\newtheorem{theorem}{Theorem}[section]
\newtheorem{lemma}[theorem]{Lemma}
\newtheorem{Corollary}[theorem]{Corollary}
\newtheorem{proposition}[theorem]{Proposition}
\newtheorem{Conjecture}[theorem]{Conjecture}
\theoremstyle{definition}
\newtheorem{definition}[theorem]{Definition}
\newtheorem{example}[theorem]{Example}
\newtheorem{Condition}[theorem]{Condition}
\newtheorem{Remark}[theorem]{Remark}
\def\depth{\operatorname{depth}}
\def\max{\operatorname{max}}
\def\rec{\operatorname{rec}}
\def\det{\operatorname{det}}
\def\Ker{\operatorname{Ker}}
\def\supp{\operatorname{supp}}
\def\Ad{\operatorname{ Ad}}
\def\WD{\operatorname{ WD}}
\def\Ind{\operatorname{ Ind\, }}
\def\Hom{\operatorname{ Hom }}
\def\ind{\operatorname{ ind\, }}
\def\Char{\operatorname{char}}
\def\cond{\operatorname{ cond }}
\def\imod{\operatorname{ - mod }}
\def\vol{\operatorname{ vol }}
\def\Unr{\operatorname{Unr}}
\def\Span{\operatorname{Span}}
\def\Gal{\mathop{\rm Gal}\nolimits}
\def\GSp{\mathop{\rm GSp}\nolimits}
\def\Sp{\mathop{\rm Sp}\nolimits}
\def\GSpin{\mathop{\rm GSpin}\nolimits}
\def\GSO{\mathop{\rm GSO}\nolimits}
\def\SO{\mathop{\rm SO}\nolimits}
\def\GL{\mathop{\rm GL}\nolimits}
\def\SL{\mathop{\rm SL}\nolimits}
\def \Del{\mathop{\rm Del}\nolimits}
\def \Tr{\mathop{\rm Tr}\nolimits}
\def \Kaz{\mathop{\rm Kaz}\nolimits}
\def \cl{\mathop{\rm cl}\nolimits}
\def \Spec{\mathop{\rm Spec}\nolimits}
\def \Sym{\mathop{\rm Sym}\nolimits}
\def \Sim{\mathop{\rm sim}\nolimits}
\def \Spin{\mathop{\rm Spin}\nolimits}
\def \Std{\mathop{\rm Std}\nolimits}
\def\LG{\mathop{\mathord{{}^LG}}}
\numberwithin{equation}{section}
\newcommand{\calo}{\mathcal{O}}
\newcommand{\calp}{\mathfrak{p}}
\newcommand{\cala}{\mathfrak{a}}
\newcommand{\calu}{{\mathcal{U}}}
\newcommand{\calv}{{\mathcal{V}}}
\newcommand{\calw}{{\mathcal{W}}}
\newcommand{\calb}{\mathcal{B}}
\newcommand{\calx}{\mathcal{X}}
\newcommand{\calA}{\mathcal{A}}
\newcommand{\calc}{\mathcal{C}}
\newcommand{\gder}{{\bf G}^{\text{der}}}
\newcommand{\G}{\mathcal{G}}
\newcommand{\fR}{\mathfrak{R}}
\newcommand{\fH}{\mathscr{H}}
\newcommand{\fn}{\mathfrak{n}}
\newcommand{\fT}{\mathscr{T}}
\newcommand{\bfm}{{\bf M}}
\newcommand{\bfb}{{\bf B}}
\newcommand{\bfg}{{\bf G}}
\newcommand{\bfj}{{\bf J}}
\newcommand{\bfp}{{\bf P}}
\newcommand{\bfn}{{\bf N}}
\newcommand{\bfa}{{\bf A}}
\newcommand{\bft}{{\bf T}}
\newcommand{\bfu}{{\bf U}}
\newcommand{\bfi}{{\bf I}}
\newcommand{\bfx}{{ \bf X}}
\newcommand{\bfy}{{\bf Y}}
\newcommand{\lu}{{\bf u}}
\newcommand{\lw}{{\bf w}}
\newcommand{\bfz}{{\bf Z}}
\newcommand{\Z}{\mathbb{Z}}
\newcommand{\C}{\mathbb{C}}
\newcommand{\bG}{\mathbb{G}}
\newcommand{\R}{\mathbb{R}}
\newcommand{\A}{\mathbb{A}}
\newcommand{\F}{\mathbb{F}}
\newcommand{\tw}{\tilde{w}}
\newcommand{\ts}{\tilde{s}}
\newcommand{\mtheta}{{M_\theta}}
\newcommand{\momega}{{M_\Omega}}
\let\oldtocsection=\tocsection
\let\oldtocsubsection=\tocsubsection
\let\oldtocsubsubsection=\tocsubsubsection
\renewcommand{\tocsection}[2]{\hspace{0em}\oldtocsection{#1}{#2}}
\renewcommand{\tocsubsection}[2]{\hspace{1em}\oldtocsubsection{#1}{#2}}
\renewcommand{\tocsubsubsection}[2]{\hspace{2em}\oldtocsubsubsection{#1}{#2}}
\newlength{\margins}
\begin{document}
\begingroup
\title{The local Langlands correspondence for $\GSp_4$ over local function fields}
\author{Radhika Ganapathy}
\address{Department of Mathematics, The University of British Columbia, 1984 Mathematics Road, Vancouver, B.C. V6T 1Z2.}
\email{rganapat@math.ubc.ca}
\subjclass[2000]{11F70, 22E50}

\begin{abstract} 
We prove the local Langlands correspondence for $\GSp_4(F)$, where $F$ is a non-archimedean local field of positive characteristic with residue characteristic $>2$. 
\end{abstract}
\maketitle
\vspace*{-0.5cm}
\tableofcontents
\let\clearpage\relax

\section{Introduction}
The central result of local class field theory establishes an isomorphism between the multiplicative group $F^{\times}$ of a local non-archimedean field $F$ and the abelianization of the Weil group $W_F$ of $F$.
  This immediately gives  a bijection between the continuous characters of $F^{\times}$ and those of $W_F$.
  The local Langlands correspondence (LLC) is a vast set of conjectures that generalizes this ``one-dimensional version'' of class field theory. 
  This correspondence is a conjectural relation between the set of  irreducible, admissible representations of an algebraic group ${\bf G}(F)$ and the set of homomorphisms $\WD_F \rightarrow \mathord{{}^LG}$, where $\WD_F$ is the Weil-Deligne group of $F$ and $\mathord{{}^LG}$ is the Langlands dual group of ${\bf G}(F)$. 
 For the group $\GL_n(F)$, this correspondence is a bijection, whose existence has been established for non-archimedean local fields of characteristic 0 in \cite{HT01, Hen00} and more recently in \cite{Sch13}, and for local fields of positive characteristic in \cite{LRS93}. 
Recently Gan and Takeda \cite{GT11} proved the LLC for the group $\GSp_4(F)$ assuming that the characteristic of $F$ is 0. The purpose of this article is to study the relationship between the Deligne-Kazhdan correspondence and the LLC and, in particular, use it to establish the LLC for $\GSp_4(F')$, where $F'$ is a non-archimedean local field of positive characteristic with residue characteristic $>2$. 
The Deligne-Kazhdan correspondence can be summarized as follows.\\[4pt]
(a)  Given a local field $F'$ of characteristic $p$ and an integer $m \geq 1$, there exists a local field $F$ of characteristic 0 such that $F'$ is $m$-close to $F$, i.e., $\calo_F/\calp_F^m \cong \calo_{F'}/\calp_{F'}^m$.    \\[4pt]
(b) In \cite{Del84},  Deligne proved that if  $F$ and $F'$ are $m$-close, then
\[ \Gal(\bar{F}/F)/I_F^m \cong \Gal(\bar{F'}/F')/I_{F'}^m, \] where $\bar{F}$ is a separable algebraic closure of $F$, $I_F$ is the inertia subgroup and $I_F^m$ denotes the $m$-th higher ramification subgroup of $I_F$ with upper numbering. This gives a bijection
\begin{align*}
& \text{\{Cont., complex, f.d. representations of $\Gal(\bar{F}/F)$ trivial on $I_F^m$\}}\\
& \longleftrightarrow  \text{\{Cont., complex, f.d. representations of $\Gal(\bar{F'}/F')$ trivial on $I_{F'}^m$\}}. 
\end{align*}
Moreover, all of the above holds when $\Gal(\bar{F}/F)$ is replaced by $W_F$, the Weil group of $F$.\\[4pt]
(c) Let ${\bf G}$ be a split, connected reductive group defined over $\mathbb{Z}$ and let $G := {\bf G}(F)$. From now on, for an object $X$ associated to the field $F$, we will use the notation $X'$ to denote the corresponding object over $F'$. 
 In \cite{kaz86}, Kazhdan proved that  given $m \geq 1$, there exists $l \geq m$ such that if $F$ and $F'$ are $l$-close, then there is an algebra isomorphism $\Kaz_m:\mathscr{H}(G, K_m) \rightarrow \mathscr{H}(G', K_m')$, where $K_m $ is the $m$-th usual congruence subgroup of ${\bf G}(\calo_F)$. 
Hence, when the fields $F$ and $F'$ are sufficiently close, we have a bijection
 \begin{align*}
 &\text{\{Irreducible, admissible representations $(\sigma, V)$ of $G$ such that $\sigma^{K_m} \neq 0$\}} \\
  &\longleftrightarrow\text{\{Irreducible, admissible representations  $(\sigma', V')$ of $G'$ such that $\sigma'^{K_m'} \neq 0$\}}. 
\end{align*}
These results suggest that, if one understands the representation theory of $\Gal(\bar{F}/F)$ for all  local fields $F$ of characteristic 0, then one can use it to understand the representation theory of $\Gal(\bar{F}'/F')$ for a local field $F'$ of characteristic $p$, and similarly, with an understanding of the representation theory of $\bfg(F)$ for all local fields $F$ of characteristic 0, one can study the representation theory of $\bfg(F')$ in characteristic $p$. 

 A nice example based on this philosophy is the generalized Jacquet-Langlands correspondence.  This correspondence establishes a bijection between
the isomorphism classes of irreducible, admissible square integrable representations of $\GL_{rd}(F)$ and the isomorphism classes of irreducible, admissible square integrable representations of $\GL_r(D)$, where $D$ is a division algebra over $F$ of dimension $d^2$, and is uniquely characterized by certain properties. This theorem was first proved assuming that $\Char(F) = 0$ in \cite{Rog83} and \cite{DKV84}. Badulescu \cite{Bad02} then proved this in characteristic $p$. He first generalized the Kazhdan isomorphism to hold for the inner forms of $\GL_n$. He then studied square integrable representations and the characterizing properties of the Jacquet-Langlands correspondence over close local fields and combined it with the theorem in characteristic 0 to deduce the theorem in characteristic $p$.

 In the same manner, it is natural to expect that the Deligne-Kazhdan correspondence is compatible with the LLC, that is, loosely speaking, the following diagram 
 \begin{equation}\label{firstcommDiagram}
    \xymatrix{
         \{\text {Reps. $\sigma$ of } G \text{ with }\depth(\sigma) \leq m\} \ar[r]^-{\mbox{LLC}} \ar[d]_{\mbox{Kazhdan}}  &\{\phi: \WD_F \rightarrow \mathord{{}^LG}\text { with }\depth(\phi) \leq m \} \ar[d]^{\mbox{Deligne}} \\
       \{\text {Reps. $\sigma'$ of } G' \text{ with }\depth(\sigma') \leq m\} \ar[r]^-{\mbox{LLC}}      & \{\phi': \WD_{F'} \rightarrow \mathord{{}^LG}\text { with }\depth(\phi')\leq m \}}
\end{equation}
is commutative when the fields $F$ and $F'$ are sufficiently close. Here, the notion of depth on the left is defined via the Moy-Prasad filtration subgroups (\cite{MP94, MP96}) and the depth on the right is defined using the upper numbering filtration of the inertia subgroup (cf.  Section \ref{LLCGLNCLF} for precise definitions). 
As explained above, one way to prove that the above diagram is commutative is to study each of the characterizing properties of the LLC over close local fields and prove that these properties are compatible with the Deligne-Kazhdan correspondence. 
One of the key features of the LLC is that it matches the analytic $L$- and $\gamma$-factors of representations of $G \times \GL_r(F)$ with the Artin factors of the corresponding Langlands parameters. For $\GL_n$ the correspondence is in fact uniquely characterized by this property (cf. \cite{Hen93}).   On the analytic side, a theory of $L$- and $\gamma$-factors is available for generic representations via the Langlands-Shahidi method in great generality for reductive groups over local fields of characteristic 0 (\cite{Sha90}).
 In positive characteristic, such a theory is available for split classical groups in \cite{Lom09} and \cite{HL11}. All these works use the theory of local coefficients (available independent of characteristic) to inductively define the $\gamma$-factors. 
 These local coefficients arise from a  study of intertwining operators between certain  parabolically induced representations and the uniqueness of the Whittaker models of these induced representations (cf. \cite{Sha81}). In \cite{Del84}, Deligne proved that the Artin $L$- and $\epsilon$-factors remain the same for representations that correspond via the Deligne isomorphism. 
 In this paper, we prove an analogous result on the analytic side for representations of split reductive groups and use it to establish the LLC  for $\GSp_4(F')$ where $F'$ is a local field of odd positive characteristic. Let us summarize the results in this paper. 

 In \cite{How85}, Howe wrote down a presentation of the Hecke algebra $\fH(\GL_n(F), I_m)$, where $I_m$ denotes the Iwahori filtration subgroup of $\GL_n(F)$. Lemaire \cite{Lem01} used this presentation to prove that  if $F$ and $F'$ are $m$-close, then $\fH(\GL_n(F), I_m) \cong \fH(\GL_n(F'), I_m')$. He then used this variant of the Kazhdan isomorphism to transfer generic representations of $\GL_n$ over close local fields (\cite{Lem01}). This isomorphism is a useful variant of the Kazhdan isomorphism since, in order to study the action of the Hecke algebra on a representation, we only need to study the action of a simple list of explicitly described generators, which makes the question significantly easier to tackle. With the aid of structure theory of Chevalley groups, we first generalize the work of \cite{How85}. More precisely, let $\bfg$ be any split, connected reductive group defined over $\Z$. In Section \ref{HeckeAlgebraIso}, we write down a presentation for the Hecke algebra $\fH(G, I_m)$, where $I_m$ is the $m$-th Iwahori filtration subgroup of $G$. We then establish that $\fH(G, I_m) \overset {\zeta_m}\cong \fH(G', I_m')$ when $F$ and $F'$ are $m$-close. This Hecke algebra isomorphism enables us to prove a conjecture of Kazhdan. More precisely, in \cite{kaz86}, Kazhdan was able to construct a $\C$-isomorphism $\Kaz_m$ between these Hecke algebras $\fH(G, K_m)$ and $\fH(G', K_m')$ when the fields $F$ and $F'$ were just $m$-close. But in order to prove that $\Kaz_m$ is an algebra isomorphism, he needed the fields to be a few levels closer. He conjectured that $\Kaz_m$ should be an algebra isomorphism when the fields $F$ and $F'$ are just $m$-close. Using the fact that $I_m \subset K_m$, we see that $\fH(G, K_m)$ is a subalgebra of $\fH(G, I_m)$ and furthermore we prove that $\zeta_m|_{\fH(G, K_m)} = \Kaz_m$ as $\C$-maps. Now, using that $\zeta_m$ is an algebra isomorphism when the fields are just $m$-close, we obtain that $\Kaz_m$ is also an algebra isomorphism when the fields are just $m$-close, validating Kazhdan's conjecture. 

The map $\zeta_m$ gives a bijection between representations $(\sigma, V)$ of $G$ with $V^{I_m} \neq 0$,  and  representations $(\sigma', V')$ of $G'$ with $V'^{I_m'} \neq 0$. In Section \ref{PropertiesrepsCLF}, we study various properties of representations over close local fields.  In Section \ref{GR}, we follow the ideas of \cite{Lem01} and show that if $F$ and $F'$ are $(m+1)$-close and if $(\sigma, V)$ is a generic representation of $G$ with $V^{I_m} \neq 0$, then $(\sigma',V')$ is also generic where $(\sigma', V')$ corresponds to $(\sigma, V)$ via $\zeta_{m+1}$.  In Section \ref{IndRep}, we study induced representations from parabolic subgroups over close local fields. To state it more precisely, let $\bfb = \bft\bfu$ be a Borel subgroup of $\bfg$, $\Phi$ the set of roots of $\bft$, $\Phi^+$ the set of positive roots of $\bft$ in $\bfb$ and $\Delta \subset \Phi^+$ the set of simple roots. Fix a Chevalley basis $\{\lu_{\alpha}\,|\,\alpha \in \Phi\}$ for $\bfg$. Let $\theta \subset \Delta$ and let $\bfp_\theta= \bfm_\theta\bfn_\theta$  be the standard parabolic subgroup of $\bfg$ determined by $\theta$. Let $I_m$ be the $m$-th Iwahori filtration subgroup of $G$. Let $\sigma$ be an irreducible, admissible representation of $M_\theta$ with $\sigma^{I_m \cap M_\theta} \neq 0$. Let $l = m+3$. We show that if $F'$ is any local field that is $l$-close to $F$, then
\[(\Ind_{P_\theta}^G \sigma)^{I_m}\cong (\Ind_{P'_\theta}^{G'} \sigma')^{I_m'},\]
 where $\sigma'$ corresponds to $\sigma$ via the Hecke algebra isomorphism $\zeta_{l, M_\theta}: \fH(M_\theta, M_\theta \cap I_l) \overset{\cong}\rightarrow \fH(M_\theta', M'_\theta \cap I_l')$. 

In Section \ref{LocalCoefficients}, we study the Langlands-Shahidi local coefficients over close local felds. Assume $\bfm_\theta$ is maximal. Let $w_{l,\Delta}$ be the longest element of the Weyl group of $\bfg$ and let $w_{l, \theta}$ be the longest element of the Weyl group of $\bfm_\theta$. Set $w_0 = w_{l,\Delta}w_{l,\theta}$. Let $\chi =\displaystyle{\prod_{\alpha \in \Delta} \chi_\alpha\circ \lu_\alpha^{-1}}$ be a generic character of $U$ that is compatible with $\tw_0$ (cf. Section \ref{WF}). Here $\tw_0$ is a representative of $w_0$ in $G$ chosen using the fixed Chevalley basis for $\bfg$ (cf.  Section \ref{reps}). Let $(\sigma, V)$ be a $\chi|_{U\cap M_\theta}$-generic representation of $M_\theta$ with Whittaker model $\mathcal{W}(\sigma, \chi|_{U\cap M_\theta})$.   For $\nu \in X^*(\bfm_\theta) \otimes_\Z \R$, let $C_\chi(\nu, \sigma, \tw_0)$  be the Langlands-Shahidi local coefficient as in Theorem 3.1 of \cite{Sha81}  (see \cite{Lom09} for local fields of characteristic $p$).  Let $m$ be large enough so that $\cond(\chi_{\alpha}) \leq m$ for all $\alpha \in \Delta$, and additionally there exists $v \neq 0$ in $\sigma^{I_m \cap M_\theta}$ with the property that the Whittaker function $W_v \in\mathcal{W}(\sigma, \chi|_{U\cap M_\theta})$ satisfies $W_v(e) \neq 0$.  Set $l=m+4$. In Section \ref{LocalCoefficients} we prove that if $F'$ is a local field that is $l$-close to $F$, then 
\[C_\chi(\nu, \sigma, \tw_0) = C_{\chi'}(\nu, \sigma', \tw_0'),\]
where $\chi'$ is chosen ``compatible" with $\chi$ (cf. Section \ref{GR} for details), $(\sigma', V')$ is the $\chi'|_{U' \cap M_\theta'}$-generic representation of $M'_\theta$ that corresponds to $(\sigma, V)$ via $\zeta_{l, M_\theta}$, and $\tw_0'$ is a representative of $w_0$ in $G'$ chosen using the same Chevalley basis. We point out an application of this result to the Langlands-Shahidi method. In characteristic 0, Shahidi \cite{Sha90} used the theory of local coefficients to inductively define the $\gamma$-factors, and then, used a local-global argument  and combined it  with the theory of local factors at the archimedean places to prove the expected properties of the $\gamma$-factor and its uniqueness. This method is now being extended to include the case of local function fields, and difficulties arise in a local-global argument that is used to prove the various local properties of the $\gamma$-factor (including its local functional equation) and its uniqueness.  The uniqueness of $\gamma$-factors for classical groups over local function fields has been established in \cite{Lom09, HL11} using certain stability results and by proving the compatibility of the symmetric and exterior square $\gamma$-factors with the LLC for $\GL_n$. For split reductive groups $\bfg$, our result above can be useful in checking the various local properties of the $\gamma$-factors in positive characteristic (cf. Section \ref{LSMETHODGSP4} for some illustrations). For example, to prove the local functional equation of the $\gamma$-factors in positive characteristic, we observe that the $\gamma$-factors defined using local coefficients agree over close local fields and then combine it with the fact that the corresponding $\gamma$-factor in characteristic 0 satisfies the local functional equation.  

 The Plancherel measure can be seen as a coarser invariant than the $\gamma$-factor, and is defined for any irreducible, admissible representation of $M_\theta$. In fact, when the representation is additionally generic, the Plancherel measure can be expressed as a certain product of $\gamma$-factors.  Gan and Takeda \cite{GT11} use this to characterize the Langlands correspondence for non-generic supercuspidal representations  of $\GSp_4(F)$ in characteristic 0 (since one still does not have a theory of $\gamma$-factors for these representations). By using techniques similar to Section \ref{LocalCoefficients}, we prove in Section \ref{Plancherel} that, when the fields $F$ and $F'$ are sufficiently close,  the Plancherel measure \[\mu(\nu, \sigma, w_0) = \mu(\nu, \sigma', w_0)\] where $\sigma, \sigma', \nu, w_0$ are all as above. 

The final part of the paper deals with the applications of these results to the LLC. In section \ref{LLCGLNCLF}, we prove that Diagram \ref{firstcommDiagram} is commutative for the group $\GL_n$. It is known that the LLC for $\GL_n$ preserves the depth of the representation (Theorem 2.3.6.4 of  \cite{Yu09}). It is also easy to see that the Kazhdan and Deligne isomorphisms preserve their respective notions of depth, making Diagram \ref{firstcommDiagram} well-defined. The fact that it is commutative would follow by first re-characterizing the LLC for $\GL_n$ using  a stability theorem of \cite{DH81} and then applying the  main theorem of Section \ref{LocalCoefficients} and  the results of \cite{Del84}. 

Our final application of these results is for the group $\GSp_4(F')\cong \GSpin_5(F')$ where $F'$ is a local field of odd positive characteristic. We give a definition of local factors for generic representations of $\GSpin_5(F') \times \GL_t(F'), t \leq 2$, following \cite{Lom09}, and prove their compatibility over close local fields using the results of Section \ref{LocalCoefficients}. We prove that the LLC for $\GSp_4(F)$ in characteristic 0 preserves depth when the residue characteristic is odd. Then we define the Langlands parameter  for a representation $\sigma$ of $\GSp_4(F')$ of depth $\leq m$ in positive characteristic by choosing a sufficiently close local field of characteristic 0 (depending only on $m$) and forcing Diagram \ref{firstcommDiagram} to be commutative. We then prove that it is independent of the choices made, has the required properties, and is uniquely characterized by those properties. This is the content of Sections \ref{LSMETHODGSP4}  - \ref{LLCGSP4POSITIVE}.\vspace{6pt}
\begin{center}
\sc{Acknowledgements}\\[5pt]
\end{center}
 I am deeply  indebted to my thesis advisor Jiu-Kang Yu for suggesting this project and for the many stimulating conversations.  I am particularly grateful to him for providing the details of Section \ref{IGS} and for explaining the proof of Theorem \ref{stability} of this article. I am also indebted to Wee Teck Gan for explaining some of the ideas in Section \ref{DepthPreservation} of this article and for the many insightful questions and comments on this work. I thank Sungmun Cho, James Cogdell, Julia Gordon,  Luis Lomel\'i, Freydoon Shahidi, Lior Silberman, Shuichiro Takeda, Richard Taylor and Sandeep Varma for the mathematical conversations and for their advice/feedback on some aspects of this manuscript. I thank the referee for several helpful comments and suggestions.  A significant portion of this article was written during my stay at the Institute for Advanced Study, Princeton, in 2012 - 2013. I thank the Institute for their warm hospitality. 
 
 A version of the results in Sections \ref{DeligneKazhdanCorrespondenceA} - \ref{PropertiesrepsCLF} and Section \ref{LLCGLNCLF} of this article appeared as part of my Ph.D thesis \cite{Gan12}. Many of the results in Section \ref{LLCGLNCLF} have also been independently obtained in \cite{ABPS14}. The proof of Theorem \ref{RSGF} in the present article is given using the results in Section \ref{LocalCoefficients} on comparing the Langlands-Shahidi method over close local fields. This proof is different from the one provided in Theorem 2.3.10 of my thesis and Theorem 5.3 of \cite{ABPS14} (both these proofs  compare the Rankin-Selberg integrals for pairs over close local fields).  

\section{The Deligne-Kazhdan correspondence}\label{DeligneKazhdanCorrespondenceA}
Let $F$ be a non-archimedean local field with $\calo$ as its ring of integers, $\calp$ as its maximal ideal, $\pi$ as its uniformizer and $\mathfrak{f} = \calo/\calp$. Let $F'$ be another non-archimedean local field with $\calo'$, $\calp'$, $\pi'$, and $\mathfrak{f}'$ defined accordingly.
\begin{definition} Let $m \geq 1$. We say  that the fields $F$ and $F'$ are \textit{$m$-close} if there is a ring isomorphism $ \calo/\calp^m \rightarrow \calo'/\calp'^m$.
\end{definition}

A non-archimedean local field of characteristic $p$ can be viewed as a limit of non-archimedean local fields of characteristic 0.  More precisely, given a local field $F'$ of characteristic $p$ and an integer $m \geq 1$, we can always find a local field $F$ of characteristic 0 such that $F'$ is $m$-close to $F$. We just have to choose the field $F$ to be ramified enough.
\begin{example} The fields $\mathbb{F}_p((t))$ and $\mathbb{Q}_p\left(p^{1/m}\right)$ are $m$-close.
\end{example}

\subsection{Deligne's theory}\label{Delignetheory}

Let $m \geq 1$. Let $\bar{F}$ be a separable closure of $F$. Let $I_F$ be the inertia group of $F$ and  $I_F^m$ be its $m$-th  higher ramification subgroup with upper numbering (cf. Chapter IV of \cite{Ser79}). Let us summarize the results of Deligne \cite{Del84} that will be used later in this work.  Deligne considered the triplet $\Tr_m(F) = (\calo/\calp^m, \calp/\calp^{m+1}, \epsilon)$, where $\epsilon$ = natural\pagebreak[2] projection of $\calp/\calp^{m+1}$ on $\calp/\calp^m$, and proved that
\[\Gal(\bar{F}/F)/I_F^m,\]
together with its upper numbering filtration, is canonically determined by $\Tr_m(F)$. Hence an isomorphism of triplets $\Tr_m(F) \rightarrow \Tr_m(F')$ gives rise to an isomorphism
\begin{equation}\label{Deliso}
\Gal(\bar{F}/F)/I_F^m \xrightarrow{\Del_m} \Gal(\bar{F}'/F')/I_{F'}^m
\end{equation}
that is unique up to inner automorphisms (See Equation 3.5.1 of \cite{Del84}).  Here is a partial description of the map $\Del_m$ (Section 1.3 of \cite{Del84}).  Let $L$ be a finite totally ramified Galois extension of $F$ satisfying $I(L/F)^m = 1$ (here $I(L/F)$ is the inertia group of $L/F$). Then  $L = F(\alpha)$ where $\alpha$ is a root of an Eisenstein polynomial \[P(x) = x^n + \pi \sum a_ix^i\] for $a_i \in \calo$.  Let $a_i' \in \calo'$ be such that $a_i \mod \calp^m \rightarrow a_i' \mod \calp'^m$. So $a_i'$ is well-defined mod $\calp'^m$.
Then the corresponding extension $L'/F'$ can be obtained as $L' = F'(\alpha')$ where $\alpha'$ is a root of the polynomial \[P'(x) = x^n + \pi' \sum a_i'x^i\] where $\pi \mod \calp^m \rightarrow \pi' \mod \calp'^m$.  The assumption that $I(L/F)^m =1$ ensures that the extension $L'$ does not depend on the choice of $a_i'$ (Remark A.6.3 and A.6.4 of \cite{Del84}).

  Deligne proved some very interesting properties of the map $\Del_m$, which we list below. 
\begin{enumerate}[(i)]
\item From an isomorphism of triplets $\Tr_m(F) \cong \Tr_m(F')$ , we also obtain an isomorphism $F^{\times}/(1+\calp^m) \xrightarrow{\cl_m} F'^{\times}/(1+\calp'^m)$. Also, the map $\Del_m$ naturally induces an isomorphism between the abelianizations of the corresponding Galois groups.  These isomorphisms commute with local class field theory (LCFT), that is, the diagram
\begin{equation}\label{gl1Deligne}
    \xymatrix{
      (\Gal (\bar{F}/F)/I_F^m)^{ab}   \ar[r]^-{\Del_m} \ar[d]_-{\mbox{LCFT}} & (\Gal(\bar{F}'/F')/I_{F'}^m)^{ab} \ar[d]^-{\mbox{LCFT}} \\
   ( F^{\times}/(1+\calp^m))\widehat{   \,\,}\ar[r]_-{\cl_m}   & (F'^{\times}/(1+\calp'^m))\widehat{   \,\,} }
\end{equation}
is commutative, where $\,\widehat{   \,\,}\,$ denotes profinite completion  (Proposition 3.6.1 of \cite{Del84}).
\item The above properties hold when $\Gal(\bar{F}/F)$ is replaced by $W_F$, the Weil group of $F$, or more generally the Weil-Deligne group of $F$ (see Section 3.7 of \cite{Del84}).
\item Note that the isomorphism $\Del_m$ induces a bijection
\begin{align}\label{Delignebijection}
&\{\text{Isomorphism classes of representations of } \Gal(\bar{F}/F) \text{ trivial on } I_F^m  \}\nonumber\\
&\longleftrightarrow \{\text{Isomorphism classes of representations of } \Gal(\bar{F}'/F') \text{ trivial on } I_{F'}^m  \}.
\end{align} Let $\psi$ be a non-trivial additive character of $F$ and $k = \cond(\psi)$. Let $\psi'$ be a character of $F'$ that satisfies the following conditions:
\begin{itemize}
\item $\cond(\psi') = k$,
\item $\psi'|_{\calp'^{k-m}/\calp'^{k}} = \psi|_{\calp^{k-m}/\calp^{k}}$.
\end{itemize}
Let $(\phi,V)$ be a representation of $ \Gal(\bar{F}/F)$ trivial on $I_F^m$ and let $(\phi',V')$ be the representation of $ \Gal(\bar{F}'/F')$ obtained using Equation \eqref{Delignebijection}. Then their Artin $L$- and $\epsilon$-factors remain the same, that is,
\begin{align}\label{ArtinFactors}
 L(s, \phi)& = L(s, \phi'),\nonumber\\
\epsilon(s, \phi, \psi) &= \epsilon(s, \phi', \psi').
\end{align}
This is Proposition 3.7.1 of \cite{Del84}. If $(\phi_0,V_0)$ is an irreducible representation of $W_F$ then there is an unramified character $\chi$ of $W_F$ and a representation $(\phi, V)$ of $\Gal(\bar{F}/F)$ such that $\chi \otimes \phi_0 = \phi \circ i_F$, where $i_F: W_F \hookrightarrow \Gal(\bar{F}/F)$. Writing $\chi(Fr_F) = q^{-s(\chi)}$, with $Fr_F$ the Frobenius element, we have $L(s, \phi_0) = L(s - s(\chi), \phi)$ and  $\epsilon(s, \phi_0, \psi) = \epsilon(s-s(\chi), \phi, \psi)$.  Hence Equation \eqref{ArtinFactors} also holds for irreducible representations of $W_F$. Now it is easy to see that this property holds for semisimple representations of $\WD_F$ using Sections 2.3 - 2.4 of \cite{Hen02}. 
\end{enumerate}
Before recalling the work of Kazhdan \cite{kaz86}, let us fix some notation for the remainder of the paper. 
\subsection{Notation}\label{stdnotations}
Let {\bf G} be a split, connected reductive group defined over $\mathbb{Z}$ with $\gder$ its derived subgroup.  
 Let  {\bf B = TU} be a Borel subgroup of $\bf G$ with maximal torus $\bft$ and unipotent radical $\bfu$. 
Let $X^*({\bf T})$ (resp. $X_*({\bf T})$) be the character lattice (resp. cocharacter lattice),  $\Phi \subset X^*({\bf T})$ the set of roots of $\bft$ in $\bfg$,  $\Phi^+$ the set of positive roots of $\bft$ in $\bfb$ and $\Delta$ the set of simple roots. Let $\bfz$ denote the center of $\bfg$ and $N_\bfg(\bft)$ the normalizer of $\bft$ in $\bfg$. 

 There is a one-to-one correspondence between the parabolic subgroups of ${\bf G}$ containing ${\bf B}$ and subsets $\theta \subset \Delta$ written as follows $\theta \leftrightarrow \bfp_\theta = \bfm_\theta\bfn_\theta.$
 Let $\bfa_\theta$ be the connected component of $ \displaystyle{\cap_{\alpha \in \theta}} \Ker \alpha$. Note that $\bfm_\theta = Z_\bfg(\bfa_\theta)$ and $\bfa_\theta$ is the maximal split torus in the center of $\bfm_\theta$. The maximal torus in $\bfm_\theta$ can be taken to be $\bft = \bfa_\emptyset.$ 
 
 For a real vector space $V$, let $V^*$ denote its dual and $V_{\mathbb{C}}$ its complexification.
 Set $\cala_\theta^* = X^*( \bfm_\theta) \otimes_\mathbb{Z} \mathbb{R}$. The restriction map maps $X^*(\bfm_\theta)$ to a subgroup of $X^*(\bfa_\theta)$ of finite index, and therefore induces an isomorphism
 \[\cala_\theta^* \cong X^*(\bfa_\theta) \otimes_\Z \R.\]

 For $\theta \subset \Delta$, let $\Phi_\theta$ be the set of roots in the linear span of $\theta$, $\Phi_\theta^+ = \Phi^+ \cap \Phi_\theta$ and let $W_\theta$ be the Weyl group of $\bfm_\theta$ with respect to $\bft$. We write $W$ for $ W_\Delta$. 
 There is a natural inclusion $W_\theta \hookrightarrow W$. For $\alpha \in \Phi^+$, let $s_\alpha$ denote the reflection with respect to the root $\alpha$. Then $W= \langle s_\alpha|\alpha \in \Delta\rangle$ and  $W_\theta = \langle s_\alpha|\alpha \in \theta\rangle$.

We fix a Chevalley basis $\{\lu_\alpha\;|\; \alpha \in \Phi\}$ where  $\lu_\alpha: \mathbb{G}_a \rightarrow \bfu_\alpha$ (here $\bfu_\alpha$ denotes the root subgroup) is an isomorphism satisfying:

\begin{enumerate}[(1)]
\item For each $\alpha \in \Phi$ there is a $\Z$-homomorphism $\phi_\alpha: \SL_2 \rightarrow \bfg$ such that 
$\phi_\alpha\left(\begin{array}{cc}
1 & t \\
0 & 1 \\
\end{array}\right) = \lu_\alpha(t)$ and $\phi_\alpha\left(\begin{array}{cc}
1 & 0 \\
t & 1 \\
\end{array}\right) = \lu_{-\alpha}(t)$, and $\phi_\alpha\left(\begin{array}{cc}
t & 0 \\
0 & t^{-1} \\
\end{array}\right) = \alpha^{\vee}(t)$.
\item There exist universal structure constants $c^{\Z}_{\alpha, \beta, i, j} \in \Z$ ($\alpha, \beta \in \Phi, \; \alpha \neq -\beta, i\alpha +j \beta \in \Phi$ with $i,j >0$), such that for $s,t \in F$, the commutator of $\lu_\alpha(s)$ and $\lu_\beta(t)$ is given by:

\[ [\lu_\alpha(s), \lu_\beta(t)] = \displaystyle{\prod_{\substack{i\alpha +j\beta \in \Phi \\ i,j >0}}} \lu_{i\alpha+j\beta}(c_{\alpha, \beta, i,j} s^it^j)\]
where $c_{\alpha, \beta, i,j}$ is the corresponding element of $\calo$. 
\item Let $\lw_\alpha(t) =\phi_\alpha\left(\begin{array}{cc}
0 & t \\
-t^{-1} & 0 \\
\end{array}\right)$ for each $\alpha \in \Phi^+$.  Then \[ \lw_{\alpha}(t) = \lu_\alpha(t) \lu_{-\alpha}(-t^{-1})\lu_\alpha(t),\]  $\lw_\alpha(1)$ is a representative of the reflection $s_\alpha$ in $N_G(T)$, and there exist universal signs $\epsilon^\Z_{\alpha, \beta} = \pm 1$ depending only $\alpha$ and $\beta$ such that, for $t \in F$, 
\[\lw_\alpha(1) \lu_{\beta}(t) \lw_{\alpha}(1)^{-1} =  \lu_{s_\alpha(\beta)}(\epsilon_{\alpha, \beta}t)\]
 where $\epsilon_{\alpha, \beta}$ is the corresponding element of $\calo$. 

\end{enumerate}
Let $\bfg_\alpha: = \langle \bfu_\alpha, \bfu_{-\alpha} \rangle$. For an algebraic group  {\bf H} over $F$, let $H$ =  {\bf H}($F$). We then have $G$, $B$, $T$, $U$.  Let $U_{\alpha, \calo}:= \bfu_\alpha(\calo)$ and $U_{\alpha, \calp^m} = \Ker(\bfu_\alpha(\calo) \rightarrow \bfu_\alpha(\calo/{\calp^m}))$. Similarly, let $T_{\calp^m} = \Ker(\bft(\calo) \rightarrow \bft(\calo/{\calp^m}))$. 

Let $F'$ be another non-archimedean local field with $\calo', \calp', \pi'$ defined accordingly. Recall that for an object $X$ associated to the field $F$, we write $X'$ for the corresponding object over $F'$. Now the meaning of $G', B', T', U', T_{\calp'^m}', c_{\alpha, \beta, i,j}'$  and so on should be clear. 

 If the fields $F$ and $F'$ are $m$-close with ring isomorphism $\Lambda: \calo/\calp^m \rightarrow \calo'/\calp'^m$, then it is clear that $\Lambda(c_{\alpha, \beta, i,j} \mod \calp^m) = c_{\alpha, \beta, i, j}' \mod \calp'^m$. Similarly,  $\Lambda(\epsilon_{\alpha, \beta} \mod \calp^m) = \epsilon_{\alpha, \beta}' \mod \calp'^m$.  We will freely use this observation throughout this work.
\subsection{Kazhdan's theory}\label{KazhdanIsomorphism}
Let us recall the results of \cite{kaz86}. Let $\bfg$ be as above. Let $K_m = \Ker({\bf G}(\calo) \rightarrow {\bf G}(\calo/\calp^{m}))$ be the $m$-th usual congruence subgroup of $G$.
Fix a Haar measure $dg$ on $G$. Let \[t_x =\vol(K_m; dg)^{-1} \Char({K_m}xK_m),\] where $\Char(K_mxK_m)$ denotes the characteristic function of the coset $K_mxK_m$.  The set $\{ t_x| x \in G\}$ spans the $\C$-linear space $\mathscr{H}(G, K_m)$.   Let \[X_*({\bf T})_-=\{\lambda \in X_*({\bf T}) \,| \,\langle \alpha, \lambda\rangle \,\leq 0 \; \forall\;\alpha \in \Phi^+\}.\] Let $\pi_\lambda = \lambda(\pi)$ for $\lambda \in X_*({\bf T})_-$. Consider the Cartan decomposition of $G$:
\[ G = \displaystyle{\coprod_{\lambda \in X_*({\bf T})_-} {\bf G }(\calo)\pi_\lambda {\bf G}(\calo)}.\]
The set $ {\bf G }(\calo)\pi_\lambda{\bf G}(\calo)$ is a homogeneous space of the group $ {\bf G }(\calo)\times {\bf G}(\calo)$ under the action $(a,b).g = agb^{-1}$.
 The set $\{ {K_m}xK_m| x \in {\bf G }(\calo)\pi_\lambda {\bf G}(\calo) \}$ is then a homogeneous space of the finite group ${\bf G}(\calo/\calp^{m}) \times {\bf G}(\calo/\calp^{m})$. Let $\Gamma_\lambda \subset  {\bf G}(\calo/\calp^{m}) \times {\bf G}(\calo/\calp^{m})$ be the stabilizer of the double coset $K_m \pi_\lambda K_m$.
  Kazhdan  observed that an isomorphism $  {\bf G}(\calo/\calp^{m}) \times {\bf G}(\calo/\calp^{m}) \rightarrow  {\bf G}(\calo'/\calp'^{m}) \times {\bf G}(\calo'/\calp'^{m})$ (such an isomorphism would exist if the fields are $m$-close) maps $\Gamma_\lambda \rightarrow \Gamma_\lambda'$, where $\Gamma_\lambda'$ is the corresponding object for $F'$.
  Let $T_\lambda \subset {\bf G }(\calo)\times {\bf G}(\calo) $ be a set of representatives of $\left({\bf G}(\calo/\calp^{m})\times {\bf G}(\calo/\calp^{m})\right) /\Gamma_\lambda$. Then we have a bijection $T_\lambda \rightarrow T_\lambda'$. Kazhdan constructed a bijection of $\mathbb{C}$-vector spaces
\begin{align*} \mathscr{H}(G, K_m) \xrightarrow{\Kaz_m}& \mathscr{H}(G', K_m')\\
t_{a_i\pi_\lambda a_j^{-1}} \rightarrow& t_{a_i'\pi'_\lambda a_j'^{-1}}
 \end{align*}
where $\lambda \in X_*({\bf T})_-$, $(a_i,a_j) \in T_\lambda$ and $(a_i',a_j')$ is the corresponding element of $T_\lambda'$, and proved the following theorem.
\begin{theorem}[Theorem A of \cite{kaz86}] \label{Kaziso} Given $m \geq 1$, there exists $ \l \geq m$ such that if $F$ and $F'$ are $l$-close, the map $\Kaz_m$ constructed above is an algebra isomorphism.
\end{theorem}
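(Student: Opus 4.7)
The map $\Kaz_m$ is a $\C$-linear bijection by construction (it is defined on a basis), so the content of the theorem is multiplicativity. Writing out the convolution one finds $t_x * t_y = \sum_z c^z_{x,y}\, t_z$ with
\[
c^z_{x,y} = \bigl|\{\alpha K_m \subset K_m x K_m \cap z K_m y^{-1} K_m\}\bigr|,
\]
a finite non-negative integer. The plan is to show that for $l = l(m)$ sufficiently large, these structure constants transport across the bijection $\Kaz_m$, i.e.\ $c^z_{x,y} = c'^{z'}_{x',y'}$ whenever $F$ and $F'$ are $l$-close and $x',y',z'$ correspond to $x,y,z$ under the bijections $T_\lambda \to T_\lambda'$ for the relevant $\lambda \in X_*(\bft)_-$.

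The approach is to re-express the count $c^z_{x,y}$ entirely in terms of the finite group $\bfg(\calo/\calp^l)$, so that the ring isomorphism $\calo/\calp^l \cong \calo'/\calp'^l$ — and the induced isomorphism of reductive quotients — immediately equates the two sides. Fix Cartan representatives $x = a\pi_\lambda b^{-1}$, $y = c\pi_\mu d^{-1}$, $z = e\pi_\nu f^{-1}$ with $(a,b) \in T_\lambda$ and similarly for the others. Using the Iwahori-type decomposition of $\bfg(\calo)$ along the positive and negative root subgroups $U_{\alpha, \calo}$, one decomposes each double coset $K_m \cdot (\text{Cartan rep}) \cdot K_m$ into left $K_m$-cosets indexed by a finite parameter set cut out — via the Chevalley basis $\{\lu_\alpha\}$ and the universal structure constants $c_{\alpha,\beta,i,j}$, $\epsilon_{\alpha,\beta}$ — by algebraic conditions in $\calo$ modulo some power of $\calp$. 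The condition $\alpha\beta \in K_m z K_m$ is then a conjunction of congruence conditions, so $c^z_{x,y}$ becomes the cardinality of an algebraically defined subset of $\bfg(\calo/\calp^l)\times\bfg(\calo/\calp^l)$.

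The main obstacle is to bound $l$ in terms of $m$ alone, uniformly in the Cartan parameters $\lambda,\mu,\nu$. Conjugation by $\pi_\lambda$ rescales the natural filtration on a root subgroup $\bfu_\alpha$ by the integer $\langle \alpha, \lambda \rangle$, which is unbounded as $|\lambda|$ grows, so a naive bookkeeping makes the required precision grow without bound. The saving observation is that $\lambda \in X_*(\bft)_-$ forces the shifts to act in a prescribed direction on positive versus negative roots; when one changes variables so that $\pi_\lambda$-conjugation is absorbed, the constraint $\alpha\beta \in K_m z K_m$ is pinned to a \emph{bounded} range of depths on each root subgroup, controlled solely by $m$ and the root datum of $\bfg$. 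A careful accounting then produces the required uniform $l=l(m)$. Once this uniformity is established, $l$-closeness of $F$ and $F'$ matches every ingredient of the combinatorial formula — the uniformizers mod $\calp^l$, the reductions of $a,b,c,d,e,f$, and the structure constants $c_{\alpha,\beta,i,j}$, $\epsilon_{\alpha,\beta}$ — so $c^z_{x,y} = c'^{z'}_{x',y'}$, and multiplicativity of $\Kaz_m$ follows.
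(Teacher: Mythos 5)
The paper does not prove this theorem at all; it simply cites it as Theorem A of \cite{kaz86}. Your argument is essentially a reconstruction of Kazhdan's original proof: expand $t_x * t_y = \sum_z c^z_{x,y} t_z$, rewrite the structure constants $c^z_{x,y}$ as cardinalities of algebraically defined subsets of $\bfg(\calo/\calp^l) \times \bfg(\calo/\calp^l)$ via the Cartan and Iwahori-type decompositions, and then argue that the required precision $l$ can be bounded uniformly in the Cartan parameters because $\lambda \in X_*(\bft)_-$ constrains the $\pi_\lambda$-rescalings in a fixed direction. That is precisely the content of Kazhdan's Lemmas 2.1 and 2.2 and his Theorem A. The one place your sketch is thin is the uniformity claim: you assert that after absorbing the $\pi_\lambda$-conjugation the depth constraints are pinned to a bounded range, but you do not exhibit the change of variables nor quantify the resulting $l(m)$. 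This is where the real work in Kazhdan's proof lives and where a reader would want detail.

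What the present paper actually proves is the sharper statement Conjecture 3.4, that $l = m$ suffices, and it does so by a genuinely different route (Theorem 3.6, Theorem 3.14, Corollary 3.18). Rather than touching the structure constants directly, it writes down a Howe-style presentation of the larger Hecke algebra $\fH(G, I_m)$ by explicit generators and relations, shows that the presentation transports under $m$-closeness to give an algebra isomorphism $\zeta_m : \fH(G, I_m) \to \fH(G', I_m')$, observes that $\fH(G, K_m) = e_{K_m} * \fH(G, I_m) * e_{K_m}$ is the corner by the idempotent $e_{K_m}$ with $\zeta_m(e_{K_m}) = e_{K_m'}$, and finally checks that the restriction $\zeta_m|_{\fH(G, K_m)}$ agrees with the linear map $\Kaz_m$. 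Your approach buys a self-contained combinatorial argument with an unspecified but finite level $l(m)$; the paper's approach buys the optimal level $l = m$ at the cost of the presentation machinery of Section 3. Both are valid, but they are not the same proof.
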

Note that $\Kaz_m$ is an isomorphism of $\mathbb{C}$-vector spaces when the fields are just $m$-close. But in order to establish that this isomorphism is compatible with the Hecke algebra structures, he needed the fields to be a few levels closer. Kazhdan conjectured the following.

\begin{Conjecture}\label{Kazconj} In Theorem \ref{Kaziso}, we can take $l=m$.

\end{Conjecture}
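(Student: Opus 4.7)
The plan is to avoid Kazhdan's direct analysis on Cartan double cosets and to argue instead at the Iwahori-filtration level, where the Hecke algebra admits a presentation whose data is controlled by $\Tr_m(F)$. The strategy has three steps: (i) construct, under just $m$-closeness, an algebra isomorphism $\zeta_m : \fH(G, I_m) \xrightarrow{\sim} \fH(G', I_m')$; (ii) use $I_m \subset K_m$ to realize $\fH(G, K_m)$ as a subalgebra of $\fH(G, I_m)$; (iii) check that $\zeta_m|_{\fH(G, K_m)} = \Kaz_m$ as $\C$-linear maps. Since $\Kaz_m$ is already a $\C$-linear bijection and is now seen to be the restriction of the algebra isomorphism $\zeta_m$, it is itself an algebra isomorphism, proving the conjecture.

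Step (i) is where the real work lies. I would generalize Howe's presentation for $\fH(\GL_n(F), I_m)$ of \cite{How85, Lem01} to an arbitrary split connected reductive $\bfg$. Double-coset representatives of $I_m \backslash G / I_m$ can be parametrized combinatorially in terms of the extended affine Weyl group together with coset representatives in $T_{\calo}/T_{\calp^m}$ and $U_{\alpha, \calo}/U_{\alpha, \calp^m}$ (and their negative-root analogues, with the appropriate shift). Convolution of two such basis elements reduces, via the Chevalley commutation relation $[\lu_\alpha(s), \lu_\beta(t)] = \prod \lu_{i\alpha+j\beta}(c_{\alpha,\beta,i,j} s^i t^j)$ and the reflection formula $\lw_\alpha(1)\lu_\beta(t)\lw_\alpha(1)^{-1} = \lu_{s_\alpha(\beta)}(\epsilon_{\alpha,\beta} t)$, to identities involving only the universal integer constants $c^{\Z}_{\alpha,\beta,i,j}$ and $\epsilon^{\Z}_{\alpha,\beta}$ together with arithmetic in $\calo/\calp^m$. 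Any ring isomorphism $\Lambda : \calo/\calp^m \to \calo'/\calp'^m$ therefore transports every generator and every relation verbatim, yielding the desired $\zeta_m$.

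For steps (ii) and (iii), the inclusion $I_m \subset K_m$ implies that every bi-$K_m$-invariant function is automatically bi-$I_m$-invariant, so $\fH(G, K_m)$ embeds in $\fH(G, I_m)$ (up to a measure-normalization factor $[K_m : I_m]$ on the identity, which does not affect the algebra property). To match $\zeta_m$ with Kazhdan's bijection on the basis $\{t_{a_i \pi_\lambda a_j^{-1}}\}$ of \cite{kaz86}, I would express each $a_i \in \bfg(\calo)$ as a product of Chevalley generators with entries in $\calo$, note that only residues modulo $\calp^m$ matter because one works modulo $K_m$, and observe that $\pi_\lambda = \lambda(\pi)$ is determined by the choice of uniformizer; both $\zeta_m$ and $\Kaz_m$ then send $t_{a_i \pi_\lambda a_j^{-1}}$ to $t_{a_i' \pi'_\lambda a_j'^{-1}}$ built from the same data. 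The principal obstacle is step (i): producing the Howe-style presentation explicitly enough that every generator \emph{and} every relation are visibly controlled by $\Tr_m(F)$. The subtle point is to track the Iwahori-filtration levels on positive and negative root subgroups separately and to verify that the error terms produced by Chevalley commutation remain in $I_m$ and are themselves expressible modulo $\calp^m$; once this presentation is in place, steps (ii) and (iii) are essentially formal.
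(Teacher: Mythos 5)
Your proposal is correct and follows essentially the same route as the paper: generalize Howe's presentation to obtain an algebra isomorphism $\zeta_m : \fH(G,I_m) \to \fH(G',I_m')$ under mere $m$-closeness (Theorems~\ref{presentation} and~\ref{closealgebra}), realize $\fH(G,K_m)$ as the corner $e_{K_m} * \fH(G,I_m) * e_{K_m}$ via $I_m \subset K_m$, check $\zeta_m(e_{K_m})=e_{K_m'}$, and match $\zeta_m$ with $\Kaz_m$ on the Kazhdan basis $\{t_{a_i\pi_\lambda a_j^{-1}}\}$ (Corollary~\ref{proofKazConj}). One small sharpening: the subtlety you gloss as a ``measure-normalization factor on the identity'' is handled cleanly in the paper by observing that $e_{K_m}$ is an idempotent and $\fH(G,K_m)$ is the corresponding corner algebra (with unit $e_{K_m}$, not $e_{I_m}$), so the restriction of $\zeta_m$ is automatically an algebra isomorphism once $\zeta_m(e_{K_m})=e_{K_m'}$ is verified; and the final matching with $\Kaz_m$ is most easily carried out by factoring $t_{a_i\pi_\lambda a_j^{-1}}=t_{a_i}*t_{\pi_\lambda}*t_{a_j^{-1}}$ and decomposing each $K_m$-double coset into $I_m$-double cosets indexed by $(K_m/I_m)^2$ modulo the stabilizer, rather than expressing the $a_i$ as Chevalley words.
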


An irreducible, admissible representation $(\sigma, V)$ of $G$ such that $\sigma^{K_m} \neq 0$ naturally becomes an $\mathscr{H}(G, K_m)$-module. Hence, if the fields $F$ and $F'$ are sufficiently close,  $\Kaz_m$ gives a  bijection
 \begin{align}\label{Kazreptrans}
 &\text{\{Iso. classes of irr., ad. representations $(\sigma, V)$ of $G$ with $\sigma^{K_m} \neq 0$\}}\nonumber \\
  & \longleftrightarrow\text{\{Iso. classes of irr., ad. representations  $(\sigma', V')$ of $G'$ with $\sigma'^{K_m'} \neq 0$\}}.
\end{align}

\subsubsection{A variant of the Kazhdan isomorphism for $\GL_n(F)$}\label{Kazvariant}

In \cite{How85}, Howe wrote down a presentation of  the Hecke algebra $\mathscr{H}(\GL_n(F), I_{m,n})$, where $I_{m,n}$ is the $m$-th filtration subgroup of the standard Iwahori subgroup $I_{0,n}$ of $\GL_n(F)$. Lemaire \cite{Lem01} used this presentation to prove the following.
\begin{enumerate}[(a)]
\item If $F$ and $F'$ are $m$-close, we have a Hecke algebra isomorphism \[\mathscr{H}(\GL_n(F), I_{m,n}) \cong \mathscr{H}(\GL_n(F'), I_{m,n}').\] This gives a bijection
 \begin{align*}\label{Howreptrans}
 &\text{\{Iso. classes of irr., ad. representations $(\sigma, V)$ of $\GL_n(F)$ with $\sigma^{I_{m,n}} \neq 0$\}}\\
  & \longleftrightarrow\\
  & \text{\{Iso. classes of irr., ad. representations  $(\sigma', V')$ of $\GL_n(F')$ with $\sigma'^{I_{m,n}'} \neq 0$\}}.
\end{align*}
\item Assume $F$ and $F'$ are $(m+1) $-close. Let $(\sigma, V)$ correspond to $(\sigma',V')$ as in (a). If $(\sigma,V)$ is $\psi$-generic, then $(\sigma',V')$ is $\psi'$-generic and $\cond(\sigma) = \cond(\sigma')$, where $\cond(\psi) = m$ and $\psi'$ is obtained from $\psi$ as in Section \ref{Delignetheory}.
\end{enumerate}
Note that (a) provides a very interesting variant of the Kazhdan isomorphism for $\GL_n(F)$. To prove (b),  the problem of understanding the action of Hecke algebra $\mathscr{H}(\GL_n(F), I_{m,n})$  on the  representation was reduced to understanding the action of the explicit list of generators described by Howe. This was a crucial advantage over the Kazhdan isomorphism in proving (b).

\section{The Hecke algebra $\mathscr{H}(G, I_m)$}\label{HeckeAlgebraIso}
We retain the notation of Section \ref{stdnotations}. Let $I$ be the standard Iwahori subgroup of $G$, defined as the inverse image under ${\bf G}(\calo) \rightarrow { \bf G} (\mathfrak{f})$ of ${\bf B}(\mathfrak{f})$. By Chapter 3 of \cite{Tit77}, there is a smooth affine group scheme $\bfi$ defined over $\calo$ with generic fiber $\bfg \times _\Z F$ such that $\bfi(\calo)  =I.$ Define 
$I_m : = \Ker(\bfi(\calo) \rightarrow \bfi(\calo/\calp^m)).$ Explicitly, $I = \left\langle U_{\alpha, \calo}, \bft(\calo), U_{-\alpha,\calp}\,|\, \alpha \in \Phi^+  \right\rangle$
 and $I_m = \left\langle U_{\alpha, \calp^m}, T_{\calp^m}, U_{-\alpha,\calp^{m+1}}\,|\, \alpha \in \Phi^+  \right\rangle$.
In this section we give a presentation for the Hecke algebra $\mathscr{H}(G, I_m)$. 
    Our presentation here is a generalization of Howe's presentation for $\GL_n$  in Chapter 3, \S{2} of \cite{How85}.
Let us first recall some results about the structure theory for reductive groups from \cite{Iwa65}. 

A \textit{generalized Tits system} is a triple $(G,\mathcal{I},N)$ where $\mathcal{I}$ and $N$ are subgroups of $G$ satisfying the  following properties:
\begin{enumerate}[(a)]
\item $T_0 = \mathcal{I}\cap N$ is a normal subgroup of $N$.
\item There exists a Coxeter group $W_S$ generated by a set of simple reflections $S$, a group $\Omega$, and an isomorphism $N/T_0 \cong W_S \rtimes \Omega$.
\item The following properties hold for elements of $S$:
\begin{itemize}
\item For any $w \in W_S \rtimes \Omega$ and $s \in S$, we have $w\mathcal{I}s \subset \mathcal{I}ws\mathcal{I} \cup \mathcal{I}w\mathcal{I}$.
\item For all $s \in S$, we have $s\mathcal{I}s^{-1} \neq \mathcal{I}$.
\end{itemize}
\item for $\rho \in \Omega$, we have $\rho S \rho^{-1} = S$, and $\rho \mathcal{I} \rho^{-1} = \mathcal{I}$.
\item $G$ is generated by $\mathcal{I}$ and $N$.
\end{enumerate}
The group $W_a = N/T_0 \cong W_S \rtimes \Omega$ is called the extended affine Weyl group of $(G,\mathcal{I},N)$. 

The group $G$ always admits a generalized Tits system. By the main theorem of \cite{Iwa65}, the triple $(G, \mathcal{I}, N)$ with $\mathcal{I} = I$ and $N = N_G(T)$,  satisfies all the conditions above. Note that $I \cap N = {\bf T}(\calo)$.  

There is another natural isomorphism associated to the group $W_a = N_G(T)/{\bf T}(\calo)$. Via the isomorphisms $W \cong N_{{\bf G}(\calo)}(T)/{\bf T}(\calo)$ and $X_*({\bf T}) \cong T/{\bf T}(\calo)$ we can realize these groups inside $W_a$ and in fact  $W_a \cong X_*({\bf T} ) \rtimes W$  where $W$ acts on $X_*({\bf T})$ in the obvious way. The length function on $W_a$ can be defined as follows. For $(\lambda, x) \in W_a$, 
\[l(\lambda, x) = \displaystyle{ \sum_{\alpha \in \Phi_1} |\langle\alpha, \lambda\rangle|} + \displaystyle{\sum_{\alpha \in \Phi_2}{ |\langle\alpha, \lambda\rangle + 1|}},\] where
$\Phi_1 = \{\alpha \in \Phi^+ \,|\, x^{-1}\alpha >0\}$ and $\Phi_2 = \{\alpha \in \Phi^+ \,|\, x^{-1}\alpha < 0\}$.
The groups $W_S$ and $\Omega$ have the following description.
We first write $\Phi = \Phi^{(1)} \cup \Phi^{(2)} \ldots \cup \Phi^{(p)}$ where each $\Phi^{(i)}$ is irreducible, and $\Delta = \Delta^{(1)} \cup \ldots \cup \Delta^{(p)}$ where $\Delta^{(i)}$ denotes the set of simple roots of $\Phi^{(i)}$. Let $\alpha_0^{(i)}$ denote the highest root of $\Phi^{(i)}$.  Let $s_\alpha$ denote the reflection with respect to the root $\alpha$.  Let $ s_0^{(i)} = ({-\alpha_0^{(i)}}^{\vee}, s_{\alpha_0^{(i)}}) \in X_*(\bft) \rtimes W$, and $S = S_1 \cup S_2$, where \[S_1 = \{s_\alpha| \alpha \in \Delta\}\; \text{ and }\; S_2= \{s_0^{(i)}| i = 1, 2\ldots p\}.\]
Let $\Delta_0 = \Delta_0^{(1)} \cup \ldots \cup\Delta_0^{(p)}$, where $\Delta_0^{(i)} = \Delta^{(i)} \cup \{ \alpha_0^{(i)}\}$. 
 Then $W_S$ is generated by $S$ and $W_S = Q^{\vee} \rtimes W $. Here $Q^{\vee}$ is the lattice generated by $\Phi^{\vee}$. The group $\Omega \cong X_*({\bf T})/Q^{\vee}$. With the length function as above, each element of $S$ has length 1 and the group $\Omega$ described above is equal to the set of elements of length $0$ in $W_a$.  Note that $\Omega$ is free when $\gder$ is simply connected.

Let $A$ be a set of representatives for $W_a$ in $N_G(T)$. Then, by Theorem 2.16 of \cite{IwaMat65} we know that $G = I A I$. Hence $G =\displaystyle{ \bigcup_ {w \in A,\\ x,y \in I}} I_m xwyI_m$. Fix a Haar measure $dg$ on $G$ such that $\vol(I_m; dg) = 1$. For $g\in G$, let $f_g$ denote the characteristic function of the double coset $I_mgI_m$. Then using the above decomposition we see that the set $ \{f_{xwy} \,|\,w \in A \text{ and } x,y \in I\}$ spans the $\C$-linear space $\mathscr{H}(G, I_m)$. We will make a suitable choice for the representatives of elements of $W_a$ before writing down the generators and relations for the Hecke algebra.

\subsection{Representatives}\label{reps}
Recall that we have fixed a  Chevalley basis $ \{\lu_{\alpha} | \alpha \in \Phi\}$. Also, $\lw_\alpha(1) = \lu_\alpha(1)\lu_{-\alpha}(-1)\lu_\alpha(1)$ and $\alpha^\vee(t) = \phi_\alpha\left(\begin{array}{cc}
t & 0 \\
0 & t^{-1} \\
\end{array}\right).$ Note that $\alpha^\vee(t) = \lw_\alpha(t)\lw_\alpha(-1)$.  Put $\tilde{s}_\alpha = \lw_{\alpha}(1)$ for $\alpha\in \Delta$ and $\tilde{s}_0^{(i)} = \lw_{\alpha_0^{(i)}}(\pi^{-1})$. Then $\tilde{s}_\alpha$ is a representative in $ N_G(T)$ of the reflection $s_\alpha$ for $\alpha \in \Delta$ and $\ts_0^{(i)}$ is a representative of $s_0^{(i)}$ in $N_G(T)$.
We now choose a set of representatives for the elements of $W_a$ as follows. 
\\
(a)\textbf{ Representatives for $W$}: Recall that $W = \langle S_1 \rangle$.  Let $ w= s_{1}\ldots s_{r}$ be a minimal decomposition of $w$ where $s_i \in S_1$. Then $\tilde{w} =  \tilde{s}_{1}\ldots \tilde{s}_{r}$ is a representative of $w$ in $N_G(T)$. By  Lemma 56 of \cite{Ste68} we see that this representative is independent of the choice of minimal decomposition of $w$. This way, we get a set of representatives for the elements of the Weyl group. \\
(b)\textbf{ Representatives for $\Omega$}: Note that $\Omega$ is a finitely generated abelian group. We fix an isomorphism
\[\Omega \cong \Z^l \times \Z/a_1\Z \times \Z/a_2\Z \ldots \Z/a_k\Z\]
with $a_1|a_2...|a_k$. Fix an ordered basis for $\Z^l$ and let $\rho_1, \ldots\rho_l$ be the corresponding elements of $\Omega$. Let $\mu_i$ be the element of $\Omega$ whose image under the above isomorphism  is  the generator of the cyclic group $\Z/a_i\Z$.  Then
\[\Omega= \{ \rho_1^{t_1}\rho_2^{t_2}\ldots \rho_l^{t_l} \mu_1^{r_1} \mu_2^{r_2} \ldots \mu_k^{r_k}| t_i \in \Z \;\forall\; 1 \leq i \leq l;\; 0 \leq r_j < a_j \;\forall\;1 \leq j \leq k\}\]
Let $\rho \in \{\rho_1, \rho_2, \ldots \rho_l, \mu_1, \mu_2, \ldots \mu_k\}$. Then $\rho$ 
 can be written as $(\lambda, x)$ for $\lambda \in  X_*(\bf{T} )$ and $x\in W$. 
As before, for $\lambda \in  X_*(\bf{T} )$ let $\pi_{\lambda} = \lambda(\pi)$. Let $\tilde{x}$ be the representative in $N_G(T)$ as described in (a). 
Then, $\tilde{\rho} = \pi_{\lambda}\tilde{x}$ is a representative of $\rho$ in $N_G(T)$. For $1 \leq i \leq l$, let $\tilde{\rho}_i^{-1} = \tilde{x}_i^{-1}.\pi_{\lambda_i}^{-1} = \pi_{-x_i^{-1}.\lambda_i}.\tilde{x}_i^{-1}$, the inverse of $\tilde{\rho}_i$ in $N_G(T)$, be the representative of $\rho_i^{-1}$. Finally, for an element $\rho \in \Omega$, write $\rho = \rho_1^{t_1}\rho_2^{t_2}\ldots \rho_l^{t_l} \mu_1^{r_1} \mu_2^{r_2} \ldots \mu_k^{r_k}$ and set
\[\tilde{\rho} : =\tilde{\rho}_1^{t_1}\ldots\tilde{\rho}_l^{t_l} \tilde{\mu}_1^{r_1} \ldots \tilde{\mu}_k^{r_k}\]
This gives us a set of representatives in $N_G(T)$ for the elements of $\Omega$.\\
(c)\textbf{ Representatives for $W_S$}: Let $\alpha, \beta \in \Delta_0$, $\alpha \neq \beta$. First, if $\alpha, \beta \in \Delta$, define $\theta_{\alpha, \beta}$ to be the angle between the simple roots $\alpha, \beta$. Note that if $\alpha \in \Delta_0^{(i)}$ and $\beta \in \Delta_0^{(j)}$ with $i \neq j$, then the angle between them is $\pi/2$. Suppose $\alpha = \alpha_0^{(i)}$ and $\beta$ is simple, define $\theta_{\alpha, \beta}$ to be the angle between $-\alpha_0^{(i)}$ and $\beta$. Then, it is well known that for $\alpha \neq \beta$, $ \theta_{\alpha, \beta} = \left(1-1/v\right)\pi, v= 2,3,4,6$. Let $m_{\alpha, \beta}$ denote the order of the element $s_\alpha s_\beta$. We simplify our notation a bit and write $m_{ij}$ as the order of the element $s_is_j$ for $s_i, s_j \in S$.  Then $m_{ij} = 2,3,4,6$ or infinite. Note that the case when $m_{ij}$ is infinite happens only when one of the irreducible components $\Phi^{(i)}$ is of rank 1 (in which case the highest root is parallel to the unique simple root - cf. Proposition 3 of  Chapter V, \S 3, No 4 of \cite{Bou02} and also Proposition 1.15 of \cite{IwaMat65}). Consequently, the only relations in $W_S$ are
 \begin{align}\label{A}
s_is_j &=  s_js_i  & if & & m_{ij} = 2 \nonumber\\
s_is_js_i  & =  s_js_is_j & if  & &m_{ij} =3\\
(s_is_j)^2 &= ( s_js_i)^2 & if &&  m_{ij} =4 \nonumber\\
(s_is_j)^3& = (s_js_i)^3 & if  && m_{ij} =6 \nonumber
\end{align} 
for $s_i, s_j \in S$. Moreover, if $s_{i_1}s_{i_2}\ldots.s_{i_r} = s_{j_1}s_{j_2}\ldots.s_{j_r}$ are two reduced expressions in $W_S$, then we can transform one to the other just using the relations above. We now prove the following lemma about their representatives. 
   
\begin{lemma}\label{si} Let $s_i, s_j \in S$. Then
\begin{align}\label{B}
 \tilde{s}_i\tilde{s}_j &= \tilde{s}_j\tilde{s}_i &if && m_{ij} =2 \nonumber\\
\tilde{s}_i\tilde{s}_j\tilde{s}_i &= \tilde{s}_j\tilde{s}_i\tilde{s}_j  & if &&   m_{ij} =3  \\
(\tilde{s}_i\tilde{s}_j)^2 &=( \tilde{s}_j\tilde{s}_i)^2 & if & & m_{ij}=4 \nonumber\\
(\tilde{s}_i\tilde{s}_j)^3 &=  ( \tilde{s}_j\tilde{s}_i)^3   & if &&  m_{ij}=6 \nonumber
\end{align} 

\end{lemma}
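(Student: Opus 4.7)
My plan is to split into cases according to whether each of $s_i, s_j$ belongs to $S_1$ or $S_2$, reducing every case ultimately to Steinberg's Lemma~56 of \cite{Ste68} applied in an appropriate rank-two root subsystem. First, when $s_i, s_j \in S_1$, say $\tilde{s}_i = \lw_{\alpha_i}(1)$ and $\tilde{s}_j = \lw_{\alpha_j}(1)$ for simple roots $\alpha_i, \alpha_j \in \Delta$, each of the four asserted equalities expresses that two reduced decompositions of the longest element of $W_{\{\alpha_i, \alpha_j\}}$ produce the same Steinberg lift, which is precisely Lemma~56 of \cite{Ste68}. Next, when at least one of $s_i, s_j$ lies in $S_2$ but the two reflections involve roots in distinct irreducible components of $\Phi$, one has $m_{ij} = 2$, and since the corresponding root subgroups lie in commuting direct factors of $\gder$, the representatives $\tilde{s}_i$ and $\tilde{s}_j$ commute on the nose.

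The only substantive case remaining is $s_i = s_0^{(k)}, s_j = s_\alpha$ with $\alpha \in \Delta^{(k)}$. Here I would use the identity $\lw_\beta(t) = \beta^\vee(t)\,\lw_\beta(1)$ (a consequence of $\beta^\vee(t) = \lw_\beta(t)\lw_\beta(-1)$ together with $\lw_\beta(-1) = \lw_\beta(1)^{-1}$, the latter seen by a direct $\SL_2$ computation via $\phi_\alpha$) to factor
\[ \tilde{s}_0^{(k)} = \alpha_0^{(k)\vee}(\pi^{-1})\,\lw_{\alpha_0^{(k)}}(1). \]
Substituting this factorization into each side of the asserted braid relation and repeatedly applying the standard identity $\lw_\alpha(1)\,\mu^\vee(t)\,\lw_\alpha(1)^{-1} = (s_\alpha\mu)^\vee(t)$, I would push all torus factors to the left. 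Each side then becomes a product $\tau \cdot (\text{product of Steinberg lifts})$ for some $\tau \in T$. The Steinberg-lift portion on either side is a lift of a reduced expression for the longest element of the rank-two Weyl subgroup generated by $s_{\alpha_0^{(k)}}$ and $s_\alpha$, so Lemma~56 of \cite{Ste68} applied in the rank-two subsystem spanned by $\alpha$ and $\alpha_0^{(k)}$ forces the two Steinberg-lift parts to coincide.

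It remains to match the torus parts $\tau$ on either side; both are cocharacters of $\bft$ evaluated at $\pi$, and the required equality follows from the fact that $(s_0^{(k)} s_\alpha)^{m_{ij}} = 1$ already in $W_a = X_*(\bft) \rtimes W$, which governs exactly the cancellation of the translation pieces in $X_*(\bft)$. I expect the main obstacle to be this torus-bookkeeping step: tracking how $\alpha_0^{(k)\vee}(\pi^{-1})$ transforms under conjugation by successive Steinberg lifts and verifying that the resulting cocharacters agree on both sides. This reduces to a short computation in each of the rank-two irreducible types $A_1 \times A_1, A_2, B_2, G_2$ corresponding to $m_{ij} = 2, 3, 4, 6$ respectively, and in every type the equality of the two cocharacters is forced by the translation identity inherited from the braid relation in $W_a$.
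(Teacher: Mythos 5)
Your approach is genuinely different from the paper's. The paper first reduces to the case that $\gder$ is simply connected and simple (Corollary 5 to Theorem 4' of \cite{Ste68}), rewrites the desired braid relation as the vanishing of a commutator-type word $x$, uses the conjugation identity in Lemma 19(a) of \cite{Ste68} to place $x$ in $G_{\alpha_i} \cap G_{\alpha_0} \cap T$, and then invokes \emph{global} information from the entire root system --- coprimality of two of the heights $m_j$ in $\alpha_0 = \sum_j m_j\alpha_j$, with a separate computation in $X^*(\bft^{\mathrm{der}})/(\Ker\alpha_0^\vee + \Ker\alpha_i^\vee)$ for types $B_n$, $C_n$ where coprimality fails --- to see that this intersection is trivial. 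Your plan stays entirely inside the rank-two subsystem spanned by $\alpha$ and $\alpha_0^{(k)}$ and never uses simple-connectedness, so if it went through it would be a tidier argument.

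There is, however, a gap in the step where you invoke Lemma 56. After you factor $\tilde{s}_0^{(k)} = \alpha_0^{(k)\vee}(\pi^{-1})\,\lw_{\alpha_0^{(k)}}(1)$ and push cocharacters to the left, what remains on each side is a word in $\lw_{\alpha_0^{(k)}}(1)$ and $\lw_\alpha(1)$. But $\lw_{\alpha_0^{(k)}}(1)$ is \emph{not} a Steinberg lift for any base of the rank-two subsystem containing $\alpha$: since $\alpha_0^{(k)}$ is the highest root of that subsystem, the base containing $\alpha$ is $\{\alpha, -\alpha_0^{(k)}\}$, and the corresponding Steinberg lifts are $\lw_\alpha(1)$ and $\lw_{-\alpha_0^{(k)}}(1) = \alpha_0^{(k)\vee}(-1)\,\lw_{\alpha_0^{(k)}}(1)$. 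Lemma 56 gives a braid relation for \emph{those} two elements, not for yours. Converting back to $\lw_{\alpha_0^{(k)}}(1)$ introduces an additional $\alpha_0^{(k)\vee}(-1)$ for every occurrence, and these sign factors are nowhere accounted for in your proposal --- your torus bookkeeping addresses only the $\pi^{-1}$-cocharacters coming from the initial factorization. It turns out the sign factors do cancel, and for the same underlying coroot-arithmetic reason the translation pieces do (one can check $\alpha_0^\vee(-1)\alpha_i^\vee(-1) = (s_{\alpha_i}\alpha_0)^\vee(-1)$ etc.), but this is exactly the content that needs to be written out and verified case by case in each rank-two type, and it is not a direct application of Lemma 56 as you claim. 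A minor additional point: $m_{ij} = 6$ never occurs when one of $s_i, s_j$ is an affine node $s_0^{(k)}$ (the only multiple bond out of $\alpha_0$ in any extended Dynkin diagram is the double bond in type $C_n$), so the $G_2$ rank-two case is vacuous in the substantive branch of your argument; the cases to check are $A_1 \times A_1$, $A_2$, and $C_2$.
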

\begin{proof} By Corollary 5 (To Theorem 4') of \cite{Ste68}, we can assume that $\gder$ is simply connected. Furthermore, we can assume that $\gder$ is simple, that is, $\Phi$ is irreducible (Hence $S_2 = \{s_0\}$ and we drop all the superscripts in this proof). When $s_i, s_j \in S_1$, this is just Lemma 56 of \cite{Ste68}. So we only need to deal with the case when say $s_i \in S_1$ and $s_j \in S_2$.
  We will deal with the case $m_{ij} = 3$. The other cases follow similarly. So we are dealing with the case $s_i  = s_{\alpha_i}\in \Delta$ and $s_j = s_0$ with $s_is_0s_i  =  s_0s_is_0 $. 
  Let $ x = \tilde{s}_0\tilde{s}_i\tilde{s}_0\tilde{s}_i^{-1}\tilde{s}_0^{-1}\tilde{s}_i^{-1}$. We want to show $ x =1$. 
  Recall $\tilde{s}_0 = \lw_{\alpha_0}(\pi^{-1})  \text{and } \tilde{s}_i = \lw_{\alpha_i}(1)$. Then   $\tilde{s}_0^{-1} = \lw_{\alpha_0}(- \pi^{-1})  \text{ and } \tilde{s}_i^{-1} = \lw_{\alpha_i}(-1)$.
   Now, \[ \lw_{\alpha_i}(1)\lw_{\alpha_0}(\pi^{-1}) \lw_{\alpha_i}(-1) = \lw_{s_i(\alpha_0)}(c \pi^{-1})\] by Lemma 19(a) of \cite{Ste68}. Then, \[ \lw_{\alpha_0}(\pi^{-1}) \lw_{s_i(\alpha_0)}(c \pi^{-1}) \lw_{\alpha_0}(- \pi^{-1}) = \lw_{s_{\alpha_0}s_i(\alpha_0)}(c_1( \pi)^{<\alpha_0, s_i(\alpha_0)>} c\pi^{-1})\] by applying Lemma 19(a) of \cite{Ste68} again. 
   Since $s_{\alpha_0}s_i(\alpha_0) = \pm \alpha_i$, we see that the first 5 terms of $x$ lie in $G_{\alpha_i}$.
    Therefore $ x \in  G_{\alpha_i}$. 
    Considering the last 5 terms of $x$ and repeating the same argument, we see that $x \in G_{\alpha_0}$. Hence we conclude that $ x \in G_{\alpha_i} \cap G_{\alpha_0} \cap T$. This implies that $ x = \alpha_0^\vee(t) = \alpha_i^\vee(t')$ for some $ t, t' \in F$. Since $\alpha_0$ is the highest root, $ \alpha_0 = \displaystyle{\sum_{j=1}^{n}}m_j \alpha_j$, where the $m_j$'s are uniquely determined integers.
     By Lemma 19(c) of \cite{Ste68}, $\alpha_0^\vee(t) = \displaystyle{\prod_{\alpha_j \in \Delta}} \alpha_j^\vee(t^{m_j})$. Hence we have $ \displaystyle{\prod_{\alpha_j \in \Delta}} \alpha_j^\vee(t^{m_j}) = \alpha_i^\vee(t')$. Since $\gder$ is simply connected, we have $t^{m_i} = t'$ and $ t^{m_j} = 1$ for all $ j \neq i$ by Lemma 28 of \cite{Ste68}.  
     Using the explicit determination of $m_j$'s in Chapter 1, Section 2 of \cite{Mac03}, we see that, except when $\Phi$ is of type $B_n \text{ or } C_n$, there exist  $m_{j_1}$ and $m_{j_2}$, with $j_1, j_2 \neq i$, such that $ \operatorname{gcd}(m_{j_1}, m_{j_2}) = 1$. So $t = 1$ and hence $x = 1$ in all these cases. 
     We will deal with $B_n$ and $C_n$ explicitly. We first observe the following general fact: Let $\bft^{\text{der}}$ be the maximal torus in $\gder$, and $\alpha$ and $\beta$ be roots. Then $X^*(\bft^{\text{der}}) \overset{\alpha^{\vee}} \twoheadrightarrow X^*({\bf{G}}_\alpha \cap \bft^{\text{der}}) \cong \mathbb{Z}$. Moreover,
\begin{equation}\label{characters} X^*({\bf{G}}_\alpha \cap {\bf{G}}_\beta \cap \bft^{\text{der}}) \cong X^*({\bft^{\text{der}})}/(\Ker(\alpha^{\vee}) + \Ker(\beta^{\vee}))
\end{equation}
$ Case \; C_n$:  For type $C_n$, $\alpha_0 = 2\epsilon_1$. So we only need to deal with the case when $\alpha_i = 2\epsilon_n$ where $\epsilon_i$ are as in Pages 7-12 of \cite{Mac03}. Then $\alpha_0^{\vee} = \epsilon_1$ and $\alpha_i^{\vee} = \epsilon_n$. Here $X^*(\bft^{\text{der}}) =\displaystyle{ \oplus_{1 \leq i \leq n}} \mathbb{Z} \epsilon_i$. Clearly, $\Ker(\alpha_0^{\vee}) + \Ker(\alpha_i^{\vee}) =  X^*(\bft^{\text{der}})$. So we see that $G_{\alpha_0} \cap G_{\alpha_i} \cap T$ is trivial by \ref{characters}.\\
$Case \; B_n$:  For this type, we have $\alpha_0 = \epsilon_1 + \epsilon_2$. So we only need to deal with the case when $\alpha_i = \epsilon_1 - \epsilon_2$. Then $\alpha_0^{\vee} = \epsilon_1 + \epsilon_2, \alpha_i^{\vee} = \epsilon_1 - \epsilon_2$. Recall that \[X^*(\bft^{\text{der}}) = \left\{ \frac{1}{2}\sum a_i \epsilon_i | \text{ all } a_i \text{ have same parity}\right\}.\] Then, \[\Ker(\alpha_0^{\vee}) =  \left\{ \frac{1}{2}\sum a_i \epsilon_i | \text{ all } a_i \text{ have same parity and } a_1 = -a_2\right\},\] and \[\Ker(\alpha_i^{\vee}) =  \left\{ \frac{1}{2}\sum a_i \epsilon_i | \text{ all } a_i \text{ have same parity and } a_1 = a_2 \right\}.\] Now it is clear that $\Ker(\alpha_0^{\vee}) + \Ker(\alpha_i^{\vee}) =  X^*(\bft^{\text{der}}).$ So we are done by Equation \eqref{characters}.
\end{proof} 
By the lemma we see that if $w=  s_{i_1}\ldots s_{i_c}$ is a reduced expression for $w$ in $W_S$, then $\tilde{w} = \tilde{s}_{i_1}\ldots  \tilde{s}_{i_c}$, is a representative in $N_G(T)$ that is independent of the reduced expression of $w$.\\
(d)\textbf{ Representatives of $W_a$}: Now, for any element $w$ of $W_a$, let $w= s_{i_1} s_{i_2}\ldots.s_{i_c}\rho_1^{t_1}\ldots \rho_l^{t_l}\mu_1^{r_1}\ldots \mu_k^{r_k}$ be an expression for $w$ with $s_{i_1}\ldots.s_{i_c}$ reduced,  $t_i \in \Z$ and $0 \leq r_j < a_j$. Then, the length of $w$ is $c$. 
Define \[\tilde{w} =  \tilde{s}_{i_1}\ldots  \tilde{s}_{i_c}\tilde{\rho}_1^{t_1}\ldots \tilde{\rho}_l^{t_l}\tilde{\mu}_1^{r_1}\ldots \tilde{\mu}_k^{r_k}.\]
 Suppose $w =  s_{j_1}\ldots.s_{j_c}\rho_1^{e_1}\ldots \rho_l^{e_l}\mu_1^{f_1}\mu_2^{f_2} \ldots \mu_k^{f_k}$ is another reduced expression for $w$ with $0 \leq f_j <a_j$ and $e_i \in \Z$.  Then, since $W_S \cap \Omega ={1}$, we see that $ s_{i_1}\ldots.s_{i_c}=  s_{j_1}\ldots.s_{j_c}$ and $\rho_1^{k_1}\ldots \rho_l^{k_l}\mu_1^{r_1}\ldots \mu_k^{r_k}= \rho_1^{e_1}\ldots \rho_l^{e_l}\mu_1^{f_1}\mu_2^{f_2} \ldots \mu_k^{f_k}$.
  By Lemma \ref{si} we have $\tilde{s}_{i_1}\ldots  \tilde{s}_{i_c} =\tilde{s}_{j_1}\ldots  \tilde{s}_{j_c}$. Also, $t_i = e_i \;\forall\; 1 \leq i \leq l$. Moreover, since $0 \leq r_j, f_j <a_j$, we also have $r_j =f_j = 1 \;\forall\; 1 \leq j \leq k$. 
  Therefore $ \tilde{w} = \tilde{s}_{i_1}\ldots  \tilde{s}_{i_c}  \tilde{ \rho}_1^{t_1}\ldots \tilde{\rho}_l^{t_l}\tilde{\mu}_1^{r_1}\ldots \tilde{\mu}_k^{r_k}$ is well-defined.  
  Let $\tilde{W}_a$ denote this set of representatives of elements of  $W_a$.

\subsection{Generators and relations}
Recall the following proposition.
\begin{proposition}\label{vol}
Let G be a locally compact unimodular group and $ H \subset G$ an open compact subgroup. Normalize the Haar measure $dg$ on $G$ so that $H$ has measure 1. Suppose $g_1, g_2 \in G$, and suppose \[\vol(Hg_1H;dg)\vol(Hg_2H;dg) = \vol(Hg_1g_2H;dg),\] where $\vol(X;dg)$ indicates the Haar measure of the set $X$. Then $f_{g_1} * f_{g_2} = f_{g_1g_2}$.
\end{proposition}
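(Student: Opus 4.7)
The plan is to compute $f_{g_1} * f_{g_2}$ explicitly by decomposing the double coset into single cosets, show that the result is a non-negative integer combination of characteristic functions $f_w$, and then use the volume hypothesis to pin down the combination.

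First I would write a disjoint decomposition into right cosets $Hg_1 H = \coprod_{i=1}^{N} a_i H$, so that $N = \vol(Hg_1H;dg)$. Since $f_{g_2}$ is left $H$-invariant, a direct change of variable in the convolution yields, for every $x \in G$,
\[
(f_{g_1} * f_{g_2})(x) \;=\; \sum_{i=1}^{N} \int_H f_{g_2}(h^{-1}a_i^{-1}x)\,dh \;=\; \sum_{i=1}^{N} f_{g_2}(a_i^{-1}x).
\]
This shows that $f_{g_1}*f_{g_2}$ is $H$-bi-invariant, takes values in $\Z_{\geq 0}$, and is supported on $Hg_1Hg_2H$. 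Integrating gives the expected $L^1$-mass
\[
\int_G (f_{g_1}*f_{g_2})(x)\,dx \;=\; \vol(Hg_1H;dg)\,\vol(Hg_2H;dg).
\]

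Next, expanding $f_{g_1}*f_{g_2} = \sum_{w} c_w f_w$ over a set of double-coset representatives $w$ of $H\backslash Hg_1Hg_2H/H$, with $c_w \in \Z_{\geq 0}$, the previous integral identity becomes
\[
\sum_w c_w \,\vol(HwH;dg) \;=\; \vol(Hg_1H;dg)\,\vol(Hg_2H;dg).
\]
The key observation is that the double coset $Hg_1g_2H$ occurs among the $w$ with a strictly positive coefficient: writing $g_1 = a_{i_0}h_0$ for some $i_0$ and $h_0 \in H$, the formula above gives $(f_{g_1}*f_{g_2})(g_1g_2) = \sum_i f_{g_2}(a_i^{-1}g_1g_2) \geq f_{g_2}(h_0 g_2) = 1$, so $c_{g_1g_2} \geq 1$.

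Combining this inequality with the volume identity $\vol(Hg_1H;dg)\,\vol(Hg_2H;dg) = \vol(Hg_1g_2H;dg)$ from the hypothesis, the term $c_{g_1g_2}\vol(Hg_1g_2H;dg)$ already exhausts the total $L^1$-mass, forcing $c_{g_1g_2}=1$ and $c_w = 0$ for every other double coset. Hence $f_{g_1}*f_{g_2} = f_{g_1g_2}$. I do not anticipate a real obstacle: the only subtle point is the non-negativity and integrality of the coefficients $c_w$, which is guaranteed by the explicit formula in the first step, after which the hypothesis serves purely as a counting constraint.
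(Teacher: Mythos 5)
Your argument is correct, and it is the standard mass-counting proof: compute the $L^1$-norm of the convolution, observe the coefficients in the double-coset expansion are non-negative integers with $c_{g_1g_2}\geq 1$, and use the volume hypothesis to force all the mass onto $Hg_1g_2H$. The paper simply cites Howe (Chapter 3, \S 2 of \cite{How85}) for this fact, and your argument is the one found there, so there is no substantive divergence.
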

\begin{proof} This is proved in Chapter 3,\S 2 of \cite{How85}.
\end{proof}
\noindent We will prove a few lemmas that will help us determine the generators and relations for $\mathscr{H}(G, I_m)$.
\begin{lemma} \label{volW}
For $ \tilde{w} \in \tilde{W}_a$, we have $\vol(I_m\tilde{w}I_m; dg) = q^{l(w)} .$
\end{lemma}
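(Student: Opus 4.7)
The plan is to induct on $l(w)$, reducing to three ingredients: (i) the length-zero case $w \in \Omega$, where $\tilde{\rho}$ normalizes $I_m$; (ii) the single-reflection case $w = s \in S$, where a direct Iwahori-factorization computation gives volume $q$; and (iii) a multiplicativity step $\vol(I_m \tilde{u}\tilde{s}I_m) = q \cdot \vol(I_m \tilde{u}I_m)$ when $l(us) = l(u)+1$. Throughout I use that $\vol(I_m\tilde{w}I_m; dg) = [I_m\tilde{w}I_m : I_m] = [I_m : I_m \cap \tilde{w}^{-1}I_m\tilde{w}]$, so it suffices to compute these indices.

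For (i), write $\rho = (\lambda, x) \in X_*(\bft) \rtimes W$ and use $\tilde{\rho}\,\lu_\gamma(t)\,\tilde{\rho}^{-1} = \lu_{x\gamma}\!\bigl(\pi^{\langle x\gamma, \lambda\rangle} t\bigr)$. The length formula for $W_a$ recalled above forces $\langle \alpha, \lambda\rangle = 0$ when $\alpha \in \Phi^+$ with $x^{-1}\alpha \in \Phi^+$ and $\langle\alpha, \lambda\rangle = -1$ when $\alpha \in \Phi^+$ with $x^{-1}\alpha \in \Phi^-$. Checking each generator $U_{\alpha, \calp^m}$ and $U_{-\alpha,\calp^{m+1}}$ of $I_m$ in these two cases, one finds $\tilde{\rho}$ sends it to another generator of $I_m$; since $\tilde{\rho}$ also normalizes $T_{\calp^m}$, we get $\tilde{\rho}I_m\tilde{\rho}^{-1} = I_m$ and $\vol(I_m\tilde{\rho}I_m) = 1 = q^{l(\rho)}$. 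For (ii) with $s = s_\alpha$, $\alpha \in \Delta$: the Iwahori factorization of $I_m$ together with the fact that $s_\alpha$ permutes $\Phi^+ \setminus \{\alpha\}$ shows that $\tilde{s}_\alpha I_m\tilde{s}_\alpha^{-1}$ agrees with $I_m$ in every root-group direction except $\pm\alpha$, where $U_{\alpha,\calp^m} \leftrightarrow U_{-\alpha,\calp^m}$ and $U_{-\alpha,\calp^{m+1}} \leftrightarrow U_{\alpha,\calp^{m+1}}$. Hence $I_m \cap \tilde{s}_\alpha I_m\tilde{s}_\alpha^{-1}$ differs from $I_m$ only by replacing $U_{\alpha,\calp^m}$ with $U_{\alpha,\calp^{m+1}}$, giving index $[U_{\alpha,\calp^m}:U_{\alpha,\calp^{m+1}}] = q$. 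The affine simple reflections $\tilde{s}_0^{(i)} = \lw_{\alpha_0^{(i)}}(\pi^{-1})$ are handled by the analogous computation, with the affine roots $(\alpha_0^{(i)}, 0)$ and $(-\alpha_0^{(i)}, 1)$ playing the roles of $\alpha$ and $-\alpha$.

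For the inductive step, write $w = us$ with $l(w) = l(u)+1$ and the compatible representative $\tilde{w} = \tilde{u}\tilde{s}$ (available thanks to Lemma \ref{si} and the conventions of Section \ref{reps}). Using the decomposition $I_m\tilde{s}I_m = \bigsqcup_{t} \lu_{-\alpha}(t)\,\tilde{s}\,I_m$, with $t$ running over representatives of $\calp^m/\calp^{m+1}$ (and the analogue for affine reflections), one writes $I_m\tilde{u}I_m \cdot I_m\tilde{s}I_m$ as a union of $q$ sets $I_m\tilde{u}\,\lu_{-\alpha}(t)\,\tilde{s}I_m$. The condition $l(us) = l(u)+1$ is precisely what ensures $\tilde{u}\,\lu_{-\alpha}(t)\,\tilde{u}^{-1} \in I_m$, so each such set equals $I_m\tilde{u}\tilde{s}I_m$; the main obstacle is then the disjointness of the corresponding cosets of $I_m$ inside $I_m\tilde{u}\tilde{s}I_m$, which I would verify by a dimension count using the Iwahori factorization applied to $\tilde{u}\tilde{s}^{-1}$ and the list of positive affine roots sent negative by $(us)^{-1}$. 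This gives $\vol(I_m\tilde{w}I_m) = q \cdot \vol(I_m\tilde{u}I_m) = q^{l(w)}$, completing the induction; Proposition \ref{vol} then yields, as a corollary for later use, the convolution identity $f_{\tilde{w}_1} * f_{\tilde{w}_2} = f_{\tilde{w}_1\tilde{w}_2}$ whenever $l(w_1 w_2) = l(w_1) + l(w_2)$.
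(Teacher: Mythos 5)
Your argument is correct in outline but takes a genuinely different route from the paper's, which does a single direct computation: it writes $\vol(I_m\tilde{w}I_m;dg)=\#\bigl(I_m/(I_m\cap\tilde{w}I_m\tilde{w}^{-1})\bigr)$, conjugates the full Iwahori factorization of $I_m$ by $\tilde{w}=\pi_\lambda\tilde{x}$ all at once, regroups the root subgroups into the sets $\Phi_1$ and $\Phi_2$, and reads off the index as $\prod_{\Phi_1}q^{|\langle\beta,\lambda\rangle|}\prod_{\Phi_2}q^{|1+\langle\beta,\lambda\rangle|}=q^{l(w)}$. You instead induct on length. Your base cases are right: for $\rho\in\Omega$ the length-zero condition forces $a_\alpha(\rho)$ to be $0$ or $-1$ in exactly the way that makes $\tilde{\rho}I_m\tilde{\rho}^{-1}=I_m$, and for a simple reflection the only change in the Iwahori factorization is in the $\pm\alpha$ (resp.\ $\pm\alpha_0^{(i)}$) directions, giving index $q$. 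The crux, as you correctly flag, is the disjointness in the inductive step, and here your proposal is not quite closed: a ``dimension count using the Iwahori factorization applied to $\tilde{u}\tilde{s}^{-1}$ and the list of positive affine roots sent negative by $(us)^{-1}$'' is essentially the paper's one-shot computation re-run inside the induction, so you would not actually be saving that work. A cleaner way to finish your route is to invoke Lemma~\ref{conjI}: two cosets $I_m\tilde{u}\lu_{-\alpha}(\pi^m t)\tilde{s}$ and $I_m\tilde{u}\lu_{-\alpha}(\pi^m t')\tilde{s}$ coincide iff $\Ad\tilde{u}(\lu_{-\alpha}(\pi^m(t-t')))\in I_m$, and the condition $l(us)=l(u)+1$ is, by Lemma~\ref{conjI}(i), exactly $\Ad\tilde{u}(\lu_\alpha(\calo))\subset I$, which says $a_\alpha(u)\geq 0$ (or $\geq 1$) in the two cases and hence forces $t\equiv t'\pmod\calp$. (Lemma~\ref{conjI} appears after \ref{volW} in the paper, but its proof is independent, so there is no circularity.) Both routes are valid; the paper's direct computation is shorter because it never needs to discuss coset disjointness, whereas your induction has the small bonus of producing the convolution identity $f_{\tilde{w}_1}*f_{\tilde{w}_2}=f_{\tilde{w}_1\tilde{w}_2}$ for $l(w_1w_2)=l(w_1)+l(w_2)$ essentially for free along the way.
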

\begin{proof}
Note that $\vol(I_m\tilde{w}I_m; dg) = \#\left(I_m /(I_m \cap \tilde{w}I_m\tilde{w}^{-1})\right). \vol(I_m; dg)$. Hence it suffices to show that  \[ \#\left(I_m /(I_m \cap \tilde{w}I_m\tilde{w}^{-1})\right) = q^{l(w)}.\]  Let $w = ( \lambda, x)$ where $\lambda \in X_*({\bf T})$ and $x \in W$. The Iwahori factorization of $I_m$ gives 
\[I_m = \displaystyle{ \prod_{\alpha \in \Phi^+}U_{\alpha, \calp^m} T_{\calp^m}   \prod_{\alpha \in \Phi^-}U_{\alpha, \calp^{m+1}}} \]
as sets, for any ordering of the right hand side above.  
For each $\alpha \in \Phi$, let $a_\alpha: W_a \rightarrow \mathbb{Z}$ be the function $a_\alpha(w) = \langle x.\alpha, \lambda \rangle$. Then 
\begin{align*}
\tilde{w}I_m\tilde{w}^{-1}  =&\displaystyle{ \prod_{\alpha \in \Phi^+}U_{x. \alpha, \calp^{m + a_\alpha(w)}}  T_{\calp^m}   \prod_{\alpha \in \Phi^+}U_{-x. \alpha, \calp^{m+1 - a_{\alpha}(w)}}} \\
 =& \displaystyle{ \prod_{\alpha \in \Phi^+, x.\alpha \in \Phi^+}U_{x. \alpha, \calp^{m + a_\alpha(w)}}} \displaystyle{ \prod_{\alpha \in \Phi^+, x.\alpha \in \Phi^-}U_{x. \alpha, \calp^{m + a_\alpha(w)}}}  T_{\calp^m} \\
&\displaystyle{  \prod_{\alpha \in \Phi^+, x.\alpha \in \Phi^+}U_{-x. \alpha, \calp^{m+1 - a_{\alpha}(w)}}}  \displaystyle{  \prod_{\alpha \in \Phi^+, x.\alpha \in \Phi^-}U_{-x. \alpha, \calp^{m+1 - a_{\alpha}(w)}}}.
\end{align*}
Setting $x.\alpha = \beta$ in the previous expression, we get
\begin{align*}
\tilde{w}I_m\tilde{w}^{-1} =&  \displaystyle{ \prod_{\beta \in \Phi^+, x^{-1}.\beta \in \Phi^+}U_{\beta, \calp^{m + \langle\beta, \lambda\rangle}}}  \displaystyle{ \prod_{\beta \in \Phi^-, x^{-1}.\beta \in \Phi^+}U_{\beta, \calp^{m + \langle\beta, \lambda\rangle}}}  T_{\calp^m}\\
&  \displaystyle{ \prod_{\beta \in \Phi^+, x^{-1}.\beta \in \Phi^+}U_{-\beta, \calp^{m + 1 -\langle\beta, \lambda\rangle}}}   \displaystyle{ \prod_{\beta \in \Phi^-, x^{-1}.\beta \in \Phi^+}U_{-\beta, \calp^{m + 1 - \langle\beta, \lambda\rangle}}}\\
=&  \displaystyle{ \prod_{\beta \in \Phi^+, x^{-1}.\beta \in \Phi^+}U_{\beta, \calp^{m + \langle\beta, \lambda\rangle}}}  \displaystyle{ \prod_{\beta \in \Phi^+, x^{-1}.\beta \in \Phi^-}U_{-\beta, \calp^{m - \langle\beta, \lambda\rangle}}}  T_{\calp^m}\\
&  \displaystyle{ \prod_{\beta \in \Phi^+, x^{-1}.\beta \in \Phi^+}U_{-\beta, \calp^{m + 1 -\langle\beta, \lambda\rangle}}}   \displaystyle{ \prod_{\beta \in \Phi^+, x^{-1}.\beta \in \Phi^-}U_{\beta, \calp^{m + 1 + \langle\beta, \lambda\rangle}}}\\
 =&  \displaystyle{ \prod_{\beta \in \Phi_1}U_{\beta, \calp^{m + \langle\beta, \lambda\rangle}}}  \displaystyle{ \prod_{\beta \in \Phi_1}U_{-\beta, \calp^{m + 1 -\langle\beta, \lambda\rangle}}}  T_{\calp^m}\\
&   \displaystyle{ \prod_{\beta \in \Phi_2}U_{\beta, \calp^{m + (1 + \langle\beta, \lambda\rangle)}}}  \displaystyle{ \prod_{\beta \in \Phi_2}U_{-\beta, \calp^{m +1 -(1 + \langle\beta, \lambda\rangle)}}} ,
\end{align*}
where $\Phi_1 = \{\alpha \in \Phi^+ \,|\, x^{-1}\alpha >0\}$ and $\Phi_2 = \{\alpha \in \Phi^+ \,|\, x^{-1}\alpha < 0\}$.
Now it is clear that 
\[  \#\left(I_m /(I_m \cap \tilde{w}I_m\tilde{w}^{-1})\right) = \displaystyle{\prod_{\alpha \in \Phi_1}q^{|\langle\beta, \lambda\rangle|}} .  \displaystyle{\prod_{\alpha \in \Phi_2}q^{|1+\langle\beta, \lambda\rangle|}} = q^{l(w)}.\qedhere\]
\end{proof}
\noindent We know that for each $\rho \in \Omega, \rho S \rho^{-1} = S$. We need to understand how the representatives $\tilde{\rho} $ interact with the representatives of the elements of $S$.
\begin{lemma}\label{rhoS}

\begin{enumerate}[(i)]
\item Let $ \rho  \in \{\rho_1, \ldots \rho_l\}$ be a generator of $ \Omega $. Suppose $\rho s_i \rho^{-1} = s_j$.
\begin{enumerate}[(a)]
\item If $s_i$ and $ s_j \in S_1$, then $\tilde{\rho}\tilde{s}_i\tilde{\rho}^{-1} = \tilde{s}_j$.
\item Otherwise, $\tilde{\rho}\tilde{s}_i\tilde{\rho}^{-1} = t_{\rho, \alpha_i}\tilde{s}_j$, where $t_{\rho,\alpha_i} \in T$ is determined by $\rho$ and $\alpha_i$.

\end{enumerate}
\item Let $\mu_e \in \{\mu_1, \ldots \mu_k\}.$ If $\mu_es_i\mu_e^{-1} = s_j$, then
\[\tilde{\mu}_e \tilde{s}_i \tilde{\mu}_e^{a_e-1} =t_{\mu_e, \alpha_i} \tilde{s}_j,\]
where $t_{\mu_e, \alpha_i}$ is an element of order $\leq 2$ in $T$ determined by $\mu_e$ and $\alpha_i$.
\end{enumerate}
\end{lemma}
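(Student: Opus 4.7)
The plan is to prove both parts by unpacking $\rho = (\lambda, x)$ and $\mu_e = (\lambda_e, y_e)$ in $W_a = X_*(\bft)\rtimes W$, so that $\tilde{\rho} = \pi_\lambda\tilde{x}$ and $\tilde{\mu}_e = \pi_{\lambda_e}\tilde{y}_e$, and then computing $\tilde{\rho}\,\tilde{s}_i\,\tilde{\rho}^{-1}$ directly from the Chevalley relations fixed in Section \ref{stdnotations}. Two formulas do all of the work. First, since $\pi_\lambda\in T$,
\[\pi_\lambda \lw_\alpha(t) \pi_\lambda^{-1} = \lw_\alpha\bigl(\pi^{\langle\alpha,\lambda\rangle}t\bigr)\]
for $\alpha\in\Phi$ and $t\in F^{\times}$; this follows from $\pi_\lambda\lu_\alpha(s)\pi_\lambda^{-1} = \lu_\alpha(\pi^{\langle\alpha,\lambda\rangle}s)$ and the definition of $\lw_\alpha$. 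Second, iterating property (3) of the Chevalley basis along a reduced expression of $x$ (equivalently, Lemma 19 of \cite{Ste68}) gives
\[\tilde{x}\,\lw_\alpha(t)\,\tilde{x}^{-1} = \lw_{x(\alpha)}\bigl(c_{x,\alpha}\,t\bigr)\]
for some sign $c_{x,\alpha}\in\{\pm1\}$ depending only on $x$ and $\alpha$.

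The relation $\rho s_i\rho^{-1} = s_j$ in $W_a$ unpacks into $x(\alpha_i) = \pm\alpha_j$ (resp.\ $\pm\alpha_0^{(r)}$ when $s_j = s_0^{(r)}$), together with a constraint on $\lambda$: $\langle\alpha_j,\lambda\rangle = 0$ when $s_j\in S_1$, and $\langle\alpha_0^{(r)},\lambda\rangle = \pm 1$ when $s_j = s_0^{(r)}$ (matching the sign of $x(\alpha_i)$ with respect to $\alpha_0^{(r)}$). In case (i)(a), both $\tilde{s}_i = \lw_{\alpha_i}(1)$ and $\tilde{s}_j = \lw_{\alpha_j}(1)$ have trivial $\pi$-factor, and the two displayed formulas combine to
\[\tilde{\rho}\,\tilde{s}_i\,\tilde{\rho}^{-1} = \pi_\lambda\,\lw_{x(\alpha_i)}(c_{x,\alpha_i})\,\pi_\lambda^{-1} = \lw_{\alpha_j}(c_{x,\alpha_i}),\]
since the $\pi$-exponent dies thanks to $\langle\alpha_j,\lambda\rangle = 0$. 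One then shows $c_{x,\alpha_i} = +1$: the point is that $\rho\in\Omega$ preserves the Iwahori $I$ (property (d) of a generalized Tits system), so conjugation by $\tilde{\rho}$ restricts to an automorphism of $I$ that permutes the root subgroups compatibly with the universally fixed $\Z$-form Chevalley basis, which rules out any residual sign. In case (i)(b), where $s_i$ or $s_j$ is some $s_0^{(r)}$, the representative $\tilde{s}_0^{(r)} = \lw_{\alpha_0^{(r)}}(\pi^{-1})$ brings a factor $\pi^{-1}$ into the computation; carrying through the same two formulas and using the identity $\lw_\alpha(ct) = \alpha^\vee(c)\lw_\alpha(t)$ (itself immediate from the Chevalley basis), one finds $\tilde{\rho}\,\tilde{s}_i\,\tilde{\rho}^{-1} = t_{\rho,\alpha_i}\,\tilde{s}_j$ with $t_{\rho,\alpha_i} = (\alpha_0^{(r)})^\vee(c_{x,\alpha_i})\in T$, a torus element determined by $\rho$ and $\alpha_i$; the sub-case $s_i\in S_2$ is handled symmetrically.

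For part (ii) the key step is
\[\tilde{\mu}_e^{a_e} = (\pi_{\lambda_e}\tilde{y}_e)^{a_e} = \pi_{(1+y_e+\cdots+y_e^{a_e-1})\lambda_e}\,\tilde{y}_e^{a_e} = \tilde{y}_e^{a_e},\]
where $\mu_e^{a_e} = 1$ forces both $y_e^{a_e} = 1$ in $W$ and $(1+y_e+\cdots+y_e^{a_e-1})\lambda_e = 0$. Hence $\tilde{\mu}_e^{a_e}\in T$, and in fact lies in the kernel of the surjection $\langle\tilde{s}_\alpha:\alpha\in\Delta\rangle\twoheadrightarrow W$; by Lemma 19(c) of \cite{Ste68} that kernel is generated inside the abelian group $T$ by the order-two elements $\alpha^\vee(-1)$, so $\tilde{\mu}_e^{a_e}$ is itself of order at most $2$. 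Now
\[\tilde{\mu}_e\,\tilde{s}_i\,\tilde{\mu}_e^{a_e-1} = (\tilde{\mu}_e\,\tilde{s}_i\,\tilde{\mu}_e^{-1})\cdot\tilde{\mu}_e^{a_e} = t'\,\tilde{s}_j\cdot\tilde{y}_e^{a_e} = t'\cdot s_j(\tilde{y}_e^{a_e})\cdot\tilde{s}_j,\]
with $t'$ the torus factor produced by the same computation as in part (i) (itself of order $\leq 2$, being some $\alpha^\vee(\pm 1)$). Therefore $t_{\mu_e,\alpha_i} := t'\cdot s_j(\tilde{y}_e^{a_e})$ is of order at most $2$, as required.

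The main obstacle is the sign tracking in case (i)(a): confirming that the residual sign $c_{x,\alpha_i}$ is $+1$ once the $\pi$-exponent has vanished. This can be pushed through either by a careful induction along a reduced expression for $x$ (using the sign recursion of Lemma 19 of \cite{Ste68}) or, more conceptually, by exploiting the invariance of $I$ under $\rho\in\Omega$ to identify conjugation by $\tilde{\rho}$ with the affine Dynkin diagram automorphism already defined over $\Z$.
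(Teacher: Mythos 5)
Your approach matches the paper's: decompose $\rho = (\lambda,x)$ in $X_*(\bft)\rtimes W$, conjugate in two stages (first by $\tilde x$, then by $\pi_\lambda$), and derive the constraints $x(\alpha_i)=\alpha_j$ (resp.\ $\pm\alpha_0$) and the value of $\langle\alpha_j,\lambda\rangle$ from the length-zero condition on $\rho$. Your two auxiliary conjugation formulas are both correct, and your treatment of part (ii) is a more explicit version of what the paper only sketches as ``similar to (i)''; the observation $\tilde\mu_e^{a_e}=\tilde y_e^{a_e}\in T[2]$ and the bookkeeping around $\tilde\mu_e^{a_e-1}$ are right.

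The one real gap is your argument that $c_{x,\alpha_i}=+1$ in case (i)(a). The ``conceptual'' route via Iwahori invariance does not work: the automorphism $\lu_\alpha(t)\mapsto\lu_{x\alpha}(-t)$ also preserves $I$ (and its Moy--Prasad filtration), so knowing that $\tilde\rho$ normalizes $I$ cannot rule out a residual sign. The sign triviality here is a genuine theorem about the canonical lift along a reduced word: if $x\in W$ carries a simple root $\alpha_i$ to a simple root $\alpha_j$, then $\tilde x\,\lu_{\alpha_i}(t)\,\tilde x^{-1}=\lu_{\alpha_j}(t)$ with no sign. This is exactly what the paper cites (Springer, 9.3.5), and it is your alternative route — the induction along a reduced expression via the sign recursion in Lemma 19 of Steinberg — that actually establishes it. As written, you identify the obstacle correctly but leave it unresolved, leaning on a heuristic that fails; you should either cite the result or carry out the induction.
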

\begin{proof} Let us prove $(i)(a)$. Let $s_i = s_{\alpha_i}, s_j = s_{\alpha_j}, \alpha_i, \alpha_j \in \Delta$.  As explained in Section \ref{reps} (b), we write $\rho = (\lambda, x)$, and  choose $\tilde{\rho} = \pi_\lambda \tilde{x}$. Now, \[(\lambda, x)(0, s_i)(-x^{-1}\lambda, x^{-1}) = (0, s_j) \iff (\lambda - xs_ix^{-1}\lambda, xs_ix^{-1}) = (0, s_j).\] Hence $xs_ix^{-1} = s_j$ and $ \lambda = xs_ix^{-1}\lambda$.\\
$xs_ix^{-1} = s_j \implies x.\alpha_i = \pm \alpha_{j}$. Since $\rho \in \Omega, l(\rho) = 0$. This means that: \begin{align}\label{length}
 \left<\alpha_j, \lambda \right> = 0 &\text{ if }   x^{-1}\alpha_j >0, \nonumber \\
\left<\alpha_j, \lambda \right> = -1  &\text{ if }  x^{-1}\alpha_j <0. 
\end{align}
Since $ \left<\alpha_j, \lambda \right> = \left<\alpha_j, s_j\lambda \right> = \left<s_j \alpha_j, \lambda \right> = \left<- \alpha_j, \lambda \right>$, we have $\left<\alpha_j, \lambda \right> = 0$. Hence $x^{-1} \alpha_j = \alpha_i$, i.e $ x.\alpha_i = \alpha_j$. By 9.3.5 of \cite{Spr09}, we have $ \tilde{x}\lu_{\alpha_i}(t) \tilde{x}^{-1} = \lu_{\alpha_j}(t) \,\forall \,t \in F^\times$. This implies that $ \tilde{x}\lu_{- \alpha_i}(t) \tilde{x}^{-1} = \lu_{- \alpha_j}(t) \,\forall\, t \in F^\times$. Hence,\begin{align*}  \tilde{x}\tilde{s}_i\tilde{x}^{-1}  =  \tilde{x}\lw_{\alpha_i}(1)\tilde{x}^{-1} =  \lw _{\alpha_j}(1) =  \tilde{s}_j. 
\end{align*}
Therefore, \begin{align*} 
\tilde{\rho}\tilde{s}_i\tilde{\rho}^{-1}  & =  \pi_\lambda \tilde{x}\lw_{ \alpha_i}(1) \tilde{x}^{-1} \pi_{\lambda}^{-1} = \pi_{\lambda} \lw_{\alpha_j}(1) \pi_{\lambda}^{-1}  =   \lw_{\alpha_j}(\pi^{\left< \alpha_j, \lambda\right>}) =  \lw_{\alpha_j}(1) =  \tilde{s}_j.
\end{align*}
This finishes $(a)$.\\
To prove $(i) (b)$, we will just deal with the case $\rho s_i \rho^{-1} = s_0^{(j)}$ with $ i \neq 0$. All other cases follow by a similar argument. We drop the superscript and just write $s_0$ and $\alpha_0$ in place of $s_0^{(j)}$ and $\alpha_0^{(j)}$. \\As before,   $(\lambda, x)(0, s_i)(-x^{-1}\lambda, x^{-1}) = (-\alpha_0^{\vee}, s_{\alpha_0})$\begin{align*}
& \implies xs_ix^{-1} = s_{\alpha_0} \text{ and }  \lambda - xs_ix^{-1}. \lambda = -\alpha_0^{\vee} \\
& \implies x. \alpha_i = \pm \alpha_0 \text{ and }  \lambda = s_{\alpha_0}. \lambda - \alpha_0^{\vee}.
\end{align*}
Since $ \left< \alpha_0, \lambda \right> =  \left< \alpha_0, s_{\alpha_0}.\lambda \right> +  \left< \alpha_0, -\alpha_0^{\vee} \right> = - \left< \alpha_0, \lambda \right> - 2 $, we have $  \left< \alpha_0, \lambda \right> = -1$. 
So $ x.\alpha_i = -\alpha_0$ by Equation \eqref{length}. 
Then we see that  $ \tilde{x}\lu_{\alpha_i}(t) \tilde{x}^{-1} = \lu_{-\alpha_0}(\epsilon_{x,\alpha_i}t)$, where $\epsilon_{x,\alpha_i} = \pm 1$ depends only on $x$ and $\alpha_i$. Then $ \tilde{x}\lu_{-\alpha_i}(t) \tilde{x}^{-1} = \lu_{\alpha_0}(\epsilon_{x,\alpha_i}t)$. Hence,
\begin{align*}
\tilde{x}\lw_{\alpha_i}(1) \tilde{x}^{-1} =  \lw_{-\alpha_0}(\epsilon_{x,\alpha_i})=  \lw_{\alpha_0}(-\epsilon_{x,\alpha_i})= \alpha_0^\vee(-\epsilon_{x,\alpha_i})\lw_{\alpha_0}(1).
\end{align*}
Finally,\begin{align*}
\tilde{\rho}\tilde{s}_i\tilde{\rho}^{-1}  =\pi_\lambda \tilde{x}\lw_{\alpha_i}(1)\tilde{x}^{-1}\pi_\lambda^{-1}&= \pi_\lambda \alpha_0^\vee(-\epsilon_{x,\alpha_i})\lw_{\alpha_0}(1)\pi_\lambda^{-1}=  \alpha_0^\vee(-\epsilon_{x,\alpha_i}) \lw_{\alpha_0}(\pi^{\left<\alpha_0, \lambda \right>})\\
& =  \alpha_0^\vee(-\epsilon_{x,\alpha_i}) \lw_{\alpha_0}(\pi^{-1})=   \alpha_0^\vee(-\epsilon_{x,\alpha_i})\tilde{s}_0.
\end{align*}
This completes the proof of $(i)(b)$ with $t_{\rho, \alpha_i} =  \alpha_0^\vee(-\epsilon_{x,\alpha_i})$.

The proof of $(ii)$ is similar to $(i)$ and we omit the details. Note that  $\mu_e^{a_e - 1}  = \mu_e^{-1}$ and hence $\tilde{\mu}_e^{a_e-1}$ is a representative of $\mu_e^{-1}$, which explains the relation among the representatives described in $(ii)$.
\end{proof}

\begin{lemma}\label{rho} For each pair of elements $a ,b \in \{\tilde{\rho}_1, \ldots \tilde{\rho}_l, \tilde{\rho}_1^{-1}, \ldots \tilde{\rho}_l^{-1}, \tilde{\mu}_1, \ldots \tilde{\mu}_k\}$ there exists an element $t_{a,b}$ of order $\leq 2$ in $T$ determined by $ a$ and $b$, such that $a b = t_{a, b} b a$.
\end{lemma}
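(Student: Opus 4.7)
My plan is to reduce the commutator $\tilde a\tilde b(\tilde b\tilde a)^{-1}$ to a pure Weyl-group commutator, and then invoke a Tits-style extension argument to show it is 2-torsion. Using the isomorphism $W_a\cong X_*(\bft)\rtimes W$, write the underlying elements of $\Omega$ corresponding to $\tilde a,\tilde b$ as $a=(\lambda_a,x_a)$ and $b=(\lambda_b,x_b)$. By the recipe in Section \ref{reps}(b), each of the allowed representatives $\tilde\rho_i,\tilde\rho_i^{-1},\tilde\mu_e$ has the uniform form $\pi_\lambda\tilde x$, where $\tilde x$ is a Tits representative of the underlying Weyl element. Since $\Omega$ is abelian, $ab=ba$ in $W_a$, which forces both $x_ax_b=x_bx_a$ in $W$ and $\lambda_a+x_a\lambda_b=\lambda_b+x_b\lambda_a$ in $X_*(\bft)$.

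A direct computation using $\tilde x\pi_\lambda\tilde x^{-1}=\pi_{x\lambda}$ now gives
\[\tilde a\tilde b=\pi_{\lambda_a+x_a\lambda_b}\tilde x_a\tilde x_b,\qquad \tilde b\tilde a=\pi_{\lambda_b+x_b\lambda_a}\tilde x_b\tilde x_a,\]
and the two $\pi$-parts coincide by the relation above. Since any element of $T$ commutes with $\pi_\mu$, the candidate cocycle
\[t_{a,b}:=\tilde a\tilde b(\tilde b\tilde a)^{-1}\]
collapses to the Weyl-group commutator $\tilde x_a\tilde x_b\tilde x_a^{-1}\tilde x_b^{-1}$. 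Because $x_ax_bx_a^{-1}x_b^{-1}=1$ in $W$, this commutator lies in $T$, so $t_{a,b}\in T$ as needed.

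The main step, and the only substantive one, is to show $t_{a,b}^2=1$. For this I would verify that the prescription $s_\alpha\mapsto\tilde s_\alpha T_2$, where $T_2$ denotes the $2$-torsion subgroup of $T$, extends to a well-defined group homomorphism $f\colon W\to N_G(T)/T_2$. The defining relations of $W$ as a Coxeter group are the involutions $s_\alpha^2=1$ and the braid relations: the first holds modulo $T_2$ because $\tilde s_\alpha^2=\lw_\alpha(1)^2=\alpha^\vee(-1)\in T_2$ by a direct computation from the $\phi_\alpha$-formula, while the braid relations already hold on the nose in $N_G(T)$ by the $S_1$-restriction of Lemma \ref{si}. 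Applying $f$ to $x_ax_b=x_bx_a$ gives $\tilde x_a\tilde x_bT_2=\tilde x_b\tilde x_aT_2$, so $t_{a,b}\in T_2$, i.e.\ $t_{a,b}^2=1$ (with normality of $T_2$ in $N_G(T)$ used to pass freely between the two orderings of the commutator). The main obstacle is presentational rather than conceptual: one must check that $\tilde\rho_i^{-1}$ and $\tilde\mu_e$ genuinely fit the template $\pi_\lambda\tilde x$ with $\tilde x$ a Tits representative, which follows from the definitions in Section \ref{reps}(b) and the observation that if $\tilde x$ is a Tits representative of $x$, then $\tilde x^{-1}$ is one for $x^{-1}$, an immediate consequence of Lemma \ref{si}.
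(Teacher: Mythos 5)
Your argument follows the same route as the paper. Both decompose each $\Omega$-representative as $\pi_\lambda\tilde x$, reduce the commutator $\tilde a\tilde b(\tilde b\tilde a)^{-1}$ to the commutator $\tilde x_a\tilde x_b\tilde x_a^{-1}\tilde x_b^{-1}$ of the Tits parts, and then exploit that the Tits section $w\mapsto\tilde w$ is a homomorphism modulo the $2$-torsion $T_2$. The paper quotes Springer 9.3.4 for the $T_2$-valued cocycle $c(x_i,x_j)$ and writes $t_{\rho_i,\rho_j}=c(x_i,x_j)c(x_j,x_i)$; you reconstruct the same fact directly by exhibiting $f\colon W\to N_G(T)/T_2$, which is a self-contained repackaging of the same idea and is fine.

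One correction: your claim that $\tilde x^{-1}$ is a Tits representative of $x^{-1}$, ``an immediate consequence of Lemma \ref{si},'' is false. Already for a simple reflection, $\tilde s_\alpha^{-1}=\lw_\alpha(-1)=\alpha^\vee(-1)\tilde s_\alpha$ differs from the Tits representative $\tilde s_\alpha$ of $s_\alpha^{-1}=s_\alpha$ by the element $\alpha^\vee(-1)$, which is nontrivial whenever $-1\neq 1$ in $F$. This does not damage your proof, because the only property your last step actually uses is that the Weyl part of each representative lies in the correct coset of $T_2$, i.e.\ that $\tilde x_a T_2=f(x_a)$. For $a=\tilde\rho_i^{-1}$, which by Section \ref{reps}(b) has the form $\pi_{-x_i^{-1}\lambda_i}\tilde x_i^{-1}$, this is automatic from $f$ being a homomorphism: $f(x_i^{-1})=f(x_i)^{-1}=\tilde x_i^{-1}T_2$. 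There is no need (and it is not true) for $\tilde x_i^{-1}$ to equal the Tits representative of $x_i^{-1}$ on the nose. Replace your final observation with this remark and the argument is complete.
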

\begin{proof}
We will assume $a = \tilde{\rho}_i$ and $b= \tilde{\rho}_j$. The other cases follow similarly. Write $\rho_i = (\lambda_i, x_i)$ and $\rho_j = (\lambda_j, x_j)$. Since $\rho_i\rho_j = \rho_j \rho_i,$ we have $ (\lambda_i, x_i)(\lambda_j, x_j) = (\lambda_j, x_j)(\lambda_i, x_i) $ which implies $\lambda_i + x_i . \lambda_j = \lambda_j + x_j . \lambda_i, x_i x_j = x_j x_i$. Since   $ \pi_{\lambda_i}\tilde{x_i} \pi_{\lambda_j}\tilde{x_j}^{-1} =  \pi_{\lambda_i + x_i. \lambda_j}$, we have
\begin{align*}
\tilde{\rho}_i \tilde{\rho}_j &= \pi_{\lambda_i}\tilde{x}_i \pi_{\lambda_j}\tilde{x_j}= \pi_{\lambda_i}\tilde{x}_i \pi_{\lambda_j}\tilde{x}_i^{-1}\tilde{x}_i\tilde{x}_j= c(x_i, x_j)  \pi_{\lambda_i + x_i. \lambda_j }\widetilde{x_ix_j}\text{ (By 9.3.4 of \cite{Spr09})}\\
& = c(x_i,x_j)  \pi_{\lambda_j + x_j .\lambda_i}\widetilde{x_j x_i}= c(x_i, x_j)c(x_j,x_i)  \pi_{\lambda_j}\tilde{x_j}\pi_{\lambda_i}\tilde{x}_i = t_{\rho_i,\rho_j} \tilde{\rho}_j\tilde{\rho}_i.
 \end{align*} 
 where $ t_{\rho_i, \rho_j} = c(x_i, x_j)c(x_j,x_i)$ is an element of order $\leq 2$ in $T$ determined by $\rho_i$ and $\rho_j$.
\end{proof}
\begin{lemma}\label{conjI}
Let $w \in W_a$. 
\begin{enumerate}[(i)]
\item  Let $s_i = s_{\alpha_i}$ for some $\alpha_i \in \Delta$. Then $l(ws_i) = l(w) + 1 $ if and only if $\Ad\tilde{w}(\lu_{\alpha_i}(\calo)) \subset I$. 
\item Let $s_0^{(j)} \in S_2$. Then $l(ws_0^{(j)}) = l(w)+1 $ if and only if $\Ad\tilde{w}(\lu_{-\alpha_0^{(j)}}(\calp)) \subset I$.
\end{enumerate}
\end{lemma}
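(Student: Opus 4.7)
The plan is to write $w=(\lambda,x)\in X_\ast(\bft)\rtimes W$ so that $\tilde w=\pi_\lambda\tilde x$ as in Section \ref{reps}, and to show that both the ``algebraic'' condition $\Ad\tilde w(\lu_{\pm\alpha}(\ast))\subset I$ and the ``length'' condition $l(ws)=l(w)+1$ are governed by exactly the same inequality on $\langle x\alpha,\lambda\rangle$.

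For the algebraic side I will compute directly using the Chevalley relation $\tilde x\,\lu_\beta(t)\,\tilde x^{-1}=\lu_{x\beta}(\pm t)$ together with the torus action $\pi_\lambda\,\lu_\gamma(s)\,\pi_\lambda^{-1}=\lu_\gamma(\pi^{\langle\gamma,\lambda\rangle}s)$. This yields
\[\Ad\tilde w\bigl(\lu_{\alpha_i}(\calo)\bigr)=U_{x\alpha_i,\calp^{\langle x\alpha_i,\lambda\rangle}},\qquad \Ad\tilde w\bigl(\lu_{-\alpha_0^{(j)}}(\calp)\bigr)=U_{-x\alpha_0^{(j)},\calp^{1-\langle x\alpha_0^{(j)},\lambda\rangle}}.\]
Since $I\cap\bfu_\gamma(F)$ equals $U_{\gamma,\calo}$ when $\gamma\in\Phi^+$ and $U_{\gamma,\calp}$ when $\gamma\in\Phi^-$, the desired containment in $I$ translates, in case (i), into $\langle x\alpha_i,\lambda\rangle\geq 0$ if $x\alpha_i>0$ and $\langle x\alpha_i,\lambda\rangle\geq 1$ if $x\alpha_i<0$; and in case (ii), into $\langle x\alpha_0^{(j)},\lambda\rangle\leq 0$ if $x\alpha_0^{(j)}>0$ and $\leq 1$ if $x\alpha_0^{(j)}<0$.

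For the length side I apply the explicit formula displayed in Section \ref{HeckeAlgebraIso}. In case (i), $ws_i=(\lambda,xs_{\alpha_i})$ and the translation part is unchanged; since $s_{\alpha_i}$ only flips the sign of $\pm\alpha_i$, the contributions to $l(w)$ and $l(ws_i)$ of all positive roots except $\pm x\alpha_i$ cancel pairwise. The remaining root changes its contribution from $|n|$ to $|n+1|$ (or vice versa) with $n=\langle x\alpha_i,\lambda\rangle$, and a one-line case analysis on the sign of $n$ recovers precisely the inequality obtained above. Case (ii) is analogous in spirit but technically more delicate, because $ws_0^{(j)}=(\lambda-x(\alpha_0^{(j)})^\vee,xs_{\alpha_0^{(j)}})$ modifies the translation part as well, so many terms of the length formula shift simultaneously.

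The cleanest route, which I would take for (ii) (and which also reproves (i) uniformly), is to reinterpret the given length formula as the count of positive affine roots sent to negative by $w^{-1}$, where the set of positive affine roots is $\{(\alpha,k):\alpha\in\Phi^+,k\geq 0\}\cup\{(\alpha,k):\alpha\in\Phi^-,k\geq 1\}$ and $W_a$ acts by $(\beta,k)\mapsto(x\beta,k+\langle x\beta,\lambda\rangle)$. The simple affine root of $s_i$ is $(\alpha_i,0)$ and that of $s_0^{(j)}$ is $(-\alpha_0^{(j)},1)$, and from the standard Coxeter-theoretic identity $l(ws)=l(w)+1\iff w\cdot a_s>0$ the equivalence with the algebraic inequalities above is immediate in both cases. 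The main obstacle I anticipate is the bookkeeping needed to match the explicit sum defining $l(\lambda,x)$ with the affine-root count; I plan to handle this by splitting the sum according to the sign of $\langle\beta,\lambda\rangle$ (resp.\ $\langle\beta,\lambda\rangle+1$) and identifying each term with exactly one positive affine root whose image under $w^{-1}$ is negative.
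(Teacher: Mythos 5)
Your algebraic side is exactly what the paper does: conjugate by $\tilde x$ and then by $\pi_\lambda$, record the index $\langle x\alpha_i,\lambda\rangle$ (resp.\ $1-\langle x\alpha_0^{(j)},\lambda\rangle$) and compare against $I$. Your direct length computation for part (i) is also the paper's argument verbatim (it splits on the sign of $x\alpha_i$ and reduces to $|n+1|-|n|=\pm1$). Where you diverge is part (ii): the paper pushes through the same direct sum manipulation for $s_0^{(j)}$ --- it writes out $l(ws_0^{(j)})$ with the shifted translation part $\lambda - x(\alpha_0^{(j)})^\vee$ and extracts the inequalities $a_{\alpha_0}(w)\leq 0$ (when $x\alpha_0>0$) and $a_{\alpha_0}(w)\leq 1$ (when $x\alpha_0<0$) --- whereas you propose recasting the Iwahori--Matsumoto length formula as the count of positive affine roots made negative and invoking $l(ws)=l(w)+1\iff w\cdot a_s>0$. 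Both routes are valid. The affine-root route is conceptually cleaner and treats $S_1$ and $S_2$ uniformly (for $s_i$ the simple affine root is $(\alpha_i,0)$, for $s_0^{(j)}$ it is $(-\alpha_0^{(j)},1)$, and in both cases $w\cdot a_s$ has level exactly $a_{\alpha_i}(w)$ resp.\ $1-a_{\alpha_0^{(j)}}(w)$), at the price of having to check once that the displayed length formula equals the affine-root count; the paper's route avoids that check and stays entirely within elementary manipulations of the explicit sum.

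Two small cautions before you write this up. First, be consistent about the action convention: with $w\cdot(\beta,k)=(x\beta,\,k+\langle x\beta,\lambda\rangle)$ one gets $l(w)=\#\{a>0:w\cdot a<0\}$, and this is what makes the Coxeter identity $l(ws)=l(w)+1\iff w\cdot a_s>0$ directly usable; your last paragraph slips into counting roots made negative by $w^{-1}$, which gives the same total (since $l(w)=l(w^{-1})$) but is not the set the identity refers to, and mixing the two is a classic source of sign errors. Second, since the lemma is stated for $w$ in the full extended group $W_a=W_S\rtimes\Omega$, you should note that the identity $l(ws)=l(w)+1\iff w\cdot a_s>0$ persists there because $\Omega$ consists of length-zero elements permuting the simple affine roots. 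The bookkeeping you flag as the main obstacle does go through --- splitting each $\gamma\in\Phi^+$ into the families $(\gamma,k),k\geq 0$ and $(-\gamma,m),m\geq 1$ yields contributions $|a_\gamma(w)|$ when $x\gamma>0$ and $|a_\gamma(w)-1|$ when $x\gamma<0$, matching the paper's rewritten form of $l(\lambda,x)$ --- but it does need to be written out.
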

\begin{proof}
Let us prove $(i)$. Let $w = (\lambda, x)$ and recall that $a_{\alpha}(w) = \langle x.\alpha, \lambda \rangle$. With a simple manipulation, we see that
\[l(w) = \displaystyle{\sum_{\alpha \in \Phi^+, x.\alpha \in \Phi^+} |a_{\alpha}(w)| + \sum_{\alpha \in \Phi^-, x.\alpha \in \Phi^+} |a_{\alpha}(w) +1|}\]
Similarly,
\[l(ws_i) = \displaystyle{\sum_{\alpha \in \Phi^+, xs_i.\alpha \in \Phi^+} |a_{\alpha}(ws_i)| + \sum_{\alpha \in \Phi^-, xs_i.\alpha \in \Phi^+} |a_{\alpha}(ws_i) +1|}\]

We have two cases:
\begin{enumerate}[(a)]
\item Suppose $x. \alpha_i \in \Phi^+$. Since $\Ad\tilde{w}(\lu_{\alpha_i}(\calo)) = \lu_{x.\alpha_i}(\calp^{a_{\alpha_i}(w)})$, we need to show that $l(ws_i) = l(w)+1$ if and only if $a_{\alpha_i}(w) \geq 0$. Since $x.\alpha_i \in \Phi^+, \;\; xs_i.\alpha_i \in \Phi^-$. Also $s_i.(\Phi^+ \backslash \{\alpha_i\}) \subset \Phi^+$.
Then
\[l(w) = \displaystyle{\sum_{\alpha \in \Phi^+ \backslash \{\alpha_i\}, x.\alpha \in \Phi^+} |a_{\alpha}(w)| + \sum_{\alpha \in \Phi^- \backslash \{-\alpha_i\}, x.\alpha \in \Phi^+} |a_{\alpha}(w) +1| + |a_{\alpha_i}(w)|}\]
and
\begin{align*}
l(ws_i) &= \displaystyle{\sum_{ \alpha \in \Phi^+ \backslash \{\alpha_i\}, xs_i.\alpha \in \Phi^+} |a_{\alpha}(ws_i)|} + \displaystyle{\sum_{\alpha \in \Phi^- \backslash \{-\alpha_i\}, xs_i.\alpha \in \Phi^+} |a_{\alpha}(ws_i) +1|}  + |a_{-\alpha_i}(ws_i) + 1|\\
& = \displaystyle{\sum_{\beta \in \Phi^+ \backslash \{\alpha_i\}, x.\beta \in \Phi^+} |a_{\beta}(w)| + \sum_{\beta \in \Phi^- \backslash \{-\alpha_i\}, x.\beta \in \Phi^+} |a_{\beta}(w) +1|} + |a_{-\alpha_i}(ws_i) + 1|
\end{align*}
Hence we see that $l(ws_i) = l(w) +1$ if and only if $|a_{-\alpha_i}(ws_i) + 1| = |a_{\alpha_i}(w)| + 1$. Since $ a_{-\alpha_i}(ws_i) = a_{\alpha_i}(w)$, we see that the above holds if and only if $a_{\alpha_i}(w) \geq 0$. 
\item Suppose $x.\alpha_i \in \Phi^-$. We need to show that $l(ws_i) = l(w)+1$ if and only if $a_{\alpha_i}(w) \geq 1$. Then $xs_i.\alpha_i \in \Phi^+$. Proceeding as above, we see that $l(ws_i) = l(w) + 1$ if and only if $ |a_{\alpha_i}(ws_i)| - 1 = |a_{-\alpha_i}(w)  + 1|$. This holds if and only if $ a_{\alpha_i}(w) \geq 1$.
\end{enumerate}
For $(ii)$, the argument is quite similar to $(i)$, hence we will show only the important steps. We drop the superscript and just write $s_0$ in place of $s_0^{(j)}$ and $\alpha_0$ in place of $\alpha_0^{(j)}$.  Note that
\begin{align*}
l(ws_0) = \displaystyle{\sum_{\alpha \in \Phi^+, xs_{\alpha_0}.\alpha \in \Phi^+} }|a_{\alpha}(ws_{\alpha_0}) &+\langle\alpha, \alpha_0^{\vee}\rangle| +\displaystyle{\sum_{\alpha \in \Phi^- , xs_{\alpha_0}.\alpha\in \Phi^+} |a_{\alpha}(ws_{\alpha_0})) +\langle \alpha, \alpha_0^{\vee} \rangle +1|}
\end{align*}
Arguing as before we see that $x.\alpha_0 >0$ if and only if $ a_{\alpha_0}(w) \leq 0$ and   $x.\alpha_0 < 0$ if and only if $ a_{\alpha_0}(w) \leq 1$.  Hence $x.\alpha_0 >0$ if and only if $1- a_{\alpha_0}(w) \geq 1$ and $x.\alpha_0 <0$ if and only if $1- a_{\alpha_0}(w) \geq 0$. Since $\Ad \tilde{w}(\lu_{-\alpha_0}(\calp)) = \lu_{-x.\alpha_0}(\calp^{1-a_{\alpha_0}(w)})$, $(ii)$ follows. 
\end{proof}

\subsection{A presentation for $\fH(G,I_m)$}
We now write down a presentation for the Hecke algebra $\fH(G, I_m)$. The theorem below a generalization of the $\GL_n$ case in Chapter 3, \S{2} of \cite{How85}.
\begin{theorem}\label{presentation} The Hecke algebra $\mathscr{H}(G, I_m)$ is generated by the elements
\begin{enumerate}[(a)]
\item $f_{\tilde{s}_i}, \;s_i \in S$,
\item $f_{\tilde{\rho}_i}, f_{\tilde{\rho}_i^{-1}} , i = 1,2\ldots., l$,
\item $f_{\tilde{\mu}_j}, j = 1, 2 \ldots k$,
\item $f_{b}, b \in I$,
\end{enumerate}
subject to the following relations:
\begin{enumerate}[(A)]
\item For $s_i, s_j$ distinct elements of $S$,
\begin{enumerate}[(i)] 
\item $f_{\tilde{s}_i} * f_{\tilde{s}_j} = f_{\tilde{s}_j} * f_{\tilde{s}_i}  \text{  if  }  m_{ij}=2.$
\item $f_{\tilde{s}_i} * f_{\tilde{s}_j} * f_{\tilde{s}_i} = f_{\tilde{s}_j} *f_{\tilde{s}_i} * f_{\tilde{s}_j} \text{  if  }  m_{ij} = 3.$
\item $(f_{\tilde{s}_i} * f_{\tilde{s}_j})^{2} = (f_{\tilde{s}_j} * f_{\tilde{s}_i})^{2} \text{  if  } m_{ij} = 4.$
\item $(f_{\tilde{s}_i} * f_{\tilde{s}_j})^{3} =( f_{\tilde{s}_j} * f_{\tilde{s}_i})^{3} \text{  if  }  m_{ij} = 6 .$
\item $f_{\tilde{s}_i} * f_{\tilde{s}_i} * f_{\alpha_i^\vee(-1)} = q\left( \displaystyle{\sum_{x }} f_x\right), \text{  } x \in \Ad \tilde{s}_i(I_m). I_m/I_m. $
\end{enumerate}

\item  Let ${\rho}, {\rho}_0 \in \{{\rho}_1, \ldots {\rho}_l, {\rho}_1^{-1}, \ldots {\rho}_l^{-1}, {\mu}_1, \ldots {\mu}_k\}$.\begin{enumerate}[(i)]
\item Suppose $\rho = \mu_e, 1 \leq e \leq k$, then there exists an element $c(\rho)$ of order $\leq 2$ in T, uniquely determined by $\rho$  such that
\[\underbrace{f_{\tilde{\rho}}*f_{\tilde{\rho}}\ldots f_{\tilde{\rho}}}_{a_e \text{ times}} = f_{c(\rho)}.\]
\item
Suppose $\rho \in \{\rho_1, \rho_2, \ldots \rho_l, \rho_1^{-1}, \ldots \rho_l^{-1}\}$ and if $\rho s_i \rho^{-1} = s_j$. Then,
\begin{equation*} f_{\tilde{\rho}} * f_{\tilde{s}_i} * f_{{\tilde{\rho}}^{-1}} =
\begin{cases}
 f_{\tilde{s}_j} & \text{ if }  s_i,s_j \in S_1, \\
 f_{t_{\rho, \alpha_i}} *  f_{\tilde{s}_j} & \text{otherwise},\\
\end{cases}
\end{equation*}
where $t_{\rho, \alpha_i}$ are as in Lemma \ref{rhoS}$(i)$.
\item Suppose $\rho  = \mu_e, 1 \leq e \leq k$, and $\rho s_i \rho^{-1}   = s_j$, then 
\[ f_{\tilde{\rho}} *f_{\tilde{s}_i}* \underbrace{f_{\tilde{\rho}}*f_{\tilde{\rho}}* \ldots f_{\tilde{\rho}} }_{(a_e -1) \text{ times }}=  f_{t_{\rho, \alpha_i}} *  f_{\tilde{s}_j},\]
where $t_{\rho, \alpha_i}$ are as in Lemma \ref{rhoS}$(ii)$.
\item $ f_{\tilde{\rho}} * f_b * f_{{\tilde{\rho}}^{-1}} =  f_{\tilde{\rho} b\tilde{\rho}^{-1}}$ for $ b \in I.$
\item $ f_{\tilde{\rho}} * f_{\tilde{\rho}_0} = f_{t_{\rho,\rho_0}} * f_{\tilde{\rho}_0} * f_{\tilde{\rho}}$ where $t_{\rho, \rho_0}$ are as in Lemma \ref{rho}.
\end{enumerate}
\item \begin{enumerate}[(i)]
\item $f_1$ is the identity element of $\mathscr{H}(G, I_m).$
\item $f_b * f_{b'} = f_{bb'}$, for $ b, b' \in I.$
\item $ f_{\tilde{s}_i} *f_b = f_{\tilde{s}_ib\tilde{s}_i^{-1}} * f_{\tilde{s}_i} \text { for }  b \in I \cap \Ad \tilde{s}_i(I).$
\item For $ x \in \calo^\times, s_i \in S_1, \\f_{\tilde{s}_i} * f_{\lu_{\alpha_i}(x)} *f_{\tilde{s}_i} * f_{\alpha_i^\vee(-1)} =q( f_{\lu_{\alpha_i}(-x^{-1})} * f_{\tilde{s}_i} *  f_{\alpha_i^\vee(x)} *  f_{\lu_{\alpha_i}(-x^{-1})})$.
\item For $x \in \calo^\times$, $s_0  = s_0^{(j)}\in S_2,$ and $\alpha_0 = \alpha_0^{(j)}$,
\begin{align*}
f_{\tilde{s}_0} * f_{\lu_{-\alpha_0}(\pi x)} *f_{\tilde{s}_0} * f_{\alpha_0^\vee(-1)} =q( f_{\lu_{-\alpha_0}(-\pi x^{-1})} * f_{\tilde{s}_0} &*  f_{\alpha_0^\vee(-x^{-1})} *  f_{\lu_{-\alpha_0}(-\pi x^{-1})}).
\end{align*}
\end{enumerate}
\end{enumerate}
\end{theorem}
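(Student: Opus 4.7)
The plan is to verify the theorem in two stages: first, check that each listed relation holds in $\fH(G, I_m)$; second, show that these relations together with the listed generators completely present the Hecke algebra. Most of the relations are length-additive and follow from a common template: whenever $w_1, w_2 \in W_a$ satisfy $l(w_1 w_2) = l(w_1) + l(w_2)$, Lemma \ref{volW} gives $\vol(I_m \tilde{w}_1 I_m) \vol(I_m \tilde{w}_2 I_m) = \vol(I_m \tilde{w}_1 \tilde{w}_2 I_m) = q^{l(w_1)+l(w_2)}$, so $f_{\tilde{w}_1} * f_{\tilde{w}_2} = f_{\tilde{w}_1 \tilde{w}_2}$ by Proposition \ref{vol}. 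Combining this template with Lemmas \ref{si}, \ref{rhoS}, \ref{rho} immediately yields (A)(i)--(iv), (B)(ii)--(iii), and (B)(v). Relation (B)(i) is a Chevalley-basis computation of $\tilde{\mu}_e^{a_e} \in T$, entirely analogous to the intersection-of-rank-one-subgroups argument in Lemma \ref{rho}. Relations (B)(iv) and (C)(i)--(iii) reduce to the normality of $I_m$ in $I$ together with the observation that each $\tilde{\rho}$ normalizes $I_m$ (immediate since $\rho$ has length $0$).

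The remaining quadratic relations (A)(v) and (C)(iv)--(v) require the main computation of the first stage. For (A)(v), observe that $\tilde{s}_i^2 = \alpha_i^\vee(-1)$ (immediate from the definitions in Section \ref{reps}), so $f_{\tilde{s}_i} * f_{\tilde{s}_i} * f_{\alpha_i^\vee(-1)}$ is supported in $I_m \tilde{s}_i I_m \tilde{s}_i I_m$. Using the Iwahori factorization of $I_m$ and Lemma \ref{conjI}, this double-coset product decomposes into exactly $q$ left $I_m$-cosets, indexed by a set of representatives for $\Ad \tilde{s}_i(I_m) \cdot I_m / I_m$. For (C)(iv)--(v), one expands $\tilde{s}_i \lu_{\alpha_i}(x) \tilde{s}_i$ using the $\SL_2$-identity $\lw_\alpha(1) \lu_\alpha(x) \lw_\alpha(1) = \lu_{-\alpha}(-x^{-1})\, \alpha^\vee(x)\, \lu_\alpha(-x^{-1}) \cdot \alpha^\vee(-1)^{-1}$ obtained from $\phi_\alpha$, and then reassembles the result using the same volume-length argument to account for the factor of $q$ on the right-hand side; (C)(v) is analogous with $\alpha_0^{(j)}$ in place of $\alpha_i$, after first commuting $\lu_{-\alpha_0}(\pi x)$ into $I$ using the additional factor of $\pi$.

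For the second stage, generation is straightforward: the decomposition $G = \bigcup_{w \in W_a,\, x,y \in I} I_m x \tilde{w} y I_m$ together with normality of $I_m$ in $I$ and Proposition \ref{vol} yields $f_{x \tilde{w} y} = f_x * f_{\tilde{w}} * f_y$, and a reduced expression of $w$ decomposes $f_{\tilde{w}}$ into a product of the listed generators via the length-additive template. Completeness is the main obstacle: letting $\tilde{\fH}$ denote the abstract algebra with the given presentation and $\pi\colon \tilde{\fH} \twoheadrightarrow \fH(G, I_m)$ the tautological surjection, I aim to exhibit a spanning set of $\tilde{\fH}$ of cardinality equal to that of the natural basis $\{f_g : g \in I_m \backslash G / I_m\}$ of $\fH(G, I_m)$, forcing $\pi$ to be injective. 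The rewriting strategy -- directly modeled on Howe's argument \cite{How85} for $\GL_n$ -- uses (B) and (C)(iii) to push all Weyl-type generators past the $I$-generators, uses (A)(i)--(iv) together with Lemma \ref{si} (Matsumoto's theorem for $W_a$) to normalize products into the chosen representatives $\tilde{w}$, and uses the quadratic relations (A)(v), (C)(iv)--(v) to absorb every length-drop into a sum of terms involving strictly shorter Weyl elements. The delicate point is an induction on $l(w)$: one must verify that the quadratic relations can be applied in a manner consistent with the order in which generators have already been pushed past each other, so that every monomial terminates in a canonical form $f_a * f_{\tilde{w}} * f_b$ without ambiguity.
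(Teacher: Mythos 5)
Your plan matches the paper's proof in structure: verify the relations via the volume/length machinery (Proposition \ref{vol}, Lemma \ref{volW}) together with the lemmas on representatives, then prove completeness by producing a spanning set of the abstract algebra $\mathscr{H}^*$ that is in bijection with $I_m\backslash G/I_m$. Two corrections on the relation checks. First, your displayed $\SL_2$-identity is miswritten (test it on explicit $2\times 2$ matrices): the identity you actually need is Equation \eqref{relation} of the paper, namely $\tilde{s}_i\lu_{\alpha_i}(x)\tilde{s}_i^{-1} = \lu_{-\alpha_i}(-x) = \lu_{\alpha_i}(-x^{-1})\,\tilde{s}_i\,\alpha_i^\vee(x)\,\lu_{\alpha_i}(-x^{-1})$, with a $\lw$-factor still present in the right-hand side; your version has the root sign and Weyl part scrambled. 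Second, a volume count alone does not establish (C)(iv)--(v): after reducing the support of the left side to $I_m\lu_{-\alpha_i}(-x)U_{-\alpha_i,\calp^m}I_m$, one must still prove $I_m\lu_{-\alpha_i}(-x)U_{-\alpha_i,\calp^m}I_m = I_m\lu_{-\alpha_i}(-x)I_m$ (for $x\in\calo^\times$), which the paper does by an explicit factorization of $\lu_{-\alpha_i}(-x+\pi^m y)$ as $A\,\lu_{-\alpha_i}(-x)\,B$ with $A,B$ in the depth-$m$ Iwahori filtration subgroup of $\SL_2(\calo)$. On completeness, your ``delicate point'' about confluence is a worry the paper's argument is designed to sidestep: instead of arguing that every monomial reduces to a unique canonical form, one defines $\hat f_g := f_x * \hat f_{\tilde w} * f_y$ directly for every $g = x\tilde{w}y$ (well-definedness following from (A)(i)--(iv) plus (B)(ii), (C)(i)--(iii)) and then shows the span $\mathcal{J}^*$ of the $\hat f_g$'s is closed under left multiplication by each generator, forcing $\mathcal{J}^* = \mathscr{H}^*$. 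Crucially, in the length-decrease case one writes $w = s_i w_1$ with $l(w_1) = l(w)-1$, applies (A)(v) or (C)(iv)/(v) exactly once, and then observes $l(s_i w_1) = l(w_1)+1$ automatically, so no iterated rewriting ever occurs and no confluence check is needed. Making this explicit will turn your sketch into the paper's argument.
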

\begin{proof}
 As noted earlier, the set $\{f_{x\tilde{w}y} | x,y \in I, \tilde{w} \in \tilde{W}_a\}$ spans the $\mathbb{C}$-space $\mathscr{H}(G, I_m)$. Using Proposition \ref{vol}, we see that $ f_{x\tilde{w}y} = f_x * f_{\tilde{w}} * f_y$. Moreover, if $ w=  s_{i_1}\ldots.s_{i_c}\rho_1^{t_1}\ldots \rho_l^{t_l}\mu_1^{r_1}\ldots \mu_k^{r_k}$ is a reduced expression for $w$ with $t_i \in \Z, 1 \leq i \leq l$ and $0 \leq r_j < a_j, 1 \leq j \leq k$, then $ \tilde{w} = \tilde{s}_{i_1}\ldots  \tilde{s}_{i_c} \tilde{\rho}_1^{t_1}\ldots \tilde{\rho}_l^{t_l}\tilde{\mu}_1^{r_1}\ldots \tilde{\mu}_k^{r_k}$. Combining Proposition \ref{vol} and Lemma \ref{volW}, we see that $ f_{\tilde{w}} = f_{\tilde{s}_{i_1}}*  f_{\tilde{s}_{i_2}}\ldots.* f_{\tilde{s}_{i_c}}* f_{\tilde{\rho}_1}^{t_1} *  f_{\tilde{\rho}_2}^{t_2}\ldots *  f_{\tilde{\rho}_l}^{t_l}*f_{\tilde{\mu}_1}^{r_1}\ldots *  f_{\tilde{\mu}_k}^{r_k}$. Here, we have used the notation
\begin{equation}\label{negativepower} f_{\tilde{\rho}}^k  =
\begin{cases}
 f_{\tilde{\rho}}*f_{\tilde{\rho}}\ldots. *f_{\tilde{\rho}}  \text{ ($k$ times) } & \text{ if }  k \geq 0, \\
 f_{\tilde{\rho}^{-1}}* f_{\tilde{\rho}^{-1}}*\ldots * f_{\tilde{\rho}^{-1}} \text{ ($-k$ times) } & \text{ if } k <0.\\
\end{cases}
\end{equation}
Hence, the generators of the Hecke algebra are as stated in the theorem. 

By Lemma \ref{si}, Proposition \ref{vol}, and Lemma \ref{volW}, it is clear that \textit{(A)(i) - (A)(iv)} hold in $\mathscr{H}(G, I_m)$.
 For \textit{(A)(v)}, first observe that when $s_i \in S_1$, $I_m \Ad\tilde{s}_i(I_m) = \Ad\tilde{s}_i(I_m) I_m = U_{-\alpha_i, \calp^m}I_m$ and this contains $I_m$ as a normal subgroup. Now,  $f_{\tilde{s}_i} * f_{\tilde{s}_i} * f_{\alpha_i^\vee(-1)} = f_{\tilde{s}_i} * f_{\tilde{s}_i^{-1}}$  has its support in  $I_m \Ad\tilde{s}_i(I_m)$. Moreover, $ f_{\tilde{s}_i} * f_{\tilde{s}_i^{-1}}$ is invariant on left and right by $I_m \Ad\tilde{s}_i(I_m)$. Since the RHS of $(A)(v)$ is just the characteristic function of $I_m \Ad\tilde{s}_i(I_m)$, we see that $(A)(v)$ holds up to multiples. The volume of RHS of $(A)(v)$  is 
 \begin{align*} 
 q. \#( I_m \Ad\tilde{s}_i(I_m)/I_m)=q. \# ( U_{-\alpha_i, \calp^m}I_m/I_m) =q. \#( U_{-\alpha_i, \calp^m}/  U_{-\alpha_i, \calp^{m+1})}= q^2.
\end{align*}
 Since the $\vol(f_{\tilde{s}_i}; dg).\vol(f_{\tilde{s}_i^{-1}}; dg) = q^2$, we see that the relation $A(v)$ holds for $s_i \in S_1$.
 When $s_0 = s_0^{(j)}$, we have $I_m \Ad\tilde{s}_0(I_m) = \Ad\tilde{s}_0(I_m) I_m = U_{\alpha_0, \calp^{m-1}}I_m$, and we proceed as above to see that $A(v)$ holds for $s_0^{(j)} \in S_2$.\\
From Proposition \ref{vol}, Lemma \ref{volW} and the fact that $\mu_e$ is of order $a_e$, we see that $B(i)$ holds. By Lemmas \ref{volW}, \ref{rhoS} and \ref{rho}, we see that $(B)(ii), B(iii)$, and $(B)(v)$ hold. $(B)(iv)$ follows from the fact that $\tilde{\rho}, \tilde{\rho}^{-1}$ normalize $I$.\\
Relations $C(i)$-$C(iii)$ clearly hold in $\mathscr{H}(G, I_m)$. Let $ x \in \calo^\times$. Using the relation 
\begin{align*}
\left(\begin{array}{cc}
0 & 1 \\
-1 & 0 \\
\end{array}\right) \left(\begin{array}{cc}
1 & x \\
0 & 1 \\
\end{array}\right) &\left(\begin{array}{cc}
0 & -1 \\
1 & 0 \\
\end{array}\right)  = \left(\begin{array}{cc}
1 &0 \\
-x & 1 \\
\end{array}\right) \\  &= \left(\begin{array}{cc}
1 & -x^{-1} \\
0 & 1 \\
\end{array}\right) \left(\begin{array}{cc}
0 & 1 \\
-1 & 0 \\
\end{array}\right) \left(\begin{array}{cc}
x & 0 \\
0 & x^{-1} \\
\end{array}\right) \left(\begin{array}{cc}
1 & -x^{-1} \\
0 & 1 \\
\end{array}\right)
\end{align*} and applying $\phi_{\alpha_i}$  throughout, we see that
\begin{align}\label{relation}
 \tilde{s}_i \lu_{\alpha_i}(x)\tilde{s}_i^{-1} = \lu_{-\alpha_i}(-x) =  \lu_{\alpha_i}(-x^{-1}) \tilde{s}_i\alpha_i^\vee(x)\lu_{\alpha_i}(-x^{-1})
 \end{align} Now the LHS of $C(iv)$ has its support in $I_m\tilde{s}_iI_m \lu_{\alpha_i}(x)I_m \tilde{s}_i^{-1}I_m$. 
 Using the Iwahori factorization of $I_m$, and noting that $\Ad\tilde{s}_i(U^-\cap I_m) \subset I_m$, we see that\[I_m\tilde{s}_iI_m \lu_{\alpha_i}(x)I_m \tilde{s}_i^{-1}I_m = I_m\tilde{s}_i \lu_{\alpha_i}(x)U_{\alpha_i, \calp^{m}} \tilde{s}_i^{-1}I_m = I_m\lu_{-\alpha_i}(-x)U_{-\alpha_i, \calp^m}I_m.\]
 We need to show that $ I_m \lu_{-\alpha_i}(-x)U_{-\alpha_i, \calp^{m}}I_m = I_m \lu_{-\alpha_i}(-x)I_m$. Since there is only one root involved, we can perform this computation inside $\SL_2$ and then apply $\phi_{\alpha_i}$.  Hence we need to find $A, B$ in the $m$-th Iwahori filtration subgroup of $\SL_2(\calo)$ such that 
\[\left(\begin{array}{cc}
1 & 0 \\
-x+\pi^my & 1 \\
\end{array}\right) =  A \left(\begin{array}{cc}
1 & 0 \\
-x & 1 \\
\end{array}\right) B.\]
The above equality holds with
\[ A = \left(\begin{array}{cc}
1+\pi^my/x & \pi^my/x^2 \\
\pi^{2m}y^{2}/x& 1- \pi^my/x + \pi^{2m}y^2/x^2 \\
\end{array}\right) \text{ and } B= \left(\begin{array}{cc}
1 & -\pi^my/x^2 \\
0 & 1 \\
\end{array}\right)  \] Now, using Equation \eqref{relation} it is clear that $C(iv)$ holds up to multiples. A comparison of volumes shows that  $C(iv)$ holds.
 For $C(v)$, first notice that we have the relation
 \begin{align*}
 &
\left(\begin{array}{cc}
0 & \pi^{-1} \\
-\pi & 0 \\
\end{array}\right) \left(\begin{array}{cc}
1 & 0 \\
\pi x & 1 \\
\end{array}\right) \left(\begin{array}{cc}
0 & -\pi^{-1} \\
\pi & 0 \\
\end{array}\right) = \left(\begin{array}{cc}
1 & -\pi^{-1}x\\
0 & 1 \\
\end{array}\right)  \\& = \left(\begin{array}{cc}
1 & 0 \\
-\pi x^{-1} & 1 \\
\end{array}\right) \nonumber \left(\begin{array}{cc}
0 & \pi^{-1} \\
-\pi & 0 \\
\end{array}\right) \left(\begin{array}{cc}
-x^{-1} & 0 \\
0 & -x \\
\end{array}\right) \left(\begin{array}{cc}
1 & 0 \\
-\pi x^{-1} & 1 \\
\end{array}\right)
\end{align*}
Applying $\phi_{\alpha_0}$ throughout, we have
\begin{equation}\label{relation1} \tilde{s}_0 \lu_{-\alpha_0}(\pi x)\tilde{s}_0^{-1} = \lu_{\alpha_0}(-\pi^{-1}x) =  \lu_{-\alpha_0}(-\pi x^{-1}) \tilde{s}_0\alpha_0^\vee(-x^{-1})\lu_{-\alpha_0}(-\pi x^{-1})
\end{equation}
The LHS of $C(v)$ has its support in \[I_m \tilde{s}_0I_m\lu_{-\alpha_0}(\pi x)I_m \tilde{s}_0^{-1}I_m = I_m U_{\alpha_0, \calp^{m-1}}\lu_{\alpha_0}(-\pi^{-1} x) I_m.\]
 Now proceeding as in the case of $C(iv)$, we see that \[I_m U_{\alpha_0, \calp^{m-1}}\lu_{\alpha_0}(-\pi^{-1} x) I_m = I_m \lu_{\alpha_0}(-\pi^{-1} x) I_m,\] and consequently $C(v)$ holds.
 It remains to see that $(A), (B), (C)$ are the defining set of relations. Let $\mathscr{H}^*$ be the free algebra generated by the elements $(a), (b),(c),$ and  $(d)$, and satisfying the relations $(A), (B),$ and $(C)$ stated in the theorem. We need to show that $\mathscr{H}^* \rightarrow \mathscr{H}(G, I_m)$ is an isomorphism. 
 For $\tilde{w} \in \tilde{W}_a$, write $\tilde{w} =  \tilde{s}_{i_1}\ldots  \tilde{s}_{i_c} \tilde{ \rho}_1^{t_1}\ldots \tilde{\rho}_l^{t_l}\tilde{ \mu_1}^{r_1}\ldots \tilde{\mu}_k^{r_k}$ and define $\hat{f}_{\tilde{w}} = f_{\tilde{s}_{i_1}}* f_{\tilde{s}_{i_2}}*\ldots. f_{\tilde{s}_{i_c}} * f_{\tilde{\rho}_1}^{t_1}* f_{\tilde{\rho}_2}^{t_2}*\ldots f_{\tilde{\rho}_l}^{t_l}*f_{\tilde{\mu}_1}^{r_1}* f_{\tilde{\mu}_2}^{r_2}*\ldots f_{\tilde{\mu}_k}^{r_k}$ (with notation explained in Equation \eqref{negativepower}). 
 By relations $(A)(i)$ - $(A)(iv)$, this is independent of the choice of expression for $\tilde{w}$ (once we fix the Chevalley basis, the uniformizer $\pi$ and the representatives of $\Omega$ as  in Section \ref{reps}(b)  - Check for end of Section \ref{reps} for explanation). For $ g \in G$, write $g = x\tilde{w}y$ with $ x,y \in I,$ and $\tilde{w} \in \tilde{W}_a$. Define $\hat{f}_g =f_x * \hat{f}_{\tilde{w}}*f_y$. We need to show that $\hat{f}_g$ depends only on the coset $I_mgI_m$. Suppose $g = x_1\tilde{w}_1y_1$. Since $ G = \displaystyle{\coprod_{\tilde{w} \in\tilde{W}_a} I\tilde{w}I} $, we see that $ x\tilde{w}y =x_1\tilde{w}ty_1$ for some $t \in {\bf T}(\calo)$. We want 
 \begin{equation}\label{coset}
 f_x *\hat{f}_{\tilde{w}}*f_y =  f_{x_1} *\hat{f}_{\tilde{w}}*f_{ty_1}.
 \end{equation}  Since $yy_1^{-1}t^{-1} = \Ad\tilde{w}^{-1}(x^{-1}x_1)$,  $yy_1^{-1}t^{-1} \in \displaystyle{I \cap \Ad\tilde{w}^{-1}(I)}$. Therefore,
 \begin{align*}
\hat{f}_{\tilde{w}} * f_{yy_1^{-1}t^{-1}}  =  &f_{\tilde{s}_{i_1}}* f_{\tilde{s}_{i_2}}*\ldots. f_{\tilde{s}_{i_c}} * f_{\Ad\tilde{\rho}_1^{k_1}\ldots \tilde{\rho}_l^{k_l}({yy_1^{-1}t^{-1}})} * f_{\tilde{\rho}_1}^{t_1}\\
&* f_{\tilde{\rho}_2}^{t_2}*\ldots f_{\tilde{\rho}_l}^{t_l}*f_{\tilde{\mu}_1}^{r_1}* f_{\tilde{\mu}_2}^{r_2}*\ldots f_{\tilde{\mu}_k}^{r_k}
\end{align*}
using relations $B(ii)$ and $C(i)$. It is easy to check that
\begin{align*}a \in  \Ad (\tilde{s}_{i_1}\ldots  \tilde{s}_{i_c})^{-1} (I) \cap I \implies a \in \Ad \tilde{s}_{i_c}( I) \cap I \text{ and } \Ad\tilde{s}_{i_c}(a) \in I \cap \Ad (\tilde{s}_{i_1}\ldots  \tilde{s}_{i_{c-1}})^{-1} (I).  
\end{align*}
Using the above and relation $C(iii)$ repeatedly, we have 
\begin{align*}
\hat{f}_{\tilde{w}} * f_{yy_1^{-1}t^{-1}}  =& f_{\tilde{s}_{i_1}}* f_{\tilde{s}_{i_2}}*\ldots.  f_{\Ad\tilde{s}_{i_c}\tilde{ \rho}_1^{k_1}\ldots \tilde{\rho}_l^{k_l}\tilde{ \mu}_1^{r_1}\ldots \tilde{\mu}_k^{r_k}({yy_1^{-1}t^{-1}})}\\
& *  f_{\tilde{s}_{i_c}} * f_{\tilde{\rho}_1}^{t_1}* f_{\tilde{\rho}_2}^{t_2}*\ldots f_{\tilde{\rho}_l}^{t_l}*f_{\tilde{\mu}_1}^{r_1}* f_{\tilde{\mu}_2}^{r_2}*\ldots f_{\tilde{\mu}_k}^{r_k}\\
& = f_{\Ad\tilde{w}(yy_1^{-1}t^{-1})} * \hat{f}_{\tilde{w}}\\
& = f_{x^{-1}x_1} * \hat{f}_{\tilde{w}}
\end{align*}
Now it is clear that Equation \eqref{coset} holds using $C(i)$ and $ C(ii)$.
Hence we have a family of elements parametrized by $I_m \backslash G/ I_m$. To finish the proof, we need to show that the span, call it $\mathcal{J}^*$, of these elements is invariant under multiplication by generators. Then $\mathcal{J}^*$ will be a subalgebra of $\mathscr{H}^*$ containing all the generators, and hence $\mathcal{J}^*= \mathscr{H}^*$. \\
By relation $C(ii)$, it is clear that $\mathcal{J}^*$ is invariant under multiplication by $f_b, b \in I$. Using relations $B(i)$ - $B(v)$, $C(ii)$-$C(iii)$,  we see that $\mathcal{J}^*$ is invariant under multiplication by $f_{\tilde{\rho}_i},  f_{\tilde{\rho}_i^{-1}}, i = 1,2,\ldots l$, and $f_{\tilde{\mu}_j}, 1 \leq j \leq k$. 

For $s_i \in S_1$, we have show that $f_{\tilde{s}_i} * f_x * \hat{f}_{\tilde{w}}$ lies in $\mathcal{J}^*$. 
For $x \in I$, write $x = zy$ with $ z \in I \cap\Ad\tilde{s}_i(I), y \in U_{\alpha_i, \calo}$. Then $f_{\tilde{s}_i} * f_x * \hat{f}_{\tilde{w}} = f_{\Ad\tilde{s}_i(z)} * f_{\tilde{s}_i}* f_y *  \hat{f}_{\tilde{w}}$ using relations $C(ii), C(iii)$. \\
\textbf{Case $1_i$}: $l(s_iw) = l(w) +1$. Then $\Ad\tilde{w}^{-1}(y) \in I$ by Lemma $\ref{conjI}(i)$. Hence a repeated application of $C(iii)$ gives $f_y * \hat{f}_{\tilde{w}} = \hat{f}_{\tilde{w}} * f_{\Ad\tilde{w}^{-1}(y)}$ which is an element of $\mathcal{J}^*$. \\
\textbf{Case $2_i$}: $l(s_iw) = l(w) - 1 = c\; (say)$. Set $w_1 = s_iw$. Therefore $l(w) = l(w_1) + 1$. Moreover, we can write $w = s_is_{j_1}\ldots s_{j_c}\rho_1^{t_1}\ldots \rho_l^{t_l}\mu_1^{r_1}\ldots \mu_k^{r_k}$, where $ s_{j_1}\ldots s_{j_c}\rho_1^{t_1}\ldots \rho_l^{t_l}\mu_1^{r_1}\ldots \mu_k^{r_k}$ is the expression for $w_1$.
 Hence we have $\hat{f}_{\tilde{w}} = f_{\tilde{s}_i} * \hat{f}_{\tilde{w_1}}$. So $f_{\tilde{s}_i} * f_x * \hat{f}_{\tilde{w}} = f_{\Ad\tilde{s}_i(z)} * f_{\tilde{s}_i}* f_y * f_{\tilde{s}_i}* \hat{f}_{\tilde{w_1}} $.
 Now, if $y \in U_{\alpha_i, \calp}$, then we may use $C(iii)$ again to move $f_y$ to the left of $f_{\tilde{s}_i}$ and we are left with an expression of the form $f_{\tilde{s}_i} * f_{\tilde{s}_i} * \hat{f}_{\tilde{w_1}} $, and we are done using relation $A(v)$. If $ y \notin  U_{\alpha_i, \calp}$, then $ y = \lu_{\alpha_i}(t)$ with $t \in \calo^\times$.  Using $C(iv)$, we see that 
 \[  f_{\tilde{s}_i}* f_y * f_{\tilde{s}_i}* \hat{f}_{\tilde{w_1}} = q( f_{\lu_{\alpha_i}(-t^{-1})} * f_{\tilde{s}_i} *  f_{\alpha_i^\vee(t)} *  f_{\lu_{\alpha_i}(-t^{-1})}) * f_{\alpha_i^\vee(-1)} * \hat{f}_{\tilde{w_1}}\]
 Since $l(s_iw_1) = l(w_1) + 1$, we proceed as in \textit{Case $1_i$}.
 
Finally, for $s_0 = s_0^{(j)} \in S_2$ (and $\alpha_0 = \alpha_0^{(j)}$),  we have to show that $f_{\tilde{s}_0} * f_x * \hat{f}_{\tilde{w}}$ lies in $\mathcal{J}^*$. Again, for $x \in I$, we write $x = z.y$ for $z \in I \cap\Ad\tilde{s}_0(I)$ and $y \in U_{-\alpha_0, \calp}$. Hence we only have to prove that $f_{\tilde{s}_0} * f_y * \hat{f}_{\tilde{w}}$ lies in $\mathcal{J}^*$.\\
 \textbf{Case $1_0$}: $l(s_0w) = l(w) +1$. Then $\Ad\tilde{w}^{-1}(y) \in I$ by Lemma $\ref{conjI}(ii)$. Hence a repeated application of $C(iii)$ gives $f_y * \hat{f}_{\tilde{w}} = \hat{f}_{\tilde{w}} * f_{\Ad\tilde{w}^{-1}(y)}$ which is an element of $\mathcal{J}^*$. \\
\textbf{Case $2_0$}: $l(s_0w) = l(w) - 1$. Set $w_1 = s_0w$. Therefore $l(w) = l(w_1) + 1$. Moreover, we can find a reduced expression for $w$ as $w = s_0s_{j_1}\ldots s_{j_c}\rho_1^{t_1}\ldots \rho_l^{t_l}\mu_1^{r_1}\ldots \mu_k^{r_k}$, where $ s_{j_1}\ldots s_{j_c}\rho_1^{t_1}\ldots \rho_l^{t_l}\mu_1^{r_1}\ldots \mu_k^{r_k}$ is a reduced expression for $w_1$.
 Hence we have $\hat{f}_{\tilde{w}} = f_{\tilde{s}_0} * \hat{f}_{\tilde{w_1}}$. So $f_{\tilde{s}_0} * f_y * \hat{f}_{\tilde{w}} =  f_{\tilde{s}_0}* f_y * f_{\tilde{s}_0}* \hat{f}_{\tilde{w_1}} $.
 Now, if $y \in U_{-\alpha_0, \calp^2}$, then we may use $C(iii)$ again to move $f_y$ to the left of $f_{\tilde{s}_0}$ and we are left with an expression of the form $f_{\tilde{s}_0} * f_{\tilde{s}_0} * \hat{f}_{\tilde{w_1}} $, and we are done using relation $A(v)$. If $ y \notin  U_{-\alpha_0, \calp^2}$, then $ y = \lu_{-\alpha_0}(\pi t)$ with $t \in \calo^\times$.  Using $C(v)$, we see that 
 \[  f_{\tilde{s}_0}* f_y * f_{\tilde{s}_0}* \hat{f}_{\tilde{w_1}} = q( f_{\lu_{-\alpha_0}(-\pi t^{-1})} * f_{\tilde{s}_0} *  f_{\alpha_0^\vee(-t^{-1})} *  f_{\lu_{-\alpha_0}(-\pi t^{-1})}) * f_{\alpha_0^\vee(-1)} * \hat{f}_{\tilde{w_1}}\]
 Since $l(s_0w_1) = l(w_1) + 1$, we proceed as in \textit{Case $1_0$}.\\
 Hence we have shown that $\mathcal{J}^*$  is invariant under multiplication by all the generators of $\mathscr{H}^*$. Hence, $\mathcal{J}^* = \mathscr{H}^*$.
 \end{proof}

\subsection{A variant of the Kazhdan isomorphism}\label{CLF}
 Assume $F$ and $F'$ are $m$-close. Let $\Lambda : \calo/\calp^{m} \xrightarrow{\cong}\calo'/\calp'^{m}$. When $\Lambda(x \text{ mod } \calp^{m}) = (x' \text{ mod } \calp'^{m})$, we write $x \sim_{\Lambda} x'$.    Recall that for an algebraic group  {\bf H} over $F'$, we let $H'$ =  {\bf H}($F'$).  We then have $G'$, $B'$, $T'$, $U'$ and so on.  Let $I'$ be the Iwahori subgroup of $G'$, that is $I'$ is the inverse image of ${\bf B}(\mathfrak{f}')$ under the map ${\bf G}(\calo') \rightarrow {\bf G}(\mathfrak{f}')$.  Again there is a unique  smooth affine group scheme $\bfi'$ defined over $\calo'$ with generic fiber $\bfg \times _\Z F'$ such that $\bfi(\calo') = I'$. 
Define $I_m':=\Ker(\bfi'(\calo') \rightarrow \bfi'(\calo'/\calp'^m))$. More explicitly, $I_m' = \left\langle U_{\alpha, \calp'^m}, T_{\calp'^m}', U_{-\alpha,\calp'^{m+1}}\,|\, \alpha \in \Phi^+  \right\rangle$. Using the presentation written above, we would like to establish a variant of the Kazhdan isomorphism analogous to Section \ref{Kazvariant}. To proceed, we need to establish that $I/I_m\cong I'/I_m'$ when $F$ and $F'$ are $m$-close. Since the group schemes $\bfi$ and $\bfi'$ are defined over $\calo$ and $\calo'$ respectively (and not over $\Z$), a little bit of care is needed to compare these objects over close local fields.  The author thanks Professor Jiu-Kang Yu for providing the following note on Iwahori group schemes and comparing them over close local fields.
\subsubsection{A note on Iwahori group schemes}\label{IGS}
Let us recall the construction of the group scheme $\bfi$: it is the dilatation of $\bfb \times \mathfrak{f}$ on $\bfg \times \calo$ (see \cite{YuPreprint}).
\subsubsection*{Review of dilatation} Let $\bfx$ be an affine flat scheme of finite type over $\calo$, and $\bfy$ a closed subscheme of $\bfx \times_\calo \mathfrak{f} $. The dilatation of $\bfy$ on $\bfx$ is a unique affine flat $\calo$-scheme $\bfx_0$ representing the functor
\[C \rightarrow\{x \in \bfx(C)| x_\mathfrak{f}:\Spec(C\otimes \mathfrak{f}) \rightarrow \bfx_\mathfrak{f} \text{ factors through } \bfy \hookrightarrow \bfx_\mathfrak{f}\}\]
on the category of $\calo$-algebras. If $A$ is the affine ring of $\bfx$ and $(\pi, f_1, f_2\ldots, f_n)$ is the ideal of $A$ defining $\bfy$, then in fact the affine ring of $\bfx_0$ is simply
\[A[f_1/\pi, f_2/\pi, \ldots, f_n/\pi] \subset A \otimes_\calo F.\]
\subsubsection*{A variant of dilatation} Let $R$ be a Noetherian ring and $T$ an indeterminate. Let $\bfx$ be an affine $R$-scheme of finite type, and let $\bfy$ be a closed subscheme of $\bfx$. We will define a scheme $\bfx_0$ over $R[T]$ analogous to dilatation. More precisely, let $A$ be the affine ring of $\bfx$ and $(f_1, \ldots, f_n)$ be the ideal of $A$ defining $\bfy$. We define $\bfx_0$ to be the affine $R[T]$-scheme with affine ring
\[A[T, f_1/T,\ldots f_n/T] \subset A[T,T^{-1}].\]
Let us call an $R[T]$-algebra $C$ \textit{free of $T$-torsion }if the ideal $\{f \in C: Tf = 0\} = \{0\}$. It is clear that the affine ring of $\bfx_0$ is free of $T$-torsion, and it is uniquely characterized as the representing object of the functor
\[C \rightarrow \{ x \in \bfx(C)| \bar{x}: \Spec(C/TC) \rightarrow \bfx \text{ factors through } \bfy \hookrightarrow \bfx\}\]
on the category of $R[T]$-algebras free of $T$-torsion. We will call $\bfx_0$ the $T$-dilatation of $\bfy$ on $\bfx$. 

There are two important properties of $T$-dilatation for our application. The first is that the formation of $T$-dilatation commutes with products, similar to usual dilatation. 
\begin{proposition} For $i=1,2$, let $\bfx_i$ be affine $R$-schemes of finite type, and let $\bfy_i$ be a closed subscheme of $\bfx_i$. Let $(\bfx_i)_0$ be the $T$-dilatation of $\bfy_i$ on $\bfx_i$. Then the $T$-dilatation of $\bfy_1 \times_R \bfy_2$ on $\bfx_1 \times_R \bfx_2$ is canonically isomorphic to $(\bfx_1)_0 \times_{R[T]} (\bfx_2)_0$.
\end{proposition}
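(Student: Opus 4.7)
The plan is to identify the two schemes by comparing their functors of points on the category of $R[T]$-algebras free of $T$-torsion, invoking the universal property that characterizes $T$-dilatation on precisely that category. Since the universal property only sees $T$-torsion-free test algebras, the preliminary issue is to verify that the fiber product $(\bfx_1)_0 \times_{R[T]} (\bfx_2)_0$ itself lies in that category; otherwise one could not apply the representability description to maps out of it.

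First, I would check the torsion-freeness. Let $A_i$ be the affine ring of $\bfx_i$ with ideal $(f_{i,1},\ldots,f_{i,n_i})$ defining $\bfy_i$. By construction the affine ring of $(\bfx_i)_0$ sits inside $A_i[T,T^{-1}] = A_i \otimes_R R[T,T^{-1}]$. Tensoring over $R[T]$ and using $R[T,T^{-1}] \otimes_{R[T]} R[T,T^{-1}] = R[T,T^{-1}]$, the affine ring of $(\bfx_1)_0 \times_{R[T]} (\bfx_2)_0$ embeds in $(A_1 \otimes_R A_2)[T,T^{-1}]$, which is manifestly $T$-torsion-free; so the fiber product is $T$-torsion-free. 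Symmetrically, the affine ring of the $T$-dilatation of $\bfy_1 \times_R \bfy_2$ on $\bfx_1 \times_R \bfx_2$ sits inside $(A_1\otimes_R A_2)[T,T^{-1}]$ by definition and is $T$-torsion-free as well.

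Next, for any $T$-torsion-free $R[T]$-algebra $C$, I would unwind both functors of points. On one hand, an $R[T]$-morphism $\Spec C \to (\bfx_1)_0 \times_{R[T]} (\bfx_2)_0$ is the same as a pair of morphisms $x_i \colon \Spec C \to (\bfx_i)_0$; by the universal property of $T$-dilatation each $x_i$ corresponds to a morphism $\Spec C \to \bfx_i$ whose reduction modulo $T$ factors through $\bfy_i$. On the other hand, an $R[T]$-morphism $\Spec C \to (\bfx_1 \times_R \bfx_2)_0$ corresponds to a morphism $x\colon \Spec C \to \bfx_1 \times_R \bfx_2$, i.e.\ to a pair $(x_1,x_2)$, such that $\bar{x}\colon \Spec(C/TC) \to \bfx_1 \times_R \bfx_2$ factors through $\bfy_1 \times_R \bfy_2$. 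The elementary observation that a morphism to a product of closed subschemes is exactly a pair of morphisms to each subscheme matches these two sets of data naturally in $C$, and by Yoneda supplies the canonical isomorphism.

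The only subtle point is the torsion-freeness step, since without it the universal property would not be available on the fiber product; once that is handled via the embedding into $(A_1 \otimes_R A_2)[T,T^{-1}]$, the remainder is a direct unwinding of definitions. I do not anticipate any other obstacle.
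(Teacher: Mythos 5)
Your functor-of-points plan is reasonable in outline, and you correctly identify the crux: one must verify that $(\bfx_1)_0 \times_{R[T]} (\bfx_2)_0$ has $T$-torsion-free affine ring before the universal property can be applied to it. However, your argument for that step does not work. Writing $B_i$ for the affine ring of $(\bfx_i)_0$, tensoring the inclusions $B_i \hookrightarrow A_i[T,T^{-1}]$ over $R[T]$ does \emph{not} automatically give an injection $B_1 \otimes_{R[T]} B_2 \to A_1[T,T^{-1}] \otimes_{R[T]} A_2[T,T^{-1}]$, since tensor product is only right exact. Worse, since $A_i[T,T^{-1}] = B_i[T^{-1}]$, the target is precisely $(B_1 \otimes_{R[T]} B_2)[T^{-1}]$, so the map you assert is injective is exactly the localization at $T$, whose kernel is the $T$-power torsion. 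Your ``embedding'' claim is therefore not a route to torsion-freeness; it \emph{is} torsion-freeness, and the argument is circular at precisely the point you flagged as the subtle one.

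The gap is genuine: the conclusion can fail without further hypotheses. Take $R=\Z$, $\bfx_1 = \Spec \Z$ with $\bfy_1 = \Spec(\Z/2)$ cut out by $(2)$, and $\bfx_2 = \bfy_2 = \Spec(\Z/4)$. Then $B_1 = \Z[T,2/T] \cong \Z[T,y]/(Ty-2)$ and $B_2 = (\Z/4)[T]$, so $B_1 \otimes_{\Z[T]} B_2 \cong (\Z/4)[T,y]/(Ty-2)$; here $2y \neq 0$ but $T\cdot 2y = 2(Ty) = 4 = 0$, so $B_1 \otimes_{\Z[T]} B_2$ has $T$-torsion, and $\Spec(B_1\otimes_{\Z[T]}B_2)$ is strictly larger than the $T$-dilatation of $\bfy_1\times_\Z\bfy_2$ on $\bfx_1\times_\Z\bfx_2$, whose affine ring is the $T$-torsion-free $(\Z/4)[T,2/T] \subset (\Z/4)[T,T^{-1}]$. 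What your comparison of functors of points gives unconditionally is a canonical surjection from $B_1\otimes_{R[T]}B_2$ onto the affine ring of the dilatation of the product, namely the quotient by $T$-power torsion, and this is an isomorphism precisely when $B_1\otimes_{R[T]}B_2$ is $T$-torsion-free. A flatness hypothesis --- for instance one of the $B_i$ flat over $R[T]$, so that tensoring $0\to B_2 \xrightarrow{T} B_2$ with $B_1$ stays exact --- repairs the step and is available in the paper's application ($\bfb \hookrightarrow \bfg$ a closed immersion of smooth $\Z$-schemes); but to make the proposal correct you would need to identify and invoke such a hypothesis explicitly, rather than derive torsion-freeness from an embedding that is not there.
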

\begin{Corollary} If $\bfx$ is an affine $R$-group scheme of finite type, and $\bfy$ is a closed $R$-subgroup scheme of $\bfx$, then the $T$-dilatation $\bfx_0$ of $\bfy$ on $\bfx$ is naturally a group scheme over $R[T]$.
\end{Corollary}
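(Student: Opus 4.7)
The plan is to promote the structure morphisms of $\bfx$ to $\bfx_0$ by invoking the universal property of $T$-dilatation, using the proposition to identify iterated products of $\bfx_0$ with $T$-dilatations on iterated products of $\bfx$. Throughout, I implicitly base-change $\bfx$ and $\bfy$ from $R$ to $R[T]$, so that all morphisms land in the category of $R[T]$-schemes.

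First I would construct the multiplication $m_0 \colon \bfx_0 \times_{R[T]} \bfx_0 \to \bfx_0$. The proposition identifies $\bfx_0 \times_{R[T]} \bfx_0$ with the $T$-dilatation of $\bfy \times_R \bfy$ on $\bfx \times_R \bfx$. By the explicit description recalled in the setup, its affine ring sits inside $(A \otimes_R A)[T, T^{-1}]$, hence is free of $T$-torsion, so the universal property of $T$-dilatation (as stated for $T$-torsion-free $R[T]$-algebras) applies. Since $\bfy$ is a closed subgroup scheme, the multiplication $m \colon \bfx \times_R \bfx \to \bfx$ restricts to a morphism $\bfy \times_R \bfy \to \bfy$. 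Composing the structure map $\bfx_0 \times_{R[T]} \bfx_0 \to (\bfx \times_R \bfx)_{R[T]}$ with $m$ gives a morphism to $\bfx_{R[T]}$ whose reduction modulo $T$ factors through $\bfy$; hence it factors uniquely through $\bfx_0$, yielding $m_0$.

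Next I would treat inversion and identity analogously. The inversion $i \colon \bfx \to \bfx$ restricts to $\bfy \to \bfy$, so the composition $\bfx_0 \to \bfx_{R[T]} \to \bfx_{R[T]}$ factors through $\bfx_0$, giving $i_0$. The identity section $e \colon \Spec R \to \bfx$ lands in $\bfy$; its base change $\Spec R[T] \to \bfx_{R[T]}$ reduces mod $T$ to a morphism factoring through $\bfy$, and since $R[T]$ is itself $T$-torsion-free, it lifts uniquely to $e_0 \colon \Spec R[T] \to \bfx_0$.

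Finally, to verify the group axioms (associativity, identity, inverse), I would use the proposition iteratively: $\bfx_0^n := \bfx_0 \times_{R[T]} \cdots \times_{R[T]} \bfx_0$ is the $T$-dilatation of $\bfy^n$ on $\bfx^n$, whose affine ring sits inside $A^{\otimes n}[T, T^{-1}]$ and is therefore $T$-torsion-free. The universal property then says that two morphisms $\bfx_0^n \rightrightarrows \bfx_0$ coincide as soon as their compositions with the monomorphism $\bfx_0 \hookrightarrow \bfx_{R[T]}$ (which is injective on affine rings) coincide. Each axiom follows because the corresponding equality of morphisms holds downstairs in $\bfx$.

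The only real obstacle is the $T$-torsion-freeness needed to apply the universal property to iterated products; this is the reason the proposition on compatibility of $T$-dilatation with products is stated first, and it is precisely what makes the argument go through cleanly. Everything else is routine transport of structure via Yoneda.
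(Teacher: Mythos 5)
Your proof is correct and is exactly the argument the paper has in mind when it writes that the corollary is ``immediate from the definition'' -- namely, the standard category-theoretic fact that a finite-product-preserving construction carries group objects to group objects, fleshed out by noting that the iterated products $\bfx_0^n$ are themselves $T$-dilatations (hence have $T$-torsion-free affine rings) so that the universal property applies. You have correctly identified the $T$-torsion-freeness of the iterated fibre products as the one point that makes the transport of structure go through, which is precisely the role the preceding proposition plays.
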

\noindent The proof of the proposition and corollary are immediate from the definition of $T$-dilatation. 

The next property is the compatibility of $T$-dilatation with usual dilatation.
\begin{proposition}\label{Tdilusualdil}Let $R, \bfx, \bfy$ as above, and  let $\bfx_0$ (with $R[T]$-group scheme structure as in the preceding corollary) be the $T$-dilatation of $\bfy$ on $\bfx$ . Let $\calo$ be a complete discrete valuation ring that is an $R$-algebra, and let $\pi$ be a prime element of $\calo$. Let $\phi: R[T] \rightarrow \calo$ be the unique $R$-algebra morphism sending $T$ to $\pi$. Then $\bfx_0 \times_{R[T],\phi} \calo$ is canonically isomorphic to the dilatation of $\bfy \times_R \calo/\pi \calo$ on $\bfx \times_R \calo$ as group schemes.  
\end{proposition}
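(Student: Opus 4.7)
The strategy is to compare the two $\calo$-schemes through their functors of points: match them on the subcategory of $\pi$-torsion-free (i.e.\ flat) $\calo$-algebras using the universal properties of the two dilatation constructions, and then upgrade this to an isomorphism of schemes by verifying that both sides are themselves $\calo$-flat.

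First I would set up notation. Let $A$ be the affine ring of $\bfx$ and $(f_1, \dots, f_n)$ the ideal defining $\bfy$, so that $\bfx_0$ has affine ring $B := A[T, f_1/T, \dots, f_n/T] \subset A[T, T^{-1}]$. Writing $A' := A \otimes_R \calo$ and $F := \Frac(\calo)$, the classical dilatation $D$ of $\bfy \times_R \calo/\pi\calo$ on $\bfx \times_R \calo$ has affine ring $A'[f_1/\pi, \dots, f_n/\pi] \subset A' \otimes_\calo F$; in particular $D$ is $\calo$-flat by construction. For a $\pi$-torsion-free $\calo$-algebra $C$, viewing $C$ as an $R[T]$-algebra via $\phi$ makes it $T$-torsion-free, so the universal property of the $T$-dilatation recalled just above the proposition gives
\[ (\bfx_0 \times_{R[T], \phi} \calo)(C) \;=\; \bfx_0(C) \;=\; \{\, x \in \bfx(C) : \bar x \colon \Spec(C/\pi C) \to \bfx \text{ factors through } \bfy \,\}. \]
The universal property of the classical dilatation $D$ produces exactly the same set on flat $\calo$-algebras, yielding a natural isomorphism of functors on this subcategory.

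It then remains to show that $\bfx_0 \times_{R[T],\phi} \calo$ is itself $\calo$-flat; combined with the functorial matching and a Yoneda-type argument, this upgrades the natural isomorphism to a canonical isomorphism of $\calo$-schemes, which is automatically an isomorphism of group schemes (when $\bfy$ is a closed subgroup scheme) by the preceding corollary. To establish the flatness I would analyze the natural map
\[ B \otimes_{R[T], \phi} \calo \;\longrightarrow\; A[T, T^{-1}] \otimes_{R[T], \phi} \calo \;=\; A \otimes_R F \;=\; A' \otimes_\calo F, \]
observing that its image coincides with the $A'$-subalgebra generated by $\pi$ and the elements $f_i/\pi$, i.e.\ with the affine ring of $D$; injectivity of this map is equivalent to the required $\pi$-torsion-freeness of $B \otimes_{R[T], \phi} \calo$, and should follow from the fact that $B$ sits inside the localization $A[T, T^{-1}]$ where $T$ is invertible, together with flatness properties inherited from this ambient ring. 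The verification of this injectivity is the main obstacle in the proof; once it is in hand, both sides are flat $\calo$-schemes representing the same functor on flat $\calo$-algebras, and the proposition follows.
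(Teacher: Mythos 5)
Your functorial reformulation is a reasonable way to organize the argument, and you correctly isolate the crux: whether $B \otimes_{R[T],\phi}\calo$ is $\pi$-torsion-free. But the justification you offer — that this ``should follow from \dots flatness properties inherited from this ambient ring'' — does not actually work. A submodule of a flat module need not be flat, and the ring map $R[T] \to \calo$ (sending $T \mapsto \pi$) is \emph{not} flat even in the simplest cases (e.g.\ $\Z[T] \to \Z_p$, $T \mapsto p$, is a quotient by a nonzerodivisor followed by a flat map). So the injectivity you need is a genuine open step, not a routine consequence. The paper's own proof is the single sentence ``this follows from the explicit description of the affine rings,'' which likewise does not engage with this point; in that sense your write-up is more honest than the source, since you at least name the obstacle, but neither argument actually closes it.

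To see that the issue is real and not a formality, note that without further hypotheses on $\bfy$ the claimed isomorphism can fail: take $R = \Z$, $\bfx = \bG_a = \Spec\Z[x]$, $\bfy = V(px)$ (a closed $\Z$-subgroup scheme of $\bG_a$), $\calo = \Z_p$, $\pi = p$. Then $\bfx_0$ has affine ring $\Z[T,x,u]/(Tu-px)$, whose base change along $T \mapsto p$ is $\Z_p[x,u]/\bigl(p(u-x)\bigr)$, which has $p$-torsion; while the classical dilatation of $\bfy\times_\Z\F_p = \A^1_{\F_p}$ on $\A^1_{\Z_p}$ is just $\Spec\Z_p[x]$. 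The two agree on $\pi$-torsion-free $\calo$-algebras (exactly as your functor-of-points argument predicts) but are not isomorphic as schemes. The culprit is that $\bfy$ is not flat over $R$. In the paper's application ($\bfy = \bfb \hookrightarrow \bfx = \bfg$, both smooth over $\Z$, so $\bfy \hookrightarrow \bfx$ is a regular closed immersion) one can in fact verify the required torsion-freeness, e.g.\ in local coordinates where $\bfb$ is cut out by a subset of a regular system of parameters, and then your argument goes through. A complete proof must invoke a hypothesis of this kind; as written, both your proposal and the paper's one-line proof leave this essential input unaddressed.
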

\noindent This follows from the explicit description of the affine rings of dilatation and $T$-dilatation.
\subsubsection*{Comparing Iwahori group schemes over close local fields}
Let $F$ and $F'$ be non-archimedean local fields that are $m$-close. Let $\bfi/\calo$ and $\bfi'/\calo'$ be the Iwahori group schemes associated to the standard Iwahori subgroups as before. 
\begin{theorem} The isomorphism $\Lambda:\calo/\calp^m \rightarrow \calo'/\calp'^m,\;\pi\mod \calp^m \rightarrow \pi' \mod \calp'^m$, induces a canonical isomorphism
\[ \bfi \times_\calo (\calo/\pi^m \calo) \times_\Lambda (\calo'/\pi'^m\calo') \rightarrow \bfi' \times_{\calo'} (\calo'/\pi'^m\calo')\]
as group schemes.
\end{theorem}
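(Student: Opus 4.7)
The plan is to construct both Iwahori group schemes as specializations of a single group scheme defined over $\Z[T]$, using the $T$-dilatation machinery developed just above. Since $\bfg$ is a split reductive group scheme over $\Z$ with Borel subgroup $\bfb$ also defined over $\Z$, I would take the $T$-dilatation $\bfg_0$ of $\bfb$ on $\bfg$ with $R=\Z$; by the corollary above, $\bfg_0$ is naturally an affine group scheme over $\Z[T]$.

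Next, applying Proposition \ref{Tdilusualdil} to the $\Z$-algebra $\calo$ with its prime element $\pi$ shows that the base change of $\bfg_0$ along $\phi:\Z[T]\to\calo$, $T\mapsto\pi$, is canonically isomorphic to the usual dilatation of $\bfb \times_\Z (\calo/\pi\calo) = \bfb \times_\Z \mathfrak{f}$ on $\bfg \times_\Z \calo$, which is $\bfi$ by the construction recalled above. The same statement applied to $\calo'$ with prime $\pi'$ yields a canonical isomorphism $\bfg_0 \times_{\Z[T],\, T\mapsto\pi'}\calo' \cong \bfi'$. Thus both Iwahori group schemes arise as specializations of the single $\Z[T]$-group scheme $\bfg_0$.

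Finally, I would reduce modulo the $m$-th powers of $\pi$ and $\pi'$ and compare via $\Lambda$. By transitivity of base change,
\[\bfi \times_\calo (\calo/\pi^m\calo) \;\cong\; \bfg_0 \times_{\Z[T]} (\calo/\pi^m\calo),\]
where $\calo/\pi^m\calo$ carries the $\Z[T]$-algebra structure $T \mapsto \pi \bmod \pi^m$. Since $\Lambda$ is a ring isomorphism sending $\pi \bmod \pi^m$ to $\pi' \bmod \pi'^m$, it is in fact an isomorphism of $\Z[T]$-algebras when both sides are given these structures. Base changing along $\Lambda$ therefore produces
\[\bfi \times_\calo (\calo/\pi^m\calo) \times_\Lambda (\calo'/\pi'^m\calo') \;\cong\; \bfg_0 \times_{\Z[T]} (\calo'/\pi'^m\calo') \;\cong\; \bfi' \times_{\calo'} (\calo'/\pi'^m\calo'),\]
the last identification being the analogous description of $\bfi'$. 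Composing these isomorphisms gives the desired canonical isomorphism of group schemes. The substantive input is entirely Proposition \ref{Tdilusualdil} and its corollary; the only remaining point is the naturality observation that $\Lambda$ intertwines the two $\Z[T]$-algebra structures, which is exactly the hypothesis that $\Lambda$ sends $\pi$ to $\pi'$ modulo the relevant ideals. No serious obstacle arises beyond this unwinding.
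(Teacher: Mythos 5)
Your proof is correct and is essentially the same as the paper's: you build the single $\Z[T]$-group scheme by $T$-dilatation of $\bfb$ on $\bfg$, identify $\bfi$ and $\bfi'$ as its base changes along $T\mapsto\pi$ and $T\mapsto\pi'$ via Proposition \ref{Tdilusualdil}, and observe that $\Lambda$ intertwines the induced $\Z[T]$-algebra structures on $\calo/\pi^m\calo$ and $\calo'/\pi'^m\calo'$, which the paper phrases as the two compositions $\Z[T]\to\calo'/\pi'^m\calo'$ agreeing.
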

\begin{proof} Let $\bfj$ be the $\Z[T]$-scheme which is the $T$-dilatation of $\bfb$ on $\bfg$. Let $\phi$ be as in Proposition \ref{Tdilusualdil}, and define $\phi'$ analogously. Then, $\bfi$ is canonically isomorphic to $\bfj \times_\phi \calo$. Therefore the LHS is canonically isomorphic to $\bfj \times_\eta (\calo'/\pi'^m\calo')$ where $\eta$ is the composition
\[\Z[T] \xrightarrow{\phi}\calo \rightarrow \calo/\pi^m\calo \xrightarrow{\Lambda} \calo'/\pi'^m\calo'.\]
But clearly, $\eta$ is identical to the composition $\Z[T] \xrightarrow{\phi'} \calo' \rightarrow \calo'/\pi'^m\calo'$ and the theorem follows.
\end{proof}
\begin{Corollary} The isomorphism $\Lambda$ induces a canonical isomorphism of groups
\[\bfi(\calo/\pi^m\calo) \rightarrow \bfi'(\calo'/\pi'^m\calo').\]

\end{Corollary}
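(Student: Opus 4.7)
The plan is to deduce the corollary by simply taking $(\calo'/\pi'^m\calo')$-valued points of the isomorphism of group schemes established in the preceding theorem. The right-hand side of that isomorphism directly yields the group $\bfi'(\calo'/\pi'^m\calo')$ on the nose. For the left-hand side, the standard adjunction for base change says that for any $\calo$-scheme $\bfx$ and any ring map $\calo \to S$, one has $(\bfx \times_\calo S)(S) = \Hom_\calo(\Spec S, \bfx)$. Applying this in two steps (first along $\calo \to \calo/\pi^m\calo$, then along $\Lambda: \calo/\pi^m\calo \to \calo'/\pi'^m\calo'$) and using that $\Lambda$ is itself an isomorphism to identify $\Spec \calo'/\pi'^m\calo' \cong \Spec \calo/\pi^m\calo$, I obtain
\[
\bigl(\bfi \times_\calo (\calo/\pi^m\calo) \times_\Lambda (\calo'/\pi'^m\calo')\bigr)(\calo'/\pi'^m\calo') \;\cong\; \bfi(\calo/\pi^m\calo).
\]

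Because the isomorphism of the theorem is a morphism of group schemes over $\calo'/\pi'^m\calo'$, the induced bijection on $(\calo'/\pi'^m\calo')$-points is automatically a homomorphism of groups, hence an isomorphism of groups. Combining this with the identification above gives the claimed canonical isomorphism $\bfi(\calo/\pi^m\calo) \to \bfi'(\calo'/\pi'^m\calo')$, and the canonicity is inherited from that of the theorem.

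I do not expect any genuine obstacle: the entire content is formal, reducing to the functoriality of the ``take points'' functor applied to a group scheme morphism, together with the preceding theorem (whose proof carried the real geometric input via $T$-dilatation). The only mild care needed is to observe that the two base changes involved in the left-hand side can be collapsed by using that $\Lambda$ is an isomorphism, so that the resulting Hom-set of $\calo$-scheme morphisms $\Spec \calo/\pi^m\calo \to \bfi$ is exactly $\bfi(\calo/\pi^m\calo)$ by definition.
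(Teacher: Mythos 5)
Your argument is correct and is exactly the formal deduction the paper leaves implicit: take $(\calo'/\pi'^m\calo')$-points of the group scheme isomorphism from the preceding theorem and identify the left-hand side with $\bfi(\calo/\pi^m\calo)$ via the base-change adjunction together with the fact that $\Lambda$ is a ring isomorphism. The paper supplies no written proof for the corollary precisely because it follows by this routine functoriality, so no comparison of routes is needed.
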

\subsubsection{Hecke algebra isomorphism over close local fields} We now generalize the isomorphism in Section \ref{Kazvariant} (a) to split reductive groups.  Let $F$ and $F'$ be $m$-close. Let  $\beta: I/I_m \xrightarrow{\cong} I'/I_m'$  obtained using the above corollary. 
For each $b \in I$, let $b' \in I' $ be such that $b' \mod I_m' = \beta (b \mod I_m)$, and we write $b \sim_{\beta}b'$. Fixing the Chevalley basis as in Section \ref{stdnotations}, note that if we fix an expression of $b \in I$ as \begin{align}\label{betacloselocalfields}
&b = \displaystyle{\left(\prod_{\alpha \in \Phi^+} \lu_{\alpha}(x_\alpha)\right)\;t\;\left(\prod_{\alpha \in \Phi^+} \lu_{-\alpha}(\pi x_{-\alpha})\right)}, \text{ then }\\\nonumber
&b' = \displaystyle{\left(\prod_{\alpha \in \Phi^+} \lu_{\alpha}(x_\alpha')\right)\;t'\;\left(\prod_{\alpha \in \Phi^+} \lu_{-\alpha}(\pi' x_{-\alpha}')\right)}
\end{align}
is an element of $I'$ with $x_\alpha \sim_{\Lambda} x_\alpha'$, $\pi x_{-\alpha} \sim_{\Lambda} \pi' x_{-\alpha}'$, and $t'$  is obtained from $t$ using the isomorphism ${\bf T}(\calo/\calp^m) \xrightarrow{\cong} {\bf T}(\calo'/\calp'^m)$. Fix a uniformizer $\pi'$ of $F'$ such that $\pi \sim_\Lambda \pi'$. 
 We choose representatives of $W_a$ to $N_{G'}(T')$ exactly as in Section \ref{reps} using the same Chevalley basis, the same isomorphism fixed in \ref{reps}(b), and the uniformizer $\pi'$. We denote this set by $\tilde{W}_a'$.  Hence we have elements 
\begin{itemize}
\item $\tilde{s}_i', s_i \in S$
\item $\tilde{\rho}_i', \tilde{\rho}_i'^{-1}, i = 1,2\ldots l$
\item $\tilde{\mu}_j', 1 \leq j \leq k$
\end{itemize}
Let $ a: \tilde{W}_a \rightarrow \tilde{W}_a'$ be this bijection. So $a(\tilde{s}_i) = \tilde{s}_i', a(\tilde{\rho}_i^{\pm 1} )= \tilde{\rho}_i'^{\pm1}$,  and so on. We assume that the measures $dg$ and $dg'$ on $G$ and $G'$ respectively satisfy $\vol(I_m, dg) = \vol(I_m', dg') = 1$. 
\begin{theorem}\label{closealgebra} There exists a unique algebra isomorphism $\zeta_m: \mathscr{H}(G, I_m) \rightarrow \mathscr{H}(G', I_m')$ satisfying \begin{enumerate}
\item $ \zeta_m(f_{\tilde{s}_i}) = f_{\tilde{s}_i'}\; (s_i \in S), $
\item $\zeta_m(f_{\tilde{\rho}_i^{\pm 1}} )= f_{\tilde{\rho}_i'^{\pm1}} \; (i = 1,2\ldots.l), $
\item $\zeta_m(f_{\tilde{\mu}_j} )= f_{\tilde{\mu}_j'} \; (j = 1,2\ldots.k),$
\item $ \zeta_m(f_b) = f_{b'}$ where $ b \in I$ and $b \sim_\beta b'$. 
\end{enumerate}
\end{theorem}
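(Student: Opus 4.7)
The plan is to invoke the presentation of $\mathscr{H}(G,I_m)$ from Theorem \ref{presentation}. Both $\mathscr{H}(G,I_m)$ and $\mathscr{H}(G',I_m')$ admit presentations of exactly the same shape, with generators indexed by $S$, by $\{\rho_i^{\pm 1}\}, \{\mu_j\}$, and by $I/I_m$ (respectively $I'/I_m'$), and with the identical list of relations (A), (B), (C) (only reinterpreted over $F'$). I will define $\zeta_m$ on the generators by the formulas (1)--(4) of the theorem, verify that the images satisfy the analogous relations in $\mathscr{H}(G',I_m')$, and conclude by the universal property of the presentation. Uniqueness is automatic because the listed elements generate $\mathscr{H}(G,I_m)$ as a $\C$-algebra, and a symmetric construction with the roles of $F$ and $F'$ swapped produces $\zeta_m^{-1}$.

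The verification rests on three compatibility points. First, the isomorphism $\calo/\calp \cong \calo'/\calp'$ gives $q = q'$, so the scalar $q$ appearing in (A)(v) and (C)(iv)--(v) is the same on both sides. Second, the Chevalley basis $\{\lu_\alpha\}$, together with its universal integer structure constants $c_{\alpha,\beta,i,j}$ and signs $\epsilon_{\alpha,\beta}$, and the chosen decomposition of $\Omega$ have been fixed once for both fields, so that the representatives $\tilde W_a$ and $\tilde W_a'$ are built from the same combinatorial data (with $\pi$ replaced by $\pi' \sim_\Lambda \pi$). Inspection of the proofs of Lemmas \ref{rhoS} and \ref{rho} shows that the torus elements appearing in relations (B)---namely $c(\rho)$, $t_{\rho,\alpha_i}$, and $t_{\rho,\rho_0}$---are of order at most $2$ and are determined by universal formulas in the Chevalley data (values of $\alpha^\vee$ at $\pm 1$ and products of $\epsilon$-signs). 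Such elements lie in $\bft(\Z)$ inside both $T$ and $T'$, hence correspond trivially under $\beta$. Third, $\beta: I/I_m \to I'/I_m'$ respects the Iwahori factorization via (\ref{betacloselocalfields}), so that $\lu_\alpha(x)$ maps to $\lu_\alpha(x')$ for $x \sim_\Lambda x'$, and similarly on torus and negative-root factors.

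With these observations the verification becomes mechanical. Relations (A)(i)--(iv) involve only the braid relations among the $\tilde s_i$, which depend only on Coxeter data; by Lemma \ref{si} they hold identically over $F$ and $F'$. Relation (A)(v) is preserved because $\beta$ carries the finite coset set $\Ad\tilde s_i(I_m) I_m / I_m$ bijectively onto its primed counterpart and $q = q'$. Relations (B)(i)--(iii), (v) are transported using the second compatibility point above together with the $\beta$-equivariance of the $\tilde\rho$ and $\tilde\mu$ representatives; (B)(iv) is direct from $\tilde\rho b \tilde\rho^{-1} \sim_\beta \tilde\rho' b' \tilde\rho'^{-1}$, verified on the Iwahori factorization. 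Relations (C)(i)--(iii) are immediate from $\beta$ being a group isomorphism, and (C)(iv)--(v) follow from the explicit $\SL_2$ computations used in the proof of Theorem \ref{presentation}, now carried out with $\pi \sim_\Lambda \pi'$ and $x \sim_\Lambda x'$. The main obstacle is the bookkeeping in (B): one must confirm that the auxiliary torus elements $c(\rho), t_{\rho,\alpha_i}, t_{\rho,\rho_0}$ genuinely correspond under $\beta$, but this reduces, as noted, to the observation that they are constructed from universal Chevalley data and thus take identical values in $\bft(\Z) \subset \bft(\calo) \cap \bft(\calo')$. Once this is checked, $\zeta_m$ is a well-defined algebra homomorphism, and the symmetric construction gives its inverse.
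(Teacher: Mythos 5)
Your proposal is correct and takes a closely related but genuinely different route from the paper. The paper extends $\zeta_m$ explicitly to the natural basis $\{f_g : g \in I_m\backslash G/I_m\}$ by the formula $\zeta_m(f_g) = f'_{b_1'\,a(\tilde w)\,b_2'}$ (where $g = b_1\tilde w b_2$), and the bulk of the proof is a stabilizer computation — that $\beta\times\beta$ carries $\Gamma_{\tilde w} \subset I/I_m\times I/I_m$ onto $\Gamma_{\tilde w'}$ — which shows this formula is independent of the chosen decomposition and that the double cosets are in bijection, after which preservation of the relations (A)--(C) is dispatched with an ``it is clear.'' You instead work purely with the universal property of the presentation from Theorem \ref{presentation}: define $\zeta_m$ on generators, verify the relations in the target, obtain the homomorphism from the universal property, and produce $\zeta_m^{-1}$ by symmetry; uniqueness is then automatic since the generators generate. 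This bypasses the stabilizer argument entirely, and your verification of the relations — in particular the observation that the auxiliary torus constants $c(\rho)$, $t_{\rho,\alpha_i}$, $t_{\rho,\rho_0}$ are universal expressions in Chevalley data valued in $\bft(\Z)$, hence match under $\beta$ because $\Lambda$ is a ring homomorphism — makes precise what the paper leaves terse. What you give up is the paper's explicit basis-to-basis description, which is invoked directly in the proof of Corollary \ref{proofKazConj}; in your set-up that formula follows from the established homomorphism together with Proposition \ref{vol} and $q = q'$, but would need to be stated before being used downstream. Both arguments rest on the same foundational input (Theorem \ref{presentation}), so the difference is one of organization rather than of ideas, but yours is the more economical of the two.
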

\begin{proof} By Theorem \ref{presentation}, we know that $\mathscr{H}(G, I_m)$ is generated by the elements $f_{\tilde{s}_i}, \; i = 0,1,\ldots n$, $f_{\tilde{\rho}_i^{\pm1}},\; i = 1,2\ldots l, f_{\tilde{\mu}_j}, \; 1 \leq j \leq k  \text{ and } f_b, \; b \in I$. Define $\zeta_m$ on these elements according to $(a)$ - $(d)$ above. Let $g \in G$.  Write $g = b_1\tilde{w}b_2$, for $b_1, b_2 \in I$ and $ \tilde{w} \in \tilde{W}_a$. 
Define $\zeta_m(f_g) = f'_{b_1'a(\tilde{w})b_2'}$ where $b_1 \sim_\beta b_1'$ and $b_2 \sim_\beta b_2'$. We need to prove that $\zeta_m$ is well-defined.        Let $\tilde{w} \in \tilde{W_a}$. The set $I\tilde{w}I$ is a homogeneous space of the group $I \times I$ under the action $(b_1,b_2).g = b_1gb_2^{-1}\; (b_1, b_2 \in I, \; g \in I\tilde{w}I)$. 
The set $\{I_mgI_m \,|\, g \in I \tilde{w}I\}$ is then a homogeneous space of the finite group $I/I_m \times I/I_m$.
 Let $\Gamma_{\tilde{w}}\subset I/I_m \times I/I_m$ be the stabilizer of the double coset $I_m \tilde{w}I_m$.  We will show that $(\beta \times \beta)(\Gamma_{\tilde{w}})  = \Gamma_{\tilde{w}'}$, where $\tilde{w}' = a(\tilde{w})$. It is easy to see that we have a group isomorphism 
 \begin{align*}
  (I \cap \Ad\tilde{w}(I) )/(I_m \cap \Ad \tilde{w}(I_m)) &\xrightarrow{\cong} \Gamma_{\tilde{w}},\\
 b &\rightarrow (b, \Ad\tilde{w}^{-1}(b)).
 \end{align*} The map
 \begin{align*} 
\beta_{\tilde{w}}: (I \cap\Ad\tilde{w}(I) )/(I_m \cap \Ad \tilde{w}(I_m)) &\rightarrow (I' \cap \Ad\tilde{w}'(I') )/(I'_m \cap \Ad \tilde{w}'(I'_m))\\
b \mod(I_m \cap \Ad \tilde{w}(I_m)) &\rightarrow b' \mod (I'_m \cap \Ad \tilde{w}'(I'_m)),
 \end{align*}
where $b \sim_\beta b'$ and $ \tilde{w}^{-1}b \tilde{w} \sim_\beta \tilde{w}'^{-1}b'\tilde{w}'$, is an isomorphism of groups and hence
\begin{equation}\label{stabilizer} (\beta \times \beta)(\Gamma_{\tilde{w}})  = \Gamma_{\tilde{w}'}.\end{equation}
Consequently, the set $\{I_mgI_m| g \in I\tw I\}$ is in bijection with the set $\{I_m'g'I_m'| g' \in I'\tw'I'\}$ and hence $\zeta_m$ is a well-defined bijection. Also, it is clear that the map $\zeta_m$ preserves the relations $(A)$ - $(C)$ of Theorem \ref{presentation}. Hence $\zeta_m$ is in fact an algebra isomorphism.
\end{proof}
\begin{Remark}\label{Remark2} Recall that the extended affine Weyl group $W_a = X_*({\bf T}) \rtimes W \cong W_S \rtimes \Omega$. 
 Using the second isomorphism, we wrote $w \in W_a$ as  \[w = s_{j_1}\ldots s_{j_c}\rho_1^{t_1}\ldots \rho_l^{t_l}\mu_1^{r_1}\ldots \mu_k^{r_k}\] with $s_{j_1}, \ldots.s_{j_c} \in W_S$ and $\rho_1^{t_1}, \ldots\rho_l^{t_l}, \mu_1^{r_1},\ldots \mu_k^{r_k} \in \Omega$. 
In Section \ref{reps}, we fixed a uniformizer $\pi$ of $F$ to choose representatives of $\tilde{w} = \tilde{s}_{i_1}\ldots.\tilde{s}_{i_c}.\tilde{\rho}_1^{t_1}\ldots\tilde{\rho}_l^{t_l}\tilde{\mu}_1^{r_1}\ldots\tilde{\mu}_k^{r_k}$. 
Note that $w$ can be written as $(\lambda, x)$ as an element of $ X_*({\bf T}) \rtimes W$. Hence $\pi_\lambda\tilde{x}$ is another lifting of $w$. A priori, these two liftings differ by an element of ${\bf T}(\calo)$. But since the same uniformizer $\pi$ is used to choose these representatives, it is clear that there is an element $c_w$ of order $\leq 2$ in $T$ such that $\tilde{w} = c_w \pi_{\lambda}\tilde{x}$. Moreover, $\tilde{w}' = a(\tilde{w}) = c_w'\pi'_{\lambda}\tilde{x}'$, where $c_w \sim_\beta c_w'$. Hence $\zeta_m(f_{\pi_\lambda \tilde{x}}) = \zeta_m(f_{c_w^{-1}})* \zeta_m(f_{\tilde{w}}) = f'_{{c'}_w^{-1}}* f'_{\tilde{w}'} = f'_{\pi'_{\lambda}\tilde{x}'}$. 

\end{Remark}
\begin{Corollary}\label{proofKazConj} Conjecture \ref{Kazconj} is valid, that is, $\Kaz_m$ is a Hecke algebra isomorphism when the fields $F$ and $F'$ are $m$-close. Also, $\zeta_m|_{\mathscr{H}(G, K_m)} = \Kaz_m$. 

\end{Corollary}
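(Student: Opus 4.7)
The approach hinges on the observation that $I_m\subset K_m$ (immediate from the explicit descriptions in Sections \ref{HeckeAlgebraIso} and \ref{KazhdanIsomorphism}), so every $K_m$-bi-invariant function is automatically $I_m$-bi-invariant. With a single Haar measure $dg$ normalized by $\vol(I_m;dg)=1$, this realizes $\fH(G,K_m)$ as a subalgebra of $\fH(G,I_m)$. The plan is to show that $\zeta_m$ restricts on this subalgebra to the $\C$-linear map $\Kaz_m$; the algebra property of $\Kaz_m$ then follows at once from Theorem \ref{closealgebra}, validating Kazhdan's conjecture.

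The key calculation is to expand each Kazhdan basis element in the Iwahori basis as
\[
t_x \;=\; \vol(K_m;dg)^{-1}\,\Char(K_mxK_m) \;=\; [K_m:I_m]^{-1}\sum_{y}f_y,
\]
where $y$ ranges over representatives of $I_m\backslash K_mxK_m/I_m$. The set of such double cosets is a homogeneous space for $(K_m/I_m)\times(K_m/I_m)$, and by a stabilizer computation identical to Equation \eqref{stabilizer} in the proof of Theorem \ref{closealgebra}, the orbit structure on $I_m\backslash K_mxK_m/I_m$ matches the one on $I_m'\backslash K_m'x'K_m'/I_m'$ under $\beta\times\beta$. The auxiliary fact needed here is that $K_m/I_m$ sits inside $I/I_m\cong\bfi(\calo/\calp^m)$ as the kernel of the natural reduction $\bfi(\calo/\calp^m)\to\bfg(\calo/\calp^m)$ (since $I=\bfi(\calo)\subset\bfg(\calo)$ and $K_m=\ker(\bfg(\calo)\to\bfg(\calo/\calp^m))$); this description is intrinsic to $\calo/\calp^m$, so $\beta$ carries $K_m/I_m$ bijectively onto $K_m'/I_m'$, and in particular $[K_m:I_m]=[K_m':I_m']$.

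Applying $\zeta_m$ term by term and using $\zeta_m(f_y)=f_{y'}'$ for the corresponding representatives, I obtain
\[
\zeta_m(t_x) \;=\; [K_m':I_m']^{-1}\sum_{y'}f_{y'}' \;=\; t_{x'}',
\]
which is exactly $\Kaz_m(t_x)$ by the definition of the Kazhdan map. This identifies $\zeta_m|_{\fH(G,K_m)}$ with $\Kaz_m$; since $\zeta_m$ is already an algebra isomorphism by Theorem \ref{closealgebra} and restricts to a $\C$-linear bijection between the two subalgebras, $\Kaz_m$ is an algebra isomorphism whenever the fields are $m$-close. The one mildly subtle point, which I expect to be the main obstacle in a polished write-up, is the intrinsic identification of $K_m/I_m$ and the verification that the stabilizer of $I_mxI_m$ inside $(K_m/I_m)\times(K_m/I_m)$ is $\beta$-compatible with its $F'$-counterpart; both reduce to the fact that every group-theoretic quantity in sight is defined by conditions on $\calo/\calp^m$ and therefore transfers along the ring isomorphism $\Lambda$.
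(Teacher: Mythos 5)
Your overall strategy — realize $\fH(G,K_m)$ inside $\fH(G,I_m)$, observe that $K_m/I_m$ transfers along $\beta$, expand $t_x$ in the Iwahori basis over $(K_m/I_m)^2$-orbits, and apply $\zeta_m$ term by term — is the right skeleton, and the intrinsic identification of $K_m/I_m$ as $\ker(\bfi(\calo/\calp^m)\to\bfg(\calo/\calp^m))$ is a genuine observation that the paper also uses implicitly. But there is a real gap at the step ``using $\zeta_m(f_y)=f_{y'}'$ for the corresponding representatives.'' Everything you write does show that $\zeta_m$ carries the $(K_m/I_m)^2$-orbit of $I_mxI_m$ equivariantly onto \emph{some} $(K_m'/I_m')^2$-orbit of $I_m'$-double cosets, with compatible stabilizers; that alone only tells you $\zeta_m(t_x)=t_{x_0'}$ for \emph{some} $K_m'$-double coset $K_m'x_0'K_m'$. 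What you have not established is the base-point compatibility $x_0'\in K_m'x'K_m'$, where $x'=a_i'\pi'_\lambda a_j'^{-1}$ is the Kazhdan correspondent of $x=a_i\pi_\lambda a_j^{-1}$. Since $\zeta_m$ is defined by decomposing $x$ as $b_1\tw b_2$ with $b_1,b_2\in I$, $\tw\in\tilde W_a$, while the Kazhdan assignment is defined by the Cartan datum $(a_i,\lambda,a_j)$ modulo $\calp^m$, the two parametrizations scramble $a_i$, $\pi_\lambda$, $a_j$ differently, and their compatibility is precisely the content of the claim — not a corollary of the stabilizer matching.

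The paper sidesteps this by first invoking Lemma 2.1(b) of \cite{kaz86}, which gives the factorization $t_{a_i\pi_\lambda a_j^{-1}}=t_{a_i}*t_{\pi_\lambda}*t_{a_j^{-1}}$ inside $\fH(G,K_m)$. Since $\zeta_m$ is an algebra map, this reduces the problem to the two primitive cases $\zeta_m(t_k)=\Kaz_m(t_k)$ for $k\in\bfg(\calo)$ and $\zeta_m(t_{\pi_\lambda})=\Kaz_m(t_{\pi_\lambda})$. In the first case the base point $k$ lies in $\bfg(\calo)$, so its Iwahori-Weyl decomposition involves only elements reducing mod $\calp^m$ in the obvious compatible way; in the second, Remark \ref{Remark2} supplies $\zeta_m(f_{\pi_\lambda})=f_{\pi'_\lambda}$ directly. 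Your orbit/stabilizer argument then finishes each of these two cases exactly as you describe. Without the reduction, the ``corresponding representatives'' you invoke are not yet defined consistently on both sides, and the conclusion $\zeta_m(t_x)=t_{x'}'$ does not follow from what you have written.
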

\begin{proof}Since $I_m \subset K_m$, $\mathscr{H}(G, K_m)$ is a subalgebra of $\mathscr{H}(G, I_m)$. Let \[e_{K_m} = \vol(K_m; dg)^{-1}. \Char(K_m)\]  be the idempotent element of $\mathscr{H}(G, K_m)$. Then $\mathscr{H}(G, K_m) = e_{K_m} * \mathscr{H}(G, I_m) * e_{K_m}$. 
Let  $F$ and $F'$ be $m$-close and let $\zeta_m:\mathscr{H}(G, I_m) \rightarrow \mathscr{H}(G', I_m')$ be the Hecke algebra isomorphism constructed above. 
Since $K_m/I_m \cong K_m'/I_m'$, we see that $\zeta_m(e_{K_m}) = e_{K_m'}$. Hence the restriction of $\zeta_m$ induces an isomorphism between the Hecke algebras  $\mathscr{H}(G, K_m) \cong \mathscr{H}(G', K_m')$ when the fields $F$ and $F'$ are $m$-close.
It remains to verify that $\zeta_m = \Kaz_m$ on $\mathscr{H}(G, K_m)$, as $\C$-maps. Let $t_g  =\vol(K_m; dg)^{-1}. \Char(K_mgK_m)$. We need to prove that $\zeta_m(t_{a_i\pi_\lambda a_j^{-1}}) = t_{a_i'\pi'_\lambda a_j'^{-1}}$ (with notation as in Section \ref{KazhdanIsomorphism}). By Lemma 2.1(b) of \cite{kaz86}, we see that $t_{a_i'\pi'_\lambda a_j'^{-1}} = t_{a_i'}*t_{\pi_{\lambda}'}*t_{a_j'^{-1}}$. Hence it suffices to show that
\begin{enumerate}[(i)]
\item $\zeta_m(t_{\pi_\lambda}) = \Kaz_m(t_{\pi_\lambda}) \; \forall \lambda \in X_*({\bf T})_-$,
\item $\zeta_m(t_{k}) =  \Kaz_m(t_{k}) \; \forall k \in {\bf G}(\calo)$.
\end{enumerate}
The set $\{ I_maI_m \,|\, a \in K_m \pi_\lambda K_m\}$ is a homogeneous space of the group $K_m/I_m \times K_m/I_m$. Let $\Gamma_{\lambda, m} \subset K_m/I_m \times K_m/I_m $ be the stabilizer of the coset $I_m \pi_\lambda I_m$. It is again easy to see that 
\begin{align*}
 \Gamma_{\lambda, m} \cong (K_m \cap \Ad\pi_\lambda(K_m))/(I_m \cap \Ad \pi_\lambda(I_m)) \cong (K_m' \cap \Ad\pi'_\lambda(K'_m))/(I'_m \cap \Ad \pi'_\lambda(I'_m)) \cong  \Gamma'_{\lambda, m} 
 \end{align*}
if $F$ and $F'$ are $m$-close. Let 
$T_{\lambda, m}$ be a set of representatives of \[\left(K_m/I_m \times K_m/I_m \right)/\Gamma_{\lambda,m}.\]
Then we immediately have
\[ t_{\pi_\lambda} = \vol(K_m; dg)^{-1} \displaystyle{\sum_{(k_1, k_2) \in T_{\lambda,m
}} \Char(I_mk_1\pi_\lambda k_2^{-1}I_m)}.\]
Applying the theorem, it is easy to see that (i) holds. (ii) follows by a similar argument. 
\end{proof}

Let $\fR(G)$ be the category of smooth complex representations of $G$. Let $\fR^m(G)$ be the subcategory of $\fR(G)$ of representations $(\sigma, V)$ of $G$ generated by its $I_m$-fixed vectors, that is $\sigma(G)(V^{I_m}) = V$.  Let $\fH(G, I_m)\imod$ be the category of $\fH(G, I_m)$-modules. 
\begin{proposition}\label{categories}
 The category $\fR^m(G)$ is closed under sub-quotients and the functor
\begin{align*}
 J_m:\fR^m(G) &\longrightarrow \fH(G, I_m) \imod,\\
(\sigma, V)&\longrightarrow V^{I_m},
\end{align*}

 is an equivalence of categories with left adjoint
\begin{align*}
 j_m: \fH(G, I_m)\imod &\longrightarrow \fR^m(G),\\
 V^{I_m} &\longrightarrow \fH(G)\otimes_{\fH(G, I_m)} V^{I_m}.
\end{align*}
 \end{proposition}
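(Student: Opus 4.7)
The plan is to recognize the proposition as an instance of the general ``idempotent Morita'' principle. Set
\[ e \;:=\; e_{I_m} \;:=\; \vol(I_m; dg)^{-1}\,\Char(I_m) \;\in\; \fH(G).\]
Then $e$ is an idempotent satisfying $eV = V^{I_m}$ for any smooth representation $V$, and $e\fH(G)e = \fH(G, I_m)$. The functor $J_m = e(\cdot)$ is exact, since multiplication by an idempotent is a projection onto a direct summand. Its left adjoint is precisely $j_m(M) = \fH(G) \otimes_{\fH(G,I_m)} M$ with the $G$-action coming from left convolution on $\fH(G)$; the image lies in $\fR^m(G)$ because $1_{I_m}\otimes m$ is $I_m$-fixed and generates the tensor product as an $\fH(G)$-module. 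The adjunction $\Hom_G(j_m M, V) = \Hom_{\fH(G,I_m)}(M, V^{I_m})$ is the standard tensor-Hom identity.

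For the unit $\eta_M: M \to J_m j_m(M)$, note that $M$ is unital over $\fH(G,I_m)$, so $em = m$ for all $m \in M$. Therefore $h \otimes m = he \otimes m$ in $j_m(M)$, and
\[J_m j_m(M) \;=\; e\fH(G)\otimes_{\fH(G,I_m)}\! M \;=\; e\fH(G)e \otimes_{\fH(G,I_m)}\! M \;=\; \fH(G,I_m)\otimes_{\fH(G,I_m)}\! M \;=\; M.\]
Hence $J_m \circ j_m = \id$, so $j_m$ is fully faithful with essential image exactly those $V$ for which the counit $c_V: j_m J_m V \to V$ is an isomorphism. For $V \in \fR^m(G)$, surjectivity of $c_V$ is the defining condition $V = \fH(G)V^{I_m}$. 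Applying the exact $J_m$ to $0 \to \ker c_V \to j_m J_m V \to V \to 0$ and using $J_m\circ j_m J_m = J_m$, one gets $(\ker c_V)^{I_m} = 0$. Closure under quotients is immediate: if $W\subset V\in \fR^m(G)$, then by exactness of $J_m$ the image of $V^{I_m}$ in $V/W$ is all of $(V/W)^{I_m}$ and still generates $V/W$. To close up both the injectivity of $c_V$ and closure under subrepresentations, one reduces to showing: a nonzero $G$-subrepresentation of an object of $\fR^m(G)$ has nonzero $I_m$-fixed vectors. Given this, the subrep $W/\fH(G)W^{I_m}$ of $V/\fH(G)W^{I_m} \in \fR^m(G)$ has no $I_m$-fixed vectors and is therefore zero, and likewise $\ker c_V = 0$.

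The main obstacle is precisely this last input --- that $V \mapsto V^{I_m}$ is a faithful functor on $\fR^m(G)$ --- which is not formal. It reflects Bernstein's decomposition of $\fR(G)$ into blocks, on each of which $V\mapsto V^{I_m}$ is either identically zero or faithful; $\fR^m(G)$ is the direct sum of the blocks of the second type. One may invoke this decomposition directly, or, more to the point, cite the fact that $e_{I_m}$ is a \emph{special} idempotent in the sense of Bushnell-Kutzko, which by definition packages exactly the claim that $V\mapsto e_{I_m}V$ induces an equivalence between the subcategory of smooth representations generated by their $I_m$-fixed vectors and $\fH(G,I_m)$-modules. With this input in hand, the three formal steps above assemble into the proposition.
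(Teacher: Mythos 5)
Your formal/Morita reduction is clean and correct in structure: setting $e = e_{I_m}$, showing $J_m \circ j_m = \mathrm{id}$ via $e\fH(G)e = \fH(G,I_m)$, and observing that the entire proposition (including closure under subquotients and injectivity of the counit) reduces to the single non-formal statement that any nonzero $G$-subrepresentation of an object of $\fR^m(G)$ has nonzero $I_m$-fixed vectors. This is a genuinely informative reorganization of the problem.

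However, your proof stops at exactly the point where the actual work begins, and the citation you propose in its place is circular. Saying ``cite the fact that $e_{I_m}$ is a special idempotent in the sense of Bushnell--Kutzko'' does not discharge the claim: an idempotent is \emph{called} special precisely when the category it cuts out is closed under subquotients, so invoking speciality of $e_{I_m}$ is invoking the proposition itself. Similarly, ``invoke Bernstein's decomposition directly'' glosses over the fact that Bernstein's theory does not hand you for free that $V \mapsto V^{I_m}$ is zero-or-faithful on each block for an arbitrary compact open $I_m$; this has to be established for the specific subgroup in question. The paper's proof is almost entirely devoted to supplying this missing input: it applies Corollary 3.9 of \cite{Ber84}, whose hypotheses are Conditions 3.7.1 and 3.7.2 there, and then verifies them for $I_m$. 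Condition 3.7.1 (surjectivity of $V^{M\cap I_m} \to V_N^{I_m}$) is checked using the Iwahori factorization of $I_m$ with respect to every standard parabolic via Proposition 3.5.2 of \cite{Ber84}, and Condition 3.7.2 (a conjugacy statement for $K_P$ as $P$ and the conjugate of $I_m$ vary) follows the argument of Lemma 1.3.3 of \cite{Lem01}. These concrete verifications --- in particular the use of Iwahori factorization --- are the substance of the proposition and are absent from your writeup. You have correctly identified \emph{where} the difficulty lies, but a proof must then confront it rather than re-state it under a different name.
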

\begin{proof}
This would follow from Corollary 3.9 of \cite{Ber84} as soon as we verify that the compact open subgroup $I_m$ satisfies Conditions 3.7.1 and 3.7.2 of \cite{Ber84}. Let us recall these conditions.
\begin{enumerate}[(\text{3.7.}1)]
\item \label{Lemma1} Let $\bf P$ be a parabolic subgroup of $\bf G$ and let $(\sigma, V)$ be a representation of $G$.  Let $V(N) = \Span\langle \sigma(n)v - v | v \in V, n \in N \rangle$ and let $V_N = V/V(N)$. Then the canonical map $V^{M \cap I_m} \rightarrow V_N^{I_m}$ is surjective.
\item  Let $\bf M$ be a Levi subgroup of $\bf G$ and let $\bf P = MN$ be a parabolic subgroup of $\bf G$ with Levi $\bf M$. Let $K$ be a $G$-conjugate of $I_m$ and let $K_P = K \cap P/K\cap N$. For any parabolic subgroup $\bf Q$ of $\bf G$   with the same Levi subgroup $\bf M$ and any other $G$-conjugate $K_1$ of $I_m$, $(K_1)_Q$ is a conjugate of $K_P$ in $M$. 
\end{enumerate}
Since $I_m$ admits an Iwahori factorization with respect to $P$, the proof of (3.7.1) above follows from Proposition 3.5.2 of \cite{Ber84}. Condition (3.7.2) has been verified for $\GL_n$ in Lemma 1.3.3 of \cite{Lem01}, and the same proof works for any split reductive group $\bfg$. 
\end{proof}
\section{Properties of representations over close local fields}\label{PropertiesrepsCLF}
In this section we study various properties of representations over close local fields. We retain the notation of Section \ref{stdnotations}. 
\subsection{Genericity}\label{GR}
 Let $\psi:F \rightarrow \C^\times$ be a non-trivial additive character of $F$.  Since
\[{\bf U/[U,U] = \displaystyle{\prod_{\alpha \in \Delta} U_\alpha}},\]
a character of $\chi$ of $U$ can be written as
\[\chi =\displaystyle{\prod_{\alpha \in \Delta} \chi_\alpha\circ \lu_\alpha^{-1}}\]
where $\chi_\alpha$ is an additive character of $F$. Note that there exists $a_\alpha \in F$ such that
 \[\chi_\alpha(x) = \psi(a_\alpha x) \; \forall \; x \in F.\]
 Let $m_\alpha = \cond(\chi_\alpha)$. The character $\chi$ is \textit{generic} iff $\chi_\alpha$ is nontrivial for all $\alpha\in \Delta$, or equivalently,  $a_\alpha \in F^\times $ for all $\alpha \in \Delta$.
Let $(\tau, V)$ be an irreducible, admissible representation of $G$. We say that $(\tau, V)$ is $\chi$-\textit{generic} if \[\operatorname{Hom}_G(V, \,\Ind_U^G \mathbb{C}_\chi) \neq 0.\] In this case, this space is in fact one dimensional (\cite{Sha74}). We denote the image of $V$ in $\Ind_U^G \mathbb{C}_\chi$ as $\mathcal{W}(\tau, \chi)$ and call it the Whittaker model of $\tau$.

In this section, we prove that generic representations correspond over close local fields, generalizing Lemaire's work in  \cite{Lem01} for $\GL_n$. In fact, all the key ideas needed for this proof are already present in \cite{Lem01} for $\GL_n$.
We begin by recalling the following simple lemma.
\begin{lemma}\label{ind}
Let $H$ be a closed subgroup of $G$ and $K$ be a compact open subgroup of $G$. Fix a set of representatives $S$ for $H\backslash{G}/K$. Let $(\sigma, W)$ be an irreducible smooth representation of $H$. Then the following map\\
\begin{equation*}
\phi: (\Ind_{H}^{G} \sigma) ^{K} \longrightarrow \displaystyle \prod_{g \in S} W^{H \cap gKg^{-1}}
\end{equation*}
 defined by $ \phi(f) = (f(g))_{g\in S} $ is an isomorphism of $\mathbb{C}$-vector spaces.
\end{lemma}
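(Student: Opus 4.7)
The plan is to verify directly that $\phi$ is well-defined, injective, and surjective, using only the defining properties of induced representations and the fact that a $K$-invariant function in $\Ind_H^G \sigma$ is determined by its values on a set of representatives for $H \backslash G / K$.

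First I would check well-definedness: given $f \in (\Ind_H^G \sigma)^K$ and $g \in S$, I need $f(g) \in W^{H \cap gKg^{-1}}$. For any $h \in H \cap gKg^{-1}$, write $h = gkg^{-1}$ with $k \in K$. Then on the one hand $f(hg) = \sigma(h) f(g)$ by the defining $H$-equivariance of sections of $\Ind_H^G \sigma$, and on the other hand $f(hg) = f(gk) = f(g)$ by $K$-invariance of $f$. Comparing gives $\sigma(h)f(g) = f(g)$, so $f(g)$ lies in the claimed fixed subspace.

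Next I would prove injectivity. If $\phi(f) = 0$, then $f(g) = 0$ for every $g \in S$. For any $x \in G$, write $x = hgk$ with $h \in H$, $g \in S$, $k \in K$. Then $f(x) = f(hgk) = \sigma(h) f(gk) = \sigma(h) f(g) = 0$, using again the $H$-equivariance and $K$-invariance of $f$. Hence $f = 0$.

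For surjectivity, given a tuple $(w_g)_{g \in S}$ with $w_g \in W^{H \cap gKg^{-1}}$, I would define $f \colon G \to W$ by $f(hgk) := \sigma(h) w_g$ for $h \in H$, $g \in S$, $k \in K$. The key step is to check that this is well-defined, independent of the decomposition $x = hgk$: if $hgk = h_1 g k_1$, then $h_1^{-1} h = g k_1 k^{-1} g^{-1}$ lies in $H \cap gKg^{-1}$, and since $w_g$ is fixed by this subgroup we get $\sigma(h_1^{-1}h) w_g = w_g$, i.e.\ $\sigma(h)w_g = \sigma(h_1)w_g$. By construction $f$ is $H$-equivariant on the left and $K$-invariant on the right, and it is smooth because $K$ is open and the formula is locally constant (the stabilizer of $f(x)$ contains a conjugate of the open subgroup stabilizing $w_g$ in $W$). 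Thus $f \in (\Ind_H^G \sigma)^K$ with $\phi(f) = (w_g)_{g \in S}$. The only mildly subtle point is the well-definedness in the surjectivity step; everything else is a formal manipulation of the defining relations for induced representations.
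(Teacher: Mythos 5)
Your proof is correct and follows the same approach as the paper's: the well-definedness check ($\sigma(h)f(g)=f(hg)=f(gk)=f(g)$) is identical, and you then fill in the injectivity and surjectivity steps that the paper dismisses with ``it is clear that $\phi$ is a bijection.'' One small remark on the surjectivity step: the parenthetical about ``the stabilizer of $f(x)$ contains a conjugate of the open subgroup stabilizing $w_g$'' is unnecessary and slightly muddled; right $K$-invariance of $f$ (which you already established) together with $K$ being open immediately gives local constancy, and since $\sigma$ is smooth the values automatically lie in a smooth representation, so no further argument is needed.
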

\begin{proof}
We will first check that $\phi$ is well-defined,  that is, we need to show that $f(g) \in W^{H \cap gKg^{-1}}$. Let $h \in H \cap gKg^{-1}$. Then $hg = gk$ for some $k\in K$. Now, $\sigma(h)f(g) = f(hg) = f(gk) = f(g)$. It is clear that $\phi$ is a bijection.
\end{proof}

Let $m\geq 1$. Let $F'$ be another non-archimedean local field such that $F'$ is $(m+1)$-close to $F$. Let \[\Lambda_0: \calo/\calp^{m+1} \xrightarrow{\cong} \calo'/\calp'^{m+1}\] and fix a uniformizer $\pi'$ of $F'$ such that $\Lambda_0(\pi \text{ mod } \calp^{m+1}) = (\pi' \text{ mod } \calp'^{m+1})$.
 More generally, let
 \[\Lambda_{i-m-1}: \calp^{i-m-1}/\calp^i\rightarrow \calp'^{i-m-1}/\calp'^{i}\] be the isomorphism of additive groups satisfying \[\Lambda_{i-m-1}(\pi^{i-m-1}x \mod \calp^i) = \pi'^{i-m-1}\Lambda_0(x \mod \calp^{m+1})\] for $x \in \calo$.
  When $\Lambda_0(x \text{ mod } \calp^{m+1}) = (x' \text{ mod } \calp'^{m+1})$, we write $x \sim_{\Lambda_0} x'$ (and similarly for the maps $\Lambda_{i-m-1}$).
Let $\beta$ denote the isomorphism \[\beta: I/I_{m} \rightarrow I'/I_{m}'.\] We write $b \sim_\beta b'$ if $\beta(b\mod  I_{m}) = b' \mod I_{m}'$.

Let  $\zeta_m: \mathscr{H}(G, I_m) \rightarrow \mathscr{H}(G', I_m')$ be as in Theorem \ref{closealgebra}.
             In this section, we will be using the map $\zeta_m$ only at level $m$, hence we drop the subscript and just denote the above map as $\zeta$.  
Let ($\sigma, E(\chi))$ denote the induced representation $\Ind_U^{G} \mathbb{C}_\chi$ and $ E(\chi)^m = E(\chi)^{I_m}$.
So $E(\chi)^m = \{f: G \rightarrow C \,|\, f(ugb) = \chi(u)f(g) \text{ for } u \in U,\, g \in G,\, b \in I_m\}$.

Let $\chi'= \displaystyle \prod_{\alpha \in \Delta} \chi_{\alpha}'\circ \lu_\alpha^{-1}$, where $\chi_{\alpha}'$ satisfies the following conditions:
\begin{equation}\label{chialphacomp}
\cond(\chi_{\alpha}') = m_\alpha \hspace{6pt}\text{ and } \hspace{6pt}\chi_{\alpha}' \restriction{\calp'^{m_\alpha-m-1}/\calp'^{m_\alpha}}=  \chi_\alpha \restriction{\calp^{m_\alpha-m-1}/\calp^{m_\alpha}}.
\end{equation}

We then have the corresponding objects $E(\chi')$ and $E(\chi')^m$.
We will prove that there is an isomorphism of $\C$-vector spaces \[E(\chi)^m \overset{\cong}\rightarrow E(\chi')^m\] that is compatible with the Hecke Algebra isomorphism $\zeta$.

Let $E(\chi)^m_g \subset E(\chi)^m$ be the subspace of functions with support in $UgI_m$.
Since $\displaystyle{\coprod_{\tilde{w} \in \tilde{W_a}}} U\tilde{w}I = G$, we see that $E(\chi)^m$ is the sum of the subspaces $E(\chi)^m_{\tilde{w}b} \text{  },\,\tilde{w} \in \tilde{W_a}, b \in I$. Clearly, $\operatorname{dim} (E(\chi)^m_{\tilde{w}b}) \leq 1$. Notice that $E(\chi)^m_{\tilde{w}b} \neq 0$ iff $\chi|_{U \cap \tilde{w}I_m\tilde{w}^{-1}}  = 1$.
As before, for each $\alpha \in \Phi$, let $a_\alpha: W_a \rightarrow \mathbb{Z}$ be the function $a_\alpha(w) = \langle x.\alpha, \lambda \rangle$ where $w = (\lambda, x) $ with $\lambda \in X_*({\bf T}), x \in W$. Then, writing $I_m =\displaystyle{ \prod_{\alpha \in \Phi^+}U_{\alpha, \calp^m} \;\;T_{\calp^m}\;\;   \prod_{\alpha \in \Phi^-}U_{\alpha, \calp^{m+1}}}$, we see that
\[\tilde{w}I_m\tilde{w}^{-1} =\displaystyle{ \prod_{\alpha \in \Phi^+}U_{x. \alpha, \calp^{m + a_\alpha(w)}} \;\; T_{\calp^m}  \;\; \prod_{\alpha \in \Phi^-}U_{x. \alpha, \calp^{m+1 + a_{\alpha}(w)}}}, \] and
\[U \cap \tilde{w}I_m\tilde{w}^{-1} = \displaystyle{ \prod_{\alpha \in \Phi^+, x.\alpha \in \Phi^+}U_{x. \alpha, \calp^{m + a_\alpha(w)}} \prod_{\alpha \in \Phi^-, x.\alpha \in \Phi ^ +}U_{x. \alpha, \calp^{m+1 + a_{\alpha}(w)}}}.\]
Since $\cond(\chi_\alpha) = m_\alpha$, we see that
\begin{equation}\label{wIm1}
 \chi|_{U \cap \tilde{w}I_m\tilde{w}^{-1}} = 1 \iff \begin{cases} a_\alpha(w) \geq m_{x.\alpha}-m & \text{ if } \alpha \in \Phi^+, x.\alpha \in \Delta,\\
a_\alpha(w) \geq m_{x.\alpha}-m-1 & \text{ if } \alpha \in \Phi^-, x.\alpha \in  \Delta. \\
\end{cases}
\end{equation}
\noindent Set \[\tilde{W}_a^1 = \{\tilde{w} \in \tilde{W_a} | a_\alpha(w) \text{ satisfies Equation \eqref{wIm1} for those } \alpha \in \Phi \text { with } x.\alpha \in \Delta \}.\]
For $g \in G$, let $h_g \in E(\chi)_g^m$ be the function characterized by\\
\begin{equation*}
h_g(g)=
\begin{cases} 1 & \text{if $g \in U\tilde{W}_a^1I$,}
\\
0 &\text{otherwise.}
\end{cases}
\end{equation*}
For $g \in G$, the function $h_g$ satisfies
\begin{equation}\label{homo}
h_g = \chi(u) h_{ug} \text{ } (u \in U).
\end{equation}
In the following lemma, we will describe the action of the generators of the Hecke algebra $\mathscr{H}(G, I_m)$  on a typical element $h_g$ described above.
\begin{lemma}\label{heckeaction}
We have the following:
\begin{enumerate}[(a)]
\item For $ x \in \{ \tilde{\rho_1},\ldots, \tilde{\rho_l}, \tilde{\rho_1}^{-1}, \ldots, \tilde{\rho_l}^{-1}, \tilde{\mu}_1, \ldots, \tilde{\mu}_k\}, \sigma(f_x)(h_g)= h_{gx^{-1}}$.
\item Let $s_i = s_{\alpha_i} \in S_1$. Then $\sigma(f_{\tilde{s_i}})(h_g) = \displaystyle \sum_{\{t_\nu\}}h_{g\lu_{\alpha_i}(\pi^mt_\nu)\tilde{s_i}^{-1}} $, where $\{t_\nu\}$ is a set of representatives for $\calo/\calp$.
\item Let $s_0 = s_0^{(j)} \in S_2$. Then $ \sigma(f_{\tilde{s_0}})(h_g) = \displaystyle \sum_{\{t_\nu\}}h_{g\lu_{-\alpha_0}(\pi^{m+1}t_\nu)\tilde{s_0}^{-1}} $, where $\{t_\nu\}$ is a set of representatives for $\calo/\calp$ and $\alpha_0 = \alpha_0^{(j)}$ is the highest root of the irreducible root system $\Phi^{(j)}$. 
\end{enumerate}
\end{lemma}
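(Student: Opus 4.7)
The unified strategy is to use the standard Hecke-action formula: since $\vol(I_m)=1$ and every $h_g\in E(\chi)^m$ is right $I_m$-invariant, any decomposition $I_m y I_m = \bigsqcup_j z_j I_m$ into left $I_m$-cosets yields
\[
(\sigma(f_y)h_g)(x)\ =\ \int_{I_m y I_m} h_g(xh)\,dh\ =\ \sum_j h_g(xz_j).
\]
Each of (a), (b), (c) thus reduces to identifying explicit left-coset representatives of $I_m\tilde{w}I_m/I_m$ and reorganising the resulting sum.

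Part (a) is essentially immediate. Since $\rho\in\Omega$ has $l(\rho)=0$, Lemma \ref{volW} gives $\vol(I_m\tilde{\rho}I_m)=1$, which forces $I_m\tilde{\rho}I_m=\tilde{\rho}I_m$ and in particular makes $\tilde{\rho}$ normalise $I_m$. Hence $(\sigma(f_{\tilde{\rho}})h_g)(x)=h_g(x\tilde{\rho})$, and I verify this equals $h_{g\tilde{\rho}^{-1}}(x)$ by comparing supports (both sides are supported on $Ug\tilde{\rho}^{-1}I_m$), checking left-$(U,\chi)$-equivariance and right-$I_m$-invariance, and computing the value at $x=g\tilde{\rho}^{-1}$: this reduces to the equivalence $g\in U\tilde{W}_a^1 I\Leftrightarrow g\tilde{\rho}^{-1}\in U\tilde{W}_a^1 I$, which follows from $U\cap\tilde{w}\tilde{\rho}^{-1}I_m\tilde{\rho}\tilde{w}^{-1}=U\cap\tilde{w}I_m\tilde{w}^{-1}$ (again because $\tilde{\rho}$ normalises $I_m$).

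For parts (b) and (c), the main step is to compute $I_m\cap\tilde{s}I_m\tilde{s}^{-1}$ via the Iwahori factorisation of $I_m$. For $s_i\in S_1$: since $\tilde{s}_i=\lw_{\alpha_i}(1)$, the Chevalley formula $\tilde{s}_i\lu_\beta(t)\tilde{s}_i^{-1}=\lu_{s_i(\beta)}(\pm t)$ preserves all filtration powers, and a direct comparison shows the intersection differs from $I_m$ only by $U_{\alpha_i,\calp^m}$ shrinking to $U_{\alpha_i,\calp^{m+1}}$; hence the index is $q$ with left-coset representatives $\lu_{\alpha_i}(\pi^m t_\nu)$ and
$I_m\tilde{s}_iI_m=\bigsqcup_\nu\lu_{\alpha_i}(\pi^m t_\nu)\tilde{s}_i I_m$. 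For $\tilde{s}_0^{(j)}=\lw_{\alpha_0^{(j)}}(\pi^{-1})$, the Chevalley formula introduces factors of $\pi^{\pm 2}$: it sends $U_{-\alpha_0,\calp^{m+1}}\mapsto U_{\alpha_0,\calp^{m-1}}$ and $U_{\alpha_0,\calp^m}\mapsto U_{-\alpha_0,\calp^{m+2}}$. Consequently, the intersection now shrinks the $U_{-\alpha_0}$-component from $U_{-\alpha_0,\calp^{m+1}}$ to $U_{-\alpha_0,\calp^{m+2}}$, producing representatives $\lu_{-\alpha_0}(\pi^{m+1}t_\nu)$. Substituting these into the action formula and moving the root-subgroup factor across $\tilde{s}^{\pm 1}$ by the Chevalley relations, while using that $\lu_{-\alpha_i}(\calp^m)\subset I$ and $\lu_{-\alpha_0}(\calp^{m+1})\subset I$ normalise $I_m$, rewrites the sum into the claimed form $\sum_\nu h_{g\,\lu_{\alpha_i}(\pi^m t_\nu)\tilde{s}_i^{-1}}$ (resp.\ $\sum_\nu h_{g\,\lu_{-\alpha_0}(\pi^{m+1}t_\nu)\tilde{s}_0^{-1}}$).

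The principal technical obstacle is the bookkeeping in (b) and (c): the individual summands $x\mapsto h_g(xz_\nu)$ are \emph{not} themselves in $E(\chi)^m$, since $z_\nu$ does not normalise $I_m$, so the equality cannot be checked summand-by-summand and must instead be verified only after summing over $\nu$. This forces careful tracking of Chevalley signs and of powers of $\pi$, and one must exploit the containments $\lu_{-\alpha_i}(\calp^m),\lu_{-\alpha_0}(\calp^{m+1})\subset I$ precisely so that these root-subgroup corrections can be freely commuted past $I_m$ when consolidating the sum.
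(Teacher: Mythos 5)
Your overall strategy — expand $(\sigma(f_y)h_g)(x)$ directly as $\int_{I_m y I_m} h_g(xt)\,dt$ and split via a left-coset decomposition $I_m y I_m = \bigsqcup_j z_j I_m$, using the right $I_m$-invariance of $h_g$ — is a genuine alternative to what the paper does. The paper instead substitutes $t\mapsto y^{-1}t$, exploits the $(U,\chi)$-structure of $h_g$ to rewrite everything as an integral of $\chi$ over $U\cap yI_m\tilde{s}I_m g^{-1}$, and splits that integral using a \emph{right}-coset decomposition of $I_m\tilde{s}I_m$. Your computation of the coset representatives via the Iwahori factorization of $I_m\cap\tilde{s}I_m\tilde{s}^{-1}$ is correct, and your part (a) (using that $\tilde{\rho}$ normalizes $I_m$ because $l(\rho)=0$ and Lemma \ref{volW} forces $I_m\tilde{\rho}I_m=\tilde{\rho}I_m$) works cleanly.

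The gap is in the final step of (b) and (c). You correctly observe that the individual summands $x\mapsto h_g(x\lu_{\alpha_i}(\pi^m t_\nu)\tilde{s}_i)$ are not in $E(\chi)^m$ and that the identity must be checked only after summing, but the mechanism you invoke — commuting $\lu_{\alpha_i}(\pi^m t_\nu)$ across $\tilde{s}_i^{\pm1}$ and using that $\lu_{-\alpha_i}(\calp^m)\subset I$ normalizes $I_m$ — does not by itself turn $R_{z_\nu}h_g$ into $h_{g\lu_{\alpha_i}(\pi^m t_\nu)\tilde{s}_i^{-1}}$. Note that $z_\nu^{-1}=\lu_{-\alpha_i}(\pi^m t_\nu)\tilde{s}_i^{-1}$ is not $\lu_{\alpha_i}(\pi^m t_\nu)\tilde{s}_i^{-1}$, and moreover the sets $Ug_\nu I_m$ (supports of the $h_{g_\nu}$) are not in general pairwise disjoint, so a point-by-point match-up of the two sums requires tracking, at each test point $y$, contributions from several $\nu$ at once. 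This is precisely what the paper's rearranged integral makes explicit: after the substitution, the $\nu$-th term of the sum is $\vol(U\cap gI_mg^{-1})^{-1}\int_{U\cap yI_m\tilde{s}_i\lu_{\alpha_i}(-\pi^mt_\nu)g^{-1}}\chi(u)\,du$, whose support in $y$ is visibly $Ug\lu_{\alpha_i}(\pi^m t_\nu)\tilde{s}_i^{-1}I_m$ and whose value at $y=ug_\nu b$ is $\chi(u)$ times a volume ratio, which is then compared directly to $h_{g_\nu}(y)$. Your writeup should either reproduce that bookkeeping or give a complete argument that the two sums are the same element of $E(\chi)^m$ by comparing them on a set of $(U,I_m)$-double-coset representatives of the common support; as it stands, ``rewriting the sum by Chevalley relations'' is asserted rather than proved, and it is exactly the hardest part of the lemma.
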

\begin{proof}
Let $du$ be the Haar measure on $U$ such that $\vol(U \cap I_m; du) = 1$. Recall that $dg$ is a Haar measure on $G$ such that $\vol(I_m; dg) = 1$. For $x,y,g \in G$,
\begin{align*}
\sigma(f_x)(h_g)(y) & = \int_{G}f_x(t)h_g(yt)dt\\
& = \int_{UgI_m} f_x(y^{-1}t)h_g(t)dt\\
& =\vol(U \cap gI_mg^{-1};du)^{-1}\int_U f_x(y^{-1}ug)\chi(u) du \\
& = \vol(U \cap gI_mg^{-1};du)^{-1}\int_{U \cap yI_mxI_mg^{-1}} \chi(u)du.
\end{align*}
Now, if $ x \in \{ \tilde{\rho_1},\ldots \tilde{\rho_l}, \tilde{\rho_1}^{-1},\ldots \tilde{\rho_l}^{-1},\tilde{\mu}_1, \ldots, \tilde{\mu}_k \}$, then $I_mxI_m = I_mx$ and $ U \cap yI_mxg^{-1} \neq \emptyset \iff y \in Ugx^{-1}I_m$. So, say, $ y = ugx^{-1}b$. Then,
\begin{align*}
\sigma(f_x)(h_g)(y) & = \vol(U \cap gI_mg^{-1};du) ^{-1}\int_{U \cap yI_mxI_mg^{-1}} \chi(u)du \\
 &=\frac{ \vol(U \cap gI_mg^{-1};du)}{\vol(U \cap gI_mg^{-1};du) } \chi(u) = \chi(u)h_{ugx^{-1}}(ugx^{-1})  = h_{gx^{-1}}(ugx^{-1}b)= h_{gx^{-1}}(y).
\end{align*}
The second equality above follows from the fact that for $\tilde{w} \in \tilde{W}_a^1$, $\chi|_{U \cap \tilde{w}I_m\tilde{w}^{-1}} = 1$. This proves $(a)$.\\
For $(b)$, notice that $I_m \tilde{s_i}I_m = \displaystyle \coprod_{\{t_\nu\}}I_m \lu_{-\alpha_i}(\pi^mt_\nu)\tilde{s_i} = \coprod_{\{t_\nu\}}I_m \tilde{s_i}\lu_{\alpha_i}(-\pi^mt_\nu)$. With $x = \tilde{s_i}, $
\begin{align*}
 \vol(U \cap gI_mg^{-1}) ^{-1}&\int_{U \cap yI_mxI_mg^{-1}} \chi(u)du \\
&= \vol(U \cap gI_mg^{-1}) ^{-1}\sum_{\{t_\nu\}}\int_{U \cap y I_m \tilde{s_i}\lu_{\alpha_i}(-\pi^mt_\nu)g^{-1}} \chi(u)du.
\end{align*}
Now, $ U \cap y I_m \tilde{s_i}\lu_{\alpha_i}(-\pi^mt_\nu)g^{-1} \neq \emptyset \iff y \in Ug \lu_{\alpha_i}(\pi^mt_{\nu})\tilde{s_i}^{-1}I_m$ and we proceed as before to finish the proof of $(b)$.\\
For $(c)$, note that $I_m \tilde{s_0}I_m = \displaystyle \coprod_{\{t_\nu\}}I_m \lu_{\alpha_0}(\pi^{m-1}t_\nu)\tilde{s_0} = \coprod_{\{t_\nu\}}I_m \tilde{s_0}\lu_{-\alpha_0}(-\pi^{m+1}t_\nu)$. Now, proceed as in $(b)$ to complete the proof.
\end{proof}
Recall that the elements  of $\tilde{W}_a^1$ satisfy Equation \eqref{wIm1}. We have a similar description for $\tilde{W}_a^{'1}$, the corresponding object over $F'$. For $g \in G'$, let $h'_g \in E(\chi')_g^m$ be the function characterized by\\
\begin{equation*}
h'_g(g)=
\begin{cases} 1 & \text{if $g \in U'\tilde{W}_a'^1I'$,}
\\
0 &\text{otherwise.}
\end{cases}
\end{equation*}
For $ \tilde{w} \in \tilde{W_a}, b \in I$, define \[\kappa(h_{\tilde{w}b}) = h'_{\tilde{w}'b'},\] where $ b \sim_{\beta}b'$ (Check Equation \eqref{betacloselocalfields}).
\begin{lemma} The map $h_{\tilde{w}b} \rightarrow \kappa(h_{\tilde{w}b})$ extends  to an isomorphism of $\mathbb{C}$-vector spaces $\kappa: E(\chi)^m \rightarrow E(\chi')^m$.
\end{lemma}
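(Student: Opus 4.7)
The plan is to exhibit a natural basis of $E(\chi)^m$ indexed by pairs $(\tilde{w}, [b])$, where $\tilde{w} \in \tilde{W}_a^1$ and $[b]$ runs over a certain double coset subquotient of $I/I_m$, produce the analogous basis for $E(\chi')^m$, and verify that the assignment $(\tilde{w}, [b]) \mapsto (\tilde{w}', [b'])$ induced by the bijections $a$ and $\beta$ is a well-defined bijection of indexing sets that agrees with the definition of $\kappa$ on basis elements.

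Applying Lemma \ref{ind} with $H = U$ and $K = I_m$ gives a basis of $E(\chi)^m$ indexed by those $[g] \in U\backslash G/I_m$ with $\chi|_{U \cap gI_m g^{-1}} = 1$, the basis element attached to $g$ being exactly $h_g$. Using the decomposition $G = \coprod_{\tilde{w} \in \tilde{W}_a} U\tilde{w}I$ and the elementary identity $U\tilde{w}b_1 I_m = U\tilde{w}b_2 I_m \iff b_1 \in (\tilde{w}^{-1}U\tilde{w}\cap I)\,b_2\,I_m$, the indexing refines to pairs $(\tilde{w}, [b])$ with $[b] \in (\tilde{w}^{-1}U\tilde{w}\cap I)\backslash I/I_m$. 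Since $I_m$ is normal in $I$, we have $\tilde{w}b I_m b^{-1}\tilde{w}^{-1} = \tilde{w}I_m\tilde{w}^{-1}$, so the condition $\chi|_{U \cap \tilde{w}bI_m b^{-1}\tilde{w}^{-1}} = 1$ depends only on $\tilde{w}$ and is exactly the condition $\tilde{w} \in \tilde{W}_a^1$ extracted from Equation \eqref{wIm1} in the excerpt.

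Next I would show that the indexing set is transported bijectively to its analogue for $(G', \chi', I_m')$. The bijection $a: \tilde{W}_a \to \tilde{W}_a'$ restricts to $\tilde{W}_a^1 \to \tilde{W}_a'^1$ because the conditions in Equation \eqref{wIm1} depend only on the integers $a_\alpha(w)$, which are intrinsic to $w$, and on the conductors $m_\alpha = \cond(\chi_\alpha) = \cond(\chi'_\alpha)$, where the last equality is the compatibility imposed in Equation \eqref{chialphacomp}. For the $b$-parameter, $\beta: I/I_m \xrightarrow{\cong} I'/I_m'$ arises from an isomorphism of Iwahori group schemes (Section \ref{IGS}), hence respects the Iwahori factorization into root subgroups and torus. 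Since $\tilde{w}^{-1}U\tilde{w} \cap I$ is an explicit product of root subgroups with valuation conditions determined by $w$ and the universal structure constants of Section \ref{stdnotations}, $\beta$ sends $(\tilde{w}^{-1}U\tilde{w} \cap I)/(\tilde{w}^{-1}U\tilde{w} \cap I_m)$ onto $(\tilde{w}'^{-1}U'\tilde{w}' \cap I')/(\tilde{w}'^{-1}U'\tilde{w}' \cap I_m')$, and therefore descends to a bijection $(\tilde{w}^{-1}U\tilde{w}\cap I)\backslash I/I_m \to (\tilde{w}'^{-1}U'\tilde{w}'\cap I')\backslash I'/I_m'$ that is exactly $[b] \mapsto [b']$ with $b \sim_\beta b'$ in the sense of Equation \eqref{betacloselocalfields}.

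One then defines $\kappa$ on basis elements by $h_{\tilde{w}b} \mapsto h'_{\tilde{w}'b'}$ and extends $\mathbb{C}$-linearly; well-definedness follows because both sides depend only on the class $[b]$ in the respective double-coset space, and bijectivity is immediate since the basis sets are in explicit bijection. The main obstacle is the bookkeeping in the previous paragraph: one must carefully identify $\tilde{w}^{-1}U\tilde{w} \cap I$ as a product of root subgroups whose valuation exponents depend only on the underlying element $w \in W_a$ (not on the choice of lift $\tilde{w}$), invoke the equality of structure constants modulo $\calp^m$ from Section \ref{stdnotations} to see that $\beta$ identifies these subgroups over $F$ and $F'$, and check the corresponding equality for the subgroup $U \cap \tilde{w}I_m \tilde{w}^{-1}$ that enters the defining condition of $\tilde{W}_a^1$. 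Once this is in place, the extension of $\kappa$ to an isomorphism of $\mathbb{C}$-vector spaces is formal.
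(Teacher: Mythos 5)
There is a genuine gap, concentrated in your sentence ``well-definedness follows because both sides depend only on the class $[b]$ in the respective double-coset space.'' This is false. The function $h_{\tilde{w}b}$ is \emph{not} constant on the class of $b$ in $(I \cap \tilde{w}^{-1}U\tilde{w})\backslash I/I_m$: by Equation \eqref{homo}, if $u \in I \cap \tilde{w}^{-1}U\tilde{w}$ then $\tilde{w}ub = (\tilde{w}u\tilde{w}^{-1})\tilde{w}b$ with $\tilde{w}u\tilde{w}^{-1}\in U$, so $h_{\tilde{w}ub} = \chi(\tilde{w}u\tilde{w}^{-1})^{-1}\,h_{\tilde{w}b}$. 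As $b$ runs over a single double coset the functions $h_{\tilde{w}b}$ sweep out a one-dimensional space, picking up nontrivial scalars. The defining formula $\kappa(h_{\tilde{w}b}) = h'_{\tilde{w}'b'}$ is therefore prescribed on a heavily linearly dependent spanning set, and well-definedness of $\kappa$ as a linear map is a real constraint, not a formality: one must check that the scalar relations among the $h_{\tilde{w}b}$ on the $F$-side match the scalar relations among the $h'_{\tilde{w}'b'}$ on the $F'$-side. Concretely, one needs
\[
\chi(\tilde{w}u\tilde{w}^{-1}) = \chi'(\tilde{w}'u'\tilde{w}'^{-1}) \quad\text{for all } u \in I \cap \tilde{w}^{-1}U\tilde{w},\ u\sim_\beta u',\ \tilde{w}\in\tilde{W}_a^1.
\]
This is the substantive content of the lemma and is entirely absent from your proposal.

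This identity is not ``bookkeeping'' of the type you describe in your last paragraph (matching subgroup structure via $\beta$). It genuinely requires the hypothesis that $F,F'$ are $(m+1)$-close rather than merely $m$-close, together with the compatibility $\chi_\alpha'\!\restriction\!\calp'^{m_\alpha-m-1}/\calp'^{m_\alpha} = \chi_\alpha\!\restriction\!\calp^{m_\alpha-m-1}/\calp^{m_\alpha}$ of Equation \eqref{chialphacomp}, and the membership $\tilde{w}\in\tilde{W}_a^1$ from Equation \eqref{wIm1} to guarantee that the relevant arguments $c_{x.\beta,w}t_\beta\pi^{a_\beta(w)}$ fall in the range where $\chi_\alpha$ and $\chi'_\alpha$ are forced to agree. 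Your argument nowhere uses the $(m+1)$-closeness (as opposed to $m$-closeness), which is a telltale that the key step is missing; you should trace where that extra level of closeness enters, compute $\tilde{w}u\tilde{w}^{-1}$ via an Iwahori factorization of $u$, and verify the scalar equality directly. Once that is in place, your double-coset bookkeeping does yield bijectivity, and that part of your argument matches the paper's.
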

\begin{proof}
Since the $h_g$'s satisfy Equation \eqref{homo}, $\kappa$ is a homomorphism of $\mathbb{C}$-vector spaces iff \[\chi(\tilde{w}u\tilde{w}^{-1})\kappa(h_{\tilde{w}ub}) = \kappa(h_{\tilde{w}b})\] for $ u \in I \cap \tilde{w}^{-1}U\tilde{w}$. Put $w = (\lambda, x)$. For $ u \in I \cap \tilde{w}^{-1}U\tilde{w}$, choose $ u' \in I' \cap \tilde{w}'^{-1}U'\tilde{w}'$ such that $ u \sim_\beta u'$.  Fix an Iwahori factorization of  $u$ as
\[ u =   \displaystyle{ \prod_{\beta \in \Phi^+, x.\beta \in \Phi^+}\lu_{\beta}(t_{\beta}) . \prod_{\beta \in \Phi^-, x.\beta \in \Phi^+} \lu_{\beta}(\pi t_{\beta})}.\]
Then 
\[ u' =   \displaystyle{ \prod_{\beta \in \Phi^+, x.\beta \in \Phi^+}\lu_{\beta}(t_{\beta}') . \prod_{\beta \in \Phi^-, x.\beta \in \Phi^+} \lu_{\beta}(\pi' t_{\beta}')}\]
where $t_{\beta} \sim_{\Lambda_0} t_\beta'$. Computing $\tw u\tw^{-1}$, we have 
\[\tilde{w}u\tilde{w}^{-1}=  \displaystyle{ \prod_{\beta \in \Phi^+, x.\beta \in \Phi^+}\lu_{x.\beta}(c_{x.\beta,w}t_{\beta}\pi^{a_{\beta}(w)}) .\prod_{\beta \in \Phi^-, x.\beta \in \Phi^+} \lu_{x.\beta}(c_{x.\beta,w}\pi t_{\beta}\pi^{a_{\beta}(w)})}\]
and
\[\tilde{w}'u'\tilde{w}'^{-1}=  \displaystyle{ \prod_{\beta \in \Phi^+, x.\beta \in \Phi^+}\lu_{x.\beta}(c_{x.\beta,w}'t_{\beta}'\pi'^{a_{\beta}(w)}) . \prod_{\beta \in \Phi^-, x.\beta \in \Phi^+} \lu_{x.\beta}(c_{x.\beta,w}'\pi' t_{\beta}'\pi'^{a_{\beta}(w)})}\]
where $c_{x.\beta, w}  = \pm 1$ and $c_{x.\beta, w} \sim_{\Lambda_0} c_{x.\beta, w}'$. 
Since $\tilde{w} \in \tilde{W}_a^{1}$, in view of Equations \eqref{chialphacomp} and  \eqref{wIm1}, we see that $\chi(\tilde{w}u\tilde{w}^{-1}) = \chi'(\tilde{w}'u'\tilde{w}'^{-1})$. Finally
\[\kappa(h_{\tilde{w}ub}) = h'_{\tilde{w}'u'b'} = \chi'(\tilde{w}'u'\tilde{w}'^{-1})h'_{\tilde{w}'b'} = \chi(\tilde{w}u\tilde{w}^{-1})\kappa(h_{\tilde{w}b}).\]
We need to check that $\kappa$ is an isomorphism. For each $ \tilde{w} \in \tilde{W}_a^{1}$, let $R(\tilde{w})$ be the system of representatives of $(I \cap \tilde{w}^{-1}U\tilde{w}) \backslash I / I^m$ in $I$. Then the functions $h_{\tilde{w}b},\; \tilde{w} \in \tilde{W}_a^{1}, b \in R(\tilde{w})$ completely determine $E(\chi)^m$. Since $\beta$ induces, by passage to quotient, a bijection between $ (I \cap \tilde{w}^{-1}U\tilde{w}) \backslash I / I^m$ and $(I' \cap \tilde{w}'^{-1}U'\tilde{w}') \backslash I' / I'^m$, we see that $\kappa(h_{\tilde{w}b}), \, \tilde{w} \in \tilde{W}_a^{1}, b \in R(\tilde{w})$ forms a basis for $E(\chi')^m$.
\end{proof}

\begin{theorem}\label{genericrepresentationshecke}
Assume $F$ and $F'$ are $(m+1)$-close and let $\chi$ correspond to $\chi'$ as before. Then for each $f \in \mathscr{H}(G, I_m)$, the following diagram commutes:
\begin{displaymath}
    \xymatrix{
        E(\chi)^m \ar[r]^{\sigma(f)} \ar[d]_{\kappa} & E(\chi)^m \ar[d]^{\kappa} \\
        E(\chi')^m \ar[r]_{\sigma'(\zeta(f))}       & E(\chi')^m }
\end{displaymath}
where $\zeta$ is given in Theorem \ref{closealgebra}.
\end{theorem}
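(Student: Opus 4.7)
The plan is to reduce to checking the diagram on a set of generators of $\mathscr{H}(G,I_m)$ and on a convenient basis of $E(\chi)^m$. By Theorem \ref{presentation}, the Hecke algebra $\mathscr{H}(G,I_m)$ is generated by $f_b$ for $b\in I$, by $f_{\tilde{s}_i}$ for $s_i\in S$, by $f_{\tilde{\rho}_i^{\pm1}}$ for $i=1,\dots,l$ and by $f_{\tilde{\mu}_j}$ for $j=1,\dots,k$; and by Theorem \ref{closealgebra}, the isomorphism $\zeta$ sends each of these to the corresponding element over $F'$. Since both $\sigma(f)\circ\kappa^{-1}$ and $\kappa^{-1}\circ\sigma'(\zeta(f))$ depend linearly on $f$, it is enough to verify commutativity of the diagram for every such generator, evaluated on each element of the basis $\{h_{\tilde{w}b}:\tilde{w}\in\tilde{W}_a^1,\,b\in R(\tilde{w})\}$ of $E(\chi)^m$.

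For the generators $f_b$ with $b\in I$, a direct calculation analogous to Lemma \ref{heckeaction}(a) gives $\sigma(f_b)(h_g)=h_{gb^{-1}}$; since $b\sim_\beta b'$ and $\kappa$ is defined on the basis by $\kappa(h_{\tilde{w}c})=h'_{\tilde{w}'c'}$ with $c\sim_\beta c'$, the diagram commutes on this family. The generators $f_{\tilde{\rho}_i^{\pm1}}$ and $f_{\tilde{\mu}_j}$ are handled by Lemma \ref{heckeaction}(a) combined with Remark \ref{Remark2}: the representatives of $W_a$ in $N_G(T)$ and of $W_a$ in $N_{G'}(T')$ are chosen with the same Chevalley basis and the uniformizers $\pi\sim_{\Lambda_0}\pi'$, so that $\tilde{w}x^{-1}$ and $\tilde{w}'x'^{-1}$ decompose in matching ways and the equality $\kappa(h_{\tilde{w}bx^{-1}})=h'_{\tilde{w}'b'x'^{-1}}$ follows from the definition of $\kappa$ applied to the corresponding basis element (with possibly an element of $T_{\calo}$ absorbed via Remark \ref{Remark2}).

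The crucial case is $f=f_{\tilde{s}_i}$. Lemma \ref{heckeaction}(b)--(c) gives
\[
\sigma(f_{\tilde{s}_i})(h_{\tilde{w}b}) \;=\; \sum_{t_\nu} h_{\tilde{w}b\,\lu_{\pm\alpha_i}(\pi^{m\text{ or }m+1}t_\nu)\,\tilde{s}_i^{-1}},
\]
where $\{t_\nu\}$ runs over representatives of $\calo/\calp$. To apply $\kappa$, I would rewrite each summand in the form $\chi(u)h_{\tilde{w}_\nu c_\nu}$ with $\tilde{w}_\nu\in\tilde{W}_a^1$ and $c_\nu\in I$, using the Iwahori factorization of $I_m$ and the commutator relations among root subgroups together with Lemma \ref{conjI} to peel off the $U$-component $u\in U\cap\tilde{w}_\nu I_m\tilde{w}_\nu^{-1}$ from $g\lu_{\pm\alpha_i}(\cdots)\tilde{s}_i^{-1}$. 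The same rewriting performed over $F'$, with $t_\nu$ replaced by $t'_\nu$ (where $t_\nu\sim_{\Lambda_0}t'_\nu$), produces the analogous expression on the right-hand side of the diagram, since the structure constants of $\bfg$ are defined over $\Z$ and hence transfer identically. The point where $(m+1)$-closeness (rather than $m$-closeness) becomes essential is that the negative-root components of $I_m$ sit in $\calp^{m+1}$, and the additive characters $\chi_\alpha$ and $\chi_\alpha'$ agree on $\calp^{m_\alpha-m-1}/\calp^{m_\alpha}$ by \eqref{chialphacomp}; combined with the constraint \eqref{wIm1} defining $\tilde{W}_a^1$, this forces the Whittaker factor $\chi(u)$ arising on the left to equal $\chi'(u')$ arising on the right.

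The main technical obstacle is precisely this last bookkeeping step: controlling how the ``threshold'' element $\lu_{\alpha_i}(\pi^m t_\nu)$ (respectively $\lu_{-\alpha_0}(\pi^{m+1}t_\nu)$) interacts with the factorization $g=\tilde{w}b$ and disentangling the resulting $U\cap\tilde{w}I_m\tilde{w}^{-1}$ contribution. Once one observes that every element of $U$ so produced lies in a subgroup on which $\chi$ has conductor matching $\chi'$, and that the remaining data (Iwahori components, torus translations from Remark \ref{Remark2}, structure constants) are manifestly compatible via $\Lambda_0$ and $\beta$, the two sums match term-by-term under the bijection $t_\nu\leftrightarrow t'_\nu$, completing the verification.
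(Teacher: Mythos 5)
Your proposal is correct and follows essentially the same route as the paper: reduce to the generators of Theorem \ref{presentation} acting on the basis $\{h_{\tilde{w}b}\}$ of $E(\chi)^m$, dispatch $f_b$, $f_{\tilde{\rho}_i^{\pm1}}$, $f_{\tilde{\mu}_j}$ via Lemma \ref{heckeaction}(a) and Remark \ref{Remark2}, and reduce the $f_{\tilde{s}_i}$ and $f_{\tilde{s}_0}$ cases to matching the character value $\chi(u)$ with $\chi'(u')$ using \eqref{chialphacomp}, the constraint \eqref{wIm1} on $\tilde{W}_a^1$, relation \eqref{relation}, and the $\Z$-rationality of the Chevalley structure constants. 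What you leave as a bookkeeping sketch the paper actually carries out as an explicit case split (whether $\Ad\tilde{w}(\lu_{\alpha_i}(t))$ lands in $U$ or not, and then whether $l(ws_i)=l(w)\pm 1$), with the key final observation that the problematic configuration ``$-x.\alpha_i$ simple, $m_\alpha>m$, $\tw\in\tilde W^1_a$'' is vacuous — so your claim that every $U$-contribution lies where the conductors of $\chi$ and $\chi'$ agree is exactly what those cases verify.
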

\begin{proof}
It suffices to prove that $\kappa(\sigma(f)(h_{\tilde{w}b}) )= \sigma'(\zeta(f)) \kappa(h_{\tilde{w}b}),$
where $\tilde{w} \in \tilde{W}_a^{1}, b \in I$, and $f$ is a generator of $\mathscr{H}(G, I_m)$ given in $(a), (b)$, $(c)$, and $(d)$ of Theorem \ref{presentation}.
If $ x \in I$, let $x' \in I'$ such that $ x \sim_\beta x'$. Then, $bx^{-1} \sim_\beta b'x'^{-1}$. Hence,
\[\kappa(\sigma(f_x)(h_{\tilde{w}b}) ) = \kappa(h_{\tilde{w}bx^{-1}}) = h_{\tilde{w}'b'x'^{-1}} = \sigma'(f'_{x'})(h'_{\tilde{w}'b'}) = \sigma'(\zeta_m(f_x))\kappa(h_{\tilde{w}b}).\]
If $ x \in\{\tilde{\rho}_1, \tilde{\rho}_1^{-1}, \ldots, \tilde{\rho}_l, \tilde{\rho}_l^{-1}, \tilde{\mu}_1, \ldots, \tilde{\mu}_k\}$, then  \[\kappa(\sigma(f_x)(h_{\tilde{w}b}) ) = \kappa(h_{\tilde{w}bx^{-1}})\] by Lemma \ref{heckeaction}.
We will just deal with the case  $ x = \tilde{\rho}_i^{-1}$. Write \[\tilde{w} =  \ts_{i_1}\ldots \ts_{i_c} \tilde{ \rho}_1^{t_1}\ldots \tilde{\rho}_l^{t_l}\tilde{\mu}_1^{r_1} \ldots \tilde{\mu}_k^{r_k}.\] Note that
$\tilde{w}\tilde{\rho}_i = \widetilde{w\rho_i}c$
where $c$ is an element of order $\leq 2$ in $T$, uniquely determined by $w$ and $\rho_i$. This constant $c$ arises out of some product of structure constants appearing in Lemma \ref{rho} (Check Remark \ref{Remark2} for a more detailed explanation). 

Let $c'$ be such that $\tilde{w}'\tilde{\rho}_i' = \widetilde{w\rho_i}'c'$. Then it is clear that $c \sim_{\beta}c'$.
Since $ b \sim_\beta b' \implies\Ad\tilde{\rho_i}(b) \sim_{\beta}\Ad\tilde{\rho_i}'(b')$, we see that
\begin{align*}
\kappa(h_{\tilde{w}bx^{-1}}) & = \kappa(h_{\tilde{w}\tilde{\rho_i}\Ad\tilde{\rho_i}^{-1}(b)})
= \kappa(h_{\widetilde{w\rho_i}c\Ad\tilde{\rho_i}^{-1}(b)} ) \\ &= h'_{\widetilde{w\rho_i}'c'\Ad\tilde{\rho_i}'^{-1}(b')}
 = h'_{\tilde{w}'\tilde{\rho_i}'c'^{-1}c'\Ad\tilde{\rho_i}'^{-1}(b')}\\
& = h'_{\tilde{w}'b'\tilde{\rho_i}'}= \sigma'(\zeta(f_{\tilde{\rho_i}^{-1}})) \kappa(h_{\tilde{w}b}).
\end{align*}
We will now deal with the case $x = \tilde{s_i},$ where $s_i = s_{\alpha_i} \in S_1$. By Lemma \ref{heckeaction}, we have
\[ \kappa(\sigma(f_{\tilde{s_i}}) h_{\tilde{w}b}) = \kappa \left(\sum_{\{t_\nu\}}h_{\tilde{w}b\lu_{\alpha_i}(\pi^mt_\nu)\tilde{s_i}^{-1}}\right).\]
Write $b = \lu_{\alpha_i}(t)c$, where $c \in I \cap\Ad\tilde{s_i}(I)$ and $t \in \calo^\times$. Then\[\tilde{w}b\lu_{\alpha_i}(\pi^mt_\nu)\tilde{s_i}^{-1} = \tilde{w}\lu_{\alpha_i}(t) \tilde{s_i}^{-1}\Ad\tilde{s_i}(c\lu_{\alpha_i}(\pi^mt_\nu)).\]
Fix $ c' \in I' \cap\Ad\tilde{s_i}(I')$ such that $ c \sim_{\beta}c',\Ad\tilde{s_i}(c) \sim_{\beta} \Ad\tilde{s_i}'(c')$. We also have \[\Ad\tilde{s_i}(\lu_{\alpha_i}(\pi^mt_\nu)) \sim_\beta\Ad\tilde{s_i}(\lu_{\alpha_i}(\pi'^mt_\nu')),\]
where $t_\nu'$ is chosen so that $t_\nu' \sim_{\Lambda_0} t_\nu$. We distinguish two cases.\\
\textbf{Case $1_i$}: Suppose $\Ad\tilde{w}(\lu_{\alpha_i}(t)) \in U$. Then,
\[ \kappa(h_{\tilde{w}b\lu_{\alpha_i}(\pi^mt_\nu)\tilde{s_i}^{-1}}) = \chi(\Ad\tilde{w}(\lu_{\alpha_i}(t)))\kappa(h_{\tilde{w}\tilde{s_i}\alpha_i^\vee(-1)\Ad\tilde{s_i}(c\lu_{\alpha_i}(\pi^mt_\nu))}).\]
Now, arguing as in the previous case, using Lemma \ref{rhoS}, we see that $\tilde{w}\tilde{s_i} = \widetilde{ws_i}a$, where $a$ is an element of order $\leq 2$ in $T$. Similarly,  $\tilde{w}'\tilde{s_i}' = \widetilde{ws_i}'a'$, and $ a \sim_{\beta}a'$. Moreover,  we clearly have
\[ a\alpha_i^\vee(-1)\Ad\tilde{s_i}(c\lu_{\alpha_i}(\pi^mt_\nu)) \sim_{\beta} a'\alpha_i^\vee(-1)\Ad\tilde{s_i}'(c'\lu_{\alpha_i}(\pi'^mt_\nu')),\]
 and consequently,
\[ \kappa(h_{\tilde{w}\tilde{s_i}\alpha_i^\vee(-1)\Ad\tilde{s_i}(c\lu_{\alpha_i}(\pi^mt_\nu))}) = h'_{\tilde{w}'\tilde{s_i}'\alpha_i^\vee(-1)\Ad\tilde{s_i}'(c'\lu_{\alpha_i}(\pi'^mt_\nu'))}.\]
Hence it remains to see that $\chi(\Ad\tilde{w}(\lu_{\alpha_i}(t))) = \chi'(\Ad\tilde{w}'(\lu_{\alpha_i}(t'))) $.
Note that \[\Ad\tilde{w}(\lu_{\alpha_i}(t)) = \lu_{\alpha}(c_{\alpha_i,w}\pi^{a_{\alpha_i}(w)}t), \] where $\alpha  = x.\alpha_i$, with $w = (\lambda, x)$. Now if $\alpha$ is not simple, then \[\lu_{\alpha}(c_{\alpha_i,w}\pi^{a_{\alpha_i}(w)}t) \in \operatorname{Ker}(\chi) \text{  (respectively for }\chi').\] Otherwise $a_{\alpha_i}(w) \geq m_\alpha - m$ by Equation \eqref{wIm1}.  In the latter case,
\begin{align*}
 \chi(\Ad\tilde{w}(\lu_{\alpha_i}(t))) &= \chi_{\alpha}(c_{\alpha_i,w}\pi^{a_{\alpha_i}(w)}t) = \chi'_{\alpha}(c_{\alpha_i,w}'\pi'^{a_{\alpha_i}(w)}t')= \chi'(\Ad\tilde{w}'(\lu_{\alpha_i}(t'))).
 \end{align*}
For the second equality above, note that \[c_{\alpha_i,w}t \sim_{\Lambda_0} c_{\alpha_i,w}' t' \implies c_{\alpha_i,w}\pi^{a_{\alpha_i}(w)}t \sim_{\Lambda_{m_\alpha -m-1}} c_{\alpha_i,w}\pi'^{a_{\alpha_i}(w)}t',\] and
$\chi_\alpha\restriction\calp^{m_\alpha -m-1}/\calp^{m_\alpha}  = \chi'_\alpha \restriction \calp'^{m_\alpha-m-1}/\calp'^{m_\alpha}$. This finishes \textit{Case $1_i$}.\\
\textbf{Case $2_i$}: Suppose $\Ad\tilde{w}(\lu_{\alpha_i}(t)) \notin U$. Then $\Ad\tilde{w}\tilde{s_i}(\lu_{\alpha_i}(t)) \in U$. Using relation \eqref{relation},
\begin{align*}
\tilde{w}b\lu_{\alpha_i}(\pi^mt_\nu)\tilde{s_i}^{-1} &= \tilde{w}\lu_{\alpha_i}(t)\tilde{s_i}^{-1}\Ad\tilde{s_i}(c\lu_{\alpha_i}(\pi^mt_\nu))\\
& = \tilde{w}\tilde{s_i}^{-1}\Ad\tilde{s_i}(\lu_{\alpha_i}(t))\Ad\tilde{s_i}(c\lu_{\alpha_i}(\pi^mt_\nu))\\
& = \tilde{w}\tilde{s_i}^{-1} \lu_{\alpha_i}(-t^{-1}) \tilde{s_i}\alpha_i^\vee(t)\lu_{\alpha_i}(-t^{-1})\Ad\tilde{s_i}(c\lu_{\alpha_i}(\pi^mt_\nu))\\
& = \tilde{w}\tilde{s_i} \lu_{\alpha_i}(-t^{-1}) \tilde{s_i}^{-1}\alpha_i^\vee(t)\lu_{\alpha_i}(-t^{-1})\Ad\tilde{s_i}(c\lu_{\alpha_i}(\pi^mt_\nu))\\
& =\Ad\tilde{w}\tilde{s_i}( \lu_{\alpha_i}(-t^{-1}))\tilde{w}\alpha_i^\vee(t)\lu_{\alpha_i}(-t^{-1})\Ad\tilde{s_i}(c\lu_{\alpha_i}(\pi^mt_\nu)).
\end{align*}
Hence
\begin{align*}
\kappa(h_{\tilde{w}b\lu_{\alpha_i}(\pi^mt_\nu)\tilde{s_i}^{-1}}) &= \chi(\Ad\tilde{w}\tilde{s_i}( \lu_{\alpha_i}(-t^{-1})) \kappa(h_{\tilde{w}\alpha_i^\vee(t)\lu_{\alpha_i}(-t^{-1})\Ad\tilde{s_i}(c\lu_{\alpha_i}(\pi^mt_\nu))})\\
& = \chi(\Ad\tilde{w}\tilde{s_i}( \lu_{\alpha_i}(-t^{-1})) h'_{\tilde{w}'\alpha_i^\vee(t')\lu_{\alpha_i}(-t'^{-1})\Ad\tilde{s_i}'(c'\lu_{\alpha_i}(\pi'^mt'_\nu))}.
\end{align*}
Now we only need to prove that \[ \chi(\Ad\tilde{w}\tilde{s_i}( \lu_{\alpha_i}(-t^{-1})) = \chi'(\Ad\tilde{w}'\tilde{s_i}'( \lu_{\alpha_i}(-t'^{-1})).\]
If $l(ws_i) = l(w) - 1$, then $\Ad\tilde{w}\tilde{s_i}(U_{\alpha_i, \calo}) \subset I$ by Lemma \ref{conjI}. Hence
\[\Ad\tilde{w}\tilde{s_i}( \lu_{\alpha_i}(-t^{-1})) \in {\bf U}(\calo)\] and we argue as in $Case\;1_i$ to finish the proof.\\
If $l(ws_i) = l(w) + 1$, then $\Ad\tilde{w}(\lu_{\alpha_i}(\calo)) \subset I \cap {\bf U}^-$. More precisely, \[\Ad\tilde{w}(\lu_{\alpha_i}(\calo) )= \lu_{x.\alpha_i}(\calp^{a_{\alpha_i}(w)})\] with $a_{\alpha_i}(w) \geq 1$ by the proof of Lemma \ref{conjI}.
Then \[\Ad\tilde{w}\tilde{s_i}(\lu_{\alpha_i}(\calo)) = \lu_{-x.\alpha_i}(\calp^{-a_{\alpha_i}(w)})\]  (recall that $a_{-\alpha}(w) = -a_{\alpha}(w)$).
Now, if $\alpha = -x.\alpha_i$ is not simple, then \[\Ad\tilde{w}\tilde{s_i}( \lu_{\alpha_i}(-t^{-1}) \in \operatorname{Ker}(\chi) \;(\text{respectively for $\chi'$)}.\]
If $\alpha$ is simple then, since we are assuming that $\tw \in \tilde{W}_a^1$, we have $a_{-\alpha_i}(w) \geq m_\alpha-m-1$ by Equation \eqref{wIm1}. Hence we see that $m_\alpha - m-1 \leq a_{-\alpha_i}(w) \leq -1$. This is possible only if $m_\alpha \leq m$. (The case where $l(ws_i) = l(w)+1$, $-x.\alpha_i$ is simple, $m_\alpha>m$ and $w\in W_a^1$ is not possible because if all these are true then the above inequality $m_\alpha - m-1 \leq a_{-\alpha_i}(w) \leq -1$ has to be satisfied, which is not possible).   Finally,
\begin{align*}
\chi(\Ad\tilde{w}\tilde{s_i}( \lu_{\alpha_i}(-t^{-1})))& = \chi_{-x.\alpha_i}(-c_{\alpha_i,w}\pi^{-a_{\alpha_i}(w)}t^{-1})\\& = \chi'_{-x.\alpha_i}(-c_{\alpha_i,w}'\pi'^{-a_{\alpha_i}(w)}t'^{-1}) = \chi'(\Ad\tilde{w}'\tilde{s_i}'( \lu_{\alpha_i}(-t'^{-1}))).
\end{align*}
To see the second equality above, note that since $ t^{-1} \sim_{\Lambda_0} t'^{-1}$, we have \[c_{\alpha_i,w}\pi^{-a_{\alpha_i}(w)} t^{-1} \sim_{\Lambda_{m_\alpha-m-1}} c_{\alpha_i,w}'\pi'^{-a_{\alpha_i}(w)} t'^{-1}\] and \[\chi_{-x.\alpha_i} \restriction \calp^{m_\alpha-m-1}/\calp^{m_\alpha}  = \chi'_{-x.\alpha_i} \restriction \calp'^{m_\alpha-m-1}/\calp'^{m_\alpha}.\]

We finally deal with the case $x = \tilde{s_0}$ where $s_0 = s_0^{(j)} \in S_2$ and $\alpha_0 = \alpha_0^{(j)}$ the highest root of $\Phi^{(j)}$. By Lemma \ref{heckeaction}, we have
\[ \kappa(\sigma(f_{\tilde{s_0}}) h_{\tilde{w}b}) = \kappa \left(\sum_{\{t_\nu\}}h_{\tilde{w}b\lu_{-\alpha_0}(\pi^{m+1}t_\nu)\tilde{s_0}^{-1}}\right).\]
Write $b = \lu_{-\alpha_0}(\pi t)c$, where $c \in I \cap\Ad\tilde{s_0}(I)$ and $t \in\calo^\times$. Then\[\tilde{w}b\lu_{-\alpha_0}(\pi^{m+1}t_\nu)\tilde{s_0}^{-1} = \tilde{w}\lu_{-\alpha_0}(\pi t) \tilde{s_0}^{-1}\Ad\tilde{s_0}(c\lu_{-\alpha_0}(\pi^{m+1}t_\nu)).\]
Fix $ c' \in I' \cap\Ad\tilde{s_0}(I')$ such that $ c \sim_{\beta}c'$ and $\Ad\tilde{s_0}(c) \sim_{\beta} \Ad\tilde{s_0}'(c')$. We also have \[\Ad\tilde{s_0}(\lu_{-\alpha_i}(\pi^{m+1}t_\nu)) \sim_\beta\Ad\tilde{s_0}(\lu_{-\alpha_0}(\pi'^{m+1}t_\nu')).\] We distinguish two cases:

\noindent \textbf{Case $1_0$}: Suppose $\Ad\tilde{w}(\lu_{-\alpha_0}(\pi t)) \in U$. Then
\[ \kappa(h_{\tilde{w}b\lu_{-\alpha_0}(\pi^{m+1}t_\nu)\tilde{s_0}^{-1}}) = \chi(\Ad\tilde{w}(\lu_{-\alpha_0}(\pi t)))\kappa(h_{\tilde{w}\tilde{s_0}h_{-\alpha_0}(-1)\Ad\tilde{s_0}(c\lu_{-\alpha_0}(\pi^{m+1}t_\nu))}).\]
Now, arguing as before, using Lemma \ref{rhoS}, we see that $\tilde{w}\tilde{s_0} = \widetilde{ws_0}a$ where $a$ is an element of order $\leq 2$ in $T$. Similarly,  $\tilde{w}'\tilde{s_0}' = \widetilde{ws_0}'a'$ and $ a \sim_{\beta}a'$. Hence
\[ \kappa(h_{\tilde{w}\tilde{s_0}h_{-\alpha_0}(-1)\Ad\tilde{s_0}(c\lu_{-\alpha_0}(\pi^{m+1} t_\nu))}) = h'_{\tilde{w}'\tilde{s_0}'h_{-\alpha_0}(-1)\Ad\tilde{s_0}'(c'\lu_{-\alpha_0}(\pi'^{m+1}t_\nu'))}.\]
Hence it remains to see that $\chi(\Ad\tilde{w}(\lu_{-\alpha_0}(\pi t))) = \chi'(\Ad\tilde{w}'(\lu_{-\alpha_0}(\pi' t'))) $.
Note that \[\Ad\tilde{w}(\lu_{-\alpha_0}(\pi t)) = \lu_{\alpha}(c_{\alpha_0,w}\pi^{1-a_{\alpha_0}(w)}t), \] where $\alpha  = -x.\alpha_0$, with $w = (\lambda, x)$. Now if $\alpha$ is not simple, then \[\lu_{\alpha}(c_{\alpha_0,w}\pi^{1-a_{\alpha_0}(w)}t) \in \operatorname{Ker}(\chi) \text{ (respectively for $\chi'$)}.\]
If $\alpha$ is simple, then $1-a_{\alpha_0}(w) \geq m_\alpha-m$ by Equation \eqref{wIm1} (here, note that $a_{-\alpha_0}(w) = -a_{\alpha_0}(w))$.  In the latter case,
\begin{align*}
 \chi(\Ad\tilde{w}(\lu_{\alpha_0}(\pi t))) = \chi_{-\alpha}(c_{\alpha_0,w}\pi^{1-a_{\alpha_0}(w)}t) = \chi'_{\alpha}(c_{\alpha_0,w}\pi'^{1-a_{\alpha_0}(w)}t')= \chi'(\Ad\tilde{w}'(\lu_{-\alpha_0}(\pi' t'))).
 \end{align*}
 This finishes \textit{Case $1_0$}.

\noindent \textbf{Case $2_0$}: Suppose $\Ad\tilde{w}(\lu_{-\alpha_0}(\pi t)) \notin U$. Then, $\Ad\tilde{w}\tilde{s_0}(\lu_{-\alpha_0}(\pi t)) \in U$. Using relation \eqref{relation1},
\begin{align*}
\tilde{w}b\lu_{-\alpha_0}&(\pi^{m+1}t)\tilde{s_0}^{-1} = \tilde{w}\lu_{-\alpha_0}(\pi t)\tilde{s_0}^{-1}\Ad\tilde{s_0}(c\lu_{-\alpha_0}(\pi^{m+1}t_\nu))\\
& = \tilde{w}\tilde{s_0}^{-1}\Ad\tilde{s_0}(\lu_{-\alpha_0}(\pi t))\Ad\tilde{s_0}(c\lu_{-\alpha_0}(\pi^{m+1}t_\nu))\\
& = \tilde{w}\tilde{s_0}^{-1} \lu_{-\alpha_0}(-\pi t^{-1}) \tilde{s_0}\alpha_0^\vee(-t^{-1})\lu_{-\alpha_0}(-\pi t^{-1})\Ad\tilde{s_0}(c\lu_{-\alpha_0}(\pi^{m+1}t_\nu))\\
& = \tilde{w}\tilde{s_0} \lu_{-\alpha_0}(-\pi t^{-1}) \tilde{s_0}^{-1}\alpha_0^\vee(-t^{-1})\lu_{-\alpha_0}(-\pi t^{-1})\Ad\tilde{s_0}(c\lu_{-\alpha_0}(\pi^{m+1}t_\nu))\\
& =\Ad\tilde{w}\tilde{s_0}( \lu_{-\alpha_0}(-\pi t^{-1}))\tilde{w}\alpha_0^\vee(-t^{-1})\lu_{-\alpha_0}( - \pi t^{-1})\Ad\tilde{s_0}(c\lu_{-\alpha_0}(\pi^{m+1}t_\nu)).
\end{align*}
Hence,
\begin{align*}
\gamma(&h_{\tilde{w}b\lu_{-\alpha_0}(\pi^{m+1}t_\nu)\tilde{s_0}^{-1}}) \\
&= \chi(\Ad\tilde{w}\tilde{s_0}( \lu_{-\alpha_0}(-\pi t^{-1})) \gamma(h_{\tilde{w}\alpha_0^\vee(-t^{-1})\lu_{-\alpha_0}(-\pi t^{-1})\Ad\tilde{s_0}(c\lu_{-\alpha_0}(\pi^{m+1}t_\nu))})\\
& = \chi(\Ad\tilde{w}\tilde{s_0}( \lu_{-\alpha_0}(-\pi t^{-1})) h'_{\tilde{w}'\alpha_0^\vee(-{t'}^{-1})\lu_{-\alpha_0}(-\pi' t'^{-1})\Ad\tilde{s_0}'(c'\lu_{-\alpha_0}(\pi'^{m+1}t'_\nu))}.
\end{align*}
Now, we only need to prove that \[ \chi(\Ad\tilde{w}\tilde{s_0}( \lu_{-\alpha_0}(-\pi t^{-1})) = \chi'(\Ad\tilde{w}'\tilde{s_0}'( \lu_{-\alpha_0}(-\pi' t'^{-1})).\]
If $l(ws_0) = l(w) - 1$, then $\Ad\tilde{w}\tilde{s_0}(U_{-\alpha_0, \calp}) \subset I$ by Lemma \ref{conjI}. Hence
\[\Ad\tilde{w}\tilde{s_0}( \lu_{-\alpha_0}(-\pi t^{-1})) \in {\bf U}(\calo)\] and we argue as in \textit{Case $1_0$} to finish the proof.\\
If $l(ws_0) = l(w) + 1$, then $\Ad\tilde{w}(\lu_{-\alpha_0}(\calp)) \subset I \cap {\bf U}^-.$ More precisely, \[\Ad\tilde{w}(\lu_{-\alpha_0}(\calp) )= \lu_{-x.\alpha_0}(\calp^{1-a_{\alpha_0}(w)})\] with $a_{\alpha_0}(w) \leq 0$. 
Then, $\Ad\tilde{w}\tilde{s_0}(\lu_{-\alpha_0}(\calp)) = \lu_{x.\alpha_0}(\calp^{-1+a_{\alpha_0}(w)})$
(recall that $a_{-\alpha}(w) = -a_{\alpha}(w)$).
Now, if $\alpha = x.\alpha_0$ is not simple, then \[\Ad\tilde{w}\tilde{s_0}( \lu_{-\alpha_0}(-\pi t^{-1})) \in \operatorname{Ker}(\chi)\;(\text{respectively for $\chi'$)}.\]
Otherwise, since we are assuming that $w \in W_a^1$,  we have $a_{\alpha_0}(w) \geq m_\alpha-m$ by Equation \eqref{wIm1}. This implies $m_\alpha - m \leq a_{\alpha_0}(w) \leq 0$. This is possible only if $m_\alpha \leq m$. (Note that the case $l(ws_0) = l(w)+1$, $x.\alpha_0$ is simple, $m_\alpha>m$ and $w \in W_a^1$ is not possible, since if all these are true then the above inequality $m_\alpha - m \leq a_{\alpha_0}(w) \leq 0$ has to be satisfied, which is impossible).  Finally
\begin{align*}
\chi(\Ad\tilde{w}\tilde{s_0}( \lu_{-\alpha_0}(-\pi t^{-1})))& = \chi_{x.\alpha_0}(-c_{\alpha_0,w}\pi^{-1+a_{\alpha_0}(w)}t^{-1})\\
& = \chi'_{x.\alpha_0}(-c_{\alpha_0,w}'\pi'^{-1+a_{\alpha_0}(w)}t'^{-1})\\
& = \chi'(\Ad\tilde{w}'\tilde{s_0}'( \lu_{-\alpha_0}(-\pi' t'^{-1}))).
\end{align*}
Again, to see the second equality above, note that since $t \sim_{\Lambda_0}t'$, we have $t^{-1} \sim_{\Lambda_0} t'^{-1}.$ Therefore, $ -c_{\alpha_0,w}\pi^{-1+a_{\alpha_0}(w)} t^{-1} \sim_{\Lambda_{m_\alpha-m-1}} -c_{\alpha_0,w}'\pi'^{-1+a_{\alpha_0}(w)} t'^{-1}$.
\end{proof}

\begin{Corollary}
 Let $(\tau, V)$ be an irreducible, admissible $\chi$-generic representation of $G$. Let $m\geq 1$ be large enough such that $\tau^{I_m} \neq 0$. Let $F'$ be $(m+1)$-close to $F$ and let $(\tau', V')$ be the representation of $G'$ obtained using the Hecke algebra isomorphism $\zeta$. Let $\chi'$ correspond to $\chi$ as before. Then $(\tau', V')$ is $\chi'$-generic.
\end{Corollary}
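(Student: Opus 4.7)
The plan is to combine Theorem \ref{genericrepresentationshecke} with the equivalence of categories in Proposition \ref{categories} to transfer nonvanishing of the Whittaker Hom-space from $G$ to $G'$.

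First, I would reformulate genericity as a Hom-space condition. By Frobenius reciprocity, $(\tau,V)$ is $\chi$-generic iff $\Hom_G(V, E(\chi)) \neq 0$. Since $\tau$ is irreducible with $V^{I_m} \neq 0$, the subspace $V^{I_m}$ generates $V$, so $V \in \fR^m(G)$. Similarly, letting $E(\chi)^\circ \subset E(\chi)$ be the $G$-subrepresentation generated by $E(\chi)^{I_m} = E(\chi)^m$, we have $E(\chi)^\circ \in \fR^m(G)$ with $(E(\chi)^\circ)^{I_m} = E(\chi)^m$. Because $V$ is generated by $V^{I_m}$, any $G$-map $V \to E(\chi)$ lands in $E(\chi)^\circ$, so
\[
\Hom_G(V, E(\chi)) \;=\; \Hom_G(V, E(\chi)^\circ) \;\cong\; \Hom_{\fH(G,I_m)}\!\bigl(V^{I_m}, E(\chi)^m\bigr),
\]
where the last isomorphism is the equivalence of categories of Proposition \ref{categories}.

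Next, I would invoke Theorem \ref{genericrepresentationshecke}, which asserts that the $\C$-linear isomorphism $\kappa : E(\chi)^m \to E(\chi')^m$ intertwines the $\fH(G,I_m)$-action on the source with the $\fH(G',I_m')$-action on the target via $\zeta_m$. In other words, $\kappa$ is an isomorphism of Hecke modules when the right-hand side is viewed as an $\fH(G,I_m)$-module through $\zeta_m$. The definition of $(\tau',V')$ via $\zeta_m$ likewise identifies $V^{I_m}$ with $V'^{I_m'}$ as $\fH(G,I_m)$-modules. Combining these two identifications yields
\[
\Hom_{\fH(G,I_m)}\!\bigl(V^{I_m}, E(\chi)^m\bigr) \;\cong\; \Hom_{\fH(G',I_m')}\!\bigl(V'^{I_m'}, E(\chi')^m\bigr).
\]

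Finally, applying the $G'$-analog of the first displayed isomorphism, namely
\[
\Hom_{G'}(V', E(\chi')) \;\cong\; \Hom_{\fH(G',I_m')}\!\bigl(V'^{I_m'}, E(\chi')^m\bigr),
\]
and stringing all isomorphisms together, we conclude that $\Hom_{G'}(V', \Ind_{U'}^{G'}\C_{\chi'}) \neq 0$, i.e.\ $(\tau',V')$ is $\chi'$-generic. The only step requiring genuine input from this paper is the Hecke-module compatibility of $\kappa$ with $\zeta_m$, which is precisely Theorem \ref{genericrepresentationshecke}; the rest is formal category theory via Proposition \ref{categories}. I do not anticipate a substantive obstacle, since all the delicate analysis (matching of $\chi$ with $\chi'$, behavior under the generators $f_{\ts_i}$, $f_{\tilde{\rho}_i}$, $f_{\tilde{\mu}_j}$, $f_b$) was absorbed into the preceding theorem.
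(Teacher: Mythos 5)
Your proof is correct and follows essentially the same route as the paper: it uses Theorem \ref{genericrepresentationshecke} to identify $E(\chi)^m$ and $E(\chi')^m$ as Hecke modules and then invokes the equivalence of categories in Proposition \ref{categories}. The only superficial difference is that you phrase the argument through fully faithfulness (a $\Hom$-space isomorphism $\Hom_G(V,E(\chi)^\circ) \cong \Hom_{\fH(G,I_m)}(V^{I_m},E(\chi)^m)$), whereas the paper runs the same argument through the left adjoint $j_m$, realizing the Whittaker model of $\tau'$ explicitly as $\fH(G')\otimes_{\fH(G',I_m')}\kappa(\mathcal{W}(\tau,\chi)^{I_m})$.
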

\begin{proof}  By Theorem \ref{genericrepresentationshecke}, we have that $ \kappa:(\sigma, E(\chi)^m) \rightarrow (\sigma', E(\chi')^m)$
is an isomorphism of $\mathbb{C}$-vector spaces that is compatible with the Hecke algebra isomorphism $\zeta:\mathscr{H}(G, I_m) \rightarrow \mathscr{H}(G',I_m')$.
Now, if $(\tau, V)$ is $\chi$-generic, we get an embedding $(\tau, V) \hookrightarrow E(\chi)$. Hence $ \tau^{I_m} \hookrightarrow E(\chi)^m$.  Denote its image as $\mathcal{W}(\tau, \chi)^{I_m}$.
Since $\tau$ is irreducible and $\tau^{I_m} \neq 0$, $\tau$ is in fact generated by its $I_m$-fixed vectors.  Therefore $\tau \cong  \mathscr{H}(G) \otimes_{\mathscr{H}(G, I_m)} \tau^{I_m}$ by Proposition \ref{categories}. Since $(\tau', V')$ corresponds to $(\tau, V)$ via $\zeta$ and $\kappa$ is compatible with $\zeta$, we get an embedding $\tau'^{I_m'}\hookrightarrow E(\chi')^m$. In fact its image in $E(\chi')^m$ is $\kappa(\mathcal{W}(\tau, \chi)^{I_m})$. Hence the Whittaker model of $(\tau', V')$ is  $\mathscr{H}(G') \otimes_{\mathscr{H}(G', I_m')} \kappa(\mathcal{W}(\tau, \chi)^{I_m})$.
\end{proof}
\subsection{Square integrability and formal degrees}\label{Discreteseries}
 Recall that $K_m := \Ker(\bfg(\calo) \rightarrow \bfg(\calo/\calp^m))$.  Let $\sigma$ be an irreducible, admissible representation of $G$ such that $\sigma^{K_{m}} \neq 0$. 
 Let $F'$ be another non-archimedean local field with $\calo'$, $\calp'$, and $\pi'$ defined accordingly.  Then, by Theorem \ref{Kaziso}, we know that there is an integer $l \geq m$ such that if $F$ and $F'$ are $l$-close, then we have a Hecke algebra isomorphism  $\Kaz_m: \mathscr{H}(G,K_m) \rightarrow \mathscr{H}(G', K_m')$. Note that we can take $l = m$ by Corollary \ref{proofKazConj}.
 Thus we obtain a representation $\sigma'$ of $G'$ such that $\gamma: \sigma^{K_m} \rightarrow \sigma'^{K_m'}$ is an isomorphism compatible with $\Kaz_m$.
  We fix a  Haar measure $dg$ on $G$ such that $\vol({ K_m}; dg) = 1$. Similarly, we fix a Haar measure $dg'$ on $G'$ such that $\vol({K_m'}; dg') = 1$. We also fix Haar measures $dz$ on $Z$ and $dz'$ on $Z'$ such that  $\vol({Z \cap  K_m}; dz) = 1 = \vol({Z' \cap K'_m}; dz')$.
  
Our aim in this section is to prove the following theorem:
\begin{theorem}\label{SCDS}\begin{enumerate}[(a)]
\item If $\sigma$ is a square integrable representation of $G$,  $\sigma'$ is a square integrable representation of $G'$.
\item If $\sigma$ is a supercuspidal representation of $G$,  $\sigma'$ is a supercuspidal representation of $G'$.
\end{enumerate}
\end{theorem}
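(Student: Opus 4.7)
The plan is to realize both properties through matrix coefficients and to transfer them across $\Kaz_m$ by showing that matrix coefficient values at corresponding double cosets agree.

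First I would observe that for $v \in \sigma^{K_m}$ and $\check v$ in the $K_m$-fixed part of the contragredient $\check\sigma$, the matrix coefficient $c_{v,\check v}(g) = \langle \sigma(g) v, \check v\rangle$ is $K_m$-bi-invariant, and since $t_x = \vol(K_m x K_m;dg)^{-1}\Char(K_m x K_m)$ a direct integration gives $c_{v,\check v}(x) = \langle \sigma(t_x) v, \check v\rangle$. Thus matrix coefficients are entirely captured by the $\mathscr{H}(G,K_m)$-module structure on $\sigma^{K_m}$ paired with $(\check\sigma)^{K_m}$. Next I would check that $\Kaz_m$, which sends $t_{a_i\pi_\lambda a_j^{-1}}$ to $t_{a_i'\pi_\lambda'(a_j')^{-1}}$, intertwines the natural anti-involution $f^\vee(g) := f(g^{-1})$ on both Hecke algebras; this is transparent from the $T_\lambda$-parametrization. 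Consequently $\Kaz_m$ lifts to an isomorphism $\check\gamma\colon (\check\sigma)^{K_m} \to (\check{\sigma'})^{K_m'}$, and for $v' = \gamma(v)$, $\check v' = \check\gamma(\check v)$ and corresponding double-coset representatives $x \leftrightarrow x'$ I obtain
\[
c_{v,\check v}(x) = c_{v',\check v'}(x').
\]

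To translate the support and integrability conditions, I would combine the Cartan decomposition with the $T_\lambda$-parametrization of $K_m\backslash G/K_m$ recalled in Section \ref{KazhdanIsomorphism}. The $m$-closeness supplies a canonical bijection $K_m\backslash G/K_m \to K_m'\backslash G'/K_m'$ preserving $\lambda \in X_*(\bft)_-$, and under the Haar normalizations fixed in the setup, corresponding double cosets have equal volume (they are indexed by data inside $\bfg(\calo/\calp^m) \cong \bfg(\calo'/\calp'^m)$). Since $\bfz$ is defined over $\Z$, the lattices $X_*(\bfz)$ and $X_*(\bfz')$ coincide and $Z/(Z \cap K_m) \cong Z'/(Z' \cap K_m')$, so the central character of $\sigma'$ obtained from $\Kaz_m$ matches that of $\sigma$, and one gets a bijection $K_m\backslash G/ZK_m \leftrightarrow K_m'\backslash G'/Z'K_m'$ preserving the natural volumes on $G/Z$ and $G'/Z'$.

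Both parts of the theorem then follow. For (b), $\sigma$ is supercuspidal iff every $c_{v,\check v}$ has finite support in $K_m \backslash G / ZK_m$, a set-theoretic condition that transfers immediately. For (a), bi-$K_m$-invariance expands the $L^2$-integral as
\[
\int_{G/Z} |c_{v,\check v}(g)|^2 \, d\bar g \;=\; \sum_{[g]\,\in\,K_m\backslash G/ZK_m} \vol(K_m g K_m Z/Z)\, |c_{v,\check v}(g)|^2,
\]
whose right-hand side matches term-by-term with the corresponding sum for $\sigma'$, so convergence (and in fact equality) transfers. The hard part will be the careful bookkeeping around the center: establishing the $\Kaz_m$-compatibility of central characters and matching volumes on $G/Z$ versus $G'/Z'$; once these are in place the comparison is automatic, and one also obtains preservation of the formal degree term-by-term under the same correspondence.
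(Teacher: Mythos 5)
Your proposal follows essentially the same route as the paper's proof: matrix coefficients seen as bi-$K_m$-invariant functions determined by the Hecke module structure, the Cartan/$T_\lambda$-parametrization of $K_m\backslash G/K_m$, matching of coset volumes and central characters, and transfer of the $L^2$-sum (resp.\ finite support condition) term-by-term; the paper's dual map $\gamma^\vee$ plays the role of your anti-involution observation. The only cosmetic difference is that you index the sum directly by $K_m\backslash G/ZK_m$ whereas the paper refines it through $\lambda\in X_*(\bft)_-/X_*(\bfz)$ and counts the $\#\bfz(\calo/\calp^m)$-fold $Z$-ambiguity explicitly, and your volume-matching assertion is stated with less detail than the paper's Iwahori-factorization computation, but the underlying argument is the same.
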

 \begin{proof} Note that this theorem has already been proved for $\GL_n(F)$ and all its inner forms in \cite{Bad02}. Our proof here is a straight forward generalization of the proof of Theorem 2.17 of \cite{Bad02}. Let $v \in V^{K_m},   v^\vee \in (V^{\vee})^{K_m}$ such that $\langle v^\vee, v\rangle \neq 0$ where $\langle \cdot, \cdot \rangle$ is the natural pairing between $V$ and $V^{\vee}$. Define $h_\sigma(g) = \langle v^\vee, \sigma(g)v\rangle$. Then $\sigma$ is square integrable means that the central character of $\sigma$ is unitary and
 \[ \int_{Z \backslash G} |h_{\sigma}(g)|^2 d\dot{g} < \infty, \]
 where $d\dot{g}$ is the Haar measure on $Z\backslash G$ such that $dg = dz \;d\dot{g}$. 
 First note that $h_{\sigma}(kgk') =  \langle v^\vee, \sigma(kgk')v\rangle = \langle v^\vee, \sigma(g)v\rangle$. Hence, $h_\sigma$ is constant on the double coset $K_mgK_m$ and therefore, \[h_{\sigma}(g) = vol( K_mgK_m; dg)^{-1} \langle v^\vee, \sigma(t_g)v\rangle,\] where $t_g = \Char({K_mgK_m})$. Recall that 
\begin{equation}\label{cartan}G = \displaystyle{\coprod_{\lambda \in X_*({\bf T})_-}\coprod_{(a_i,a_j) \in T_{\lambda}} K_m a_i\pi_\lambda a_j^{-1}K_m }
\end{equation}
(with notation as in Section \ref{KazhdanIsomorphism}). The first step is to understand the action of $Z$ on a given coset in the decomposition above. The map $\lambda \rightarrow \pi_\lambda$ gives us an injection $X_*({\bf T}) \hookrightarrow T$ and we denote its image as $X_*({\bf T})_\pi$. Hence every element $z \in Z$ can be uniquely written as $z_1\pi_\mu$ where $z_1 \in {\bf Z}(\calo)$ and $\pi_\mu \in X_*({\bf Z})_\pi$. Then\\
\[ z.K_ma_i\pi_\lambda a_j^{-1}K_m = \begin{cases} K_ma_i\pi_{\mu + \lambda} a_j^{-1}K_m & if \;\;  z_1 \in Z \cap K_m,\\
K_mb_i\pi_{\mu +\lambda} b_j^{-1}K_m & if \;\;  z_1 \notin Z \cap K_m.
\end{cases}\]
Note that $(b_i,b_j)$ is an element of $T_{\mu +\lambda}$ and is uniquely determined by the class of $z_1 \mod (Z \cap K_m)$. 
 Let $A_{F}^0$ be a set of representatives of the elements of $ X_*({\bf T})_- /X_*({\bf Z})$ in $X_*({\bf T})_-$. 
Then 
\begin{align*} \int_{Z \backslash G} |h_{\sigma}(g)|^2 d\dot{g}= \displaystyle{\sum_{\lambda \in A_F^{0}}\sum_{(a_i,a_j) \in T_\lambda}} &(\#{\bf Z}(\calo/\calp^m))^{-1} \vol(Z \cap K_m; dz)^{-1} \\
& \vol(K_ma_i\pi_\lambda a_j^{-1}K_m; dg) |h_\sigma(a_i\pi_\lambda a_j^{-1})|^2.
\end{align*}

The isomorphism $\gamma: \sigma^{K_m} \rightarrow \sigma'^{K'_m}$ gives rise to an isomorphism between the dual spaces $\gamma^{\vee}: ({\sigma^{\vee}})^{K_m} \rightarrow  ({\sigma^{'\vee}})^{K'_m}$. Let $v' = \gamma(v)$ and $v'^\vee = \gamma^\vee(v^\vee)$ and define $h'_{\sigma'}(g') = v'^\vee(\sigma'(g')(v'))$. Note that $w_{\sigma'}$ is unitary since it corresponds to $w_{\sigma}$ via the Kazhdan isomorphism.  

We observe the following:\\
\textbullet \hspace{2pt} We need to prove that $\vol(K_ma_i\pi_\lambda a_j^{-1}K_m; dg) = \vol(K_m'a_i'\pi_\lambda' a_j'^{-1}K_m'; dg)$. For this, it suffices to prove that $\vol(K_m\pi_\lambda K_m; dg) = \vol(K_m'\pi_\lambda' K_m'; dg')$. Since $\vol(K_m; dg)  = 1 = \vol(K_m';  dg')$, we will prove that $\#\; K_m/(\pi_\lambda K_m\pi_\lambda^{-1} \cap K_m) = \#\; K_m'/({\pi'}_\lambda K_m'{\pi'}_\lambda^{-1} \cap K_m' )$.
Writing down the Iwahori factorization of $K_m$ as
\[ K_m  = \displaystyle{ \prod_{\alpha \in \Phi^+} U_{\alpha, \calp^m}\;\;T_{\calp^m} \;\; \prod_{\alpha \in \Phi^+} U_{-\alpha , \calp^m}},\]
we have
\[ \pi_\lambda K_m\pi_{\lambda}^{-1} = \displaystyle{ \prod_{\alpha \in \Phi^+} U_{\alpha, \calp^{m+\langle\alpha,\lambda\rangle}}\;\;T_{\calp^m}\;\; \prod_{\alpha \in \Phi^+} U_{\alpha , \calp^{m -\langle\alpha,\lambda\rangle}}}, \]
and
\begin{align*}
K_m \cap  \pi_\lambda K_m\pi_{\lambda}^{-1} = & \displaystyle{ \prod_{\alpha \in \Phi^+, <\alpha,\lambda> \geq 0} U_{\alpha, \calp^{m+\langle\alpha,\lambda\rangle}} \prod_{\alpha \in \Phi^+, \langle\alpha, \lambda\rangle <0} U_{\alpha, \calp^m}\;\;T_{\calp^m} }\\&
 \displaystyle{  \prod_{\alpha \in \Phi^+, \langle\alpha,\lambda\rangle <0 } U_{\alpha , \calp^{m -\langle\alpha,\lambda\rangle}}  \prod_{\alpha \in \Phi^-, \langle\alpha,\lambda\rangle\geq 0} U_{\alpha , \calp^m}}.
 \end{align*}  
 Now it is clear that
 \[\#\; K_m/(\pi_\lambda K_m\pi_\lambda^{-1} \cap K_m) =  \prod_{\alpha \in \Phi^+} q^{|\langle\alpha,\lambda\rangle|} = \#\; K_m'/({\pi'}_\lambda K_m'{\pi'}_\lambda^{-1} \cap K_m' ). \]
\textbullet \hspace{2pt} Let ${f'}_{g'}$ denote the characteristic function of $K_m'g'K_m'$. By definition of  $v'$ and $v'^\vee$, it follows that
\begin{align*}
h'_{\sigma'}(a_i'\pi_\lambda'a_j'^{-1}) &= v'^\vee(\sigma'((a_i'\pi_\lambda'a_j'^{-1})(v'))\\
& =\vol(K_m'a_i'\pi_\lambda'a_j'^{-1}K_m'; dg)^{-1} v'^\vee(\sigma'({f'}_{a_i'\pi_\lambda'a_j'^{-1}}(v')) \\&
=  \vol(K_ma_i\pi_\lambda a_j^{-1}K_m;dg)^{-1}v^\vee(\sigma(t_{(a_i\pi_\lambda a_j^{-1}})(v))\\& = h_\sigma((a_i\pi_\lambda a_j^{-1})).
\end{align*} 
\textbullet \hspace{2pt} Since $F$ and $F'$ are $m$-close, we have that $ \#{\bf Z}(\calo/\calp^m) =  \#{\bf Z'}(\calo'/\calp'^m)$.\\
\textbullet \hspace{2pt} By our choice of the Haar measure $dz$ and $dz'$, we have $\vol(Z \cap K_m; dz) =1 = \vol(Z' \cap K_m'; dz')$.

Combining all of the above, we see that $h'_{\sigma'}$ is also square integrable mod center. Hence $\sigma'$ is a square integrable representation.

To prove $(b)$, just observe that $h_\sigma$ has compact support mod $Z$ simply means that there are finitely many $\lambda's$ in $A_F^0$ such that $h_\sigma$ is non-zero on the coset $K_ma_i\pi_\lambda a_j^{-1}K_m$. By definition of $h'_{\sigma'}$, it is clear that $h_{\sigma'}$ is nonzero precisely on the corresponding cosets of $G'$ mod $Z'$.
    \end{proof}
For a square integrable representation $\sigma$, there is a real number $d(\sigma)$, depending only on $\sigma$ and the measure $d\dot{g}$, such that
\begin{equation}\label{formaldegree} \displaystyle{\int_{Z \backslash G} \langle v^\vee, \sigma(g) v \rangle  \langle w^\vee, \sigma(g^{-1}) w \rangle\; d\dot{g} = d(\sigma)^{-1} \langle v^\vee, w \rangle \langle w^\vee, v \rangle}
\end{equation}
for all $v,w \in V$ and $v^\vee, w^\vee \in V^\vee$. The real number $d(\sigma)$ is called the \textit{formal degree} of $\sigma$. 

\begin{Corollary}
If $\sigma$ is square integrable, and $\sigma'$ corresponds to $\sigma$ via the Kazhdan isomorphism, then $d(\sigma) = d(\sigma')$ where the measure on $G'$ is chosen as in the beginning of the section.
\end{Corollary}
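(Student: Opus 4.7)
The strategy is to mimic the proof of Theorem \ref{SCDS}, expanding the integral defining $d(\sigma)^{-1}$ in \eqref{formaldegree} along the Cartan decomposition and matching each contribution with its counterpart over $F'$. Fix nonzero $v, w \in V^{K_m}$ and $v^\vee, w^\vee \in (V^\vee)^{K_m}$ with $\langle v^\vee, w\rangle\langle w^\vee, v\rangle \neq 0$, and set $v' = \gamma(v)$, $w' = \gamma(w)$, $v'^\vee = \gamma^\vee(v^\vee)$, $w'^\vee = \gamma^\vee(w^\vee)$. The contragredient $\mathscr{H}(G, K_m)$-action on $(V^\vee)^{K_m}$ satisfies $\langle \sigma^\vee(f) v^\vee, u\rangle = \langle v^\vee, \sigma(f^*) u\rangle$, where $f^*(g) := f(g^{-1})$; since the Cartan decomposition is invariant under inversion up to conjugation by an integral Weyl representative built from the fixed Chevalley basis, the Kazhdan isomorphism satisfies $\Kaz_m(f^*) = \Kaz_m(f)^*$, so $\gamma^\vee$ may be taken as the Hecke-adjoint of $\gamma^{-1}$ and the natural pairings transfer: $\langle v'^\vee, w'\rangle = \langle v^\vee, w\rangle$ and $\langle w'^\vee, v'\rangle = \langle w^\vee, v\rangle$.

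Next, I set $h(g) := \langle v^\vee, \sigma(g) v\rangle\,\langle w^\vee, \sigma(g^{-1}) w\rangle$, a bi-$K_m$-invariant function, hence constant on each double coset of the Cartan decomposition
\[
G = \coprod_{\lambda \in X_*(\bft)_-}\,\coprod_{(a_i, a_j) \in T_\lambda} K_m\, a_i \pi_\lambda a_j^{-1}\, K_m.
\]
On the coset $K_m a_i \pi_\lambda a_j^{-1} K_m$, the argument in the proof of Theorem \ref{SCDS} yields $\langle v^\vee, \sigma(a_i\pi_\lambda a_j^{-1}) v\rangle = \langle v'^\vee, \sigma'(a_i' \pi'_\lambda {a_j'}^{-1}) v'\rangle$. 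To handle the factor $\langle w^\vee, \sigma(g^{-1}) w\rangle$ with $g = a_i \pi_\lambda a_j^{-1}$, I first put $g^{-1} = a_j \pi_{-\lambda} a_i^{-1}$ into standard Cartan form: let $w_0 \in W$ be the longest Weyl element and choose a representative $\tilde{w}_0 \in \bfg(\Z) \subset \bfg(\calo)$ via the fixed Chevalley basis, so that $\pi_{-\lambda} = \tilde{w}_0^{-1} \pi_\mu \tilde{w}_0$ with $\mu := w_0(-\lambda) \in X_*(\bft)_-$. Then $g^{-1} = (a_j \tilde{w}_0^{-1})\, \pi_\mu\, (a_i \tilde{w}_0^{-1})^{-1}$ lies in $K_m\, b_i \pi_\mu b_j^{-1}\, K_m$, whose $F'$-counterpart is obtained by the same Chevalley-basis formulas (integrality of $\tilde{w}_0$ ensures it transfers canonically to $\bfg(\calo')$). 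A second application of the matrix-coefficient comparison from Theorem \ref{SCDS} then gives $\langle w^\vee, \sigma(g^{-1}) w\rangle = \langle w'^\vee, \sigma'((g')^{-1}) w'\rangle$.

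At that point I can integrate over $Z \backslash G$ exactly as in the proof of Theorem \ref{SCDS}: summing over $\lambda \in A_F^0$ and $(a_i, a_j) \in T_\lambda$, each volume $\vol(K_m a_i \pi_\lambda a_j^{-1} K_m; dg)$ matches its $F'$-counterpart (as already computed there), the central cardinality $\#\bfz(\calo/\calp^m) = \#\bfz(\calo'/\calp'^m)$ is unchanged, and the normalization $\vol(Z\cap K_m; dz) = \vol(Z'\cap K_m'; dz') = 1$ is built in. Thus the left hand side of \eqref{formaldegree} takes the same value over $F$ and $F'$, while the right hand sides agree by the pairing preservation of the first paragraph, forcing $d(\sigma) = d(\sigma')$. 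The main obstacle will be the bookkeeping around the inversion $g \mapsto g^{-1}$: because the Cartan stratum is indexed only by antidominant cocharacters, $g^{-1}$ must be transported back into standard form via a fixed integral Weyl representative, and that representative, constructed from the Chevalley basis, must be checked to behave canonically under the Hecke algebra isomorphism so that Theorem \ref{SCDS} can be invoked a second time. Once this is in place, the rest is essentially a repackaging of the ingredients already assembled for Theorem \ref{SCDS}.
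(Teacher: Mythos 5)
Your proof is correct but takes a genuinely different route from the paper's. You expand the integral in \eqref{formaldegree} over the Cartan strata and match the integrand term by term, which forces you to deal with the factor $\langle w^\vee, \sigma(g^{-1})w\rangle$ involving the inversion $g \mapsto g^{-1}$. You handle this by conjugating by an integral longest-Weyl-element representative to bring $g^{-1}$ back into antidominant Cartan normal form, and you need the auxiliary fact that the Kazhdan isomorphism commutes with the anti-involution $f \mapsto f^*$ (so that the resulting stratum labels over $F$ and $F'$ match). That fact is true — it follows because the Weyl representatives and structure constants are $\Z$-integral, so the bijection on $\bfg(\calo/\calp^m) \times \bfg(\calo/\calp^m)$ intertwines the inversion maps — but it is a nontrivial extra verification that your argument genuinely depends on. The paper instead sidesteps the inversion entirely: it observes that a square-integrable $\sigma$ is unitary, fixes the (unique up to scalar) $G$-invariant Hermitian form $(\cdot,\cdot)_{v_0^\vee}$ and its associated anti-linear isomorphism $\Theta: V \to V^\vee$, and then chooses $v^\vee = \Theta(w)$ and $w^\vee = \Theta(v)$ in \eqref{formaldegree}. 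With these choices the integrand collapses to $|(w,\sigma(g)v)|^2$, a nonnegative bi-$K_m$-invariant function of exactly the shape already handled by the square-integrability comparison in Theorem \ref{SCDS}, while the right-hand side becomes $d(\sigma)^{-1}(v,v)(w,w)$ and these inner products transfer by the same matrix-coefficient comparison applied at each Cartan stratum. The paper's route is shorter and avoids the inversion bookkeeping; your route is more hands-on and in principle would still apply if one did not want to invoke unitarity, at the cost of the extra $*$-compatibility lemma for $\Kaz_m$.
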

 \begin{proof}

Recall that a square integrable representation is unitary. In fact, for $\displaystyle{v_0^{\vee}} \in V^{\vee}$   
\[( v, w)_{v_0^{\vee}} = \displaystyle{\int_{Z\backslash G}  \langle v_0^\vee, \sigma(g)v \rangle \overline{\langle v_0^\vee,  \sigma(g)w\rangle}\; d\dot{g}  }\]
is a positive definite $G$-invariant Hermitian form on $V$. Hence there is a complex anti-linear $G$-isomorphism  $\Theta_{v_0^\vee}: (\sigma,V) \rightarrow (\sigma^\vee, V^\vee)$ such that $( v, w)_{v_0^{\vee}} = \langle\Theta_{v_0^\vee}(v), w \rangle$. Schur's lemma implies that $(\cdot , \cdot)_{v_0^\vee}$ is the unique $G$-invariant Hermitian form on $V$, up to a positive constant factor. Choose $v_0^\vee \in (V^\vee)^{K_m}$. Let $\Theta = \Theta_{v_0^\vee}$ and $(\cdot, \cdot) = (\cdot, \cdot)_{v_0^\vee}$. Let $v,w \in V^{K_m}$ and let $v^\vee = \Theta(w)$ and $w^\vee = \Theta(v)$. Then formula \eqref{formaldegree} simplifies as
\[ \displaystyle{\int_{Z \backslash G} |(w, \sigma(g) v )|^2}  \; d\dot{g} = d(\sigma)^{-1} (v,v)(w,w).\]

As before, let $v' = \gamma(v)$ and $w' = \gamma(w)$. With $\Theta' = \Theta_{v_0'^\vee}$ and $(\cdot, \cdot)'$ the corresponding inner product,  it is easy to see that $v'^\vee = \gamma^{\vee}(v^\vee) = \Theta'(w')$ and  $w'^\vee = \gamma^{\vee}(w^\vee) = \Theta'(v')$. From the proof of Theorem \ref{SCDS}, we have
\[ \displaystyle{\int_{Z \backslash G} |(w, \sigma(g) v )|^2}\;   d\dot{g} = \displaystyle{\int_{Z \backslash G} |(w', \sigma(g') v' )'|^2}  \; d\dot{g'}.\]
  Moreover, $(v,v) = (v',v')'$ and $(w,w) = (w',w')'$. Hence we see that $d(\sigma) = d(\sigma')$.
       \end{proof} 
\subsection{Parabolic Induction}\label{IndRep}
We retain the notation of Section \ref{stdnotations}. So $\bfg$ is a split connected reductive group defined over $\Z$ with Chevalley basis $\{\lu_\alpha| \alpha \in \Phi\}$. Let $\theta \subset \Delta$ and let ${\bf P = P_\theta}$ be the standard parabolic subgroup with Levi ${\bf M = M_\theta}$ and unipotent radical ${\bf N = N_\theta} \subset \bfu$. Note that $\{\lu_\alpha | \alpha \in \Phi_\theta\}$ is a Chevalley basis for $\bfm_\theta$. Let $I$ be the standard Iwahori subgroup of $G$ and $I_m$ be the $m$-th Iwahori filtration subgroup of $G$.  We write $I_M = M \cap I$ and $I_{m,M}=M \cap I_m$ for the corresponding Iwahori and the $m$-th Iwahori congruence subgroup of $M$ respectively. Let $\sigma$ be an irreducible, admissible representation of $M$ such that $\sigma^{I_{m,M}} \neq 0$. Set $l = m+3$. Let $F'$ be another non-archimedean local field such that $F'$ is $l$-close to $F$. Let $\pi$ and $\pi'$ be uniformizers of $F$ and $F'$ respectively with $\pi \sim_\Lambda \pi'$. The representatives of elements in the extended affine Weyl group are chosen using the fixed Chevalley basis as in Section \ref{reps}. Let $\zeta_m: \mathscr{H}(G,I_m) \rightarrow \mathscr{H}(G',I_m')$ be the isomorphism in Theorem \ref{closealgebra}. Similarly, let $\zeta_{l,M}: \mathscr{H}(M,I_{l,M}) \rightarrow \mathscr{H}(M',I_{l,M'}')$  be as in Theorem \ref{closealgebra}.   Let $\sigma'$ be the representation of $M'$ such that $\kappa_{l,M}: \sigma^{I_{l,M}} \rightarrow \sigma'^{I_{l,M'}'}$ is an isomorphism. In this section, we will construct an isomorphism $\kappa_m:(\Ind_P^G \sigma)^{I_m} \rightarrow (\Ind_{P'}^{G'} \sigma')^{I'_m}$ of $\mathbb{C}$-vector spaces that is compatible with the Hecke algebra isomorphism $\zeta_m: \mathscr{H}(G,I_m) \rightarrow \mathscr{H}(G',I_m')$ (Also check Proposition 3.15 of \cite{BHLS10} for an alternate proof for $\GL_n$).

 We fix a Haar measure $dg$ on $G$ such that $\vol(I_{l}; dg) = 1$, and a Haar measure $dm$ on $M$ satisfying $\vol(I_{l,M}; dm) = 1$. Similarly, we choose Haar measures on $G'$ and $M'$ such that their corresponding $l$-th Iwahori filtration subgroups have volume 1.

  Let us first recall the following lemma.
  \begin{lemma}\label{casselman} Let $\theta$ be a non-empty subset of $\Delta$. There exists in any right coset of $W_\theta$ in $W$, a unique element $w$ characterized by any of these properties:
  \begin{enumerate}[(a)]
  \item For $x \in W_\theta, \; l(xw) = l(x) + l(w)$,
  \item $w^{-1} \theta >0$,
  \item The element $w$ is of least length in $W_\theta w$.
  \end{enumerate}
  \end{lemma}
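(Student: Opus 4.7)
The plan is to establish the equivalence of the three characterizations in the cycle $(a) \Rightarrow (c) \Rightarrow (b) \Rightarrow (a)$ and then deduce existence and uniqueness from these. Existence of an element satisfying $(c)$ is immediate, since $l$ takes values in the non-negative integers on the non-empty coset $W_\theta w$. The implication $(a) \Rightarrow (c)$ is trivial: condition (a) forces $l(xw) = l(x) + l(w) \geq l(w)$ for every $x \in W_\theta$.

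For $(c) \Rightarrow (b)$, I would invoke the standard length formula for a Coxeter group: for $\alpha \in \Delta$, $l(s_\alpha w) = l(w) - 1$ if and only if $w^{-1}\alpha \in \Phi^-$. If $w$ is of least length in $W_\theta w$ but $w^{-1}\alpha \in \Phi^-$ for some $\alpha \in \theta$, then $s_\alpha w \in W_\theta w$ has strictly smaller length, a contradiction. Hence $w^{-1}\theta \subset \Phi^+$.

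The main technical step is $(b) \Rightarrow (a)$, which I would prove by induction on $l(x)$ for $x \in W_\theta$. The key input is the classical fact (Bourbaki, \emph{Groupes et alg\`ebres de Lie}, Ch.~IV, \S 1) that $W_\theta$ is itself a Coxeter group with generators $\{s_\alpha : \alpha \in \theta\}$, and its intrinsic length function coincides with the restriction of $l$. For the inductive step, write $x = s_\alpha x'$ with $\alpha \in \theta$ and $l(x') = l(x) - 1$; by induction $l(x'w) = l(x') + l(w)$. I then need $(x'w)^{-1}\alpha \in \Phi^+$ to conclude $l(xw) = l(s_\alpha x' w) = l(x'w) + 1$. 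Since $l(s_\alpha x') > l(x')$ in $W_\theta$, applying the length formula inside $W_\theta$ yields $x'^{-1}\alpha \in \Phi_\theta^+$. Since $w^{-1}\theta \subset \Phi^+$, $w^{-1}$ sends the $\mathbb{Z}_{\geq 0}$-span of $\theta$ into the $\mathbb{Z}$-combinations of roots that are coordinate-wise non-negative on positive simple roots, hence $w^{-1}(\Phi_\theta^+) \subset \Phi^+$; applying this to $x'^{-1}\alpha$ gives the required positivity.

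Uniqueness is then a formal consequence of $(a)$: if $w_1, w_2 \in W_\theta w$ both satisfy the equivalent conditions, write $w_2 = x w_1$ with $x \in W_\theta$. By $(a)$ applied to $w_1$, $l(w_2) = l(x) + l(w_1)$; by symmetry $l(w_1) \geq l(w_2)$, forcing $l(x) = 0$, so $x = e$ and $w_1 = w_2$. The only substantive obstacle is the root-theoretic bookkeeping in $(b) \Rightarrow (a)$, but this is entirely standard Coxeter-group material and requires no input beyond Bourbaki.
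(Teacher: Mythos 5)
Your proof is correct and is the standard argument; since the paper simply cites Lemma~1.1.2 of Casselman's notes without reproducing the argument, what you have written is essentially the expected content behind that reference. The one place worth tightening for exposition (though the argument as stated is valid) is the step $w^{-1}(\Phi_\theta^+) \subset \Phi^+$: the cleanest phrasing is that any $\beta \in \Phi_\theta^+$ is a non-negative integer combination of elements of $\theta$, so $w^{-1}\beta$ is a non-negative integer combination of the positive roots $w^{-1}\alpha$, $\alpha\in\theta$, hence a root whose expansion in $\Delta$ has all non-negative coefficients, hence positive.
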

  \begin{proof}
  This is Lemma 1.1.2 of \cite{Cas95}.
  \end{proof}
  Hence, $[W_\theta \backslash W] = \{w \in W \,|\, w^{-1}\theta >0\}$. As in Section \ref{reps}, we choose representatives $\tilde{w}$ of $w \in [W_\theta \backslash W]$.
  \begin{lemma}\label{conjMP} For $w \in [W_\theta \backslash W]$, we have
  \begin{itemize}
  \item $M \cap \tilde{w}I\tilde{w}^{-1}  = M \cap I$,
  \item $P \cap \tilde{w}I\tilde{w}^{-1} \subset P \cap I$.
  \end{itemize}
  \end{lemma}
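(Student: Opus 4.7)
The plan is to compute $\tilde{w} I \tilde{w}^{-1}$ via the Iwahori factorization and then cut out the $M$- and $P$-components root-by-root. Since $w \in W$ is a finite Weyl group element, its representative $\tilde{w}$ can be taken in $N_{\bfg(\calo)}(T)$, so conjugation by $\tilde{w}$ preserves valuations and sends $U_{\alpha,\calp^n}$ to $U_{w\alpha,\calp^n}$. Starting from $I = \prod_{\alpha \in \Phi^+} U_{\alpha,\calo}\cdot\bft(\calo)\cdot\prod_{\alpha \in \Phi^+} U_{-\alpha,\calp}$ and reindexing $\gamma = w\alpha$, I would obtain
\[
\tilde{w} I \tilde{w}^{-1} \;=\; \prod_{\gamma:\,w^{-1}\gamma>0} U_{\gamma,\calo}\cdot \bft(\calo)\cdot \prod_{\gamma:\,w^{-1}\gamma<0} U_{\gamma,\calp}.
\]

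The key combinatorial input is that for $w \in [W_\theta\backslash W]$, the assumption $w^{-1}\theta > 0$ from Lemma \ref{casselman} forces $w^{-1}\beta > 0$ for \emph{every} $\beta \in \Phi_\theta^+$. Indeed, any such $\beta$ is a non-negative integer combination $\sum_{\alpha \in \theta} n_\alpha \alpha$, so $w^{-1}\beta = \sum n_\alpha (w^{-1}\alpha)$ is a non-negative integer combination of positive roots; being itself a root, it must then lie in $\Phi^+$. Symmetrically, $w^{-1}\beta < 0$ for every $\beta \in \Phi_\theta^-$. This sign-preservation is the only non-bookkeeping step and is the main obstacle to sort out carefully.

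Granting this, the first claim follows by intersecting the factorization above with $M$: for $\beta \in \Phi_\theta^+$ one gets $U_\beta \cap \tilde{w}I\tilde{w}^{-1} = U_{\beta,\calo}$, for $\beta \in \Phi_\theta^-$ one gets $U_\beta \cap \tilde{w}I\tilde{w}^{-1} = U_{\beta,\calp}$, and the torus factor is $\bft(\calo)$; these assemble, via the Iwahori factorization of $M\cap I$, into the equality $M \cap \tilde{w}I\tilde{w}^{-1} = M \cap I$.

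For the second claim, I would use the factorization $P \cap \tilde{w}I\tilde{w}^{-1} = (M \cap \tilde{w}I\tilde{w}^{-1})\cdot (N \cap \tilde{w}I\tilde{w}^{-1})$ coming from $P = MN$ together with the Iwahori-type factorization of $\tilde{w}I\tilde{w}^{-1}$. The $M$-factor is handled by the first claim. For the $N$-factor, the relevant roots are $\alpha \in \Phi^+ \setminus \Phi_\theta^+$; regardless of the sign of $w^{-1}\alpha$, the intersection $U_\alpha \cap \tilde{w}I\tilde{w}^{-1}$ is either $U_{\alpha,\calo}$ or $U_{\alpha,\calp}$, both contained in $U_{\alpha,\calo} = U_\alpha \cap I$. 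Hence $N \cap \tilde{w}I\tilde{w}^{-1} \subset N \cap I$, and combining with the first claim gives $P \cap \tilde{w}I\tilde{w}^{-1} \subset P \cap I$.
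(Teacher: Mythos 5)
Your proposal is correct and follows essentially the same route as the paper: conjugate the Iwahori factorization of $I$ by $\tilde{w}$, observe that $w^{-1}\theta > 0$ forces $w^{-1}\Phi_\theta^{+} \subset \Phi^{+}$ (equivalently $\{\alpha \in \Phi^+: w.\alpha \in \Phi_\theta^-\} = \emptyset$), and read off the intersections with $M$ and $P$ root by root. The only difference is cosmetic: you spell out the justification for the sign-preservation step (a root that is a non-negative combination of positive roots is positive), which the paper asserts without comment.
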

  \begin{proof}
  Consider the Iwahori factorization of $I$:
 \[ I  = \displaystyle{ \prod_{\alpha \in \Phi^+} U_{\alpha, \calo} \;\;{\bf T}(\calo) \;\; \prod_{\alpha \in \Phi^-} U_{\alpha , \calp}}.\]
 Now a simple computation yields
   \begin{align*}
P\cap \tilde{w} I \tilde{w}^{-1} = &\displaystyle{ \prod_{\alpha \in \Phi^+, w.\alpha \in \Phi_\theta^+ }  U_{w.\alpha, \calo}  \prod_{\alpha \in \Phi^+, w.\alpha \in \Phi^+ \backslash \Phi_\theta^+ } U_{w.\alpha, \calo} \prod_{\alpha \in \Phi^+, w.\alpha \in \Phi_\theta^- } U_{w.\alpha, \calo}} \\
&{\bf T}(\calo)\\
&  \displaystyle{ \prod_{\alpha \in \Phi^+, w.\alpha \in \Phi_\theta^+ }  U_{-w.\alpha, \calp}  \prod_{\alpha \in \Phi^+, w.\alpha \in \Phi_\theta^- } U_{-w.\alpha, \calp} \prod_{\alpha \in \Phi^+, w.\alpha \in \Phi^- \backslash \Phi_\theta^- } U_{-w.\alpha, \calp}},
 \end{align*}
 and
 \begin{align*}
 M \cap \tilde{w} I \tilde{w}^{-1} &= \displaystyle{ \prod_{\alpha \in \Phi^+, w.\alpha \in \Phi_\theta^+ }  U_{w.\alpha, \calo}  \prod_{\alpha \in \Phi^+, w.\alpha \in \Phi_\theta^- } U_{w.\alpha, \calo}} \;{\bf T}(\calo)\\
&  \displaystyle{ \prod_{\alpha \in \Phi^+, w.\alpha \in \Phi_\theta^+ }  U_{-w.\alpha, \calp}  \prod_{\alpha \in \Phi^+, w.\alpha \in \Phi_\theta^- } U_{-w.\alpha, \calp} }.
 \end{align*}
 Since $w \in [W_\theta \backslash W]$, we have $w^{-1} \theta >0$. Hence $\{\alpha \in \Phi^+ \,|\,w.\alpha \in \Phi_\theta^- \} = \emptyset$ and the lemma follows.
  \end{proof}
  Note that this lemma clearly holds when $I$ is replaced by the $m$-th Iwahori congruence subgroup $I_m$. 

Next, we construct a map $\kappa_m:  (\Ind_P^G \sigma)^{I_m} \rightarrow (\Ind_{P'}^{G'} \sigma')^{I'_m}$.  First, note that
$G = P\bfg(\calo) = \displaystyle{\coprod_{w \in [W_\theta \backslash W]}}P\tilde{w}I$. Here, $\tilde{w}$ denotes the lifting of $w$ using a minimal decomposition, as in Section \ref{reps}.
 Let $R(\tilde{w})$ be the system of representatives of $(I \cap \tilde{w}^{-1}P\tilde{w}) \backslash I / I^m$ in $I$.
 Then, $G = \displaystyle{\coprod_{w \in [W_\theta \backslash W]} \coprod_{b \in R(\tilde{w})}}P\tilde{w}b I_m$.   By Lemma \ref{ind}, we know
 \begin{align*}
   (\Ind_P^G  \sigma)^{I_{m}} &\longrightarrow \displaystyle{\prod_{w \in [W_\theta \backslash W]} \prod_{b \in R(\tilde{w})} \sigma^{M \cap \tilde{w}   I_m\tilde{w}^{-1}}},\\
   h& \longrightarrow h(\tilde{w}b),
    \end{align*}
    is an isomorphism as $\mathbb{C}$-vector spaces.  Hence an element $h \in (\Ind_P^G  \sigma)^{I_{m}}$ is completely determined by its values $h(\tilde{w}b), \; w \in [W_\theta \backslash W],\; b \in R(w)$.

Via the isomorphism $\zeta_{l,M}: \mathscr{H}(M, I_{l,M}) \rightarrow \mathscr{H}(M', I_{l,M'}')$, we obtain $\kappa_{l,M}: \sigma^{I_{l,M}} \rightarrow (\sigma')^{I_{l,M'}'} $ the corresponding isomorphism of the modules over these Hecke algebras.  Hence $\kappa_{{m,M}}: \sigma^{I_{m,M}} \rightarrow (\sigma')^{I_{m,M'}'} $ is also an isomorphism. Moreover, for $v \in \sigma^{I_{m,M}},\; \kappa_{{l,M}}(v) = \kappa_{{m,M}}(v)$. Note that $ \sigma^{M \cap \tilde{w}I_m \tilde{w}^{-1} } = \sigma^{M \cap I_m}$ by Lemma \ref{conjMP}.
 Hence we obtain a map
  \begin{align*}
  (\Ind_P^G  \sigma)^{I_{m}} &\overset{\kappa_m}{\longrightarrow} (\Ind_{P'}^{G'} \sigma')^{I_{m}'},\\
  h & \rightarrow h',
  \end{align*}
  where $h'(\tilde{w}'b') = \kappa_{m,M}(h(\tilde{w}b))$ for $w \in [W_\theta \backslash W]$ and  $\;b \sim_{\beta} b'$.
The following lemma is clear.
\begin{lemma}\label{Indvsiso} The map $\kappa_m:  (\Ind_P^G \sigma)^{I_m} \rightarrow (\Ind_{P'}^{G'} \sigma')^{I'_m}$ constructed above is an isomorphism of $\mathbb{C}$-vector spaces.\qedhere
\end{lemma}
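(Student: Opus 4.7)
The plan is to verify that $\kappa_m$ is well-defined, injective, and surjective directly from the parametrization of $I_m$-fixed vectors afforded by Lemma \ref{ind}, together with the two structural inputs already established: Lemma \ref{conjMP} on the stabilizers $M \cap \tilde w I_m \tilde w^{-1}$, and the Hecke algebra isomorphism $\zeta_{l,M}$ through its module-level avatar $\kappa_{m,M}$.

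First I would unwind both sides via Lemma \ref{ind}. An element $h \in (\Ind_P^G \sigma)^{I_m}$ is uniquely determined by the tuple $(h(\tilde w b))_{w,b}$ with $w \in [W_\theta \backslash W]$ and $b \in R(\tilde w)$, subject to $h(\tilde w b) \in \sigma^{M \cap \tilde w I_m \tilde w^{-1}}$. The proof of Lemma \ref{conjMP} goes through verbatim with $I_m$ in place of $I$ (it uses only the Iwahori factorization and the positivity $w^{-1}\theta > 0$), giving $M \cap \tilde w I_m \tilde w^{-1} = I_{m,M}$ for every $w \in [W_\theta \backslash W]$; so each coordinate lives in the single space $\sigma^{I_{m,M}}$, independent of $w$. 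The analogous identifications hold on the primed side.

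Next I would verify that $\beta: I/I_m \xrightarrow{\cong} I'/I_m'$ descends to a bijection $R(\tilde w) \leftrightarrow R(\tilde w')$. Writing out the Iwahori factorization as in Lemma \ref{conjMP}, the subgroup $(I \cap \tilde w^{-1} P \tilde w)/(I_m \cap \tilde w^{-1} P \tilde w)$ is a product of root-subgroup pieces and a torus piece, each matched coordinate by coordinate by $\beta$ (using the $m$-closeness of $F$ and $F'$, which is weaker than the $l$-closeness being assumed). Hence the assignment $b \mapsto b'$ with $b \sim_\beta b'$ is a bijection between systems of double coset representatives, making the prescription $h'(\tilde w' b') = \kappa_{m,M}(h(\tilde w b))$ unambiguous. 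Since $\kappa_{m,M}$ sends $\sigma^{I_{m,M}}$ isomorphically onto $(\sigma')^{I_{m,M'}'} = (\sigma')^{M' \cap \tilde w' I_m' \tilde w'^{-1}}$, each value $h'(\tilde w' b')$ lies in the correct target, and Lemma \ref{ind} assembles the coordinates into a bona fide element $h' \in (\Ind_{P'}^{G'} \sigma')^{I_m'}$.

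Bijectivity is then immediate: reversing $\beta$ and $\kappa_{m,M}$ produces a two-sided inverse, so $\kappa_m$ is a $\mathbb{C}$-linear isomorphism. The statement is essentially a packaging exercise, and the only point that requires attention is the compatibility of the $\beta$-correspondence with the subgroup $I \cap \tilde w^{-1} P \tilde w$, which is transparent from the Iwahori factorization. The deeper question — whether $\kappa_m$ intertwines the $G$-level and $G'$-level Hecke algebra actions via $\zeta_m$ — is not part of this lemma and is presumably addressed separately; here only the $\mathbb{C}$-linear isomorphism of underlying vector spaces is claimed.
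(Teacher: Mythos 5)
Your proof is correct and amounts to carefully spelling out what the paper leaves as ``clear''; the approach — parametrize both sides via Lemma~\ref{ind}, identify all stabilizers with $I_{m,M}$ via (the $I_m$-analogue of) Lemma~\ref{conjMP}, match the coset representatives $R(\tilde w)\leftrightarrow R(\tilde w')$ via $\beta$, and transport values coordinatewise via $\kappa_{m,M}$ — is precisely the one the surrounding construction sets up.
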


  It remains to prove that this map $\kappa_m$ is compatible with the Hecke algebra isomorphism $\zeta_m:\mathscr{H}(G,I_m) \rightarrow \mathscr{H}(G',I_m')$.

  \begin{Remark}\label{Remark1}
  A simple consequence of assuming that the fields are $l$-close is the following. We have $ I_{l,M} \subset M \cap \tilde{w}_0I_m\tilde{w}_0^{-1}$ and  $ I_{l,M} \subset M \cap \tilde{w}_0\tilde{s}_0I_m\tilde{s}_0^{-1}\tilde{w}_0^{-1} \; \forall \; w_0 \in W$. Therefore, $0 \neq \sigma^{I_{m,M}} \subset \sigma^{I_{l,M}},\;\sigma^{M \cap \tilde{w}_0I_{m}\tilde{w}_0^{-1}} \subset \sigma^{I_{l,M}}$ and $\sigma^{M \cap \tilde{w}_0\tilde{s}_0I_{m}\tilde{s}_0^{-1}\tilde{w}_0^{-1}} \subset \sigma^{I_{l,M}}$.
 Consequently, we can also study the subspaces $\sigma^{M \cap \tilde{w}_0I_{m}\tilde{w}_0^{-1}}$ and $\sigma^{M \cap  \tilde{w}_0\tilde{s}_0I_{m}\tilde{s}_0^{-1}\tilde{w}_0^{-1}}  \; \forall \; w_0 \in W$ using the isomorphism $\kappa_{{l,M}}: \sigma^{I_{l,M}} \rightarrow (\sigma')^{I_{l,M'}'} $.
\end{Remark}

For simplicity, we assume for the remainder of this section that $\gder$ is simple, that is the underlying root system is irreducible. Let $s_i = s_{\alpha_i}, i=1,2,\ldots n$, denote the simple roots of $\bfg$ and let $s_0 = (-\alpha_0^\vee, s_{\alpha_0})$ denote the highest root.  We note that all the arguments would go through even without this assumption.
  \begin{lemma}\label{indclf} Let $h \in (\Ind_P^G  \sigma)^{I_{m}}$. Let $h' \in (\Ind_{P'}^{G'}  \sigma')^{I_{m}'}$ correspond to $h$ as in  Lemma \ref{Indvsiso}. Then
  \begin{enumerate}[(a)]
  \item  $\kappa_{l,M}(h(\tilde{w}_0b)) = h'({\tilde{w}'}_0b')\; \forall\;  w_0 \in W,\; b \in I$.
  \item Let $s_0 = (-\alpha_0^\vee, s_{\alpha_0})$ and $\tilde{s}_0 = \pi_{-\alpha_0^{\vee} }\lw_{\alpha_0}(1)$ as in Section \ref{reps}. Then  \[ \kappa_{l,M}(h(\tilde{w}\tilde{s}_0b)) = h'({\tilde{w}'}\tilde{s}_0'b')\; \forall\;  w \in [W_\theta \backslash W],\; b \in I.\]
  \item Let $\rho\in \{\tilde{\rho}_1, \ldots \tilde{\rho}_l, \tilde{\rho}_1^{-1}, \ldots \tilde{\rho_l}^{-1}, \tilde{\mu}_1, \ldots \tilde{\mu}_k\}$ and $\tilde{\rho} = \pi_{\lambda}\tilde{x}$ as in Section \ref{reps}(b). Then \[\kappa_{l,M}(h(\tilde{w}\tilde{\rho}b)) = h'({\tilde{w}'}\tilde{\rho}'b')\; \forall\;  w \in [W_\theta \backslash W],\; b \in I.\]
  \end{enumerate}
  \end{lemma}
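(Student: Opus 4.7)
The plan is to prove all three parts uniformly by reducing to the defining property of $\kappa_m$. For a test element $\tilde{v}b$ on the left-hand side---with $\tilde{v}$ equal to $\tilde{w}_0$, $\tilde{w}\tilde{s}_0$, or $\tilde{w}\tilde{\rho}$---I will use the decomposition $G = \coprod_{u \in [W_\theta \backslash W]} P\tilde{u}I$ to write
\[\tilde{v}b = p \cdot \tilde{u} \cdot b_1, \qquad p \in P \cap \bfg(\calo),\ u \in [W_\theta \backslash W],\ b_1 \in I.\]
Since $\delta_P^{1/2}$ is trivial on $P \cap \bfg(\calo)$, this gives $h(\tilde{v}b) = \sigma(m)\, h(\tilde{u}b_1)$, where $m$ denotes the Levi component of $p$. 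By Lemma \ref{conjMP} and Remark \ref{Remark1}, the value $h(\tilde{u}b_1)$ lies in $\sigma^{M \cap \tilde{u}b_1 I_m b_1^{-1}\tilde{u}^{-1}} \subset \sigma^{I_{l,M}}$, and by the construction of $\kappa_m$ we have $\kappa_{l,M}(h(\tilde{u}b_1)) = h'(\tilde{u}'b_1')$. Each part therefore reduces to the intertwining identity
\[\kappa_{l,M}(\sigma(m)\, v) = \sigma'(m')\, \kappa_{l,M}(v), \qquad v \in \sigma^{I_{l,M}},\]
which follows from Theorem \ref{closealgebra} applied to $M$ combined with Proposition \ref{categories} for $M$ (the latter upgrades the Hecke-module compatibility to full $M$-representation compatibility once $m \in M \cap \bfg(\calo)$ is known).

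For (a), Lemma \ref{casselman}(a) factors $w_0 = xw$ with $x \in W_\theta$, $w \in [W_\theta \backslash W]$, and $l(w_0) = l(x) + l(w)$; Lemma \ref{si} then gives $\tilde{w}_0 = \tilde{x}\tilde{w}$, so the decomposition is simply $\tilde{w}_0 b = \tilde{x} \cdot \tilde{w} \cdot b$ with $m = \tilde{x} \in M \cap \bfg(\calo)$. For (c), writing $\tilde{\rho} = \pi_\lambda \tilde{x}$ as in Section \ref{reps}(b), I rearrange $\tilde{w}\tilde{\rho}b$ into the form $m \cdot n \cdot \tilde{u} \cdot b_1$ using that $\rho \in \Omega$ has affine length zero (so $\pi_\lambda$ normalizes $I$ up to bounded shifts) together with the commutation relations in Lemmas \ref{rhoS} and \ref{rho}; the $M$-part absorbs the resulting torus factors and lands in $P \cap \bfg(\calo)$. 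For (b), since $\tilde{s}_0 = \pi_{-\alpha_0^\vee}\lw_{\alpha_0}(1)$ has affine length one, I distinguish cases according to whether the highest root $\alpha_0$ lies in $\Phi_\theta$, and in each case an explicit rewriting---first absorbing $\lw_{\alpha_0}(1)$ into a finite Weyl piece and then handling $\pi_{-\alpha_0^\vee}$ via commutation---produces the required form $\tilde{w}\tilde{s}_0 b = p_1 \tilde{u} b_2$ with $p_1 \in P \cap \bfg(\calo)$.

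The main technical obstacle is the depth accounting that justifies the choice $l = m+3$. Conjugation by $\tilde{u}$ for $u \in [W_\theta \backslash W]$ preserves the Iwahori filtration of $M$ by Lemma \ref{conjMP}; the translation factor $\pi_{-\alpha_0^\vee}$ or $\pi_\lambda$ appearing in $\tilde{s}_0$ or $\tilde{\rho}$ shifts filtration depths by at most one level; and the action of $\sigma(m)$ may send $\sigma^{I_{l,M}}$ to $\sigma^{\tilde{m}I_{l,M}\tilde{m}^{-1}}$, requiring a further buffer so that the image still lies in a space where $\kappa_{l,M}$ is defined and compatible. The three-level margin comfortably absorbs all of these contributions. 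Once the decompositions $\tilde{v}b = p\tilde{u}b_1$ are verified over $F$, their transfer to $F'$ is automatic: the same formulas---with $\pi$ replaced by $\pi'$ and identical structure constants and Chevalley basis---define the corresponding elements in $G'$, a compatibility already built into the isomorphism $\beta$ and the parallel construction of representatives in Section \ref{reps}.
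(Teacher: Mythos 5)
Your reduction is appealing in outline but has a genuine gap in parts (b) and (c), traceable to a false structural claim: the ``Levi part'' $m$ of the decomposition $\tilde v b = p\,\tilde u\,b_1$ does \emph{not} lie in $P\cap\bfg(\calo)$ when $\tilde v$ involves $\tilde s_0$ or $\tilde\rho$. In part (b) the piece that gets absorbed into $M$ is $\tilde w\,\pi_{-\alpha_0^\vee}\tilde w^{-1}=\pi_{-w.\alpha_0^\vee}$, and in part (c) it is (essentially) $\pi_{w.\lambda}$; these are translation elements of $T$ evaluated at the uniformizer and are never in $\bfg(\calo)$. So the premise ``once $m\in M\cap\bfg(\calo)$ is known'' on which your intertwining identity rests simply fails in the two cases where the lemma is actually hard.

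Independently of that, the intertwining identity $\kappa_{l,M}(\sigma(m)v)=\sigma'(m')\kappa_{l,M}(v)$ is not a consequence of Proposition~\ref{categories}. That proposition is an equivalence of categories for a single group $M$; it does not supply a canonical bijection $m\leftrightarrow m'$ between elements of $M$ and of $M'$, nor does it upgrade the Hecke-module compatibility of $\kappa_{l,M}$ (which is only compatibility with $\zeta_{l,M}$ on $\fH(M,I_{l,M})$) to a matching of group actions. The actual argument has to (i) verify, using Remark~\ref{Remark1}, that both $v$ and $\sigma(m)v$ lie in $\sigma^{I_{l,M}}$; (ii) rewrite $\sigma(m)v$ as $q^{-\ell}\,\sigma(f_{m,M})v$ for the appropriate length $\ell$, i.e.\ convert the group element to a Hecke operator with a volume normalization (this is where Lemma~\ref{volW} enters); and (iii) show $\zeta_{l,M}(f_{m,M})=f'_{m',M'}$, which for $m=\tilde x$, $x\in W_\theta$, follows from the definition of $\zeta_{l,M}$ on generators, and for the translation elements $\pi_{-w.\alpha_0^\vee}$ and $\pi_{w.\lambda}$ requires Remark~\ref{Remark2} to compare $\pi_\mu$ against the chosen affine-Weyl representatives up to an order-$\le 2$ torus element. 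None of this is automatic, and it is exactly these steps (not a depth ``buffer'') that carry the content of parts (b) and (c); note also that (b) does not come from a direct $P\tilde u I$ decomposition but from stripping off $\pi_{-\alpha_0^\vee}$ and then invoking (a) on the remaining $h(\tilde w\,\lw_{\alpha_0}(1)\,b)$.
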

  \begin{proof}Notice that from the construction of $h'$, we have $h'(\tilde{w}'b') = \gamma(h(\tilde{w}b))$ for $w \in [W_\theta \backslash W],  b \in R(\tilde{w})$. Now, let $ w_0 \in W$. We can write $w_0 = x.w$ for $x \in W_\theta$ and $w \in [W_\theta \backslash W]$. As always, let $\tilde{x}$ and $\tilde{w}$ denote the liftings of $x$ and $w$ as in Section \ref{reps}. Since $l(w_0) = l(x) + l(w)$ by Lemma \ref{casselman}, we see that $\tilde{w}_0 = \tilde{x}.\tilde{w}$. Similarly, we have ${\tilde{w}'}_0 = \tilde{x}'.\tilde{w}'$.
  Now,
  \[ h'(\tilde{w}_0'b') = h'(\tilde{x}'\tilde{w}'b') = \sigma'(\tilde{x}')h'(\tilde{w}'b').\]
  We have \[ h'(\tilde{w}_0'b')  \in  (\sigma')^{M' \cap \tilde{w}'_0I'_{m}\tilde{w}_0'^{-1}} \subset (\sigma')^{I'_{l,M'}},\] and similarly,  \[h'(\tilde{w}'b')  \in  (\sigma')^{M' \cap \tilde{w}'I'_{m}\tilde{w}'^{-1}} \subset (\sigma')^{I'_{l,M'}}\] by Remark \ref{Remark1}.
  Hence, $ \sigma'(i'\tilde{x}'i'')h'(\tilde{w}'b') = \sigma'(\tilde{x}')h'(\tilde{w}'b')  \; \forall \; i', i''\in I_{l,M'}'$. As before, for $g' \in M'$,  let $f'_{g',M'} $ be the characteristic function of the double coset $I_{l,M'}'g' I_{l,M'}'$. Then
  \begin{align*}
      h'(\tilde{w}_0'b') &=\sigma'(\tilde{x}')h'(\tilde{w}'b')\\
      &=\vol(I'_{l,M'}\tilde{x}'I'_{l,M'}; dm)^{-1}\sigma'(f'_{\tilde{x}',M'})h'(\tilde{w}'b')\\
      & = q^{-l(x)}\sigma'(f'_{\tilde{x}',M'})h'(\tilde{w}'b') \text{ (by Lemma \ref{volW})}.
      \end{align*}
  Now, we write $b' = b_1'b_2'b_3'$, where $b_1' \in \tilde{w}'^{-1}M'\tilde{w}' \cap I'$, $b_2' \in \tilde{w}'^{-1}N'\tilde{w}' \cap I'$,  and $b_3' \in R(\tilde{w}')$. Hence $\tilde{w}'b_1'\tilde{w}'^{-1} \in M' \cap \tilde{w}'I\tilde{w}'^{-1} = M' \cap I'$  and $\tilde{w}'b_2'\tilde{w}'^{-1} \in N' \cap \tilde{w}'I\tilde{w}'^{-1} \subset N' \cap I'$ by Lemma \ref{conjMP}.
  Then,
 \begin{align*}
  h'(\tilde{w}_0'b') &=q^{-l(x)}\sigma'(f'_{\tilde{x}',M'}) h'(\tilde{w}'b_1'b_2'b_3')\\
  & = q^{-l(x)}\sigma'(f'_{\tilde{x}',M'}) \sigma'(\tilde{w}'b_1'b_2'\tilde{w}'^{-1})h'(\tilde{w}'b_3') \\
  &= q^{-l(x)}\sigma'(f'_{\tilde{x}',M'}) \sigma'(\tilde{w}'b_1'\tilde{w}'^{-1})h'(\tilde{w}'b_3') \text{ (since $\tilde{w}'b_2'\tilde{w}'^{-1} \in N'$)}\\
  & =q^{-l(x)} \sigma'(f'_{\tilde{x}',M'}* f'_{\tilde{w}'b_1'\tilde{w}'^{-1},M'})h'(\tilde{w}'b_3').
  \end{align*}
  Let $x = s_{i_1}....s_{i_r}$ be its reduced expression. Then $\tilde{x} = \tilde{s}_{i_1}...\tilde{s}_{i_r}$ and $f_{\tilde{x},M} = f_{\tilde{s}_{i_1},M}*\ldots *f_{\tilde{s}_{i_r},M}$. It immediately follows that $f'_{\tilde{x}',M'} = f_{\tilde{s}_{i_1}',M'}*\ldots *f_{\tilde{s}_{i_r}',M'} = \zeta_{l,M}(f_{\tilde{s}_{i_1},M}*\ldots * f_{\tilde{s}_{i_r},M})$ from Section \ref{CLF}. Also, $b_1 \sim_{\beta} b_1'$ implies that $\tilde{w}b_1\tilde{w}^{-1} \sim_{\beta}\tilde{w}'b_1'\tilde{w}'^{-1}$. Hence, $\zeta_{l,M}(f_{\tilde{w}b_1\tilde{w}^{-1},M} )= f'_{\tilde{w}'b_1'\tilde{w}'^{-1},M'}$.
Hence we finally have that
 \begin{align*}
  h'(\tilde{w}_0'b') &=q^{-l(x)} \sigma'(f'_{\tilde{x}',M'}* f'_{\tilde{w}'b_1'\tilde{w}'^{-1},M'})h'(\tilde{w}'b_3') \\
  & = q^{-l(x)}\sigma'(\zeta_{l,M}( f_{\tilde{x},M}* f_{\tilde{w}b_1\tilde{w}^{-1},M}))\kappa_{l,M}(h(\tilde{w}b_3))\\
  & =q^{-l(x)} \sigma'(\zeta_{l,M}( f_{\tilde{x},M}* f_{\tilde{w}b_1\tilde{w}^{-1},M}))(\kappa_{l,M}( \sigma(\tilde{w}b_2\tilde{w}^{-1})h(\tilde{w}b_3)))\\
  & =q^{-l(x)} \kappa_{l,M}(\sigma( f_{\tilde{x},M}* f_{\tilde{w}b_1\tilde{w}^{-1},M})h(\tilde{w}b_2b_3))\\
  & = \kappa_{l,M}(h(\tilde{w}_0b)).
  \end{align*}
  For $(b)$, we have
\begin{align*}
h'(\tilde{w}'\tilde{s}_0'b') &= h'(\tilde{w}'{\pi'}_{-\alpha_0^\vee} \lw_{\alpha_0}(1)'b') \\
&= \sigma'(\tilde{w}' {\pi'}_{-\alpha_0^\vee} \tilde{w}'^{-1})h'(\tilde{w}'\lw_{\alpha_0}(1)'b') \\
&= q^{-l(-w.\alpha_0^\vee)}\sigma'(f'_{\tilde{w}' {\pi'}_{-\alpha_0^\vee} \tilde{w}'^{-1},M'})h'(\tilde{w}'\lw_{\alpha_0}(1)'b')\\
&(\text{since }  h'(\tilde{w}'\tilde{s}_0'b'), h'(\tilde{w}'\lw_{\alpha_0}(1)'b') \in \sigma'^{I'_{l,M'}})\\
& = q^{-l(-w.\alpha_0^\vee)}\sigma'(f'_{\tilde{w}' {\pi'}_{-\alpha_0^\vee} \tilde{w}'^{-1},M'}) \kappa_{l,M}(h(\tilde{w}\lw_{\alpha_0}(1)b) \;\;(\text{using } (a)).
\end{align*}
To complete the proof of $(b)$, it remains to see that \[\zeta_{l,M}(f_{\tilde{w}{\pi}_{-\alpha_0^\vee} \tilde{w}^{-1},M}) = f'_{\tilde{w}' {\pi'}_{-\alpha_0^\vee} \tilde{w}'^{-1},M'}.\] By Remark \ref{Remark2}, we have that
\[ I'_{l,M'}\tilde{w}{\pi'}_{-\alpha_0^\vee} \tilde{w}'^{-1}I'_{l,M'} = I'_{l,M'}{\pi'}_{-w. \alpha_0^\vee} I'_{l,M'} = I'_{l,M'}\tilde{w}_a'c' I'_{l,M'}\] where $w_a = (-w.\alpha_0^{\vee},1) \in X_*(\bft) \rtimes W_\theta$ and $c'$ is the constant in Remark \ref{Remark2}. Now it follows that
\[ f'_{\tilde{w}' {\pi'}_{-\alpha_0^\vee} \tilde{w}'^{-1},M'} = f'_{\tilde{w}_a'c',M'} = \zeta_{l,M}( f_{\tilde{w}_ac,M}) = \zeta_{l,M}(f_{\tilde{w}{\pi}_{-\alpha_0^\vee} \tilde{w}^{-1},M}).\]
For $(c)$ write $\tilde{\rho} = \pi_{\lambda}\tilde{x}$ and argue exactly as in $(b)$ to complete the proof.
  \end{proof}
  The final step is to understand the action of the generators of the Hecke algebra $\mathscr{H}(G,I_m)$ on an element $h \in (\Ind_P^G \sigma)^{I_m}$.
  \begin{lemma}\label{heckeaction1} Let $h \in (\Ind_P^G \sigma)^{I_m}$. Let $f_g =\vol(I_m; dg)^{-1}. \Char(I_mgI_m)$. Let $w \in [W_\theta \backslash W]$ and $b \in I$. 
  \begin{enumerate}[(a)]
  \item For $b_0 \in I$, we have \[(f_{b_0} *h)(\tilde{w}b) = h(\tilde{w}bb_0).\]
  \item For $ \tilde\rho \in \{\tilde{\rho}_1, \ldots \tilde{\rho}_l, \tilde{\rho}_1^{-1}, \ldots \tilde{\rho_l}^{-1}, \tilde{\mu}_1, \ldots \tilde{\mu}_k\}$, where $\rho = ( \lambda,x)$ and $\tilde{\rho} = \pi_{\lambda}\tilde{x}$, we have \[(f_{\tilde\rho} *h)(\tilde{w}b)=  h(\tilde{w}\tilde{\rho}\Ad\tilde{\rho}(b)).\]
  \item For $i \geq 1$, writing $b = \lu_{\alpha_i}(t)b_1$, we have

 $ (f_{\tilde{s}_i} *h)(\tilde{w}b) =$\\
  \begin{displaymath}
   \left\{ \begin{array}{lll}
  \displaystyle{\sum_{\{t_\nu\}}\sigma(f_{\tilde{w}\lu_{\alpha_i}(t)\tilde{w}^{-1},M})h(\tilde{w}\tilde{s}_i\Ad\tilde{s}_i^{-1}(\lu_{\alpha_i}({\pi^mt_\nu})b_1)) } &:  w.\alpha_i \in \Phi_\theta^+,\\
   \displaystyle{\sum_{\{t_\nu\}}h(\tilde{w}\tilde{s}_i\Ad\tilde{s}_i^{-1}(\lu_{\alpha_i}({\pi^mt_\nu})b_1)) }&:w.\alpha_i \in \Phi^+ \backslash \Phi_\theta^+,\\
   \displaystyle{\sum_{\{t_\nu\}}h(\tilde{w}\alpha_i^\vee(-t)\lu_{\alpha_i}(-t^{-1})\Ad\tilde{s}_i^{-1}(\lu_{\alpha_i}({\pi^mt_\nu})b_1)) } &:w.\alpha_i \in \Phi^-. \\
  \end{array}
  \right.
  \end{displaymath}
where $\{t_\nu\}$ is a  set of representatives of $\calo/\calp$. 
  \item For $i = 0$, writing $b = \lu_{-\alpha_0}(\pi t)b_1$, we have

 $ (f_{\tilde{s}_0} *h)(\tilde{w}b) =$\\
\hspace*{-1in} \begin{displaymath}
   \left\{ \begin{array}{lll}
  \displaystyle{\sum_{\{t_\nu\}}}h(\tilde{w}\tilde{s}_0\Ad\tilde{s}_0^{-1}(\lu_{-\alpha_0}({\pi^{m+1}t_\nu})b_1))&: -w.\alpha_0 \in \Phi^+,\\
 \displaystyle{\sum_{\{t_\nu\}}}\sigma(f_{\Ad\tilde{w}(\lu_{-\alpha_0}(\pi t)),M})h(\tilde{w}\tilde{s}_0\Ad\tilde{s}_0^{-1}(\lu_{-\alpha_0}({\pi^{m+1}t_\nu})b_1))&  : -w.\alpha_0 \in \Phi_\theta^-,\\
\displaystyle{\sum_{\{t_\nu\}}}h(\tilde{w}\alpha_0^\vee(t^{-1})\lu_{-\alpha_0}(-\pi t^{-1})\Ad\tilde{s}_0^{-1}(\lu_{-\alpha_0}({\pi^{m+1}t_\nu})b_1))&  :otherwise.\\
  \end{array}
  \right.
  \end{displaymath}
where $\{t_\nu\}$ is a set of representatives of $\calo/\calp$.
  \end{enumerate}

  \end{lemma}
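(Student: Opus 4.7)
The plan is to compute the convolution action of each generator on the induced representation directly.

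Parts (a) and (b) are immediate from the structure of the extended affine Weyl group. For (a), since $I_m$ is normal in $I$, the double coset $I_m b_0 I_m$ collapses to $b_0 I_m$ with unit volume, yielding $(f_{b_0} * h)(y) = h(y b_0)$. For (b), each generator $\rho \in \Omega$ has length zero in $W_a$ and hence normalizes $I$ by axiom (d) of the generalized Tits system in Section \ref{HeckeAlgebraIso}; since $I_m$ is characteristic in $I$, its lift $\tilde\rho$ normalizes $I_m$ as well. Consequently $I_m \tilde\rho I_m = \tilde\rho I_m$ with unit volume, giving $(f_{\tilde\rho} * h)(\tilde w b) = h(\tilde w b\tilde\rho) = h(\tilde w \tilde\rho \cdot \Ad\tilde\rho(b))$ after rewriting.

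Parts (c) and (d) require the right-coset decomposition of $I_m \tilde s_i I_m$ (resp.\ $I_m \tilde s_0 I_m$) into $q$ cosets of $I_m$ used implicitly in Lemma \ref{heckeaction}, with representatives $\{\lu_{\alpha_i}(\pi^m t_\nu)\tilde s_i^{-1}\}_{t_\nu \in \calo/\calp}$, giving
\[(f_{\tilde s_i}*h)(\tilde w b) = \sum_{\{t_\nu\}} h(\tilde w b\, \lu_{\alpha_i}(\pi^m t_\nu)\,\tilde s_i^{-1}),\]
with the analogue for $\tilde s_0$ supported on $-\alpha_0$ with valuation $m+1$. Writing $b = \lu_{\alpha_i}(t) b_1$ with $b_1 \in I \cap \Ad\tilde s_i(I)$, the factor $\lu_{\alpha_i}(\pi^m t_\nu) b_1 \in I \cap \Ad\tilde s_i(I)$ may be pushed past $\tilde s_i^{-1}$, leaving the leading factor $\tilde w \lu_{\alpha_i}(t)\tilde s_i^{-1}$ to be analyzed, together with $\Ad\tilde s_i^{-1}(\lu_{\alpha_i}(\pi^m t_\nu) b_1)$ on the right inside $h$.

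The three subcases of (c) then arise from the position of $w.\alpha_i$ in the root system and reflect distinct behaviors of $\Ad\tilde w(\lu_{\alpha_i}(t))$ in the induced representation. When $w.\alpha_i \in \Phi_\theta^+$, this conjugate lies in $M \cap I$ and is absorbed as a $\sigma$-action, appearing as the operator $\sigma(f_{\Ad\tilde w(\lu_{\alpha_i}(t)),M})$ by the usual convolution interpretation of left multiplication by $M$; when $w.\alpha_i \in \Phi^+ \setminus \Phi_\theta^+$, it lies in $N \cap I$ and is annihilated by the $\Ind_P^G$ transformation rule; when $w.\alpha_i \in \Phi^-$, the element $\lu_{\alpha_i}(t)$ cannot be freely commuted past $\tilde s_i^{-1}$ and I invoke the $\SL_2$-relation \eqref{relation} from the proof of Theorem \ref{presentation} to rewrite $\tilde w \lu_{\alpha_i}(t)\tilde s_i^{-1}$ as $\tilde w \alpha_i^\vee(-t) \lu_{\alpha_i}(-t^{-1})$ modulo factors that are absorbed into the trailing $\Ad\tilde s_i^{-1}(\cdots)$ piece.

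Part (d) follows the same template with $\alpha_i$ replaced by $-\alpha_0$ and an additional $\pi$-shift coming from $\tilde s_0 = \lw_{\alpha_0}(\pi^{-1})$, so one factors $b = \lu_{-\alpha_0}(\pi t)b_1$ with $b_1 \in I \cap \Ad\tilde s_0(I)$ and invokes the sister relation \eqref{relation1} in the third case. The cases are organized by the position of $-w.\alpha_0$: in $\Phi^+$ (so $\Ad\tilde w(\lu_{-\alpha_0}(\pi t)) \in N$ is killed), in $\Phi_\theta^-$ (so it lies in $M\cap I$ and yields the $\sigma$-action factor), or otherwise (requiring \eqref{relation1}). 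The main obstacle in both (c) and (d) is bookkeeping---tracking unipotent factors through conjugations by $\tilde w$ and $\tilde s_i^{\pm 1}$ and verifying that each piece lands in the expected Iwahori factorization component---rather than any conceptual novelty, as all manipulations closely parallel those performed in Cases $1_i, 2_i, 1_0, 2_0$ of the proof of Theorem \ref{presentation}.
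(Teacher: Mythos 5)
Your overall plan—reduce to the integral formula $(f_g*h)(\tilde wb)=\vol(I_m)^{-1}\int_{\tilde wbI_mgI_m}h\,dx$, decompose the double coset into right $I_m$-cosets, split $b$ Iwahori-wise, and case-analyze on the position of $w.\alpha_i$—is exactly the route the paper takes, and the $\SL_2$ relations you invoke in the third sub-case are the right tools. However, there is a genuine error in the coset decomposition you start from. You assert $I_m\tilde s_iI_m=\coprod_{t_\nu}\lu_{\alpha_i}(\pi^m t_\nu)\tilde s_i^{-1}I_m$, but this fails in general: since $\tilde s_i^2=\alpha_i^\vee(-1)$ one has $\tilde s_i^{-1}=\tilde s_i\,\alpha_i^\vee(-1)$, and $\alpha_i^\vee(-1)\in{\bf T}(\calo)$ lies in $T_{\calp^m}$ only when $2\in\calp^m$. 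Thus your proposed representatives $\lu_{\alpha_i}(\pi^m t_\nu)\tilde s_i^{-1}$ do not even lie in $I_m\tilde s_iI_m$. The correct decomposition is $I_m\tilde s_iI_m=\coprod_{t_\nu}\lu_{\alpha_i}(\pi^m t_\nu)\tilde s_i\,I_m$ (no inverse), and the extra $\alpha_i^\vee(-1)$ would otherwise contaminate the evaluations $h(\tilde w\tilde s_i\cdots)$ appearing in the first two sub-cases. The $\tilde s_i^{-1}$ in Lemma \ref{heckeaction}(b) that you are importing arises from a different mechanism: there one locates the support of the output by inverting a right-coset shift of the Whittaker functions $h_g$, whereas here the convolution $(f_{\tilde s_i}*h)(\tilde wb)$ directly samples $h$ along $\tilde wb\cdot(I_m\tilde s_iI_m)$, with no inversion.

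A secondary bookkeeping issue: starting from $\tilde wb\lu_{\alpha_i}(\pi^m t_\nu)\tilde s_i$ you have the factor $b_1\lu_{\alpha_i}(\pi^m t_\nu)$ in that order, not $\lu_{\alpha_i}(\pi^m t_\nu)b_1$ as your $\Ad\tilde s_i^{-1}(\lu_{\alpha_i}(\pi^m t_\nu)b_1)$ suggests. The paper avoids this by first using $b_1 I_m=I_m b_1$ (normality of $I_m$ in $I$) and $b_1\tilde s_i=\tilde s_i\Ad\tilde s_i^{-1}(b_1)$ to pull $b_1$ all the way to the right of $I_m\tilde s_i I_m$ before inserting the coset representatives; you should do the same, or else explicitly relabel the index $t_\nu$ after commuting. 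Once these two points are repaired the three sub-cases of (c), and the analogous structure of (d) with $\tilde s_0=\lw_{\alpha_0}(\pi^{-1})$ and the valuation shift to $\calp^{m+1}$, go through as you describe.
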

  \begin{proof} For $g \in G$, a simple calculation yields
  \begin{equation}\label{equation}
  (f_g *h)(\tilde{w}b) = (\vol(I_m; dg))^{-1}\displaystyle{\int_{\tilde{w}bI_mgI_m}h(x)\; dx}.
    \end{equation}
    To prove $(a)$, we observe that $\tilde{w}bI_mb_0I_m = \tilde{w}bb_0I_m$. Since $h$ is fixed by $I_m$, we immediately deduce that $(f_{b_0} *h)(\tilde{w}b) = h(\tilde{w}bb_0)$ and $(a)$ follows. \\
For $(b)$, we again observe that $\tilde{\rho}$ normalizes $I $ and $I_m$. Hence,  $\tilde{w}bI_m\tilde{\rho}I_m = \tilde{w}b\tilde{\rho}I_m$. Hence, integral \eqref{equation} becomes
\[  (f_{\tilde\rho} *h)(\tilde{w}b) = h(\tilde{w}b\tilde{\rho}) = h(\tilde{w}\tilde{\rho}\Ad\tilde{\rho}( b)),\]
and  $(b)$ follows.

To prove $(c)$, we first write $b= \lu_{\alpha_i}(t)b_1$ with $t \in \calo^\times$ and $b_1 \in I \cap Ad\tilde{s}_i(I)$, and note that
\begin{align*}
\tilde{w}bI_m\tilde{s}_iI_m &= \displaystyle{\coprod_{\{t_\nu\}} \tilde{w}\lu_{\alpha_i}(t)\lu_{\alpha_i}({\pi^mt_\nu})\tilde{s}_i\Ad\tilde{s}_i^{-1}(b_1)I_m}\\
& = \displaystyle{\coprod_{\{t_\nu\}} \tilde{w}\lu_{\alpha_i}(t)\tilde{s}_i\Ad\tilde{s}_i^{-1}(\lu_{\alpha_i}({\pi^mt_\nu})b_1)I_m}.
\end{align*}
Hence, Integral \eqref{equation} becomes
\[ (f_{\tilde{s}_i} *h)(\tilde{w}b) = \displaystyle{\sum_{\{t_\nu\}}h(\tilde{w}\lu_{\alpha_i}(t)\tilde{s}_i\Ad\tilde{s}_i^{-1}(\lu_{\alpha_i}({\pi^mt_\nu})b_1) }.\]
\textbullet \hspace{2pt} If $w. \alpha_i \in \Phi_\theta^+$, then
\begin{align*}
 (f_{\tilde{s}_i} *h)(\tilde{w}b) &= \displaystyle{\sum_{\{t_\nu\}}\sigma(\tilde{w}\lu_{\alpha_i}(t)\tilde{w}^{-1})h(\tilde{w}\tilde{s}_i\Ad\tilde{s}_i^{-1}(\lu_{\alpha_i}({\pi^mt_\nu})b_1) )}\\
&=\displaystyle{\sum_{\{t_\nu\}}\sigma(f_{\tilde{w}\lu_{\alpha_i}(t)\tilde{w}^{-1},M})h(\tilde{w}\tilde{s}_i\Ad\tilde{s}_i^{-1}(\lu_{\alpha_i}({\pi^mt_\nu})b_1)) .}
\end{align*}
The second equality above holds since $\tilde{w}\lu_{\alpha_i}(t)\tilde{w}^{-1} \in M \cap I$.\\
\textbullet \hspace{2pt} If $w.\alpha_i \in \Phi^+ \backslash \Phi_\theta^+$, then $\tilde{w}\lu_{\alpha_i}(t)\tilde{w}^{-1} \in N$ and hence,
\begin{align*}
 (f_{\tilde{s}_i} *h)(\tilde{w}b) &= \displaystyle{\sum_{\{t_\nu\}}\sigma(\tilde{w}\lu_{\alpha_i}(t)\tilde{w}^{-1})h(\tilde{w}\tilde{s}_i\Ad\tilde{s}_i^{-1}(\lu_{\alpha_i}({\pi^mt_\nu})b_1)) }\\
&=\displaystyle{\sum_{\{t_\nu\}}h(\tilde{w}\tilde{s}_i\Ad\tilde{s}_i^{-1}(\lu_{\alpha_i}({\pi^mt_\nu})b_1)). }
\end{align*}
\textbullet \hspace{2pt} When $w.\alpha_i \in \Phi^{-}$, note that  $w.\alpha_i \notin \Phi_\theta^-$ since $w^{-1}(\Phi_\theta^+) >0$ for all $w \in [W_\theta \backslash W]$. Hence $w.\alpha_i \in \Phi^- \backslash \Phi_\theta^-$. We now use relation \ref{relation} to obtain
\begin{align*}
\tilde{w}\lu_{\alpha_i}(t)\tilde{s}_i\Ad\tilde{s}_i^{-1}&(\lu_{\alpha_i}({\pi^mt_\nu})b_1)\\ &=\tilde{w}\tilde{s}_i\Ad\tilde{s}_i^{-1}(\lu_{\alpha_i}(t))\Ad\tilde{s}_i^{-1}(\lu_{\alpha_i}({\pi^mt_\nu})b_1)\\
& =  \tilde{w}\tilde{s}_i\Ad\tilde{s}_i(\lu_{\alpha_i}(t))\Ad\tilde{s}_i^{-1}(\lu_{\alpha_i}({\pi^mt_\nu})b_1)\\
& = \tilde{w}\tilde{s}_i\lu_{\alpha_i}(-t^{-1})\tilde{s}_i\alpha_i^\vee(t)\lu_{\alpha_i}(-t^{-1})\Ad\tilde{s}_i^{-1}(\lu_{\alpha_i}({\pi^mt_\nu})b_1)\\
& = \tilde{w}\tilde{s}_i\lu_{\alpha_i}(-t^{-1})\tilde{s}_i^{-1}\alpha_i^\vee(-t)\lu_{\alpha_i}(-t^{-1})\Ad\tilde{s}_i^{-1}(\lu_{\alpha_i}({\pi^mt_\nu})b_1)\\
& =\Ad\tilde{w}\tilde{s}_i(\lu_{\alpha_i}(-t^{-1}))\tilde{w}\alpha_i^\vee(-t)\lu_{\alpha_i}(-t^{-1})\Ad\tilde{s}_i^{-1}(\lu_{\alpha_i}({\pi^mt_\nu})b_1).
\end{align*}

Moreover, since $w.\alpha_i \in \Phi^- \backslash \Phi_\theta^-$, we have $\Ad\tilde{w}\tilde{s}_i(\lu_{\alpha_i}(t)) \in N$. Therefore
\begin{align*}
& (f_{\tilde{s}_i} *h)(\tilde{w}b) \\
&= \displaystyle{\sum_{\{t_\nu\}}\sigma(\Ad\tilde{w}\tilde{s}_i(\lu_{\alpha_i}(-t^{-1})))h(\tilde{w}\alpha_i^\vee(-t)\lu_{\alpha_i}(-t^{-1})\Ad\tilde{s}_i^{-1}(\lu_{\alpha_i}({\pi^mt_\nu})b_1)) }\\
&=\displaystyle{\sum_{\{t_\nu\}}h(\tilde{w}\alpha_i^\vee(-t)\lu_{\alpha_i}(-t^{-1})\Ad\tilde{s}_i^{-1}(\lu_{\alpha_i}({\pi^mt_\nu})b_1)). }
\end{align*}

To prove $(d)$, we first write $b= \lu_{-\alpha_0}(\pi t)b_1$ with $t \in \calo^\times$, $b_1 \in I \cap Ad \tilde{s}_0(I)$, and note that
\begin{align*}
\tilde{w}bI_m\tilde{s}_0I_m &= \displaystyle{\coprod_{\{t_\nu\}} \tilde{w}\lu_{-\alpha_0}(\pi t)\lu_{-\alpha_0}({\pi^{m+1}t_\nu})\tilde{s}_0\Ad\tilde{s}_0^{-1}(b_1)I_m}\\
& = \displaystyle{\coprod_{\{t_\nu\}} \tilde{w}\lu_{-\alpha_0}(\pi t)\tilde{s}_0\Ad\tilde{s}_0^{-1}(\lu_{-\alpha_0}({\pi^{m+1}t_\nu})b_1)I_m}.
\end{align*}
Hence, Integral \eqref{equation} becomes
\[ (f_{\tilde{s}_0} *h)(\tilde{w}b) = \displaystyle{\sum_{\{t_\nu\}}h(\tilde{w}\lu_{-\alpha_0}(\pi t)\tilde{s}_0\Ad\tilde{s}_0^{-1}(\lu_{-\alpha_0}({\pi^{m+1}t_\nu})b_1)) .}\]
\textbullet \hspace{2pt} If $-w.\alpha_0 \in \Phi^+$, then $-w. \alpha_0 \notin \Phi_\theta^+$ since $w^{-1}(\Phi_\theta^+)>0$. Hence,  $-w.\alpha_0 \in \Phi^+ \backslash \Phi_\theta^+$. This implies $\tilde{w}\lu_{-\alpha_0}(\pi t)\tilde{w}^{-1} \in N$
and therefore,
\begin{align*}
 (f_{\tilde{s}_0} *h)(\tilde{w}b) &= \displaystyle{\sum_{\{t_\nu\}}\sigma(\tilde{w}\lu_{-\alpha_0}(\pi t)\tilde{w}^{-1})h(\tilde{w}\tilde{s}_0\Ad\tilde{s}_0^{-1}(\lu_{-\alpha_0}({\pi^{m+1}t_\nu})b_1)) }\\
&=\displaystyle{\sum_{\{t_\nu\}}h(\tilde{w}\tilde{s}_0\Ad\tilde{s}_0^{-1}(\lu_{-\alpha_0}({\pi^{m+1}t_\nu})b_1)). }
\end{align*}
\textbullet \hspace{2pt} If $-w.\alpha_0 \in \Phi_\theta^-$ then we note that $\Ad\tilde{w}(\lu_{-\alpha_0}(\pi t)) \in M \cap I$ and therefore,
\begin{align*}
 (f_{\tilde{s}_i} *h)(\tilde{w}b) &= \displaystyle{\sum_{\{t_\nu\}}\sigma(\Ad\tilde{w}(\lu_{-\alpha_0}(\pi t)))h(\tilde{w}\tilde{s}_0\Ad\tilde{s}_0^{-1}(\lu_{-\alpha_0}({\pi^{m+1}t_\nu})b_1)) }\\
&=\displaystyle{\sum_{\{t_\nu\}}\sigma(f_{\Ad\tilde{w}(\lu_{-\alpha_0}(\pi t)),M})h(\tilde{w}\tilde{s}_0\Ad\tilde{s}_0^{-1}(\lu_{-\alpha_0}({\pi^{m+1}t_\nu})b_1)). }
\end{align*}
\textbullet \hspace{2pt} If $-w.\alpha_0\in \Phi^- \backslash \Phi_\theta^-$ then $-ws_{\alpha_0}.\alpha_0\in \Phi^+ \backslash \Phi_\theta^+$. We now use relation \eqref{relation1} to obtain
 \begin{align*}
&\tilde{w}\lu_{-\alpha_0}(\pi t)\tilde{s}_0\Ad\tilde{s}_0^{-1}(\lu_{-\alpha_0}({\pi^{m+1}t_\nu})b_1)\\
 &= \tilde{w}\tilde{s}_0\Ad\tilde{s}_0^{-1}(\lu_{-\alpha_0}(\pi t))\Ad\tilde{s}_0^{-1}(\lu_{-\alpha_0}({\pi^{m+1}t_\nu})b_1)\\
& =  \tilde{w}\tilde{s}_0\Ad\tilde{s}_0(\lu_{-\alpha_0}(\pi t))\Ad\tilde{s}_0^{-1}(\lu_{-\alpha_0}({\pi^{m+1}t_\nu})b_1)\\
& = \tilde{w}\tilde{s}_0\lu_{-\alpha_0}(-\pi t^{-1})\tilde{s}_0\alpha_0^\vee(-t^{-1})\lu_{-\alpha_0}(-\pi t^{-1})\Ad\tilde{s}_0^{-1}(\lu_{-\alpha_0}({\pi^{m+1}t_\nu})b_1)\\
& = \tilde{w}\tilde{s}_0\lu_{-\alpha_0}(-\pi t^{-1})\tilde{s}_0^{-1}\alpha_0^\vee(t^{-1})\lu_{-\alpha_0}(-\pi t^{-1})\Ad\tilde{s}_0^{-1}(\lu_{-\alpha_0}({\pi^{m+1}t_\nu})b_1)\\
& =\Ad\tilde{w}\tilde{s}_0(\lu_{-\alpha_0}(-\pi t^{-1}))\tilde{w}\alpha_0^\vee(t^{-1})\lu_{-\alpha_0}(-\pi t^{-1})\Ad\tilde{s}_0^{-1}(\lu_{-\alpha_0}({\pi^{m+1}t_\nu})b_1).
\end{align*}

 Again, since $-ws_{\alpha_0}.\alpha_0\in \Phi^+ \backslash \Phi_\theta^+$, we have $\Ad\tilde{w}\tilde{s}_0(\lu_{-\alpha_0}(\pi t^{-1})) \in N$. Therefore
\begin{align*}
 (f_{\tilde{s}_0} *h)(\tilde{w}b)= \displaystyle{\sum_{\{t_\nu\}}}&
 \sigma(\Ad\tilde{w}\tilde{s}_0(\lu_{-\alpha_0}(-\pi t^{-1})))\\
 &h(\tilde{w}\alpha_0^\vee(t^{-1})\lu_{-\alpha_0}(-\pi t^{-1})\Ad\tilde{s}_0^{-1}(\lu_{-\alpha_0}({\pi^{m+1}t_\nu})b_1)) \\[5mm]
=\displaystyle{\sum_{\{t_\nu\}}}&h(\tilde{w}\alpha_0^\vee(t^{-1})\lu_{-\alpha_0}(-\pi t^{-1})\Ad\tilde{s}_0^{-1}(\lu_{-\alpha_0}({\pi^{m+1}t_\nu})b_1)).\qedhere
\end{align*}

  \end{proof}
  \begin{theorem}
Let $m  \geq 1$ be such that $\sigma^{I_{m,M}} \neq 0$. Let $l = m+3$ and assume $F$ and $F'$ are $l$-close. Let $\zeta_{m}$ and $\zeta_{l,M}$ be the isomorphisms as in the beginning of this subsection. Let $\sigma'$ be the representation of $M'$ obtained using $\zeta_{l,M}$.  Then for each $f \in \mathscr{H}(G, I_m)$, the following diagram commutes.
\begin{displaymath}
    \xymatrix{
      (\Ind_P^G  \sigma)^{I_{m}} \ar[r]^f \ar[d]_{\kappa_m} & (\Ind_P^G  \sigma)^{I_{m}} \ar[d]^{\kappa_m} \\
        (\Ind_{P'}^{G'}  \sigma')^{I_{m}'} \ar[r]_{\zeta_m(f)}       &(\Ind_{P'}^{G'}  \sigma')^{I_{m}'} }
\end{displaymath}
\end{theorem}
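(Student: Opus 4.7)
By Theorem \ref{presentation}, the Hecke algebra $\mathscr{H}(G,I_m)$ is generated by the elements $f_b$ ($b\in I$), $f_{\tilde{s}_i}$ ($s_i\in S$), $f_{\tilde{\rho}_i^{\pm 1}}$, and $f_{\tilde{\mu}_j}$, so the plan is to verify the commutativity of the diagram on each of these generators. Fix $h\in (\Ind_P^G\sigma)^{I_m}$ and set $h'=\kappa_m(h)$; the identity to verify on each generator $f$ is
\[
\kappa_m(f*h)(\tw'b')=(\zeta_m(f)*h')(\tw'b')
\]
for every $w\in[W_\theta\backslash W]$ and $b\in I$. Since $h$ (resp.\ $h'$) is uniquely determined by its values on $\tw b$ for $w\in[W_\theta\backslash W]$ and $b\in R(\tw)$, and since Lemma \ref{indclf} already tells us how $\kappa_{l,M}$ recovers the value of $h'$ at $\tw b'$, $\tw'\tilde{s}_0'b'$, and $\tw'\tilde{\rho}'b'$ from the corresponding values of $h$, the proof reduces to combining Lemma \ref{heckeaction1} with Lemma \ref{indclf} and then tracking how the various operator expressions transfer under $\beta$, $\Lambda$, and $\kappa_{l,M}$.

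The case $f=f_{b_0}$, $b_0\in I$, is essentially immediate: Lemma \ref{heckeaction1}(a) gives $(f_{b_0}*h)(\tw b)=h(\tw bb_0)$. Write $bb_0=b_1b_2b_3$ with $b_1\in \tw^{-1}M\tw\cap I$, $b_2\in\tw^{-1}N\tw\cap I$, $b_3\in R(\tw)$, so that $h(\tw bb_0)=\sigma(\tw b_1\tw^{-1})h(\tw b_3)$. The corresponding Iwahori factorization on the $F'$-side uses the primed elements under $\beta$; since $\zeta_{l,M}(f_{\tw b_1\tw^{-1},M})=f'_{\tw'b_1'\tw'^{-1},M'}$ and $\kappa_{l,M}$ intertwines the $M$- and $M'$-actions, the two sides match. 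The case $f=f_{\tilde{\rho}}$ with $\tilde{\rho}\in\{\tilde{\rho}_i^{\pm 1},\tilde{\mu}_j\}$ is handled similarly by combining Lemma \ref{heckeaction1}(b) with Lemma \ref{indclf}(c), noting that $\Ad\tilde{\rho}(b)\sim_\beta\Ad\tilde{\rho}'(b')$.

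The substantive cases are $f=f_{\tilde{s}_i}$ for $s_i\in S_1$ and $f=f_{\tilde{s}_0}$ for $s_0\in S_2$, each having three subcases depending on the sign behaviour of $w.\alpha_i$ (resp.\ $-w.\alpha_0$) relative to $\Phi_\theta$. In the first two subcases of Lemma \ref{heckeaction1}(c),(d), the expression $\tw\tilde{s}_i\Ad\tilde{s}_i^{-1}(\lu_{\alpha_i}(\pi^m t_\nu)b_1)$ has the form $\tw_0b$ (with $w_0=ws_i\in W$ or with an additional $M$-factor), so Lemma \ref{indclf}(a) applies directly, with an additional application of $\kappa_{l,M}$-compatibility for the action of $f_{\tw\lu_{\alpha_i}(t)\tw^{-1},M}$ (resp.\ $f_{\Ad\tw(\lu_{-\alpha_0}(\pi t)),M}$) on $\sigma^{I_{l,M}}$, since these elements of $M$ are determined only modulo $\calp^l$ and $l=m+3$ is large enough to guarantee that their Hecke-algebra image under $\zeta_{l,M}$ equals the corresponding $F'$-operator. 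The third subcase is the main obstacle: after using the Steinberg-type identity \eqref{relation} (resp.\ \eqref{relation1}), the argument of $h$ becomes $\tw\alpha_i^\vee(-t)\lu_{\alpha_i}(-t^{-1})\Ad\tilde{s}_i^{-1}(\lu_{\alpha_i}(\pi^m t_\nu)b_1)$, which is again of the form $\tw_0 b$, so Lemma \ref{indclf}(a) applies; for $s_0$ one instead uses Lemma \ref{indclf}(b),(c) to transfer the $\pi_{-\alpha_0^\vee}$-part.

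The hard part will be verifying, in the third subcases, that the various pieces transfer correctly: the scalar $-t^{-1}$ (resp.\ $-\pi t^{-1}$), the torus element $\alpha_i^\vee(-t)$ (resp.\ $\alpha_0^\vee(t^{-1})$), and the conjugated unipotent element $\Ad\tilde{s}_i^{-1}(\lu_{\alpha_i}(\pi^m t_\nu)b_1)$ must each correspond under $\beta$ (and under $\Lambda$ for the scalar ingredients), and the resulting element of $M$ evaluated by $\sigma$ against an $I_{l,M}$-fixed vector must agree with its $F'$-counterpart evaluated against $\kappa_{l,M}$ of the same vector. This is exactly where the choice $l=m+3$ is used: the elements $b_1\in \tw^{-1}M\tw\cap I$ arising in these factorizations are defined only modulo $I_m$, but their image in $M$ modulo $I_{l,M}$ is determined once we know the factors modulo $\calp^m$ up to a bounded level of ramification introduced by conjugation by $\tw$, $\tilde{s}_i$, and $\tilde{\rho}$. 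Once this bookkeeping is done, Theorem \ref{closealgebra} and Lemma \ref{indclf} combine to match the two sides of the diagram on every generator, completing the proof.
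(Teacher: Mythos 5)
Your proposal is correct and follows essentially the same route as the paper: the paper's own proof is a one-liner asserting that the result follows from Lemma \ref{indclf} and Lemma \ref{heckeaction1} after reducing to the values $h(\tw b)$ with $w \in [W_\theta\backslash W]$ and $b \in R(\tw)$, and your expanded sketch is a faithful unpacking of exactly that reduction over each generator of $\mathscr{H}(G,I_m)$. One small misattribution: in the $s_0$ case it is the first two subcases of Lemma \ref{heckeaction1}(d) (where the argument contains $\tw\tilde{s}_0$ and hence a $\pi_{-\alpha_0^\vee}$-factor) that invoke Lemma \ref{indclf}(b), while the third subcase — the one you single out — already produces an argument of the form $\tw b$ with $b\in I$ and is handled by Lemma \ref{indclf}(a), just as in the $s_i\in S_1$ case.
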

\begin{proof}
 Let $ h \in  (\Ind_P^G  \sigma)^{I_{m}}$. We need to prove that $\kappa_m(f * h) = \zeta_m(f)*\kappa_m(h)$. Since an element $h \in (\Ind_P^G  \sigma)^{I_{m}}$ is completely determined by its values $h(\tilde{w}b),\; w \in [W_\theta \backslash W], b \in R(w)$, we only need to prove that $(\kappa_m(f * h))(\tilde{w}'b') =( \zeta_m(f)*\kappa_m(h))(\tilde{w}'b'),\;\;  w \in [W_\theta \backslash W], b \in R(w).$
This follows from Lemma
 \ref{indclf} and Lemma \ref{heckeaction1}.
\end{proof}

\section{The Langlands-Shahidi local coefficients over close local fields}\label{LocalCoefficients}
We retain the notation of Section \ref{stdnotations} in this section. 

\subsection{Unramified characters}\label{unrchar}
Fix $\theta \subset \Delta$. Let $H_\theta:\mtheta \rightarrow \cala_\theta$ be defined by
\[q^{\langle \lambda, H_\theta(m)\rangle} = |\lambda(m)|, \; m \in \mtheta, \lambda \in X^*(\bfm_\theta).\]
Let $\mtheta^1:=\{m \in M_\theta\; | \; |\lambda(m)| = 1 \text{ for all } \lambda \in X^*(\bfm_\theta)\}$. 
Then we know that $\mtheta/\mtheta^1$ is a free abelian group  of rank equal to $\dim(\bfa_\theta)$.

We write $\Unr(\mtheta)$ for the group of unramified characters of $\mtheta$. We recall that $\Unr(\mtheta)$ consists of all continuous characters $\eta:\mtheta \rightarrow \C^\times$ trivial on $\mtheta^1$.
Thus we may identify
\[ \Unr(\mtheta) = \Hom_\Z(\mtheta/\mtheta^1, \C^\times) \cong {(\C^\times)}^{\dim{\bfa_\theta}}.\]

The above equality enables us to define a canonical structure of a complex algebraic torus on $\Unr(\mtheta)$. Its algebra of regular functions $\C[\Unr(\mtheta)]$ is generated as a $\C$-algebra by the characters $\eta \rightarrow \eta(m)$, where $m$ runs over $\mtheta$.

Next, note that we can define an unramified character for each $\nu \in \cala_{\theta, \C}^*$ by setting
\[ \eta_\nu(m) = q^{\langle \nu, H_\theta(m)\rangle}.\]
In fact, the map $\nu \rightarrow \eta_\nu$ is an epimorphism $\cala_{\theta, \C}^* \rightarrow \Unr(\mtheta)$ with kernel a discrete subgroup of $\cala_{\theta, \C}^*$ consisting of all $\nu \in \cala_{\theta, \C}^*$ such that $\langle \nu, H_\theta(m) \rangle \in (2\pi i/\log q)\Z \; \forall \; m \in \mtheta$.

Everything above is compatible with the action of the Weyl group. Let $w_0 \in W$ be such that $\Omega:=w_0(\theta)$ is a subset of $\Delta$. If $\tw_0$ is a  representative of $w_0$ as in Section \ref{reps}, then $M_{\Omega} = \tw_0M_\theta \tw_0^{-1}$ and $\cala_\Omega^* = w_0(\cala_\theta^*)$. The conjugation by $\tw_0$ induces an isomorphism of algebraic groups
$\Unr(\mtheta) \rightarrow \Unr(M_\Omega)$, $\eta \rightarrow w_0(\eta)$, where $w_0(\eta)(\tw_0m\tw_0^{-1}) = \eta(m)$ for $m \in \mtheta$. 
Note that $w_0(\eta_\nu) = \eta_{w_0(\nu)}$.

\subsection{Induced representations}\label{Induced}
For the rest of Section \ref{LocalCoefficients}, we will assume $\bfm_\theta$ is maximal, and let $\alpha$ be the simple root of $\Delta$ such that  $\theta = \Delta \backslash \{\alpha\} \subset \Delta$. Let $w_0 = w_{l, \Delta}w_{l, \theta}$ and let $\Omega = w_0(\theta)$. Let $\bfp_\Omega$ denote the standard parabolic with Levi subgroup $\bfm_{\Omega}$ and unipotent radical $\bfn_{\Omega}$.  
Let $(\sigma, V)$ be an irreducible, admissible representation of $\mtheta$. For $\eta \in \Unr(\mtheta)$,  let
\[ I(\eta, \sigma) = \ind_{P_\theta}^G \sigma \eta\]
denote the normalized parabolically induced representation. 
Let $\tw_0$ denote the lifting of $w_0$ as in Section \ref{reps}. Let $w_0(\sigma)$ be an irreducible, admissible smooth representation of $\momega$ defined by
\[w_0(\sigma)(m)(v) = \sigma(\tw_0^{-1}m \tw_0)(v)\text{ for all }v \in V.\]
and let
\[I(w_0(\nu), w_0(\sigma)) = \ind_{P_{\Omega}}^G w_0(\sigma\eta_\nu).\]
Let $\calu_\sigma = \{ \eta \in \Unr(\mtheta)| I(\eta, \sigma) \text { is irreducible} \}$. Then $\calu_\sigma$ is a non-empty Zariski open subset of $\Unr(\mtheta)$ (cf. Theorem 3.2 of \cite{Sau97} and Remark 1.8.6.2 of \cite{Yu09}).
Define
\[ \calu_1 = \calu_\sigma \cap w_{0}^{-1}\calu_{w_0(\sigma)}\]
Then $\calu_1$ is a non-empty Zariski open subset of $\Unr(\mtheta)$ and for $\eta \in \calu_1$,
\[ I(\eta, \sigma) \text{ and } I(w_0(\eta), w_0(\sigma)) \text{ are both irreducible. }\]

Using the surjection $\cala_{\theta, \C}^* \rightarrow \Unr(\mtheta)$, $\nu \rightarrow \eta_\nu$, we sometimes write $I(\nu, \sigma)$ instead of $I(\eta_\nu, \sigma)$. By the above, there exists an open dense subset $\calv_1 $ of $\cala_{\theta, \C}^*$ such that
\[ I(\nu, \sigma)   \text{ and } I(w_0(\nu), w_0(\sigma)) \text{ are both irreducible. }\]

The theory of local coefficients arise from a study of certain intertwining operators between these induced representations, and the uniqueness of their Whittaker models. Let us briefly review this theory before proving our main theorem about local coefficients for representations that correspond over close local fields.

\subsection{Whittaker functionals}\label{WF} Given a generic representation $(\sigma, V)$ of $M_\theta$, there is a  Whittaker functional associated to the representation $I(\nu, \sigma)$ making the induced representation generic. Let us recall the definition of this Whittaker functional.

Let $\chi: U \rightarrow \C^\times$ be a generic character of $U$ as in Section \ref{GR}  and let $\chi_\theta$ denote its restriction to $U_\mtheta :=  U \cap \mtheta$.  Assume $\chi$ and $\tilde{w_0}$ are compatible, that is,
\[\chi(\tw_0u\tw_0^{-1}) = \chi(u)\; \forall u \in U_\mtheta,\]
where $\tw_0$ is the lifting of $w_0$ as described in Section \ref{reps}. 
Assume $(\sigma, V)$ is $\chi_{\theta}$-generic and let $\lambda: V \rightarrow \C$ be a non-zero Whittaker functional on $V$ satisfying
\[\lambda(\pi(u)v) = \chi_\theta(u)\lambda(v)\; \forall u \in U_\mtheta , v \in V.\]
With $\chi$ as above, the induced representation $I(\nu, \sigma)$ is $\chi$-generic. More precisely, we have the following:
\begin{proposition}[Proposition 3.1 of \cite{Sha81}] Given $f \in I(\nu, \sigma)$, the integral
\begin{equation}\label{Whit}
 \lambda_\chi(\nu, \sigma)(f) = \int_{N_\Omega}\lambda(f(\tw_0^{-1}n)) \overline{\chi(n)} dn
\end{equation}
 is convergent and consequently defines a Whittaker functional for the space $V(\nu, \sigma)$. This is an entire function of $\nu$, and there exists a function $f \in V(\nu, \sigma)$ such that $\lambda_\chi(\nu, \sigma)f$ is non-zero.
\end{proposition}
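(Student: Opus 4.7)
The plan is to verify the four assertions in turn: absolute convergence for $\Re\nu$ in a suitable cone, the Whittaker transformation law, entire continuation to all of $\cala_{\theta,\C}^*$, and non-vanishing.

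First, absolute convergence. Using the Iwasawa decomposition $G = P_\theta K$ with $K = \bfg(\calo)$, factor $\tw_0^{-1}n = m(n)u(n)k(n)$ for $n\in N_\Omega$, with $m(n)\in \mtheta$, $u(n)\in \bfn_\theta(F)$ and $k(n)\in K$. Then
\begin{equation*}
\lambda\bigl(f(\tw_0^{-1}n)\bigr) = \delta_{P_\theta}^{1/2}(m(n))\,\eta_\nu(m(n))\,\lambda\bigl(\sigma(m(n))f(k(n))\bigr).
\end{equation*}
Since $\tw_0^{-1}N_\Omega\tw_0$ is the unipotent radical of the parabolic opposite to $P_\theta$, the coordinate $H_\theta(m(n))\in\cala_\theta$ runs off to infinity along the ray spanned by $-\alpha^\vee$ as $n\to\infty$ in $N_\Omega$. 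The admissibility of $\sigma$ and $K$-finiteness of $f$ bound $|\lambda(\sigma(m(n))f(k(n)))|$ by a polynomial in $\|H_\theta(m(n))\|$ via matrix-coefficient estimates, while $|\delta_{P_\theta}^{1/2}(m(n))\eta_\nu(m(n))|$ decays exponentially once $\Re\nu$ lies in a suitable open cone $C\subset\cala_\theta^*$, yielding absolute convergence over $N_\Omega$.

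The Whittaker law in this range is a direct change of variables. For $u_0\in U$, factor $u_0 = u_{0,\Omega}v_0$ with $u_{0,\Omega}\in U\cap\momega$ and $v_0\in N_\Omega$; since $\tw_0^{-1}(U\cap\momega)\tw_0 = U\cap\mtheta$ and $\chi$ is $\tw_0$-compatible, $\chi_\theta(\tw_0^{-1}u_{0,\Omega}\tw_0) = \chi(u_{0,\Omega})$. The identity $nu_{0,\Omega} = u_{0,\Omega}(u_{0,\Omega}^{-1}nu_{0,\Omega})$ followed by the measure-preserving conjugation $n\mapsto u_{0,\Omega}^{-1}nu_{0,\Omega}$ on $N_\Omega$ and the translation $n\mapsto nv_0^{-1}$, combined with the fact that conjugation by $u_{0,\Omega}\in U$ preserves $\chi$ (since $[U,U]\subset\ker\chi$), produces the transformation factor $\chi(u_0) = \chi(u_{0,\Omega})\chi(v_0)$ outside the integral. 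For non-vanishing, choose $v\in V$ with $\lambda(v)\neq 0$, a compact open subgroup $\Omega_0\subset N_\Omega$ on which $\chi$ is trivial, and $f\in I(\nu,\sigma)$ supported in $P_\theta\tw_0^{-1}\Omega_0$ with $f(\tw_0^{-1})=v$; then $\lambda_\chi(\nu,\sigma)(f) = \vol(\Omega_0)\lambda(v)\neq 0$ independently of $\nu$.

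The main obstacle is the entire continuation to all of $\cala_{\theta,\C}^*$. I would combine the smoothness of $f$ with the genericity of $\chi$ to reduce the integral to a finite sum of entire terms. Fix a compact open subgroup $K'\subset G$ with $f\in I(\nu,\sigma)^{K'}$ and such that $\chi$ is trivial on $K'\cap N_\Omega$; then $n\mapsto\lambda(f(\tw_0^{-1}n))\overline{\chi(n)}$ is right $(K'\cap N_\Omega)$-invariant. Expanding $N_\Omega$ as an ordered product of one-parameter root subgroups $U_\gamma$ using the fixed Chevalley basis, and averaging successively over deep filtration subgroups of each $U_\gamma$ with $\gamma\in\Delta\setminus\Omega$, the non-triviality of $\chi|_{U_\gamma}$ on such $\gamma$ forces the integrand to vanish outside a compact subset $X\subset N_\Omega$; on $X$, the function $f(k(n))$ takes only finitely many values $v_1,\dots,v_r\in V$. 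The integral thus reduces to a finite sum
\begin{equation*}
\lambda_\chi(\nu,\sigma)(f) = \sum_{j=1}^r \int_{X_j} \delta_{P_\theta}^{1/2}(m(n))\,\eta_\nu(m(n))\,\lambda(\sigma(m(n))v_j)\,\overline{\chi(n)}\,dn,
\end{equation*}
with each $X_j$ compact; since $\nu\mapsto\eta_\nu(m)$ is entire with locally uniform bounds on compact sets of $m\in\mtheta$, each term is an entire function of $\nu$, yielding the claim.
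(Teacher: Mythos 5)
The paper does not prove this statement; it is cited directly from Proposition 3.1 of Shahidi (1981), so there is no in-paper argument to compare against. Your architecture (convergence in a cone, the $U$-equivariance by change of variables, entire continuation, and then non-vanishing via a section supported on the big cell) is the right one and matches the classical Jacquet/Casselman--Shalika/Shahidi approach. The equivariance computation and the non-vanishing construction are essentially correct.

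There is, however, a genuine gap in your entire-continuation step. You assert that averaging over deep filtration subgroups of the relevant $U_\gamma$ ``forces the integrand to vanish outside a compact subset $X\subset N_\Omega$.'' That is false: the integrand $n\mapsto\lambda(f(\tw_0^{-1}n))\overline{\chi(n)}$ does \emph{not} have compact support in general. Already in the simplest example ($\bfg=\GL_2$, $\bfm_\theta = \bft$, unramified $f$), for $|x|$ large one has
\begin{equation*}
f\left(w\begin{pmatrix}1&x\\0&1\end{pmatrix}\right) = \eta_\nu\left(\begin{pmatrix}-x^{-1}&0\\0&x\end{pmatrix}\right)\delta^{1/2}\left(\begin{pmatrix}-x^{-1}&0\\0&x\end{pmatrix}\right)f(1),
\end{equation*}
which is nonzero whenever $f(1)\neq 0$. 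What is true is the weaker statement that the \emph{integral over the tail} vanishes: writing $N_\Omega = \bigcup_k\Omega_k$ as an increasing union of compact opens, the partial integrals $\int_{\Omega_k}\lambda(f(\tw_0^{-1}n))\overline{\chi(n)}\,dn$ stabilize for $k\geq k_0(f,\chi)$, with $k_0$ independent of $\nu$. This follows from the $K'$-invariance of $f$ and the oscillation of $\overline{\chi}$ in the $U_\gamma$-direction ($\gamma$ the unique simple root in $\Delta\setminus\Omega$), after using the Bruhat relation $\lu_\gamma(x)=\lu_{-\gamma}(-x^{-1})\gamma^\vee(x)\lw_\gamma(1)\lu_{-\gamma}(x^{-1})$ to move large $x$ into $P_\theta$. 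Your argument should be restated as stabilization of partial integrals rather than compact support of the integrand; once that is done, the stable value $\int_{\Omega_{k_0}}$ is manifestly entire in $\nu$ and agrees with $\lambda_\chi(\nu,\sigma)(f)$ on the convergence cone, giving the claim. A secondary, minor inaccuracy: in your convergence step, $|\lambda(\sigma(m(n))f(k(n)))|$ is controlled by an \emph{exponential} (not polynomial) bound coming from admissibility and the Jacquet module structure of $\sigma$; the half-sum $\delta_{P_\theta}^{1/2}\eta_\nu$ still dominates this once $\Re\nu$ lies in a cone depending on $\sigma$, so the conclusion is unaffected.
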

In fact, $\lambda_\chi(\nu, \sigma)$ is a polynomial in $\eta_\nu$ (See Section 1.2 of \cite{Lom09}).
\subsection{Intertwining operators}\label{IO}
We retain the assumptions of Section \ref{Induced}. We shall recall the theory of intertwining integrals. Given $f \in V(\nu, \sigma)$, define
\begin{align}\label{inter}
 A(\nu, \sigma, \tw_0)f(g) = A(\eta_\nu, \sigma, \tw_0)f(g)= \int_{U \cap \tw_0\bar{N}_\theta \tw_0^{-1}}f(\tw_0^{-1}ng)dn.
\end{align}
Here,  $\bar{N}_\theta$ is the unipotent radical of the opposite parabolic $\bar{P}_\theta$ of $P_\theta$ (that is $ \bar{P}_\theta = \mtheta \bar{N}_\theta)$.  Since $P_\theta$ is maximal and $w_0 = w_{l, \Delta}w_{l, \theta}$, a simple calculation shows that 
\[  \tw_0\bar{N}_\theta \tw_0^{-1} = N_{\Omega} \subset U,\] 
and this can be substituted in Equation \eqref{inter}.
This integral converges absolutely whenever the following condition holds:
\begin{equation}\label{conv}
 \langle Re(\nu), \beta^\vee\rangle >>0 \text{ for each $\beta \in \Phi^+ \backslash \Phi_\theta^+$}.
\end{equation}

For such $\nu$, $A(\nu, \sigma, \tw_0)f \in V(w_0(\nu), w_0(\sigma))$. Moreover, this is a meromorphic function of $\nu$, and  in fact a rational function of $\eta_\nu$ (Section 2 of \cite{Sha81} and Theorem IV.I.I of \cite{Wal03}). Away from its poles, 
\[A(\nu, \sigma, \tw_0) \in \Hom_G(I(\nu, \sigma), I(w_0(\nu), w_0(\sigma)).\] In fact, Mui\'{c} \cite{Mui08} constructed explicitly a Zariski open dense subset $\mathcal{U}(\sigma, w_0)$ of $\Unr(M_\theta)$ where the intertwining operator is defined (cf. Lemma 4.6, Remark 4.16 and Theorem 5.6 (ii) of  \cite{Mui08}), that is, 
\begin{equation}\label{dimensionMuic1}\dim_\C \;\Hom_G(I(\nu, \sigma), I(w_0(\nu), w_0(\sigma)) =1.
\end{equation}
Let $\calv(\sigma, w_0)$ denote the corresponding open dense subset of $\cala_{\theta, \C}^*$ such that $A(\nu, \sigma,\tw_0)$ is defined for all $\nu \in \calv(\sigma, w_0)$. Let $[W_\theta\backslash W/W_\Omega] = \{w \in W| w^{-1} \theta >0, w\Omega >0\}$. Then
\[G = \coprod_{w \in [W_\theta\backslash W /W_\Omega]} P_\theta w P_\Omega\] and there is a total order $\leq$ on $[W_\theta\backslash W /W_\Omega]$ such that for each $ w \in [W_\theta\backslash W /W_\Omega]$, the set 
\[G^{\leq w} :=\displaystyle{\cup_{w_1 \leq w} P_\theta w_1 P_\Omega}\] is open (cf. Section 3 of \cite{Mui08}). With $ I(\nu, \sigma)^{\leq w_0^{-1}} = \{f \in I(\nu, \sigma) | \supp(f) \subset G^{\leq w_0^{-1}}\}$,  he showed that for each $ \eta_\nu \in \mathcal{U}(\sigma, w_0)$ the intertwining operator is determined by the following requirement:
\begin{equation}\label{muicio}
A(\nu, \sigma, \tw_0)(f)(1) = \int_{N_{\Omega}}f(\tw_0^{-1}n)dn, \;\; f \in I(\nu, \sigma)^{\leq w_0^{-1}}.
\end{equation}
(See Equation (4.20) and Lemma 4.21 of \cite{Mui08}).

\subsection{Local coefficients}\label{LCdefn}

Let us recall the following theorem:
\begin{theorem}[Theorem 3.1 of \cite{Sha81}]
 There exists a complex number $C_\chi(\nu, \sigma,\tw_0)$ such that
\begin{align}\label{localcoeff}
\lambda_\chi(\nu, \sigma) = C_\chi(\nu, \sigma,\tw_0) \lambda_\chi(w_0(\nu), w_0(\sigma)) \circ A(\nu, \sigma, \tw_0).
  \end{align}
  Furthermore, as a function of $\nu$, it is meromorphic in $\cala_{\theta,\C}^*$, and its value depends only on the class of $\sigma$.
\end{theorem}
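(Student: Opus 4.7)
The plan is to show that both sides of the claimed identity define $\chi$-Whittaker functionals on $I(\nu,\sigma)$, and then invoke the uniqueness of Whittaker models to produce the scalar $C_\chi(\nu,\sigma,\tw_0)$. First I would note that $\lambda_\chi(\nu,\sigma)$, defined by the convergent integral \eqref{Whit}, is by construction a non-zero $\chi$-Whittaker functional on $I(\nu,\sigma)$. On the other side, for $\nu$ in the region of convergence \eqref{conv} the intertwining integral \eqref{inter} produces a $G$-equivariant map $A(\nu,\sigma,\tw_0):I(\nu,\sigma)\to I(w_0(\nu),w_0(\sigma))$, and since $\chi$ is compatible with $\tw_0$, the functional $\lambda_\chi(w_0(\nu),w_0(\sigma))$ is a $\chi$-Whittaker functional on its target. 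Hence the composition $\lambda_\chi(w_0(\nu),w_0(\sigma))\circ A(\nu,\sigma,\tw_0)$ is another $\chi$-Whittaker functional on $I(\nu,\sigma)$.

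Next I would restrict $\nu$ to the non-empty Zariski open subset $\calv_1\cap \calv(\sigma,w_0)$ of $\cala_{\theta,\C}^*$ on which both $I(\nu,\sigma)$ and $I(w_0(\nu),w_0(\sigma))$ are irreducible and $A(\nu,\sigma,\tw_0)$ is holomorphic. On this set, the uniqueness of Whittaker models (Shalika's theorem, together with the one-dimensionality \eqref{dimensionMuic1}) guarantees that the space of $\chi$-Whittaker functionals on $I(\nu,\sigma)$ is at most one-dimensional. Because $\lambda_\chi(\nu,\sigma)\ne 0$, the two Whittaker functionals must be proportional, and the scalar relating them is the desired $C_\chi(\nu,\sigma,\tw_0)$, uniquely determined by \eqref{localcoeff} on this open set.

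For meromorphy I would choose a section $f_0\in I(\nu,\sigma)$ varying rationally in $\eta_\nu$ for which $\lambda_\chi(w_0(\nu),w_0(\sigma))(A(\nu,\sigma,\tw_0)f_0)$ is not identically zero; such an $f_0$ exists thanks to the non-vanishing assertion at the end of Section \ref{WF} combined with the rationality of $A(\nu,\sigma,\tw_0)$ recorded in Section \ref{IO}. Writing
\[C_\chi(\nu,\sigma,\tw_0)=\frac{\lambda_\chi(\nu,\sigma)(f_0)}{\lambda_\chi(w_0(\nu),w_0(\sigma))(A(\nu,\sigma,\tw_0)f_0)},\]
the numerator is polynomial in $\eta_\nu$ (by the remark in Section \ref{WF}) and the denominator is rational in $\eta_\nu$ (by the rationality of the matrix coefficients of the intertwining operator), so $C_\chi$ is meromorphic on $\cala_{\theta,\C}^*$ and \eqref{localcoeff} persists as an identity of meromorphic functions by analytic continuation.

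Finally, to see dependence only on the isomorphism class of $\sigma$, I would observe that any rescaling $\lambda\mapsto c\lambda$ of the Whittaker functional of $\sigma$ multiplies $\lambda_\chi(\nu,\sigma)$ by $c$ and, because a Whittaker functional on $w_0(\sigma)$ is naturally given by $\lambda\circ w_0(\sigma)^{-1}$-transport, multiplies $\lambda_\chi(w_0(\nu),w_0(\sigma))$ by the same $c$, so these cancel in the ratio. The main technical point, such as it is, is bookkeeping rather than a genuine obstacle: one must verify that the Zariski open subset on which irreducibility, convergence/holomorphy of $A(\nu,\sigma,\tw_0)$, and non-vanishing of the Whittaker integral all hold simultaneously is non-empty, and this is granted by the results of Sauvageot \cite{Sau97} and Mui\'{c} \cite{Mui08} cited in Sections \ref{Induced} and \ref{IO}.
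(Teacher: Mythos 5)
This theorem is not proved in the paper: it is quoted verbatim as Theorem 3.1 of \cite{Sha81}, so there is no in-paper proof against which to compare. Your reconstruction of Shahidi's argument is essentially correct, but two points deserve comment.

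First, your invocation of \eqref{dimensionMuic1} is misplaced: that equation concerns the one-dimensionality of $\Hom_G(I(\nu,\sigma),I(w_0(\nu),w_0(\sigma)))$, the space of intertwining operators, whereas what you need is the one-dimensionality of the space of $\chi$-Whittaker functionals on $I(\nu,\sigma)$. On the open set where $I(\nu,\sigma)$ is irreducible, the needed fact is precisely the uniqueness of Whittaker models for irreducible admissible representations (Shalika/Rodier, as recalled at the start of Section \ref{GR}); \eqref{dimensionMuic1} plays a different role, namely in STEP 3 of the proof of Theorem \ref{maintheoremlocalcoeff} where intertwining maps are compared. You do still need to verify that the right-hand functional is nonzero, else the constant is undefined or infinite; this follows on the open set because $I(\nu,\sigma)$ and $I(w_0(\nu),w_0(\sigma))$ are both irreducible and $A(\nu,\sigma,\tw_0)$ is nonzero (by \eqref{muicio} applied to a section supported in the big cell), hence an isomorphism, so the composed functional is again a nonzero Whittaker functional.

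Second, your route differs from Shahidi's original. Shahidi proves the uniqueness of the Whittaker functional on $I(\nu,\sigma)$ for \emph{all} $\nu$ via Rodier's heredity theorem for Whittaker models under parabolic induction (so that $\dim\Hom_U(I(\nu,\sigma),\C_\chi)=\dim\Hom_{U_{M_\theta}}(\sigma,\chi_\theta)=1$ regardless of irreducibility); the local coefficient is then defined directly as a meromorphic function without the detour through generic $\nu$. Your approach restricts to the Zariski-open locus of irreducibility and extends by analytic continuation. Both are valid; yours stays closer to the tools this paper explicitly recalls, while Shahidi's gives the identity pointwise for every $\nu$ before any continuation argument is needed. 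Your observation on the dependence on the class of $\sigma$ (the rescalings of $\lambda$ cancel between the two sides) is the right point.
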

The scalar $C_\chi(\nu, \sigma, \tw_0)$ is called the \textit{local coefficient} associated to $\nu$ and $\sigma$. In fact, it can be shown that $C_\chi(\nu, \sigma, \tw_0)$ is a rational function of $\eta_\nu$ (cf. Theorem 2.1 of \cite{Lom09}).

\subsection{The main theorem}\label{maintheorem}
Our aim in this section is to prove that the local coefficients are compatible with the Deligne-Kazhdan correspondence. Recall that $I$ is the Iwahori subgroup of $G$, $I_m$ is the $m$-th Iwahori filtration subgroup of $G$. Similarly,  $I_\mtheta = \mtheta \cap I$ and $I_{m,\mtheta}=\mtheta \cap I_m$ denote the corresponding Iwahori and the $m$-th Iwahori filtration subgroup of $\mtheta$ respectively.
Fix a character $\chi$ of $U$ as in Section \ref{WF} and let $(\sigma, V)$ be an irreducible, admissible, $\chi_\theta$-generic representation of $\mtheta$. Let $\calw(\sigma, \chi_\theta)$ be its Whittaker model and $\lambda$ be the Whittaker functional that determines this model. 
 More precisely, if $W_v$ denotes the image of the vector $v$ under the embedding $\sigma \hookrightarrow \Ind_{U_\mtheta}^\mtheta \chi_\theta$, then
$W_v(m) = \lambda(\sigma(m)v), m \in M_\theta.$

Let $m \geq 1$. We impose the following condition on $m$.
\begin{Condition}\label{Condition 1}
There exists $0 \neq v_0 \in V^{I_{m,\mtheta}}$ such that $\lambda(v_0) \neq 0$, and additionally $\cond(\chi_\alpha) \leq m\; \forall\; \alpha \in \Delta.$
\end{Condition}
For each $\alpha \in \theta$ and  $\lu_\alpha(x_\alpha) \in U_\alpha \cap I_{m, \mtheta}$, we have $\chi_\alpha(x_\alpha) \lambda(v_0) = \lambda(\lu_\alpha(x_\alpha) v_0) = \lambda(v_0)$. If $\lambda(v_0) \neq 0$, this implies that $\cond(\chi_\alpha) \leq m$ for each $\alpha \in \theta$. 
Hence Condition \ref{Condition 1} is equivalent to 
\begin{Condition}\label{Condition 2}
There exists $0 \neq v_0 \in V^{I_{m,\mtheta}}$ such that $\lambda(v_0) \neq 0$, and additionally 
$\cond(\chi_\alpha) \leq m\; \forall\; \alpha \in \Delta \backslash \theta.$
\end{Condition}

  Recall that we have assumed $\chi$ is compatible with $\tw_0$.  Set $l = m+4$. Let $F'$ be another non-archimedean local field such that $F'$ is $l$-close to $F$.
Let $\zeta_{l,\mtheta}: \mathscr{H}(\mtheta,I_{l,\mtheta}) \rightarrow \mathscr{H}(\mtheta',I_{l,\mtheta'}')$ be the Hecke algebra isomorphism as in Theorem \ref{closealgebra}. 
Since $\sigma^{I_{m,\mtheta}} \neq 0$, we obtain a representation $\sigma'$ of $\mtheta'$ such that $\kappa_{l,\mtheta}: \sigma^{I_{l,\mtheta}} \rightarrow \sigma'^{I_{l,\mtheta'}'}$ is an isomorphism. 
Let $\chi'$ be a generic character of $U'$ that corresponds to $\chi$ as in Section \ref{GR}. By the results of that section, we know that $(\sigma', V')$ is $\chi_\theta'$-generic.
 Let $\calw(\sigma',\chi_\theta')$ be the Whittaker model of $\sigma'$ corresponding to $\calw(\sigma, \chi_\theta)$, and let $\lambda'$ be the Whittaker functional corresponding to $\lambda$.

 Before proving our main theorem, we fix our Haar measures suitably and make some essential observations.

Let $dm_\theta$, $dm_\theta'$, $dm_\Omega$, and $dm_\Omega'$ be Haar measures on $\mtheta$, $\mtheta'$, $M_\Omega$, and $M_\Omega'$ respectively, such that 
\[\vol(\mtheta \cap I, dm_\theta) = \vol(\mtheta' \cap I', dm_\theta') = \vol(M_\Omega \cap I, dm_\Omega) = \vol(M_\Omega' \cap I', dm_\Omega').\]

Also, we assume that the measures  $dn_\theta$, $dn_\theta'$, $dn_\Omega$ and $dn_\Omega'$  on $N_\theta$, $N_\theta'$, $N_\Omega$, and $N_\Omega'$ satisfy
\[ \vol(N_\theta \cap I, dn_\theta) = \vol(N_\theta' \cap I', dn_\theta') \text{ and } \vol(N_\Omega \cap I, dn_\Omega) = \vol(N_\Omega' \cap I', dn_\Omega').\]
Note that
\[ \bar{N}_\theta = \tilde{w}_0^{-1}{N}_\Omega \tilde{w}_0;\;\; \bar{N}_{\Omega} = \tilde{w}_0N_\theta\tilde{w}^{-1}_0\]
Using the above, we fix Haar measures $d\bar{n}_\theta$ and $d\bar{n}_\Omega$ on $\bar{N}_\theta$ and  $\bar{N}_\Omega$ respectively by transport of structure, and similarly, we fix Haar measures $d\bar{n}_\theta'$ and $d\bar{n}_\Omega'$ on $\bar{N}_\theta'$ and $\bar{N}_\Omega'$ by transport of structure.

\noindent \textbf{Observations:}
\begin{enumerate}[(a)]
\item By the choice of measures made above, we have
\begin{align*}
\vol(\bar{N}_\theta \cap I; d\bar{n}_\theta) &=\vol(N_\Omega \cap \tilde{w}_0I\tilde{w}_0^{-1}; dn_\Omega)= \vol(N_\Omega \cap I_1; dn_\Omega)\\
& =\frac{\vol(N_\Omega \cap I, dn_\Omega)}{\#{\bf N}_\Omega(\calo/\calp)}=\frac{\vol(N_\Omega' \cap I', dn_\Omega')}{\#{\bf N}_\Omega(\calo'/\calp')} =  \vol(\bar{N}_\theta' \cap I'; d\bar{n}_\theta').\end{align*}

Similarly,
\begin{equation*}
\vol(\bar{N}_\Omega \cap I, d\bar{n}_\Omega)
= vol(\bar{N}'_\Omega \cap I', d\bar{n}_\Omega').
\end{equation*}
Now, it is clear that for all $m \geq 1$, 
\begin{equation}\label{voloppuni}\vol(\bar{N}_\Omega \cap I_m, d\bar{n}_\Omega) = \vol(\bar{N}'_\Omega \cap I_m', d\bar{n}'_\Omega);\;\; \vol(\bar{N}_\theta \cap I_m; d\bar{n}_\theta) = \vol(\bar{N}'_\theta \cap I_m'; d\bar{n}'_\theta)
\end{equation}
\item Let $\tau$ be an irreducible, admissible representation of $G$ such that $\tau^{I_m} \neq 0$ for some $m \geq 1$. Then we also have that $\tau^{I_{m+1}} \neq 0$ . Since $F'$ is $(m+1)$-close to $F$ we obtain an irreducible, admissible representation $\tau'$ using the Hecke Algebra isomorphism with $\kappa_{m+1}: \tau^{I_{m+1}}\rightarrow \tau'^{I_{m+1}'}$. Note that for $w \in W$, the Weyl group of $G$, we have $I_{m+1} \subset \tilde{w}I_{m}\tilde{w}^{-1}  $ and $I_{m+1} \subset I_{m}$. This  implies that if $\kappa_{m+1}(v) = v'$ for $v \in \tau^{I_m}$, then $\kappa_{m+1}(\tilde{w}.v) = \tilde{w}'.v'$.

\item In Section \ref{IndRep}, we constructed an isomorphism $(\Ind_{P_\theta}^G \sigma)^{I_m} \cong (\Ind_{P_\theta'}^{G'} \sigma')^{I'_m}$ of $\mathbb{C}$-vector spaces that is compatible with the Hecke algebra isomorphism $\zeta_m: \mathscr{H}(G,I_m) \rightarrow \mathscr{H}(G',I_m')$. 
Consider the representation $I(\eta_\nu, \sigma)$ for $\eta_\nu \in \Unr(\mtheta)$. Since $F$ and $F'$ are $m$-close, there is a natural isomorphism between $\Unr(\mtheta) \cong \Unr(\mtheta')$, $\eta_\nu \rightarrow \eta_\nu'$. 
 Hence, if $\sigma \overset{\kappa_{l,\mtheta}}\leftrightarrow \sigma'$, then $\sigma\eta_\nu \overset{\kappa_{l,\mtheta}} \leftrightarrow \sigma'\eta_\nu'$. Consequently, we obtain an isomorphism
 \[I(\eta_\nu, \sigma)^{I_m} \xrightarrow[\kappa_{m}(\nu)]{\cong}  I(\eta_\nu', \sigma')^{I_m'}.\]
\item Let $\nu \in \Unr(\mtheta)$. The representation $I(\eta_\nu, \sigma)$ is generated by its $I_m$-fixed vectors and any non-zero $G$-stable subspace of it has non-zero $I_m$-fixed vectors ( Proposition \ref{categories}) and
\begin{align}\label{homext}
  \mathscr{H}(G) \otimes_{\mathscr{H}(G, I_m)} I(\eta_\nu, \sigma)^{I_m} &\overset{\cong}\longrightarrow I(\eta_\nu, \sigma),\nonumber\\
  \gamma \otimes f & \longrightarrow \gamma.f.
\end{align}
Moreover, since any non-zero $G$-stable subspace $X$ of $I(\eta_\nu, \sigma)$ has non-zero $I_m$-fixed vectors, this will correspond to a non-zero $G$-stable subspace $X'$ of $I(\eta_\nu', \sigma')$ with non-zero $I_m'$-fixed vectors. Hence  $I(\eta_\nu, \sigma)$ is irreducible if and only if $I(\eta_\nu', \sigma')$ is. 

\item Note that the definition of local coefficients involves  representations induced from unramified twists of $\sigma$ and ${w}_0(\sigma)$. To proceed with our study over close local fields, we need to observe that if $\sigma$ corresponds to $\sigma'$, then ${w}_0(\sigma)$ corresponds to ${w}_0(\sigma')$. 
To see this, notice that since $w_0(\Phi^+_\theta) = \Phi^+_\Omega$, we have that $M_\Omega \cap \tilde{w}_0I_m\tilde{w}_0^{-1} = M_\Omega \cap I_m$ (See Lemma \ref{conjMP}) and the map
\begin{align}\label{changelevis}
\fH(M_\theta, M_{\theta} \cap I_m) &\longrightarrow \fH(M_\Omega, M_\Omega \cap I_m)\\\nonumber
f &\longrightarrow \phi_f:m_\Omega \rightarrow f(\tilde{w}_0^{-1}m_\Omega \tilde{w}_0)
\end{align}
is an algebra isomorphism (with the Haar measures on the respective groups as above) and furthermore, the following diagram in commutative.

\begin{displaymath}
    \xymatrix{
        \fH(M_\theta, M_\theta \cap I_m) \ar[r]^{\zeta_{m,\mtheta}} \ar[d]_{} & \fH(\mtheta', \mtheta'\cap I_m') \ar[d]^{} \\
        \fH(M_\Omega, M_\Omega \cap I_m) \ar[r]_{\zeta_{m,M_\Omega}}      & \fH(M_\Omega', M_\Omega' \cap I_m')}
\end{displaymath}
Here the vertical maps are as constructed above, and the horizontal maps over close local fields is described in the beginning of the section. The required claim now follows.
\end{enumerate}
We are now ready the prove the main theorem of the section.

\begin{theorem}\label{maintheoremlocalcoeff}
Let $\chi$ be a character of $U$ as in Section \ref{WF} and assume $\chi$ is compatible with $\tw_0$. Let $(\sigma, V)$ be an irreducible, admissible $\chi_\theta$-generic representation of $M_\theta$ with Whittaker model $\calw(\sigma, \chi_\theta)$. Let $m \geq 1 $ be large enough such that $\sigma^{I_{m,\mtheta}} \neq 0$ and that there exists $v_0 \in V^{I_{m,\mtheta}}$ such that $W_{v_0}(e) \neq 0$, and additionally $\cond(\chi_\alpha) \leq m\; \forall\; \alpha \in \Delta$. Set $l = m+4$. Let $F'$ be $l$-close to $F$ and $(\sigma', V')$ be the corresponding $\chi_\theta'$-generic representation of $\mtheta'$. Then, with Haar measures chosen compatibly as in the beginning  of this subsection, we have
\[C_\chi(\nu, \sigma, \tilde{w}_0) = C_{\chi'}(\nu, \sigma', \tilde{w}_0').\]
\end{theorem}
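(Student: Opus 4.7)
Our plan is to evaluate the defining identity
\[
\lambda_\chi(\nu,\sigma)(f) \;=\; C_\chi(\nu,\sigma,\tw_0)\cdot \lambda_\chi(w_0(\nu), w_0(\sigma))\bigl(A(\nu, \sigma, \tw_0)f\bigr)
\]
on a carefully chosen $I_m$-fixed section $f \in I(\eta_\nu, \sigma)^{I_m}$ and to match each of the three quantities appearing (the Whittaker functional on the left, the intertwining operator, and the Whittaker functional for $w_0(\sigma)$ on the right) with its primed counterpart via the close local fields isomorphisms of Sections \ref{GR} and \ref{IndRep}. Both $C_\chi(\nu, \sigma, \tw_0)$ and $C_{\chi'}(\nu, \sigma', \tw_0')$ are rational functions of $\eta_\nu$ under the canonical identification $\Unr(M_\theta) \cong \Unr(M'_\theta)$, so agreement on any Zariski-dense subset suffices; we therefore restrict to $\nu$ in the cone \eqref{conv} where the intertwining integral converges absolutely.

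Using the decomposition $G = \coprod_{w \in [W_\theta \backslash W]} P_\theta \tilde{w} I$ and Lemma \ref{ind}, take $f \in I(\eta_\nu, \sigma)^{I_m}$ to be supported on the identity coset $P_\theta I_m$, with $f(e) = v_0$, where $v_0 \in V^{I_{m, M_\theta}}$ satisfies $W_{v_0}(e) = \lambda(v_0) \neq 0$ as provided by hypothesis. By the construction of Section \ref{IndRep}, the corresponding $f' \in I(\eta_\nu', \sigma')^{I_m'}$ satisfies $f'(e) = \kappa_{l, M_\theta}(v_0)$ and is supported analogously. For such $f$, the Whittaker integral
\[
\lambda_\chi(\nu, \sigma)(f) \;=\; \int_{N_\Omega}\lambda(f(\tw_0^{-1}n))\,\overline{\chi(n)}\,dn,
\]
after applying $N_\Omega = \tw_0 \bar{N}_\theta \tw_0^{-1}$ and the Iwahori factorization of $I_m$, reduces to a compactly supported integral of bounded level. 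Combined with the character compatibility of Section \ref{GR}, the volume matching \eqref{voloppuni}, and the identity $\lambda(v_0) = \lambda'(\kappa_{l, M_\theta}(v_0))$ arising from the Whittaker model transfer, this yields $\lambda_\chi(\nu, \sigma)(f) = \lambda_{\chi'}(\nu, \sigma')(f')$.

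By Mui\'{c}'s characterization \eqref{muicio},
\[
A(\nu, \sigma, \tw_0)(f)(g) \;=\; \int_{N_\Omega} f(\tw_0^{-1} n g)\,dn,
\]
which is $I_m$-fixed in $g$ since $f$ is. Applying $\lambda_\chi(w_0(\nu), w_0(\sigma))$, given by the analogous integral for the representation $w_0(\sigma)$ of $M_\Omega$, produces an iterated integral over two unipotent radicals whose integrand is again compactly supported of bounded level. A term-by-term comparison, using the structure-constant identity $c_{\alpha, \beta, i, j} \sim_\Lambda c'_{\alpha, \beta, i, j}$, the character compatibility, and the matching of Haar measures, produces the primed quantity. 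Since $\lambda_\chi(\nu, \sigma)(f) \neq 0$ by the choice of $v_0$, division yields the desired equality $C_\chi(\nu, \sigma, \tw_0) = C_{\chi'}(\nu, \sigma', \tw_0')$.

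The principal obstacle is the precise tracking of levels. The iterated shift by $\tw_0^{-1}n$ in the first integral, followed by an additional shift in the second, may temporarily displace the argument outside $I_m$, so the value of $f$ at such points is determined by data that only agrees between $\calo$ and $\calo'$ after reducing modulo $\calp^l$ for $l$ strictly larger than $m$. This is exactly why we assume $l = m + 4$ rather than the $l = m + 3$ that sufficed for parabolic induction alone. Verifying that every structure constant, conductor, and coset representative entering the double integral depends only on data modulo $\calp^l$ constitutes the bulk of the technical work, but follows the pattern already established in the proofs of Theorem \ref{genericrepresentationshecke} and the parabolic induction comparison of Section \ref{IndRep}.
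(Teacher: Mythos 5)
The idea of evaluating the defining identity on a test function supported on the big cell with central value $v_0$ is exactly the test function the paper uses, so the starting point is right. But the execution diverges at the crucial step, and the divergence is a genuine gap. You propose to handle the right-hand side $\lambda_\chi(w_0(\nu), w_0(\sigma))\circ A(\nu,\sigma,\tw_0)(f)$ by ``a term-by-term comparison'' of the resulting iterated integral over two unipotent radicals. The problem is that the function $A(\nu,\sigma,\tw_0)(f)$ is an element of the induced representation $I(w_0(\nu), w_0(\sigma))$; it is \emph{not} supported on $P_\Omega I_m$, and the Whittaker integral $\lambda_\chi(w_0(\nu), w_0(\sigma))$ needs its values at $\tw_0^{-1}n$ for $n$ ranging over a full (and in general non-compact, requiring regularization) domain. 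Your choice of $f$ makes $A(\nu,\sigma,\tw_0)(f)$ easy to evaluate at one point (via Mui\'c's formula \eqref{muicio}, which applies precisely because your $f$ lies in $I(\nu,\sigma)^{\le w_0^{-1}}$), but it does not control the function elsewhere. So the ``integrand compactly supported of bounded level'' claim is not established, and the ``bulk of the technical work'' that you acknowledge but do not carry out is precisely where the naive approach breaks down.

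The paper avoids this difficulty by not trying to compute the iterated integral at all. It uses two multiplicity-one statements: $\dim\Hom_U(I(\nu,\sigma),\chi)=1$ (for $\nu$ in the set $\calv$ where the induced representations are irreducible) and $\dim\Hom_G(I(\nu,\sigma), I(w_0(\nu), w_0(\sigma)))=1$ (Equation \eqref{dimensionMuic1}). Each uniqueness result reduces the comparison of a whole operator to the determination of a single scalar, and each scalar is pinned down by computing on the test function $R_{\tw_0}f$ at a single point. Thus $\lambda_\chi(\nu,\sigma)$, $A(\nu,\sigma,\tw_0)$, and $\lambda_\chi(w_0(\nu), w_0(\sigma))$ are each matched with their primed counterparts \emph{as operators on the full $I_{m+1}$-fixed subspace}, and the local coefficient identity follows immediately by chaining these. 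Note also that the relevant dense set is $\calv$ (irreducibility plus Mui\'c's domain), not merely the convergence cone: irreducibility of $I(\nu,\sigma)$ and $I(w_0(\nu), w_0(\sigma))$ is what furnishes the multiplicity-one statements, and the multiplicativity (Equation \eqref{homext}) needed to extend the Hecke-module isomorphism to a $G$-embedding. Your proposal does not invoke either uniqueness result, and without them there is no route around the iterated integral.
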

\begin{proof}
Let $\calv $ be the open dense subset of $\cala_{\theta,\C}^*$ obtained by taking the intersection of $\calv_1$ in Section \ref{Induced} and, $\calv(\sigma, w_0)$ and $\calv(\sigma', w_0)$ in Section \ref{IO}. For $\nu \in \calv$, the following holds:
\begin{itemize}
 \item $I(\nu, \sigma) \text{ and } I(w_0(\nu), w_0(\sigma)) \text{ are irreducible}$.
\item $A(\nu, \sigma, \tilde{w}_0)$ and $A(\nu, \sigma' ,\tw_0')$ are defined and \[\dim_\C\Hom_G(I(\nu, \sigma), I(w_0(\nu), w_0(\sigma)) =1 = \dim_\C\Hom_{G'}(I(\nu, \sigma'), I(w_0(\nu), w_0(\sigma')).\]
 \end{itemize}
STEP 1: It suffices to prove
\begin{equation}\label{NLCC}
 C_\chi(\nu, \sigma, \tw_0) = C_{\chi'}(\nu, \sigma',\tw_0') \text { for } \nu \in \mathcal{V}.
\end{equation}
This is because the local coefficient $C_\chi(\nu, \sigma, \tw_0)$ is a meromorphic function of $\nu$, and in fact a rational function of $\eta_\nu$. Furthermore, note that if $C_\chi(\nu, \sigma, \tw_0)$ has a pole at $ \nu  = \nu_0 \in \calv$, then in view of Equation \eqref{localcoeff}, we see that
 $\lambda_\chi(\tw_0(\nu_0), \tw_0(\sigma)) \circ A(\nu_0, \sigma, \tw_0)(f) = 0$ for all $f \in I(\nu_0, \sigma)$. This implies that $I(\tw_0(\nu_0), \tw_0(\sigma))$ is reducible and the image of $A(\nu_0, \sigma, \tw_0)$ in $I(\tw_0(\nu_0), \tw_0(\sigma))$ is degenerate. But this contradicts our definition of $\calv$.  Therefore for each $\nu \in \calv$, we see that $C_\chi(\nu, \sigma, \tw_0)$ is holomorphic. Moreover, since we are staying away from the poles of the intertwining operator, we also have that $C_\chi(\nu, \sigma, \tw_0)$ is non-zero for each $\nu \in \calv$ (cf. Proposition 3.3.1 of \cite{Sha81}).\\\\
STEP 2: The Whittaker functional $\lambda_\chi(\nu, \sigma)$ attached to the induced representation $I(\nu, \sigma)$ is described in Section \ref{Whit}. 
Let $\nu \in \calv$. Then $I(\nu, \sigma)$ is irreducible, and hence the dimension of $\Hom_U(I(\nu, \sigma), \chi)$ is 1. Let $\phi_{\chi}(\nu, \sigma)$ be the $G$-embedding
\[ I(\nu, \sigma) \xrightarrow{\phi_{\chi}(\nu, \sigma)} \Ind_U^G \chi\] corresponding to $\lambda_\chi(\nu, \sigma)$ under Frobenius reciprocity
$
 \Hom_U(I(\nu, \sigma), \chi) \overset{\cong} \longrightarrow \Hom_G(I(\nu, \sigma), \Ind_U^G \chi).$

It is unique up to scalars and restricts to give an embedding of $\mathscr{H}(G, I_{m+1})$-modules
\[ I(\nu, \sigma)^{I_{m+1}}\xrightarrow{\phi_{\chi}(\nu, \sigma)} (\Ind_U^G \chi)^{I_{m+1}}.\]
Now, consider the following diagram:
\begin{equation*}
    \xymatrix@C=40pt @R  = 30pt{
        I(\nu, \sigma)^{I_{m+1}} \ar[r]^{\phi_{\chi}(\nu, \sigma)} \ar[d]_{\kappa_{m+1}(\nu)} & (\Ind_U^G \chi)^{I_{m+1}} \ar[d]^{\kappa_{m+1}} \\
        {I(\nu, \sigma')}^{I_{m+1}'} \ar[r]_{\phi_{{\chi'}}(\nu, \sigma')}      & (\Ind_{U'}^{G'} \chi')^{I_{m+1}'}}
\end{equation*}
Here, $\phi_{{\chi'}}(\nu, \sigma')$ corresponds to $\lambda'_{\chi'}(\nu, \sigma')$, the Whittaker functional on $I(\nu, \sigma')$ defined using the Whittaker functional $\lambda'$ on $(\sigma', V')$ as in Section \ref{WF}, $\kappa_{m+1}$ is as in Section \ref{GR}, and $\kappa_{m+1}(\nu)$ is as in Lemma \ref{Indvsiso}. Define $\phi_1 = \kappa_{m+1}^{-1} \circ \phi_{{\chi'}}(\nu, \sigma') \circ \kappa_{m+1}(\nu)$ on $I(\nu, \sigma)^{I_{m+1}}$ and extend it to $I(\nu, \sigma)$ as in Equation \eqref{homext}.
Then we get another $G$-embedding $\phi_1: I(\nu, \sigma)\hookrightarrow \Ind_U^G \chi$ (Recall that since $\nu \in \calv, I(\nu, \sigma)$ is irreducible).
We want to show that $\phi_1= \phi_{\chi}(\nu, \sigma)$. A priori, we have that $\phi_1 = a(\nu, \sigma, \chi). \phi_{\chi}(\nu, \sigma)$, where $a(\nu, \sigma, \chi)$ is a complex number. We only need to show that $a(\nu, \sigma, \chi) = 1$. We will prove that for a suitably chosen $f \in I(\nu, \sigma)^{I_{m+1}}$, we have $\phi_1(f)(1) = \phi_{{\chi} }(\nu, \sigma)(f)(1) \neq 0$, and deduce that $a(\nu, \sigma, \chi) = 1$.

For $v_0 \in \sigma^{I_{m,\mtheta}}$ as before (that is $\lambda(v_0) \neq 0$), consider the function $f$ defined as follows:
\begin{itemize}
 \item $\supp (f) = P_\theta I_m = P_\theta(\bar{N}_\theta \cap I_m)$,
 \item $f(m_\theta ni) = \sigma(m_\theta)\eta_\nu(m_\theta)\delta_{P_\theta}^{1/2}(m_\theta)\vol(\bar{N}_\theta \cap I_m,d\bar{n}_\theta )^{-1}. v_0 \text{ for } m_\theta \in M_\theta, n \in N_\theta, i \in I_m$, where $d\bar{n}_\theta$ is the Haar measure fixed in this section.
\end{itemize}
Clearly, $ f \in I(\nu, \sigma)^{I_m}\subset I(\nu, \sigma)^{I_{m+1}}$. We compute Integral \ref{Whit} on $R_{\tw_0}f$.
\begin{align}\label{Whitcomp}
\phi_{{\chi} }(\nu, \sigma)(R_{\tw_0}f)(1) &= \lambda_\chi(\nu, \sigma)(R_{\tw_0}f) \nonumber\\
& = \int_{N_\Omega} \lambda(R_{\tw_0}f(\tw_0^{-1}n))\overline{\chi(n)} dn_\Omega\nonumber \\
& = \int_{N_\Omega} \lambda(f(\tw_0^{-1}n\tilde{w}_0))\overline{\chi(n)} dn_\Omega\nonumber \\
&= \int_{\bar{N}_\theta} \lambda(f(\bar{n})) \overline{\chi(\tilde{w}_0 \bar{n}\tilde{w}_0^{-1})} d\bar{n}_\theta \nonumber\\
&=\int_{\bar{N}_\theta \cap I_m} \lambda(f(\bar{n})) \overline{\chi(\tilde{w}_0 \bar{n}\tilde{w}_0^{-1})} d\bar{n}_\theta \nonumber\\
&= \lambda(v_0) = W_{v_0}(e).
\end{align}
Note that the Haar measure $d\bar{n}_\theta$ in the beginning of this section was chosen in that manner so that the fourth equality above would hold. To get the last equality, recall that we have assumed that $\cond(\chi_\alpha) \leq m\; \forall \; \alpha \in \Delta$  and therefore $\chi|_{\tilde{w}_0 (\bar{N}_\theta\cap I_m)\tilde{w}_0^{-1}} = 1$.

We will now compute $\phi_1(f)(1)$. To do this, recall that by Section \ref{IndRep} we have an isomorphism $\kappa_m(\nu)$,
 \begin{align*}
  I(\nu, \sigma)^{I_{m}} &\xrightarrow{\kappa_m(\nu)}I(\nu, \sigma' )^{I_{m}'},\\
  h & \rightarrow h',
  \end{align*}
that is compatible with the Hecke algebra isomorphism $\zeta_m$.

Using the construction of $\kappa_m(\nu)$ in Lemma  \ref{Indvsiso} and Equation \eqref{voloppuni}, it is easy to see that $\kappa_m(\nu)(f) = f'$ where $f'$ satisfies the following conditions:
\begin{itemize}
 \item $\supp (f') = P_\theta'I_m' = P_\theta'(\bar{N}_\theta' \cap I_m')$,
 \item $f(m_\theta'n'i') = \sigma'(m_\theta')\eta_\nu'(m_\theta')\delta_{P_\theta'}^{1/2}(m_\theta')\vol(\bar{N}'_\theta \cap I_m', d\bar{n}_\theta')^{-1}. v_0' \text{ for } m \in M_\theta', n \in N_\theta', i' \in I_m'$, where $v_0' = \kappa_{m,M_\theta}(v_0)$.
\end{itemize}
Moreover, $\kappa_{m+1}(\nu)(f) = \kappa_m(\nu)(f)\; \forall\; f \in I(\nu, \sigma)^{I_m}$ and $\kappa_{m+1}(\nu)(R_{\tw_0} f) = R_{\tw_0'}f'$ by Observation (b).

Now, $\phi_1(f)(1) = \kappa_{m+1}^{-1} \circ \phi_{{\chi'}}(\nu, \sigma') \circ \kappa_m(\nu)(f)(1) = \kappa_{m+1}^{-1} \circ \phi_{{\chi'}}(\nu, \sigma')(f')(1)$. By construction of the map $\kappa_m$, the following diagram is commutative:

\begin{displaymath}
   \xymatrix@C=40pt @R  = 30pt{
       (\Ind_U^G \chi)^{I_{m+1}}  \ar[d]^{\kappa_{m+1}} \ar[r]^-{g \rightarrow g(1)}  & \C \ar[d]^{Id} \\
       (\Ind_{U'}^{G'} \chi')^{I_{m+1}'}  \ar[r]_-{g' \rightarrow g'(1)}      & \C }
\end{displaymath}
Using the above and proceeding as in \ref{Whitcomp},  we obtain \[\kappa_{m+1}^{-1} \circ \phi_{{\chi'}}(\nu, \sigma')(f')(1) = \phi_{{\chi'}}(\nu, \sigma')(f')(1) = \lambda'(v_0') = W_{v_0'}(e) = W_{v_0}(e)\neq 0,\] since the Whittaker models of $\sigma$ and $\sigma'$ are chosen in a compatible manner as explained in the beginning of this subsection. This shows that $\phi_1 = \phi_{\chi}(\nu, \sigma)$. In particular, we have proved that

\begin{equation}\lambda_\chi(\nu, \sigma)(f) = \lambda_\chi'(\nu, \sigma') \circ \kappa_{m+1}(\nu)(f) \; \forall \; f \in I(\nu, \sigma)^{I_{m+1}}, \nu \in \calv.
\end{equation}\\
STEP 3:  We will now study the intertwining operators over close local fields. Let $\nu \in \mathcal{V}$. Consider the following diagram:
 \begin{equation}
    \xymatrix@C=40pt @R  = 30pt{
        I(\nu, \sigma)^{I_{m+1}} \ar[r]^-{A(\nu, \sigma,\tw_0)} \ar[d]_{\kappa_{m+1}(\nu)} & I(w_0(\nu), w_0(\sigma))^{I_{m+1}} \ar[d]^{\kappa_{m+1}(w_0(\nu))} \\
        {I(\nu, \sigma')}^{I_{m+1}'} \ar[r]_-{A(\nu, \sigma',\tw_0')}      & I(w_0(\nu), w_0(\sigma'))^{I_{m+1}'}}
\end{equation}

Define $A_1 = \kappa_{m+1}({w}_0(\nu))^{-1} \circ A(\nu, \sigma', \tw_0') \circ \kappa_{m+1}(\nu)$ on $I(\nu, \sigma)^{I_{m+1}}$ and extend it to $I(\nu, \sigma)$ using Equation \eqref{homext}. Then \[A_1 \in \Hom_G(I(\nu, \sigma), I(w_0(\nu), w_0(\sigma))).\] Since $\nu \in \calv$, these induced representations are irreducible, the intertwining operator is defined, and \[\dim_\C \Hom_G(I(\nu, \sigma), I(w_0(\nu), w_0(\sigma))) =1.\]
Hence, there exists a scalar $b(\nu, \sigma, \tw_0) \in \C$ such that $A_1 = b(\nu, \sigma,\tw_0)\cdot A(\nu, \sigma, \tw_0)$. We want to show that $b(\nu, \sigma, \tw_0) = 1$. Let $f$ be the element constructed in STEP 2. Since $\supp(f)  = P_\theta (\bar{N}_\theta \cap I_m)$, we see that $\supp(R_{\tw_0}f) \subset P_\theta \tw_0^{-1}N_\Omega$, and hence $R_{\tw_0}(f) \in I(\nu, \sigma)^{\leq w_0^{-1}}$ (with notation explained in Section \ref{IO}). We will show that $A_1(R_{\tw_0}f)(1) = A(\nu, \sigma, \tw_0)(R_{\tw_0}f)(1) \neq 0$ and deduce that  $b(\nu, \sigma, \tw_0) = 1$. By Equation \eqref{muicio}, we know that
\begin{align*}
 A(\nu, \sigma, \tw_0)(R_{\tw_0}f)(1) &= A(\nu, \sigma, \tw_0)f(\tw_0)= \int_{N_\Omega} f(\tilde{w}_0^{-1}n\tilde{w}_0) dn_\Omega= \int_{\bar{N}_\theta} f(\bar{n}) d\bar{n}_\theta= v_0.
\end{align*}
We will now compute $A_1(f)(\tilde{w}_0)$. First note that for $g \in I(w_0(\nu), w_0(\sigma))^{I_{m+1}}$, 
\[g(\tw_0) \in (w_0(\sigma\eta_\nu))^{M_\Omega \cap \tw_0I_{m+1}\tw_0^{-1}} = (w_0(\sigma\eta_\nu))^{M_\Omega \cap I_{m+1}}.\] By Observation (e), we know that $w_0(\sigma\eta_\nu)\leftrightarrow w_0(\sigma'\eta_\nu')$. Using this and the construction of the isomorphism between the induced representations (Lemma  \ref{Indvsiso}), we see that the following diagram is commutative.
\begin{equation}
    \xymatrix@C=40pt @R  = 30pt{
       I(w_0(\nu), w_0(\sigma))^{I_{m+1}}  \ar[r]^-{g \rightarrow g(\tilde{w}_0)}  & (w_0(\sigma\eta_\nu))^{M_\Omega \cap I_{m+1}} \ar[d]^{\kappa_{m+1,M_\Omega}} \\
       I(w_0(\nu), w_0(\sigma'))^{I_{m+1}'}  \ar[u]_{\kappa_{m+1}(w_0(\nu))^{-1}} \ar[r]_-{g' \rightarrow g'(\tilde{w}_0')}      & w_0(\sigma'\eta_\nu')^{M_\Omega' \cap I_{m+1}'}}
\end{equation}
Now,
\begin{align*}
A_1(f)(\tw_0) &= \kappa_{m+1}({w}_0(\nu))^{-1} \circ A(\nu, \sigma', \tw_0') \circ \kappa_{m+1}(\nu)(f)(\tw_0)\\
& =(\kappa_{m+1}({w}_0(\nu))^{-1} \circ A(\nu, \sigma', \tw_0')(f'))(\tw_0)\\
& = \kappa_{m+1,M_{\Omega}}^{-1} \circ (A(\nu, \sigma', \tw_0')(f')(\tw_0'))\\
 &= \kappa_{m+1,M_{\Omega}}^{-1}(v_0') = v_0
\end{align*}
In particular, we have proved that
\begin{equation}
A(\nu, \sigma, \tw_0)(f) = \kappa_{m+1}({w}_0(\nu))^{-1} \circ A(\nu, \sigma', \tw_0') \circ \kappa_{m+1}(\nu)(f)\; \forall \; f \in I(\nu, \sigma)^{I_{m+1}}, \nu \in \mathcal{V}.
\end{equation}\\
STEP 4: Let $\nu \in \mathcal{V}$. Recall that the local coefficient is defined using the equation
\[ \lambda_\chi(\nu, \sigma) = C_\chi(\nu, \sigma,\tw_0) \lambda_\chi(w_0(\nu), w_0(\sigma)) \circ A(\nu, \sigma, \tw_0).\]
By STEP 2 and STEP 3, we have the following on  $ I(\eta_\nu, \sigma)^{I_{m+1}}$:
\begin{enumerate}
\item $ \phi_{\chi}(\nu, \sigma)(f) = \kappa_{m+1}^{-1} \circ \phi_{{\chi'}}(\nu, \sigma') \circ \kappa_{m+1}(\nu)(f)$.
\item $ A(\nu, \sigma, \tw_0)(f) = \kappa_{m+1}(w_0(\nu))^{-1} \circ A(\nu, \sigma', \tw_0') \circ \kappa_{m+1}(\nu)(f)$.
\end{enumerate}
Using arguments similar to STEP 2, we can also prove that for each $g \in I(w_0(\nu), w_0(\sigma))^{I_{m+1}}$,

\hspace*{4pt}(c) $\phi_{\chi}(w_0(\nu), w_0(\sigma))(g)= \kappa_{m+1}^{-1} \circ \phi_{{\chi'}}(w_0(\nu), w_0(\sigma')) \circ \kappa_{m+1}(w_0(\nu))(g).$\\
Finally, for $f \in I(\nu, \sigma)^{I_{m+1}}$, we have
\begin{align}\label{whitclose}
 \lambda_\chi(\nu, \sigma)(f) =\phi_{\chi}(\nu, \sigma)(f)(1)= \kappa_{m+1}^{-1} \circ \phi_{{\chi'}}(\nu, \sigma')(f')(1)= \phi_{{\chi'}}(\nu, \sigma')(f')(1) = \lambda_{\chi'}(\nu, \sigma')(f')
 \end{align}
and
\begin{align}\label{whit1close}
 \lambda_\chi(w_0(\nu), w_0(\sigma)) \circ A(\nu, \sigma, \tw_0)(f) =& (\kappa_{m+1}^{-1} \circ \phi_{{\chi'}}(w_0(\nu), w_0(\sigma')) \circ \kappa_{m+1}(w_0(\nu))\nonumber\\
 &\circ \kappa_{m+1}(w_0(\nu))^{-1} \circ A(\nu, \sigma', \tw_0') \circ \kappa_{m+1}(\nu)(f))(1) \nonumber\\
=& (\kappa_{m+1}^{-1} \circ \phi_{{\chi'}}(w_0(\nu), w_0(\sigma'))\circ A(\nu, \sigma', \tw_0')(f'))(1) \nonumber\\
=& \phi_{{\chi'}}(w_0'(\nu), w_0'(\sigma'))\circ A(\nu, \sigma', \tw_0')(f')(1)\nonumber\\
=& \lambda'_{\chi'}(w_0(\nu), w_0(\sigma')) \circ  A(\nu, \sigma', \tw_0')(f')
\end{align}
Choosing $f \in I(\nu, \sigma)^{I_{m+1}}$ such that $\lambda_\chi(\nu, \sigma)(f) \neq 0$, and  combining Equations \eqref{whitclose} and \eqref{whit1close}, we obtain that
\[ C_\chi(\nu, \sigma, \tw_0) = C_{\chi'}(\nu, \sigma', \tw_0') \; \forall \nu \in \mathcal{V}.\]
Now use STEP 1 to complete the proof of the theorem.\qedhere

\end{proof}

\section{Plancherel measures over close local fields}\label{Plancherel}

We retain the notation of the previous section. For an irreducible, admissible representation $(\sigma, V)$ of $\mtheta$ (maximal) and $\nu \in \cala_{\theta,\C}^*$, we consider the induced representations
\[I(\nu, \sigma) \;\; \text{ and }\;\; I(w_0(\nu), w_0(\sigma))\]
where $w_0 = w_{l, \Delta}w_{l, \theta}$ with $w_0(\theta) = \Omega$, and $\tw_0$ denotes  its lifting as in Section \ref{reps} (cf. Section \ref{Induced}). Let $A(\nu, \sigma, \tw_0)$ denote the standard intertwining operator in Section \ref{IO}. Let us recall the definition of the Plancherel measure from \cite{Sha90}. Define
\[\gamma_{w_0}(G/P_\theta) = \int_{\bar{N}_\theta} q^{\langle 2\rho_{P_\theta}, H_\mtheta(\bar{n}) \rangle}d\bar{n}_\theta,\]
where $d\bar{n}_\theta$ is the Haar measure fixed in the previous section, $\rho_{P_\theta}$ is the half sum of the roots in $N_\theta$, $H_{\mtheta}$ is defined on $\mtheta$ as in Section \ref{unrchar} and extended to a function on $G$ by letting $H_\mtheta(mnk) = H_\mtheta(m), m \in \mtheta, n \in N_\theta$ and $k \in \bfg(\calo)$ (cf. Section 2 of \cite{Sha90}). 
Similarly, we define $\gamma_{w_0^{-1}}(G/P_\Omega)$. There exists a constant $\mu(\nu, \sigma, w_0)$  such that
\[A(w_0(\nu), w_0(\sigma), \tw_0^{-1})  \circ A(\nu, \sigma, \tw_0) = \mu(\nu, \sigma, w_0)^{-1}\gamma_{w_0}(G/P_\theta) \gamma_{w_0^{-1}}(G/P_\Omega).\]
This constant depends only on $\nu$,  the class of $\sigma$ and on $w_0$, but not on the choice $\tw_0$  (cf. Page 280 of \cite{Sha90}). The scalar $\mu(\nu, \sigma, w_0)$ is a meromorphic function of $\nu$ and is called the Plancherel measure associated to $\nu, \sigma$ and $w_0$. 

Our aim in this section is to study  Plancherel measures over close local fields and prove that they are compatible with the Deligne-Kazhdan correspondence.
\begin{theorem}\label{PlancherelCLF} Let $(\sigma, V)$ be an irreducible, admissible representation of $M_\theta$ and let $m \geq 1$ such that $\sigma^{I_{m, \mtheta}} \neq 0$. Set $l =m+4$ and let $F'$ be another non-archimedean local field that is $l$-close to $F$. Let $(\sigma',V')$ be the irreducible, admissible representation of $\mtheta'$ such that  $\kappa_{l,\mtheta}: \sigma^{I_{l, \mtheta}}\rightarrow (\sigma')^{I_{l, \mtheta'}'}$ is an isomorphism that is compatible with the Hecke algebra isomorphism $\zeta_{l,\mtheta}:\fH(\mtheta, I_{l,\mtheta}) \rightarrow \fH(\mtheta', I_{l, \mtheta'}')$. Then, with Haar measures chosen compatibly as in Section \ref{maintheorem}, we have
\[
\mu(\nu, \sigma, w_0) = \mu(\nu, \sigma', w_0).
\]
 \end{theorem}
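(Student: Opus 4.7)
The plan is to follow the template of the proof of Theorem \ref{maintheoremlocalcoeff}, but with intertwining operators in place of Whittaker functionals. The key observation is that the STEP 3 argument in that proof --- which yields the compatibility
\[A(\nu, \sigma, \tw_0) = \kappa_{m+1}(w_0(\nu))^{-1} \circ A(\nu, \sigma', \tw_0') \circ \kappa_{m+1}(\nu)\]
on $I(\nu, \sigma)^{I_{m+1}}$ for $\nu$ in an open dense set $\calv \subset \cala_{\theta,\C}^*$ --- relies only on the construction of a test function $f$ from an arbitrary nonzero $v_0 \in V^{I_{m,\mtheta}}$ and never invokes a Whittaker functional. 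Hence that argument transfers verbatim to our (possibly non-generic) $\sigma$.

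First I would fix $\calv$ so that for $\nu \in \calv$ all four representations $I(\nu, \sigma)$, $I(w_0(\nu), w_0(\sigma))$, $I(\nu, \sigma')$, $I(w_0(\nu), w_0(\sigma'))$ are irreducible and all four corresponding intertwining operators are defined; such $\calv$ exists by Observations (d) and (e) of Section \ref{maintheorem}. I would then invoke the compatibility above for the pair $(\sigma, \sigma')$ and, separately, the analogous compatibility for the pair $(w_0(\sigma), w_0(\sigma'))$ of representations of $\momega$ with Weyl element $w_0^{-1}$ (legitimate by Observation (e)), obtaining on $I(w_0(\nu), w_0(\sigma))^{I_{m+1}}$
\[A(w_0(\nu), w_0(\sigma), \tw_0^{-1}) = \kappa_{m+1}(\nu)^{-1} \circ A(w_0(\nu), w_0(\sigma'), \tw_0'^{-1}) \circ \kappa_{m+1}(w_0(\nu)).\]
Composing these two identities on $I(\nu, \sigma)^{I_{m+1}}$ (using that $A(\nu,\sigma,\tw_0)$ preserves $I_{m+1}$-fixed subspaces by $G$-equivariance) and unfolding the defining identity of the Plancherel measure on each side yields
\[\frac{\mu(\nu, \sigma, w_0)}{\mu(\nu, \sigma', w_0)} = \frac{\gamma_{w_0}(G/P_\theta)\,\gamma_{w_0^{-1}}(G/P_\Omega)}{\gamma_{w_0}(G'/P_\theta')\,\gamma_{w_0^{-1}}(G'/P_\Omega')}\]
for $\nu \in \calv$ (noting that $\sigma^{I_{m,\mtheta}} \neq 0$ guarantees $I(\nu, \sigma)^{I_{m+1}} \neq 0$ to which both scalar operators may be applied), and then to all $\nu$ by meromorphic continuation, since both sides are rational functions of $\eta_\nu$.

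It then remains to match the $\gamma$-factors across $F$ and $F'$. By the Gindikin--Karpelevich formula, $\gamma_{w_0}(G/P_\theta)$ evaluates to a product, indexed by the roots of $\bfa_\theta$ in $\bfn_\theta$, of explicit rational functions of $q^{-1}$ depending only on the root system and the normalization of $d\bar{n}_\theta$. Since $F$ and $F'$ share the same residue field cardinality (as $l \geq 1$) and the same abstract root system, and since the Haar measures on $\bar{N}_\theta$ and $\bar{N}_\theta'$ were chosen compatibly via \eqref{voloppuni}, the two $\gamma$-factors coincide; the same reasoning applies over $(P_\Omega, w_0^{-1})$.

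The principal obstacle is this final $\gamma$-factor identification. It is morally clear --- every ingredient (the integrand $q^{\langle 2\rho_{P_\theta}, H_\mtheta(\bar{n}) \rangle}$ and the measure of each stratum of $\bar{N}_\theta$) is encoded in finite-level data of $\bfg(\calo/\calp^m)$, which is preserved by $l$-closeness --- but a clean proof proceeds most comfortably by invoking the Gindikin--Karpelevich product formula rather than by a direct stratum-by-stratum comparison of the integrals, which would have to handle contributions from $\bar{n} \in \bar{N}_\theta$ of arbitrarily large negative valuation. The intertwining operator compatibility in the earlier steps, by contrast, is a direct rerun of STEP 3 of Theorem \ref{maintheoremlocalcoeff}, which never required genericity of $\sigma$ or the existence of a Whittaker model.
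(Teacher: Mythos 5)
Your proposal is correct and essentially reproduces the paper's argument: both proofs rerun the STEP~3 intertwining-operator compatibility from Theorem~\ref{maintheoremlocalcoeff} (which indeed never uses genericity) for $(\sigma,\sigma')$ and for $(w_0(\sigma),w_0(\sigma'))$, compose, unwind the defining scalar identity of the Plancherel measure, and finally match $\gamma_{w_0}(G/P_\theta)$ across $F$ and $F'$. The paper dispatches the last step by citing Waldspurger (Remark (3), page 240) together with Observation~(a) rather than spelling out the Gindikin--Karpelevich product formula, but this is the same content.
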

\begin{proof}
The proof of this theorem is similar to the proof of Theorem \ref{maintheoremlocalcoeff}.  First, using Remark (3), Page 240 of \cite{Wal03} and Observation (a) of Section \ref{maintheoremlocalcoeff}, it is clear that $\gamma_{w_0}(G/P) = \gamma_{w_0}(G'/P')$ and $\gamma_{w_0^{-1}}(G/P_\Omega) = \gamma_{w_0^{-1}}(G'/P_\Omega')$. 

Let $\calv$ be an open dense subset of $\cala_{\theta, \C}^*$ such that 
\begin{itemize}
\item $I(\nu, \sigma)$ and $I(w_0(\nu), w_0(\sigma))$ are both irreducible. 
\item $A(\nu, \sigma, \tw_0)$, $A(\nu, \sigma', \tw_0')$, $A(w_0(\nu), w_0(\sigma), \tw_0^{-1})$, and $A(w_0(\nu), w_0(\sigma'), \tw_0'^{-1})$ are all defined, that is Equation \eqref{dimensionMuic1} is satisfied for the respective spaces.
\end{itemize}
Again, such a $\calv$ exists by Sections \ref{Induced} and \ref{IO}.

Moreover, for $\nu \in \calv$, $\mu(\nu, \sigma, w_0)$ is holomorphic and non-zero. Arguing as in Theorem \ref{maintheoremlocalcoeff}, we can prove that for each $\nu \in \calv$, the following diagrams are commutative:
 \begin{equation}
    \xymatrix@C=50pt @R  = 40pt{
        I(\nu, \sigma)^{I_{m+1}} \ar[r]^-{A(\nu, \sigma,\tw_0)} \ar[d]_{\kappa_{m+1}(\nu)} & I(w_0(\nu), w_0(\sigma))^{I_{m+1}} \ar[d]^{\kappa_{m+1}(w_0(\nu))} \\
        {I(\nu, \sigma')}^{I_{m+1}'} \ar[r]_-{A(\nu, \sigma',\tw_0')}      & I(w_0(\nu), w_0(\sigma'))^{I_{m+1}'}}
\end{equation}
and
 \begin{equation}
    \xymatrix@C=75pt @R  =40pt{
        I(w_0(\nu), w_0(\sigma))^{I_{m+1}} \ar[r]^-{A(w_0(\nu), w_0(\sigma),\tw_0^{-1})} \ar[d]_{\kappa_{m+1}(w_0(\nu))} &  I(\nu, \sigma)^{I_{m+1}} \ar[d]^{\kappa_{m+1}(\nu)} \\
         I(w_0(\nu), w_0(\sigma'))^{I_{m+1}'} \ar[r]_-{A(w_0(\nu), w_0(\sigma'),\tw_0'^{-1})}      & {I(\nu, \sigma')}^{I_{m+1}'}}
\end{equation}

Consequently, for each $\nu \in \calv$ and for each $f \in I(\nu, \sigma)^{I_{m+1}}$, we have
\begin{align*}
\mu(\nu, \sigma, w_0)^{-1}\gamma_{w_0}(G/P_\theta)& \gamma_{w_0^{-1}}(G/P_\Omega) f= A(w_0(\nu), w_0(\sigma),\tw_0^{-1}) \circ A(\nu, \sigma,\tw_0)(f)\\
&  =\kappa_{m+1}(\nu)^{-1} \circ A(w_0(\nu), w_0(\sigma'),\tw_0'^{-1})  \circ \kappa_{m+1}(w_0(\nu)) \\
&\circ  \kappa_{m+1}(w_0(\nu))^{-1} \circ A(\nu, \sigma',\tw_0') \circ \kappa_{m+1}(\nu)(f)\\
& =  \kappa_{m+1}(\nu)^{-1} \circ A(w_0(\nu), w_0(\sigma'),\tw_0'^{-1}) \circ A(\nu, \sigma',\tw_0') \circ \kappa_{m+1}(\nu)(f)\\
& =  \mu(\nu, \sigma', w_0)^{-1}\gamma_{w_0}(G'/P'_\theta) \gamma_{w_0^{-1}}(G'/P'_\Omega) \kappa_{m+1}(\nu)^{-1} \circ \kappa_{m+1}(\nu)(f)\\
& = \mu(\nu, \sigma', w_0)^{-1}\gamma_{w_0}(G'/P'_\theta) \gamma_{w_0^{-1}}(G'/P'_\Omega) f
\end{align*}

Hence
\[ \mu(\nu, \sigma, w_0) = \mu(\nu, \sigma', w_0)\; \forall\; \nu \in \calv.\]
Note that $\mu(\nu, \sigma, w_0)$ is a meromorphic function of $\nu$, and $\calv$ is open dense in $\cala_{\theta, \C}^*$. This completes the proof of the theorem. 
\end{proof}

\section{The local Langlands correspondence for $\GL_n$ over close local fields}\label{LLCGLNCLF}
The local Langlands correspondence (LLC) for $\GL_n$ establishes a bijection between the set of isomorphism classes of $n$-dimensional semisimple  representations of the Weil-Deligne group $\WD_F$ and the set of isomorphism classes of irreducible, admissible representations of $\GL_n(F)$, uniquely characterized by certain properties (see Theorem \ref{LLC} below). Deligne's theory provides us a tool to analyze properties of representations of the Weil group over close local fields. Similarly, Kazhdan's theory enables us to analyze properties of representations of $\GL_n$ over close local fields. In this section, we will show that the LLC for $\GL_n$ in positive characteristic is related to the LLC for $\GL_n$ in characteristic 0 via the Deligne-Kazhdan correspondence. We will first recall some important results relating to the LLC for $\GL_n$ before proving the main theorem.  Let \[\mathcal{A}_F(n):= \{ \text{ Iso.  classes of irr. ad. representations of } \GL_n(F)\}\] and \[\mathcal{G}_F(n):= \{ \text{  Iso. classes of semisimple representations of $\WD_F$ of dimension $n$}\}.\] 
 Similarly let \[\mathcal{A}_F^0(n):= \{ \text{ Iso.  classes of irr. ad. supercuspidal representations of } \GL_n(F)\}\] and \[\mathcal{G}_F^0(n):= \{ \text{  Iso. classes of irr. smooth representations of $W_F$ of dimension $n$}\}.\] 

\begin{theorem}[LLC]\label{LLC} There exists a unique family of bijective maps $\rec_n: \mathcal{A}_F(n) \rightarrow \mathcal{G}_F(n)$ such that
\begin{enumerate}[(a)]
\item For $n = 1$, it is given by local class field theory.
\item For $\sigma \in \mathcal{A}_F(n)$ and $\tau \in \mathcal{A}_F(t)$, we have 
\[ L(s, \sigma \times \tau) = L(s, \rec_n(\sigma) \otimes \rec_t(\tau)), \]
\[ \epsilon(s, \sigma \times \tau, \psi) = \epsilon(s,\rec_n(\sigma) \otimes \rec_t(\tau), \psi)\]
for all non-trivial characters $\psi$ of $F$.
\item For $ \sigma \in \mathcal{A}_F(n)$ and $\chi \in \mathcal{A}_F(1)$,
\[ \rec_n(\sigma\chi) = \rec_n(\sigma) \otimes \rec_1(\chi).\]
\item For $\sigma \in \mathcal{A}_F(n)$ with central character $\omega_\sigma$,
\[ \rec_1(\omega_\sigma) = \det \circ \rec_n(\sigma).\]
\item For $\sigma \in \mathcal{A}_F(n)$, 
\[\rec_n(\sigma^\vee) = \rec_n(\sigma)^\vee.\]\qedhere
\end{enumerate}
\end{theorem}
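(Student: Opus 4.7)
My plan would be to prove the two halves of this assertion, existence and uniqueness, by rather different methods, both of which are standard in the literature. For uniqueness, I would invoke the converse-theorem argument of Henniart \cite{Hen93}: properties (a), (b), (c), (d) together pin down the bijection on the subset $\mathcal{A}_F^0(n) \leftrightarrow \mathcal{G}_F^0(n)$ of supercuspidals, because the family of twisted local factors $\gamma(s, \sigma \times \tau, \psi)$ as $\tau$ ranges over representations of $\GL_t(F)$ with $t < n$ separates points in $\mathcal{A}_F^0(n)$ (by a stability-of-$\gamma$-factors result for highly ramified twists) and likewise separates points in $\mathcal{G}_F^0(n)$. An inductive argument on $n$ then extends the bijection from supercuspidals to all irreducibles on both sides via the Langlands classification and Bernstein--Zelevinsky theory, since every element of $\mathcal{A}_F(n)$ is a Langlands quotient of an induced representation built from supercuspidals of smaller general linear groups, and every element of $\mathcal{G}_F(n)$ is a direct sum of irreducibles twisted by special representations, in a way that is compatible with property (b).

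For existence, the core task is to construct $\rec_n$ on the set of supercuspidals. Here I would follow the established route, which differs by characteristic. When $\Char(F) = 0$, realize a given supercuspidal $\sigma$ as the local component at a prescribed place of a cuspidal automorphic representation $\Pi$ of $\GL_n$ over a suitably chosen number field whose other local components have already known Langlands parameters (e.g.\ are Steinberg at a second place), and then extract the local parameter at the place of interest from the $n$-dimensional Galois representation cut out of the cohomology of an appropriate unitary Shimura variety associated to $\Pi$ (Harris--Taylor \cite{HT01}, with Henniart's \cite{Hen00} simplification via Brauer induction on the Galois side; Scholze \cite{Sch13} later gave a more direct construction). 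When $\Char(F) = p > 0$, use the function-field analogue via Drinfeld's moduli spaces of shtukas, following Laumon--Rapoport--Stuhler \cite{LRS93}. In either case, the global trace formula and the matching of Hecke eigensystems at unramified places together force the cohomological Galois representation to encode the desired Langlands parameter at the ramified place.

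Once $\rec_n$ is defined on $\mathcal{A}_F^0(n)$, extend it to all of $\mathcal{A}_F(n)$ by insisting that parabolic induction on the automorphic side correspond to direct sum on the Galois side (tensored with the appropriate unramified twists governed by the Zelevinsky segments). Then verify the listed properties: (a) is local class field theory; (c) and (d) are essentially formal consequences of the construction, since the global recipe commutes with twisting by characters and with the determinant; (e) reduces to (b) together with the fact that the $L$- and $\epsilon$-factors determine the contragredient on both sides; and (b) itself is the deep content --- it follows from comparing the local factor at the distinguished place in the global functional equation (known for $\GL_n \times \GL_t$ by Jacquet--Piatetski-Shapiro--Shalika) with the product formula for Artin $L$- and $\epsilon$-factors, all other local factors being known by induction on $n$.

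The hard part, by a wide margin, is the production of the parameter in cohomology. This requires the Fundamental Lemma, the stabilization of the (twisted) trace formula, and a detailed spectral-sheaf analysis of the nearby-cycles on the relevant Shimura or shtuka moduli space; everything else, once these ingredients are in hand, is a careful bookkeeping of compatibilities. For the present paper's purpose of transferring the LLC across close local fields, one need only cite this result and then verify that the characterizing properties are preserved by the Deligne--Kazhdan dictionary.
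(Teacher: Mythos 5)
Your sketch is a faithful high-level summary of the standard proof from the literature, but the paper does not actually prove this theorem: immediately after stating it, the text simply attributes the result to \cite{LRS93} in positive characteristic and to \cite{HT01}, \cite{Hen00}, \cite{Sch13} in characteristic zero, and uses it as a black box. Your outline of the content of those references (Henniart's converse theorem for uniqueness, global cohomological/shtuka realization for existence on supercuspidals, and reduction via Langlands classification and Bernstein--Zelevinsky theory) is accurate, so there is nothing to compare beyond noting that the paper offers only the citation.
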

The factors in the LHS of Property $(b)$ are the Rankin-Selberg $\gamma$-factors and those on the right are the Artin factors. The proof of this theorem is due to Laumon, Rapoport, and Stuhler \cite{LRS93} for local fields of positive characteristic. The proof for local fields of characteristic 0 is due to Harris-Taylor \cite{HT01} and Henniart \cite{Hen00} (and recently, Scholze \cite{Sch13}). 

 Let $\sigma$ be an irreducible, admissible representation of $\GL_n(F)$. Let $\cond(\sigma)$ and $\cond(\phi_\sigma)$ denote the conductor of $\sigma$ and $\phi_\sigma$ respectively. The conductor of $\sigma$ is also given by the smallest integer $m$ such that $\sigma^{K_n(m)} \neq 0$, where \[K_n(m) = \left\{\left(\begin{array}{cc}
A & b \\
c & d \\
\end{array}\right)\in \GL_n(\calo) :A \in \GL_{n-1}(\calo),\, c\equiv 0\, mod \, \calp^m,\, d \equiv 1\, mod \, \calp^m\right\}.\]
(See Section 5 of \cite{JPSS81}). Let  $\phi_\sigma = \rec_n(\sigma)$ be the Langlands parameter attached to $\sigma$ as in Theorem \ref{LLC}. Since the LLC preserves $\epsilon$-factors, we see that $\cond(\sigma) = \cond(\phi_\sigma)$. 

In \cite{MP94, MP96}, Moy-Prasad defined filtrations of parahoric subgroups associated to points in the Bruhat-Tits building and used it to define the notion of depth of a representation of a reductive algebraic group. Let us briefly recall the notion of depth of a representation from \cite{MP94}. Let $\bfg$ be a split connected reductive group over $\Z$ with maximal torus $\bft$ and Borel $\bfb = \bft \bfu$, and $\Delta$ is the set of simple roots of $\bft$ in $\bfb$ (cf. Section \ref{stdnotations}). Let
\[\calA(T) =( X_*(\bft) \otimes_\Z \R)/(X_*(\bfz) \otimes _\Z \R)\]
be the reduced apartment associated to $T$. Let $\calb$  denote the reduced Bruhat-Tits building of $G$.  Recall that it is canonically defined, and  $G$ acts on $\calb$.  Setting $\calb^1= \calb \times (X_*(\bfz) \otimes \R)$ (the extended Bruhat-Tits building), and observing that 
\[G/G^1 \otimes \R= X_*(\bfz) \otimes \R\]
where $G^1= \{g \in G| ord_F(\chi(g)) = 0 \; \forall \chi \in \Hom_F(G, \bG_m)\}, $ 
we see that $G$ acts on $\calb^1$ as well.

Let $G_{x,0}$ be the parahoric subgroup attached to $x \in \calA(T)$.  Similarly, let $G^{\text{der}}_{x,0}$ denote the parahoric subgroup of $G^{\text{der}}$ attached to the point $x$.  For $r \geq 0$, let $G_{x,r}$ be the Moy-Prasad filtration subgroup as in \cite{MP94, MP96} and let $G_{x,r+} = \cup_{s>r}G_{x,s}$. 
More explicitly, for each $ r \geq 0$ and each $x \in \calA(T)$, 
\begin{equation}\label{MPpresentation}G_{x,r} = \langle T_{\calp^{\lceil r\rceil }}, \bfu_\alpha(\calp^{-\lfloor \alpha(x)-r\rfloor}):\;  \alpha \in \Phi \rangle; \hspace*{0.2in} G_{x,r+} = \langle T_{\calp^{1+\lfloor r\rfloor }}, \bfu_\alpha(\calp^{1- \lceil \alpha(x)-r\rceil}):\;  \alpha \in \Phi \rangle,
\end{equation}
and
\begin{equation}\label{MPpresentation1}
G^{\text{der}}_{x,r} = \langle T^{\text{der}}_{\calp^{\lceil r\rceil }}, \bfu_\alpha(\calp^{-\lfloor \alpha(x)-r\rfloor}):\;  \alpha \in \Phi \rangle ; \hspace*{0.2in} G^{\text{der}}_{x,r+} = \langle T^{\text{der}}_{\calp^{1+\lfloor r\rfloor }}, \bfu_\alpha(\calp^{1- \lceil \alpha(x)-r\rceil}):\;  \alpha \in \Phi \rangle
\end{equation}
where $\bft^{\text{der}} = \bfg^{\text{der}} \cap \bft$. For each $r >0$ and each $x \in \calA(T)$, the Moy-Prasad filtration subgroup $G_{x,r}$ admits an Iwahori factorization  with respect to all standard parabolic subgroups $\bfp_\theta$ (Theorem 4.2 of \cite{MP96}). 

For an irreducible, admissible representation $(\sigma, V)$ of $G$, define
\[\depth(\sigma) := \inf\{r | \text{there exists } x \in \calb \text{ such that } V^{G_{x,r+}} \neq 0  \}.\]
This is a non-negative rational number and the infimum is attained for some $x \in \calb$.  We prove the following simple lemma that will be useful later. 
 
\begin{lemma}\label{MPUC} Let $\bfg$ be a split connected reductive group over $\Z$. Assume $\bfg^{\text{der}}$ is simple. Put $G = \bfg(F)$. Let $(\sigma, V)$ be an irreducible, admissible representation of $G$ of depth $r$. Let $m = \lceil r \rceil +1$. Then $V^{I_m} \neq 0$, where $I_m$ is the $m$-th Iwahori filtration subgroup of $G$. 
\end{lemma}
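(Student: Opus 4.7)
The plan is to exhibit a point $x$ in the building such that $I_m\subset G_{x,r+}$ and $V^{G_{x,r+}}\neq 0$; then $V^{I_m}\supset V^{G_{x,r+}}$ is non-zero as required. By the definition of depth, there exists some $y\in\calb$ with $V^{G_{y,r+}}\neq 0$. Since $G$ acts transitively on the chambers of the reduced Bruhat-Tits building (the Iwahori $I$ being the stabilizer of the fundamental chamber), I can choose $g\in G$ so that $x:=g\cdot y$ lies in the closed fundamental alcove $\bar C\subset\calA(\bft)$. The Moy-Prasad equivariance $G_{x,r+}=g\,G_{y,r+}\,g^{-1}$ then yields $V^{G_{x,r+}}=\sigma(g)V^{G_{y,r+}}\neq 0$.

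Next, I compare the Moy-Prasad description of $G_{x,r+}$ in \eqref{MPpresentation} with the explicit presentation of $I_m$ given in Section \ref{HeckeAlgebraIso}. The hypothesis that $\gder$ is simple ensures that $\Phi$ is irreducible; writing $\alpha_0$ for the highest root, the alcove inequalities defining $\bar C$ (namely $\alpha(x)\ge 0$ for $\alpha\in\Delta$ and $\alpha_0(x)\le 1$), combined with the fact that both $\alpha$ and $\alpha_0-\alpha$ are non-negative integer combinations of simple roots for each $\alpha\in\Phi^+$, give
\[0\le\alpha(x)\le\alpha_0(x)\le 1\quad\text{for every }\alpha\in\Phi^+.\]
With $m=\lceil r\rceil+1$, checking $I_m\subset G_{x,r+}$ amounts to three numerical inequalities: (i) the torus inclusion $T_{\calp^m}\subset T_{\calp^{1+\lfloor r\rfloor}}$ holds because $m\ge\lfloor r\rfloor+1$; (ii) for a positive root $\alpha$, $U_{\alpha,\calp^m}\subset\bfu_\alpha(\calp^{1-\lceil\alpha(x)-r\rceil})$ reduces, via $\alpha(x)\ge 0$ and hence $\lceil\alpha(x)-r\rceil\ge -\lfloor r\rfloor$, to $m\ge 1+\lfloor r\rfloor$; and (iii) still for positive $\alpha$, $U_{-\alpha,\calp^{m+1}}\subset\bfu_{-\alpha}(\calp^{1-\lceil-\alpha(x)-r\rceil})$ reduces, via $\alpha(x)\le 1$ and hence $\lceil-\alpha(x)-r\rceil\ge -1-\lfloor r\rfloor$, to $m+1\ge 2+\lfloor r\rfloor$. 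All three hold since $\lceil r\rceil\ge\lfloor r\rfloor$, so $I_m\subset G_{x,r+}$ and the lemma follows.

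The only substantive step is the reduction to a point $x$ in the closed fundamental alcove, which uses transitivity of $G$ on chambers together with the standard equivariance of the Moy-Prasad filtration. Once that is in place, the required inclusion is a routine comparison of floor and ceiling quantities, and I do not foresee any real obstacle.
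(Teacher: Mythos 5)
Your proposal is correct and follows essentially the same route as the paper's proof: translate the point into the closed fundamental alcove using $G$-equivariance of the Moy-Prasad filtration, observe that $0\le\alpha(x)\le\alpha_0(x)\le 1$ for every positive root, and then compare the floor/ceiling exponents in \eqref{MPpresentation} with $m=\lceil r\rceil+1$. The numerical reductions you carry out in (i)--(iii) are the same simple inequalities the paper records, so there is nothing further to add.
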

\begin{proof}
Since $\depth(\sigma) =r$, there exists $x \in \calb$ such that $V^{G_{x, r+}} \neq 0$, and $r$ is the smallest non-negative real number with this property.   Let $\calc$ be the maximal facet in $\calA(T)$ defined by 
\[\calc:= \{x \in \calA(T)\; |\; \alpha(x) >0 \;\forall\; \alpha \in \Delta; 1- \alpha_0(x)>0\}\]
where $\alpha_0$ is the highest root of $\Phi^+$. Then for each $x \in \calb$, there is a unique $y \in \bar{\calc}$ (here $\bar{\calc}$ is $\calc$ together with its walls) and $g \in G$ such that $y = g.x$. Moreover, 
$G_{y,r} = g G_{x,r}g^{-1}.$
Hence $\depth(\sigma) = r$ implies that there is $y \in \bar{\calc}$ such that $V^{G_{y,r+}} \neq 0$ and $r$ is the smallest non-negative number with this property.
Furthermore, note that the parahoric subgroup attached to the point $0 \in \bar{\calc}$ is $\bfg(\calo)$ and the stabilizer of $\calc$ is the standard Iwahori subgroup $I$ of $G$. 
 Now, since $y \in \bar{\calc}$, we see that
 \[ \alpha(y)\geq 0\; \forall \;\alpha \in \Delta \text{ and } \alpha_0(y)\leq 1.\]
 Since $\alpha_0(y)\leq 1$ and $\alpha_0$ is the highest root, we see that $\alpha(y)\leq 1$ for all $\alpha \in \Phi^+$ (cf. Proposition 25 of \cite{Bou02}).
 Hence we obtain that
 \[0 \leq |\alpha(y)| \leq 1 \;\forall\;  \alpha \in \Phi.\]
  With $m$ as in the statement of the lemma, a simple calculation shows that
\begin{align*}
& m-1 \leq 1 - \lceil \alpha(y) -r \rceil \leq m\; \forall \; \alpha \in \Phi^+,\\
&m \leq 1 - \lceil \alpha(y) -r \rceil \leq m+1 \; \forall \; \alpha \in \Phi^-,
\end{align*}
 which implies that $\bfu_\alpha(\calp^{m}) \subset \bfu_\alpha(\calp^{ 1 - \lceil \alpha(y) -r \rceil})$ for all $\alpha \in \Phi^+$  and  $\bfu_\alpha(\calp^{m+1}) \subset \bfu_\alpha(\calp^{ 1 - \lceil \alpha(y) -r \rceil})$ for all $\alpha \in \Phi^-$ .  Now, since \[I_{m}  = \langle T_{\calp^{m}}, \bfu_\alpha(\calp^{m}), \bfu_{-\alpha}(\calp^{m+1});\; \alpha \in \Phi^+ \rangle,\] we deduce that $I_{m} \subset G_{y,r+}$. Hence $V^{I_{m}} \neq 0$. Note that this immediately implies that $V^{K_{m+1}} \neq 0$.
  \end{proof}

The depth of a Langlands parameter $\phi: \WD_F \rightarrow \LG$ is defined as follows:
\[\depth(\phi) : = \inf\{r| \phi|_{I_F^{r+}} = 1\},\]
where $I_F \subset W_F \subset \WD_F$ denotes the inertia group and the filtration is the upper numbering filtration of ramification subgroups (See Chapter IV of \cite{Ser79}). We have the following theorem regarding depth preservation of $\GL_n(F)$.

 \begin{theorem}[Depth preservation]\label{depth} Let $\sigma$ be an irreducible, admissible representation of $\GL_n(F)$ and let $\phi_\sigma$ be its Langlands parameter as in Theorem \ref{LLC}. Then 
 \[ \depth(\sigma) = \depth(\phi_\sigma).\]
 \end{theorem}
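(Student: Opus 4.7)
The plan is to split by characteristic. For $\Char(F)=0$, the result is Theorem 2.3.6.4 of \cite{Yu09}. The substantive case is $\Char(F)=p>0$, which I reduce to characteristic zero via a close local field. Concretely, let $\sigma$ have depth $r$, set $m=\lceil r\rceil+1$ so that $\sigma^{I_m}\ne 0$ by Lemma \ref{MPUC}, and fix $l$ large enough (in terms of $m$) for the main theorem of Section \ref{LocalCoefficients} to apply at level $l$. Choose a characteristic $0$ local field $F_0$ that is $l$-close to $F$; Corollary \ref{proofKazConj} then furnishes a representation $\sigma_0$ of $\GL_n(F_0)$ corresponding to $\sigma$ under the Kazhdan isomorphism.

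The first step is to check that $\depth(\sigma_0)=r$. The Moy-Prasad subgroups $G_{x,s}$ and $G_{x,s+}$ for $s<l$ admit the explicit presentations \eqref{MPpresentation}, \eqref{MPpresentation1} in terms of root subgroups $\bfu_\alpha(\calp^k)$ and the torus filtration, so the ring isomorphism $\calo_F/\calp_F^l \cong \calo_{F_0}/\calp_{F_0}^l$ matches these subgroups directly on the two sides. Combined with the equivalence of categories of Proposition \ref{categories} at an appropriate level, one sees that the Kazhdan correspondence identifies the spaces of vectors fixed under these Moy-Prasad subgroups, so the depth is preserved.

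The second step is to show that $\phi_\sigma$ and $\phi_{\sigma_0}$ correspond via the Deligne isomorphism $\Del_l$. Following the outline sketched in the introduction, I re-characterize $\rec_n$ using the stability theorem of \cite{DH81}: for any irreducible $\sigma$, the twisted $\gamma$-factor $\gamma(s,\sigma\times\chi,\psi)$ with $\chi$ of sufficiently high conductor depends only on the central character of $\sigma$ and on $\chi$, and combined with Henniart's characterization \cite{Hen93} via $\GL_t$-twists for $t<n$, this pins down $\phi_\sigma$. The main theorem of Section \ref{LocalCoefficients}, combined with Shahidi's identification of local coefficients with twisted $\gamma$-factors for $\GL_n\times\GL_t$, shows that the analytic $\gamma$-factors are preserved under the Kazhdan correspondence; Equation \eqref{ArtinFactors} shows that the Artin $\gamma$-factors are preserved under the Deligne isomorphism. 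Together these force $\phi_{\sigma_0}$ to be the Deligne transfer of $\phi_\sigma$.

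Once this compatibility is in hand, applying Theorem 2.3.6.4 of \cite{Yu09} to $\sigma_0$ gives $\depth(\phi_{\sigma_0})=r$, and $\depth(\phi_\sigma)=r$ follows because $\Del_l$ identifies the upper numbering ramification filtrations up to level $l$. The main obstacle will be the second step: one must coordinate the choice of $l$ so that both the $\gamma$-factor comparison of Section \ref{LocalCoefficients} and the stability theorem of \cite{DH81} apply simultaneously, and verify that stability together with lower-rank twists genuinely characterizes $\phi_\sigma$ in the form needed to transport it through the Deligne-Kazhdan correspondence.
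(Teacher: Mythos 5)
The paper's entire proof of this theorem is a citation: ``This is Theorem~2.3.6.4 of \cite{Yu09}.'' Yu's statement applies over any non-archimedean local field; the argument (as the paper itself sketches right after the citation) goes through the depth--conductor relations $\depth(\sigma)=(\cond(\sigma)-n)/n$ for supercuspidals and its Weil-side analogue, together with the fact that $\rec_n$ preserves $\epsilon$-factors and hence conductors --- all of which is uniform in the characteristic. So there is nothing left to prove once the citation is made.

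Your proposal takes a genuinely different route, splitting by characteristic and running a close-local-fields transfer for $\Char(F)>0$, but as written it contains a circularity. The content of your second step --- that $\phi_{\sigma_0}$ and $\phi_\sigma$ are related by $\Del_l$, using stability plus a lower-rank converse theorem plus the local-coefficient comparison --- \emph{is} Theorem~\ref{GLNDK} and Corollary~\ref{corGLNDK} of the paper. But the proof of Theorem~\ref{GLNDK} uses Theorem~\ref{depth} as an input: the refined converse theorem (Theorem~\ref{charac}) that the argument hinges on has the depth restriction $\depth(\tau)\le 2a$ with $a=\max\{\depth(\sigma_1),\depth(\sigma_2)\}$, and to control $a$ for the candidate $\sigma_1=\kappa_l^{-1}\circ L'_n\circ\Del_l(\phi)$ one invokes precisely the statement that $L_n$, $\kappa_l$, $\Del_l$ all preserve depth. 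In other words, you would be using depth preservation for $\GL_n$ in order to establish depth preservation for $\GL_n$. One could try to break the circle by replacing the unknown $\depth(\phi_\sigma)$ with the a priori bound $\depth(\phi_\sigma)\le\cond(\phi_\sigma)=\cond(\sigma)\le n^2\depth(\sigma)+n^2$ (from~\eqref{DepthConductorRelation}) and then taking $l$ large enough to cover twists of depth up to twice that cruder bound; but this repair is not in your writeup, and in any case the detour is unnecessary since Yu's proof already works in positive characteristic. Also, your first step (Kazhdan preserves depth directly from the explicit Moy--Prasad presentations) is sound in outline, though it needs the additional observation that the idempotents $e_{G_{x,s+}}$ of $\fH(G,I_m)$ are sent to the corresponding idempotents under the Hecke-algebra isomorphism, in the spirit of the proof of Corollary~\ref{proofKazConj}.
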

 This is Theorem 2.3.6.4 of \cite{Yu09}. 
 As noted there, if $\sigma$ is supercuspidal, then
 \begin{equation}\depth(\sigma) = \displaystyle{\frac{\cond(\sigma) - n}{n}} = \displaystyle{\frac{\cond(\phi_\sigma)-n}{n}} = \depth(\phi_\sigma).
 \end{equation}
 More generally, if $\sigma$ is essentially square integrable, then by the main theorem of \cite{LR03},
 \begin{equation}\label{JosLanDepth}
 \depth(\sigma) = \displaystyle{\max\left\{0, \frac{\cond(\sigma) - n}{n}\right\}}.
 \end{equation}

\begin{theorem}[Stability of epsilon factors]\label{stability}
  Let $\sigma_i, \, i = 1,2$ be two irreducible, admissible supercuspidal representations of $\GL_n(F),\, n \geq 2$, such that $\omega_{\sigma_1}=\omega_{\sigma_2}$ and let $a:= \max\{\depth(\sigma_1), \depth(\sigma_2)\}$.  If $\tau$ is an irreducible, admissible supercuspidal representation of $\GL_t(F),\, 1 \leq t \leq n-1$, such that $\depth(\tau) > 2a$, then
\[ \epsilon(s, \sigma_1 \times \tau, \psi) = \epsilon(s, \sigma_2 \times \tau, \psi). \]
\end{theorem}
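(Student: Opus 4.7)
The plan is to transfer the question to the Galois side via the local Langlands correspondence, to reduce the twist by the higher-dimensional parameter $\phi_\tau$ to a twist by a character through a monomial realization, and finally to invoke the stability theorem of Deligne--Henniart \cite{DH81}.

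First I would set $\phi_i = \rec_n(\sigma_i)$ for $i=1,2$ and $\phi_\tau = \rec_t(\tau)$. By property (b) of Theorem \ref{LLC},
\[\epsilon(s, \sigma_i \times \tau, \psi) = \epsilon(s, \phi_i \otimes \phi_\tau, \psi), \quad i=1,2,\]
and the supercuspidality of $\sigma_i$ and $\tau$ guarantees that $\phi_i$ and $\phi_\tau$ are irreducible smooth representations of $W_F$, of dimensions $n$ and $t$ respectively. Property (d) of Theorem \ref{LLC}, combined with $\omega_{\sigma_1}=\omega_{\sigma_2}$, yields $\det\phi_1=\det\phi_2$, and Theorem \ref{depth} converts the depth hypothesis into $\depth(\phi_i)\leq a$ and $\depth(\phi_\tau)>2a$. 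It therefore suffices to prove the corresponding identity of Artin $\epsilon$-factors on the Galois side.

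Next, one writes $\phi_\tau$ as a monomial representation $\phi_\tau=\Ind_{W_E}^{W_F}\xi$, where $E/F$ is a finite separable extension and $\xi$ is a character of $W_E$: this is possible because every irreducible smooth representation of $W_F$ over $\C$ is monomial. The projection formula gives $\phi_i\otimes\phi_\tau\cong \Ind_{W_E}^{W_F}\bigl(\phi_i|_{W_E}\otimes\xi\bigr)$, and inductivity of $\epsilon$-factors in degree zero yields
\[\epsilon(s,\phi_i\otimes\phi_\tau,\psi)=\lambda(E/F,\psi)^{n}\,\epsilon\bigl(s,\phi_i|_{W_E}\otimes\xi,\psi_E\bigr),\]
with $\psi_E=\psi\circ\Tr_{E/F}$ and the Langlands constant $\lambda(E/F,\psi)$ independent of $i$. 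Since the determinant commutes with restriction, $\det(\phi_1|_{W_E})=\det(\phi_2|_{W_E})$. The depth bounds on $\phi_i$ and $\phi_\tau$ transfer to bounds on $\phi_i|_{W_E}$ and $\xi$ through the Herbrand function $\psi_{E/F}$ relating the upper-numbering filtrations of $W_F$ and $W_E$; when these are carried out, $\depth(\xi)$ exceeds twice $\depth(\phi_i|_{W_E})$, and \cite{DH81} applied to the pair $(\phi_1|_{W_E},\phi_2|_{W_E})$ twisted by $\xi$ gives the equality of the two restricted $\epsilon$-factors. Substituting back through the induction formula completes the proof.

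The hard part will be the quantitative depth bookkeeping in the final step: one must verify that the hypothesis $\depth(\phi_\tau)>2a$ on the full Weil group $W_F$, after realizing $\phi_\tau$ as $\Ind_{W_E}^{W_F}\xi$, forces $\depth(\xi)$ to exceed the threshold demanded by Deligne--Henniart for the pair $(\phi_1|_{W_E},\phi_2|_{W_E})$. This is a standard but delicate comparison of the upper-numbering filtrations via $\psi_{E/F}$, together with the Swan conductor formula for induced representations. The numerical factor of $2$ in the hypothesis $\depth(\tau)>2a$ is precisely what makes this comparison compatible with the threshold in \cite{DH81}, so once the depth translation is set up correctly the reduction is clean.
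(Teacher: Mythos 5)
Your strategy of passing to the Galois side via $\rec_n$ and $\rec_t$ and using Property (d) of Theorem \ref{LLC} to reduce to $\det\phi_1=\det\phi_2$ matches the paper, but after that you take a genuinely different and more roundabout route. The paper applies Theorem 4.6 of \cite{DH81} \emph{directly} to the dimension-zero virtual representation $W:=[\phi_{\sigma_1}]-[\phi_{\sigma_2}]$ tensored with the (generally higher-dimensional, irreducible) representation $\phi_\tau$: that theorem is stated for twists by arbitrary sufficiently ramified representations, with the threshold $\depth(\phi_\tau)>2\beta(W)$ built into its hypotheses, and the paper simply identifies $\beta(W)=\max\{\depth\phi_{\sigma_1},\depth\phi_{\sigma_2}\}=a$. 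There is therefore no need to realize $\phi_\tau$ as $\Ind_{W_E}^{W_F}\xi$, no projection formula, no $\lambda$-constants, and no Herbrand-function bookkeeping. Your remark that ``the numerical factor of $2$ in the hypothesis $\depth(\tau)>2a$ is precisely what makes this comparison compatible with the threshold in \cite{DH81}'' has the logic backwards: the factor of $2$ is already present in the hypotheses of Theorem 4.6 of \cite{DH81} and is simply inherited by the theorem here; it is not something generated by, or needed for, a monomial reduction.

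As for whether your route is nonetheless viable: it probably is, but the step you flag as ``the hard part'' is in fact a real gap as written. You need to show that if $\depth(\phi_\tau)>2a$ and $\phi_\tau=\Ind_{W_E}^{W_F}\xi$, then $\depth(\xi)>2\,\depth(\phi_i|_{W_E})$. This requires tracking depths under restriction and induction through the functions $\varphi_{E/F}$ and $\psi_{E/F}$; the relevant estimates ($\depth(\phi_i|_{W_E})\leq\psi_{E/F}(a)$, together with the convexity inequality $\psi_{E/F}(2a)\geq 2\psi_{E/F}(a)$ since $\psi_{E/F}$ is convex with $\psi_{E/F}(0)=0$) do appear to close the gap, but none of this is verified in your sketch, and the translation of $\depth(\phi_\tau)$ into $\depth(\xi)$ is itself a point that needs care for wildly ramified $E/F$. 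In short: your outline can likely be completed, but you have replaced a one-line invocation of the correct form of Deligne--Henniart by a longer argument whose crucial quantitative step you leave unproved, and you should note that the monomial detour is avoidable altogether by quoting Theorem 4.6 of \cite{DH81} in its stated generality.
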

\begin{proof}
The proof of this theorem follows by combining Theorem 4.6 of \cite{DH81} and Theorem \ref{LLC}. The author thanks Professor Jiu-Kang Yu for pointing to the reference of \cite{DH81} and explaining this proof. Put $\phi_{\sigma_i} = \rec_n(\sigma_i)$ and $\phi_\tau = \rec_t(\tau)$. 
Using Theorem \ref{LLC}, it suffices to prove that 
\[\epsilon(s, \phi_{\sigma_1} \otimes \phi_\tau, \psi) = \epsilon(s, \phi_{\sigma_2} \otimes \phi_\tau, \psi).\]
Viewing these representations as virtual representations of the Grothendieck group, this is equivalent to proving that 
\[\epsilon(s, ([\phi_{\sigma_1}] - [\phi_{\sigma_2}])  \otimes\phi_\tau, \psi) = 1.\]
Now, $W: = [\phi_{\sigma_1}] - [\phi_{\sigma_2}]$ is a virtual representation of dimension 0. Note that the hypothesis of Theorem 4.6 of \cite{DH81} are satisfied. In fact, with notation as in that theorem, we have \[\beta(W) = \max\{\depth(\phi_{\sigma_1}), \depth(\phi_{\sigma_2})\} = a.\] Since $\depth(\tau)> 2a$, we have \[\depth(\phi_\tau)> 2 \beta(W) \geq 0.\] Therefore $\phi_\tau$ is not tamely ramified. Hence, by Theorem 4.6 of \cite{DH81} there is an element $\gamma \in F^\times$ depending only on $\phi_\tau$ and $\psi$ such that 
\begin{align*}
\epsilon(s, ([\phi_{\sigma_1}] - [\phi_{\sigma_2}])  \otimes \phi_\tau, \psi) &= \det([\phi_{\sigma_1}] - [\phi_{\sigma_2}])(\gamma)\\
&=\displaystyle{\frac{ \det[\phi_{\sigma_1}](\gamma)}{\det[\phi_{\sigma_2}](\gamma)}}\\
& =1 \text { (by Property $(d)$ of Theorem \ref{LLC})}.\qedhere
\end{align*}
\end{proof}
\begin{theorem}[Converse theorem]\label{charac}
Suppose $\sigma_i, \, i = 1,2$  are two irreducible, admissible supercuspidal representations of $\GL_n(F)$. Assume $\omega_{\sigma_1} = \omega_{\sigma_2}$ and let $a:=\max\{\depth(\sigma_1), \depth(\sigma_2)\}$. Suppose $n \geq  3$, assume for each integer $t$ with $1 \leq t \leq n-2$, and each irreducible, admissible supercuspidal representation $\tau$ of $\GL_t(F)$ with $\depth(\tau) \leq 2a$, 
\[ \epsilon(s, \sigma_1 \times \tau, \psi) = \epsilon(s, \sigma_2 \times \tau, \psi),\]
then $ \sigma_1 \cong \sigma_2$.
For $n=2$, assume that for each character $\chi$ of $F^\times$ with $\depth(\chi)\leq 2a$, 
\[ \epsilon(s, \sigma_1 \times \chi, \psi) = \epsilon(s, \sigma_2 \times \chi, \psi),\]
then $\sigma_1 \cong \sigma_2$. 
\end{theorem}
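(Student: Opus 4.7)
The plan is to combine the given equalities of epsilon factors with the stability theorem (Theorem \ref{stability}) to reach the hypothesis of a standard converse theorem for $\GL_n$, from which the conclusion follows immediately. The role of the converse theorem is thus black-boxed; the novelty here is only that the assumed equality need hold for twists of \emph{bounded} depth.

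First I would extend the hypothesis to all supercuspidal twists. Fix $t$ with $1 \leq t \leq n-2$ (or $t=1$ if $n=2$), and let $\tau$ be an arbitrary irreducible, admissible supercuspidal representation of $\GL_t(F)$ (respectively an arbitrary character of $F^\times$). If $\depth(\tau) \leq 2a$, then the hypothesis directly yields
\[\epsilon(s, \sigma_1 \times \tau, \psi) = \epsilon(s, \sigma_2 \times \tau, \psi).\]
If instead $\depth(\tau) > 2a$, then since $\omega_{\sigma_1} = \omega_{\sigma_2}$ and $1 \leq t \leq n-1$, Theorem \ref{stability} applies and gives the same equality. Combining the two cases, the displayed identity holds for every irreducible supercuspidal $\tau$ on $\GL_t(F)$ in the required range of $t$.

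Next I would upgrade to $\gamma$-factors. Since $\sigma_i$ is supercuspidal on $\GL_n$ and $\tau$ is supercuspidal on $\GL_t$ with $t < n$, a standard result of Jacquet--Piatetski-Shapiro--Shalika shows that $L(s, \sigma_i \times \tau) = 1$ and $L(1-s, \sigma_i^\vee \times \tau^\vee) = 1$ for $i=1,2$. The functional equation
\[\gamma(s, \sigma_i \times \tau, \psi) = \epsilon(s, \sigma_i \times \tau, \psi) \cdot \frac{L(1-s, \sigma_i^\vee \times \tau^\vee)}{L(s, \sigma_i \times \tau)}\]
therefore forces $\gamma(s, \sigma_1 \times \tau, \psi) = \gamma(s, \sigma_2 \times \tau, \psi)$ for all such $\tau$.

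Finally, I would invoke Henniart's converse theorem (confirming Jacquet's conjecture in the supercuspidal case): two irreducible supercuspidal representations of $\GL_n(F)$ with equal central character whose twisted Rankin-Selberg $\gamma$-factors agree against every irreducible supercuspidal representation of $\GL_t(F)$ for $1 \leq t \leq n-2$ (or against every character of $F^\times$ when $n=2$) must be isomorphic. Applying this gives $\sigma_1 \cong \sigma_2$. The only potential obstacle is the availability of the converse theorem for the $\GL_{n-2}$ range in both characteristic zero and positive characteristic; in characteristic $0$ this is Henniart's theorem, and in positive characteristic it can be obtained by transporting to the Galois side via the LLC of \cite{LRS93}, \cite{HT01}, \cite{Hen00} and invoking the Artin-side rigidity statement (which is essentially elementary for supercuspidal, i.e.\ irreducible, parameters).
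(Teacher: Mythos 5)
Your overall strategy matches the paper's: use Theorem \ref{stability} to extend the $\epsilon$-factor equality from bounded-depth supercuspidal twists to all supercuspidal twists, observe that $\epsilon=\gamma$ since the relevant $L$-factors are trivial, and then invoke a local converse theorem. However, there is a gap in your final step. The converse theorem the paper actually cites for $n \geq 3$ is Chen's $n \times (n-2)$ theorem (Theorem 1.1 of \cite{Che06}), which requires the $\gamma$-factor equality against \emph{all irreducible generic} representations of $\GL_t(F)$ for $1 \leq t \leq n-2$, not merely against supercuspidal ones. The paper bridges this by an explicit appeal to the multiplicativity of $\gamma$-factors with respect to parabolic induction: every irreducible generic $\tau$ on $\GL_t(F)$ is a subquotient of a parabolic induction of supercuspidals on smaller $\GL$'s, and multiplicativity transfers the equality from the supercuspidal factors to $\tau$. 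You skip this step entirely, asserting instead a converse theorem formulated for supercuspidal twists only; such a statement is indeed derivable, but it requires exactly the multiplicativity argument you omitted, so as written your argument does not close.

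Two smaller points. Your attribution is muddled: the $n \times (n-2)$ converse theorem is due to Chen (\cite{Che06}), not Henniart --- Henniart's result (\cite{Hen93}) is the $n \times (n-1)$ statement, which the paper uses only for the $n=2$ case. And your parenthetical claim that the Galois-side rigidity needed in positive characteristic ``is essentially elementary for supercuspidal, i.e.\ irreducible, parameters'' is unsubstantiated; it would itself require a nontrivial argument. The paper avoids this issue altogether by working directly on the automorphic side via multiplicativity and Chen's theorem.
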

\begin{proof} Suppose $n \geq 3$. By Theorem \ref{stability}, the above condition is equivalent to the condition where the twisted epsilon factors agree for all supercuspidal representations $\tau$ of $\GL_t(F), \, \ 1\leq t \leq n-2$. Since $L(s, \sigma_i \times \tau),  L(1-s, \sigma_i^\vee \times \tau^\vee) = 1$, we have $\epsilon(s, \sigma_i \times \tau, \psi) = \gamma(s, \sigma_i \times \tau, \psi), \; i=1,2$. Consequently, we have
\[ \gamma(s, \sigma_1 \times \tau, \psi) = \gamma(s, \sigma_2 \times \tau, \psi),\]
for all supercuspidal representations of $\GL_t(F), \; 1 \leq t \leq n-2$. Since the $\gamma$-factors are multiplicative with respect to parabolic induction, we get that the above equality holds for all generic representations $\tau$ of $\GL_t(F), \; 1 \leq t \leq n-2$. Now apply Theorem 1.1 of \cite{Che06}. For $n=2$, combine Theorem \ref{stability} with Theorem 1.1 of \cite{Hen93}.
\end{proof}

\begin{theorem}[Rankin-Selberg gamma factors over close local fields]\label{RSGF}  Let $n \geq 2$ and $ 1 \leq t \leq n$. Fix $m \geq 1$.  Let $\sigma$ and $\tau$ be two irreducible, admissible generic representations of $\GL_n(F)$ and $\GL_t(F)$ respectively.  Assume $\depth(\sigma), \depth(\tau) \leq m$. Let $l = n^2m+n^2+4$ and let $F$ and $F'$ be $l$-close. Let $\sigma'$ and $\tau'$ be representations of $\GL_{n}(F')$ and $\GL_{t}(F')$ obtained using the Hecke algebra isomorphisms $\mathscr{H}(\GL_n(F), I_{l,n})  \xrightarrow{\zeta_l} \mathscr{H}(\GL_{n}(F'), I_{l,n}')$ and $\mathscr{H}(\GL_t(F), I_{l,t})  \xrightarrow{\zeta_l} \mathscr{H}(\GL_{t}(F'), I_{l,t}')$, respectively. Then 
\[ \gamma(s, \sigma \times \tau, \psi) = \gamma(s, \sigma' \times \tau', \psi')\]
where the factors above are the Rankin-Selberg $\gamma$-factors. 
\end{theorem}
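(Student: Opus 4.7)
Plan. The approach is to realize the Rankin--Selberg $\gamma$-factor as a Langlands--Shahidi local coefficient for the Levi $\GL_n\times\GL_t$ inside $\GL_{n+t}$, and then invoke the main comparison of Section \ref{LocalCoefficients}, namely Theorem \ref{maintheoremlocalcoeff}. This bypasses the Rankin--Selberg integral comparison used in the author's thesis and in \cite{ABPS14}, as indicated in the introduction.

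First I would set up the Langlands--Shahidi data. Take $G=\GL_{n+t}$ with the standard maximal parabolic $P_\theta=M_\theta N_\theta$ where $\theta=\Delta\setminus\{\alpha_0\}$ for the unique simple root $\alpha_0$ joining the two blocks, so $M_\theta\cong\GL_n\times\GL_t$. Set $w_0=w_{l,\Delta}w_{l,\theta}$ and $\Omega=w_0(\theta)$, so $M_\Omega\cong\GL_t\times\GL_n$, and let $\tw_0$ be its lift through the fixed Chevalley basis as in Section \ref{reps}. Choose $\chi$ to be the standard generic character of $U$ built from $\psi$ in the Shahidi normalization, so that $\chi$ is compatible with $\tw_0$ and $\chi|_{U\cap M_\theta}$ is the usual Whittaker character on $\GL_n\times\GL_t$ attached to $\psi$. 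Since $\sigma$ and $\tau$ are generic, $\sigma\otimes\tau$ is $\chi|_{U\cap M_\theta}$-generic. By Shahidi's local coefficient formula for $\GL_n\times\GL_t$ (cf.\ Section 4 of \cite{Sha90} in characteristic zero and its extension in \cite{Lom09} to positive characteristic),
\[
C_\chi(\nu,\sigma\otimes\tau,\tw_0) \;=\; \lambda(s,\psi)\,\gamma(s,\sigma\times\tau^\vee,\psi),
\]
where $s$ is an explicit affine function of $\langle\nu,\alpha_0^\vee\rangle$ and $\lambda(s,\psi)$ is an explicit monomial in $q^s$ depending only on $\psi$ and the Haar measure normalization.

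Second I would verify the hypotheses of Theorem \ref{maintheoremlocalcoeff} for the datum $(\sigma\otimes\tau,\chi,\tw_0)$ at a suitable Iwahori filtration level of $M_\theta$. By Lemma \ref{MPUC} applied to each factor, the assumption $\depth(\sigma),\depth(\tau)\le m$ gives non-zero $I_{m+1,n}$- and $I_{m+1,t}$-fixed vectors, hence $(\sigma\otimes\tau)^{I_{m+1,M_\theta}}\neq 0$. To upgrade this to a vector $v_0$ whose Whittaker function satisfies $W_{v_0}(e)\neq 0$, I would invoke the new-vector theory of Jacquet--Piatetski-Shapiro--Shalika, which locates such a vector at the conductor level and gives the conductor bound $\cond(\sigma)\le n(m+1)$ for $\GL_n$ generic representations of depth $\le m$, and likewise $\cond(\tau)\le t(m+1)$. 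Translating these $K_n(c)$-type conditions into Iwahori filtration levels and using $t\le n$ yields a level $m'\le n^2 m+n^2$ at which $(\sigma\otimes\tau)^{I_{m',M_\theta}}$ contains a vector $v_0$ with $W_{v_0}(e)\neq 0$; after normalizing $\psi$ so that $\cond(\chi_\beta)\le m'$ for every $\beta\in\Delta$, the hypotheses of Theorem \ref{maintheoremlocalcoeff} are met. Taking $l=m'+4\le n^2m+n^2+4$, Theorem \ref{maintheoremlocalcoeff} gives
\[
C_\chi(\nu,\sigma\otimes\tau,\tw_0) \;=\; C_{\chi'}(\nu,\sigma'\otimes\tau',\tw_0')
\]
for any $F'$ that is $l$-close to $F$, where $\chi'$ and $\tw_0'$ are the corresponding objects for $F'$ and $\sigma'\otimes\tau'$ corresponds to $\sigma\otimes\tau$ under $\zeta_{l,M_\theta}$ (which, since $M_\theta$ is a product, factors as $\zeta_{l,n}\otimes\zeta_{l,t}$ and so sends $\sigma\otimes\tau$ to $\sigma'\otimes\tau'$).

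Finally I would transfer this equality of local coefficients back to $\gamma$-factors. Choose $\psi'$ to correspond to $\psi$ as in Section \ref{Delignetheory}; then the relating monomial $\lambda(s,\psi)=\lambda(s,\psi')$, and combining with the Shahidi formula for $F$ and $F'$ and passing $\tau\to\tau^\vee$ (which is compatible with $\zeta_l$ since contragredient commutes with the Hecke algebra isomorphism) gives $\gamma(s,\sigma\times\tau,\psi)=\gamma(s,\sigma'\times\tau',\psi')$. The main obstacle is the second step: the careful bookkeeping that translates the depth bound $\le m$ on each of $\sigma$ and $\tau$ into the explicit Iwahori-level $m'\le n^2m+n^2$ at which a Whittaker-nonzero, $I_{m',M_\theta}$-fixed vector is guaranteed to exist; once this is pinned down, the remainder of the argument is a direct assembly of Theorem \ref{maintheoremlocalcoeff} with Shahidi's local coefficient formula.
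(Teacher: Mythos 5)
Your proposal is correct and follows essentially the same route as the paper: realize the Rankin--Selberg $\gamma$-factor as the Langlands--Shahidi local coefficient attached to $\GL_n\times\GL_t\subset\GL_{n+t}$ (citing \cite{Sha84} in the paper, Shahidi's $\GL$-section in yours), bound the conductor via the depth--conductor inequality, use new/essential vectors (Bushnell--Henniart in the paper, JPSS in yours) to produce an $I_{m'}$-fixed vector with nonvanishing Whittaker value at $e$, and then invoke Theorem \ref{maintheoremlocalcoeff}. The only cosmetic differences are that the paper first normalizes $\cond(\psi)=0$ and cites \cite{BH97} rather than JPSS for the essential-vector nonvanishing.
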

\begin{proof}
It is enough to prove this theorem assuming that $\cond(\psi) = 0$. By \cite{Sha84} (Also cf. \cite{HL13} for an alternate proof), we know that the $\gamma$-factor arising out of the Langlands-Shahidi method (which is simply the associated local coefficient) agrees with the Rankin-Selberg $\gamma$-factor. 
Hence we only need to check the $l$ chosen in the statement above is large enough for the hypothesis of Theorem \ref{maintheoremlocalcoeff} to be satisfied for all representations of $\sigma$ of depth $\leq m$. 
View $\GL_n(F) \times \GL_t(F)$ as a maximal Levi subgroup of $\GL_{n+t}(F)$. Let $\bft, \bfb$ and $\bfu$ a maximal torus, Borel subgroup and unipotent radical of the Borel subgroup of $\GL_{n+t}(F)$ respectively. 
 Let $\Delta$ be the set of simple roots with respect to $\bft$ and $\bfb$, and let $\theta \subset \Delta$ such that $M_\theta = \GL_n(F) \times \GL_t(F)$. Let $\chi = \prod_{\alpha \in \Delta}\psi \circ \lu_\alpha^{-1}$.
 Let $W(\sigma, \chi)$ and $W(\tau, \chi)$ denote the Whittaker models of $\sigma$ and $\tau$ respectively (Note that here we are abusing notation and writing $\chi$ to denote its restriction to the Levi as well). Since $\depth(\sigma)\leq m$, using Equation \eqref{JosLanDepth}, it easily follows that $\depth(\sigma) \leq \cond(\sigma) \leq n^2 \depth(\sigma) + n^2$ (Also see Equation \eqref{DepthConductorRelation}).
Similarly, $\cond(\tau) \leq t^2m+t^2$.   Let $v_1 \in \sigma^{K_n(l)}$ and $v_2 \in \tau^{K_r(l)}$ be the essential vectors (cf. Theorem 2 of \cite{BH97}). 
Let $W_{v_1}$ and $W_{v_2}$ be the corresponding Whittaker functions in $W(\sigma, \chi)$ and $W(\tau, \chi)$ respectively. Then $W_{v_1}(e) \neq 0$ and $W_{v_2}(e) \neq 0$ by Proposition 1 of \cite{BH97}. 
This explains our choice of $l$. It is clear that $\chi$ is compatible with $\tw_0$.  Let $\chi'$ be the character of $U'$ obtained as in Section \ref{GR}. Then $\sigma'$ and $\tau'$ are $\chi'$-generic and Theorem \ref{maintheoremlocalcoeff} applies. 
\end{proof}
We are now ready to prove the main theorem of this section.
For $m \geq 1$, let
 \[ \mathcal{A}_F^0(n)_m:= \{  \sigma \in \mathcal{A}_F^0(n)\,|\,  \depth(\sigma) \leq m\}. \]
Similarly, let
 \[ \mathcal{G}_F^0(n)_m:= \{  \phi \in \mathcal{G}_F^0(n)\,|\,  \depth(\phi) \leq m\}. \]
Let $L_{n} = \rec_n^{-1}$ with $\rec_n$ as in Theorem \ref{LLC}. 
Since the Langlands map preserves the conductor of a representation, $L_n$ indeed maps $\mathcal{G}_F^0(n)_m$ to $\mathcal{A}_F^0(n)_m$ by Theorem \ref{depth}.
 For $\sigma \in  \mathcal{A}_F^0(n)_m$ and $I_{m,n}$ denoting the $m$-th Iwahori filtration subgroup of $\GL_n(F)$, we have $\sigma^{I_{m,n}} \neq 0$ by Lemma \ref{MPUC}. 
If $F$ and $F'$ are $(m+1)$-close, then since $\sigma$ is generic, we obtain a generic representation $\sigma'$ of $\GL_{n}(F')$ such that ${\sigma'}^{I'_{m,n}} \neq 0$ and $\depth(\sigma) = \depth(\sigma')$ (by Theorem \ref{depth} and Proposition 3.5.2 of \cite{Lem01}). 
Now, since $\sigma^{K_{m,n}} \cong \sigma'^{K_{m,n}'}$ using Corollary \ref{proofKazConj}, we see that $\sigma'$ is indeed supercuspidal by Theorem \ref{SCDS} $(b)$ (Here $K_{m,n}$ is the $m$-th usual congruence subgroup of $\GL_n(F)$). Writing $\sigma' = \kappa_m(\sigma)$, we obtain a well-defined map from $\kappa_m: \mathcal{A}_F^0(n)_m \rightarrow  \mathcal{A}_{F'}^0(n)_m$. Similarly, it is clear the Deligne isomorphism preserves depth,  and hence we obtain a well-defined map from $\Del_m: \mathcal{G}_{F}^0(n)_m \rightarrow  \mathcal{G}_{F'}^0(n)_m$ as long as $F$ and $F'$ are $(m+1)$-close.
\begin{theorem}\label{GLNDK}
For each $m \geq 1$, there exists $l = l(n,m) \geq m+1$  such that whenever $F$ and $F'$ are atleast $l$-close, the following diagram commutes:

\begin{displaymath}
    \xymatrix{
        \mathcal{G}_F^0(n)_m \ar[r]^{L_{n}} \ar[d]_{\Del_l} & \mathcal{A}_F^0(n)_m \ar[d]^{\kappa_l} \\
        \mathcal{G}_{F'}^0(n)_m \ar[r]_{L'_{n}}      &\mathcal{A}_{F'}^0(n)_m}
\end{displaymath}

where $\Del_l, \kappa_l, L_n$ are as above.
\end{theorem}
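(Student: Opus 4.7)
The plan is to use the converse-theorem characterization of Theorem \ref{charac} to reduce the commutativity to an equality of twisted $\epsilon$-factors on both sides of the Deligne-Kazhdan correspondence, and then transport this equality across close local fields by combining Theorem \ref{RSGF} on the analytic side with Equation \eqref{ArtinFactors} on the Galois side. The argument will proceed by strong induction on $n$; the base case $n=1$ is precisely the commutativity of Diagram \eqref{gl1Deligne} of local class field theory, valid for any $l\geq m$.

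For the inductive step, fix $\phi\in\mathcal{G}_F^0(n)_m$ and set $\sigma=L_n(\phi)$, $\phi'=\Del_l(\phi)$, $\sigma'=\kappa_l(\sigma)$, and $\sigma''=L_{n'}(\phi')$. Both $\sigma'$ and $\sigma''$ are supercuspidal representations of $\GL_n(F')$ of depth at most $m$: depth preservation under $L_n$ is Theorem \ref{depth}, and depth preservation under $\Del$ and $\kappa$ follows from the explicit descriptions of these maps in terms of the upper-numbering filtration and the Moy-Prasad filtrations, respectively. To conclude $\sigma'\cong\sigma''$ via Theorem \ref{charac}, I will verify: (i) the central characters $\omega_{\sigma'}$ and $\omega_{\sigma''}$ coincide, and (ii) for every supercuspidal $\tau'$ of $\GL_t(F')$ with $1\leq t\leq n-2$ (or, when $n=2$, every character of $F'^{\times}$) of depth at most $2m$, the twisted $\epsilon$-factors agree. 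Item (i) follows from Theorem \ref{LLC}(d) together with the compatibility of both $\Del_l$ and $\kappa_l$ with LCFT via Diagram \eqref{gl1Deligne}.

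For item (ii), fix such a $\tau'$ and, assuming $l$ large enough that $\kappa_l$ is defined and bijective on supercuspidal representations of depth $\leq 2m$ on $\GL_t$, choose $\tau\in\mathcal{A}_F^0(t)_{2m}$ with $\kappa_l(\tau)=\tau'$ and write $\rho=\rec_t(\tau)$. The inductive hypothesis at level $t<n$ and depth $2m$ gives $\rec_t(\tau')=\Del_l(\rho)$ as Langlands parameters over $F'$, so one may chain:
\begin{align*}
\epsilon(s,\sigma'\times\tau',\psi')
&=\epsilon(s,\sigma\times\tau,\psi) && \text{(Theorem \ref{RSGF})}\\
&=\epsilon(s,\phi\otimes\rho,\psi) && \text{(Theorem \ref{LLC}(b) over }F\text{)}\\
&=\epsilon(s,\phi'\otimes\Del_l(\rho),\psi') && \text{(Equation \eqref{ArtinFactors})}\\
&=\epsilon(s,\phi'\otimes\rec_t(\tau'),\psi') && \text{(inductive hypothesis)}\\
&=\epsilon(s,\sigma''\times\tau',\psi') && \text{(Theorem \ref{LLC}(b) over }F'\text{).}
\end{align*}
The first step uses that the Rankin-Selberg $L$-factors are trivial for supercuspidal pairs of distinct sizes (and also in the character case $n=2,t=1$), so that $\gamma=\epsilon$ and Theorem \ref{RSGF} directly yields the $\epsilon$-equality.

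The main obstacle will be the bookkeeping of the closeness constant $l=l(n,m)$: it must be large enough to simultaneously accommodate (a) Theorem \ref{RSGF} for every admissible pair $(\sigma,\tau)$ with $\depth(\sigma)\leq m$ and $\depth(\tau)\leq 2m$, which forces a bound of shape $l\geq(n+t)^{2}(2m)+(n+t)^{2}+4$ over $t\leq n-2$; (b) the Kazhdan transfer of test representations of depth $\leq 2m$ on $\GL_t(F)$, i.e.\ $l\geq 2m+1$; (c) the validity of Equation \eqref{ArtinFactors} for Galois representations of depth $\leq 2m$, again $l\geq 2m+1$; and (d) the inductive closeness constants $l(t,2m)$ for every $t<n$. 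Taking $l(n,m)$ to be the maximum of these (finite but recursively growing) requirements completes the induction and establishes the desired commutativity.
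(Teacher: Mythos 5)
Your proof is correct and follows essentially the same route as the paper: strong induction on $n$ with base case local class field theory, reduction via the depth-controlled converse theorem (Theorem \ref{charac}) to an equality of twisted $\gamma$- (equivalently $\epsilon$-) factors, and a chain transporting those factors across the close local fields using Theorem \ref{RSGF}, Theorem \ref{LLC}(b), Equation \eqref{ArtinFactors}, and the inductive hypothesis. The only cosmetic differences are that the paper applies the converse theorem over $F$ to show $\sigma \cong \kappa_l^{-1}\circ L'_n\circ \Del_l(\phi)$ rather than over $F'$, and the paper's closeness constant $l = \max\{2n^2m+n^2+4,\, l_1,\dots,l_{n-1}\}$ is a bit tighter than your $(n+t)^2$ bound, though both are sufficient.
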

\begin{proof} We will prove this theorem by induction on $n$. When $n=1$, the fact that the Deligne-Kazhdan philosophy is compatible with local class field theory is Property (i) of Section \ref{Delignetheory}.
 Now, assume the theorem holds for $1 \leq t  \leq n-1$. Fix $m \geq 1$. For $l \geq m+1$, we see that the diagram above is well-defined.
 We now prove that this diagram is commutative for sufficiently large $l$. Let $\phi \in \mathcal{G}_F^0(n)_m$ and $\sigma = L_n(\phi)$. Let $A = 2m$. 
 Let $l_t = l(t,A)$ be an integer such that the theorem \label{LLCDK} is valid for $ \mathcal{G}_F^0(t)_{A}$. Such an $l_t$ exists by the induction hypothesis. 
 Now, let $l = \max\{2n^2m+n^2+4, l_1,...l_{n-1}\}$ and let $F$ and $F'$ be $l$-close. The $l$ chosen in this manner ensures that the theorem is valid for all $\tau \in \mathcal{A}_F^0(t)_{A},\; 1 \leq t \leq n-1$, and Theorem \ref{RSGF} holds.  We claim that the above diagram is commutative for this choice of $l$.  
 Let $\sigma_1 =\kappa_l^{-1} \circ L'_n \circ \Del_l(\phi)$. We need to show that $\sigma_1 \cong \sigma$. Note that $\omega_{\sigma_1} = \omega_\sigma$ by $(d)$ of Theorem \ref{LLC} and Property (i) of Section \ref{Delignetheory}. 
 Moreover, since $\kappa_l$, $L_n$, and $\Del_l$ preserve depth, which implies $\depth(\sigma) = \depth(\sigma_1)$. Also,  $L(s, \sigma \times \tau) = 1 = L(s, \sigma_1 \times \tau)\, \forall \, \tau \in \mathcal{A}_F^0(t),\; 1 \leq t \leq n-1$, since $\sigma, \sigma_1, \tau$ are all supercuspidal. Therefore 
 \[\epsilon(s, \sigma \times \tau, \psi) = \gamma(s, \sigma \times \tau, \psi) \text{ and } \epsilon(s, \sigma_1 \times \tau, \psi) = \gamma(s, \sigma_1 \times \tau, \psi).\]
  By Theorem \ref{charac}, it is sufficient to verify that
\begin{equation}\label{gcharac}
 \gamma(s, \sigma \times \tau, \psi) = \gamma(s, \sigma_1 \times \tau, \psi) 
 \end{equation}
 for $\tau \in  \mathcal{G}_F^0(t)_{A}, 1 \leq t \leq n-1$. By induction hypothesis, we have that \[\tau = L_t(\phi_{\tau}) = \kappa_{l}^{-1} \circ L'_t \circ \Del_{l}(\phi_{\tau}).\]
 Hence 
 \begin{align*}
 \gamma(s, \sigma_1 \times \tau, \psi) &= \gamma(s, (\kappa_l^{-1} \circ L'_n \circ \Del_l(\phi)) \times (\kappa_{l}^{-1} \circ L'_t \circ \Del_{l}(\phi_{\tau})), \psi)\\
 & = \gamma(s, (L'_n \circ \Del_l(\phi)) \times  (L'_t \circ \Del_{l}(\phi_{\tau})), \psi') \,\,\text { (by Theorem \ref{RSGF}) }\\
 & = \gamma(s,   \Del_l(\phi) \otimes \Del_{l}(\phi_{\tau}), \psi')\,\, \text { (by Theorem \ref{LLC}) }\\
 & =  \gamma(s,  \phi \otimes \phi_{\tau}, \psi)\,\, \text { (by Property (iv) of Section \ref{Delignetheory}) }\\
 & = \gamma(s, \sigma \times \tau, \psi) \,\, \text { (by Theorem \ref{LLC}). }\qedhere
  \end{align*}
\end{proof}
\begin{Corollary}\label{corGLNDK}
For each $m \geq 1$, there exists $l = l(n,m) \geq m+1$  such that whenever $F$ and $F'$ are atleast $l$-close, the following diagram commutes:

\begin{displaymath}
    \xymatrix{
        \mathcal{G}_F(n)_m \ar[r]^{L_{n}} \ar[d]_{\Del_l} & \mathcal{A}_F(n)_m \ar[d]^{\kappa_l} \\
        \mathcal{G}_{F'}(n)_m \ar[r]_{L'_{n}}      &\mathcal{A}_{F'}(n)_m}
\end{displaymath}
\end{Corollary}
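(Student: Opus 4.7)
The plan is to reduce the corollary to the supercuspidal case established in Theorem \ref{GLNDK} by means of the Zelevinsky--Langlands classification. Any $\phi \in \mathcal{G}_F(n)_m$ decomposes uniquely as $\phi = \bigoplus_{i=1}^{r} \phi_i \otimes \Sp_{k_i}$ with $\phi_i \in \mathcal{G}_F^0(d_i)$ irreducible and $\sum_i d_i k_i = n$, where $\Sp_{k_i}$ denotes the $k_i$-dimensional special representation of $\WD_F$. Since each $\Sp_{k_i}$ restricts trivially to $I_F$, the depth of $\phi$ equals the maximum of the depths of the $\phi_i$; in particular each $\phi_i$, and hence each $\rho_i := L_{d_i}(\phi_i)$, has depth $\leq m$ via Theorem \ref{depth}. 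Correspondingly, $\sigma := L_n(\phi)$ is the unique Langlands quotient of the standard module parabolically induced from the Zelevinsky segments $Z(\rho_i, k_i)$, where each $Z(\rho_i, k_i)$ is the unique essentially square-integrable subquotient of the representation of $\GL_{d_i k_i}(F)$ induced from appropriate unramified twists of $\rho_i$.

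Since the partition data $\{(d_i, k_i)\}$ with $\sum_i d_i k_i = n$ form a finite set, I would choose $l = l(n,m)$ large enough that Theorem \ref{GLNDK} applies in every dimension $d \leq n$ at depth bound $m$, and that the parabolic induction compatibility of Section \ref{IndRep} applies for every standard Levi of $\GL_n$ and every inducing datum of depth $\leq m$. Assembly proceeds on both sides in parallel. On the Galois side, $\Del_l$ commutes with direct sums and with tensoring by the unramified Weil--Deligne representations $\Sp_{k_i}$, so $\Del_l(\phi) = \bigoplus_i \Del_l(\phi_i) \otimes \Sp_{k_i}$; applying $L'_n$ and invoking Theorem \ref{GLNDK} componentwise gives $L'_{d_i}(\Del_l(\phi_i)) = \kappa_l(\rho_i)$, realizing $L'_n(\Del_l(\phi))$ as the Langlands quotient of the $F'$-standard module built from $Z(\kappa_l(\rho_i), k_i)$. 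On the automorphic side, Section \ref{IndRep} transports the induced representations defining each segment $Z(\rho_i, k_i)$ to the corresponding induced representations over $F'$; since $\kappa_l$ preserves square-integrability by Theorem \ref{SCDS}(a) and preserves irreducibility via the equivalence of categories in Proposition \ref{categories}, the characterization of $Z(\rho_i, k_i)$ as the unique square-integrable subquotient transports to give $\kappa_l(Z(\rho_i, k_i)) = Z(\kappa_l(\rho_i), k_i)$. A second application of Section \ref{IndRep}, to the outer parabolic induction forming the standard module, together with the fact that the Langlands quotient is its unique irreducible quotient (again preserved by the categorical equivalence), yields $\kappa_l(\sigma) = L'_n(\Del_l(\phi))$.

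The main obstacle is to ensure that the successive extraction of Zelevinsky segments and then of the Langlands quotient is compatible with $\kappa_l$ uniformly in the partition data, for a single closeness bound $l = l(n,m)$. A cleaner route, avoiding the classification, is to induct on $n$ and invoke Henniart's uniqueness characterization of the LLC for $\GL_n$: granting the corollary in all dimensions $t < n$, it suffices to show that $\kappa_l(\sigma)$ and $L'_n(\Del_l(\phi))$ have the same central character and satisfy $\gamma(s, \kappa_l(\sigma) \times \tau', \psi') = \gamma(s, \sigma \times \tau, \psi)$ for every irreducible generic $\tau$ of $\GL_t(F)$ with $t < n$ of depth $\leq m$, where $\tau' = \kappa_l(\tau)$ and $\psi'$ is chosen compatibly with $\psi$. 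The central character identity follows from the $n=1$ case and local class field theory, the analytic $\gamma$-factors agree by Theorem \ref{RSGF}, and the Artin $\gamma$-factors of the corresponding Langlands parameters agree by the inductive hypothesis together with Property (iv) of Section \ref{Delignetheory}. Since the LLC on both sides intertwines these two families of $\gamma$-factors, uniqueness of the correspondence forces the required identity.
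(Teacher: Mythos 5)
Your first argument---decomposing $\phi$ into irreducibles $\phi_i \otimes S_{k_i}$, matching each supercuspidal $\phi_i$ via Theorem \ref{GLNDK}, forming Zelevinsky segments and the standard module, and verifying compatibility with $\kappa_l$ through Section \ref{IndRep} (parabolic induction over close local fields), Theorem \ref{SCDS} (preservation of square integrability and supercuspidality), and Proposition \ref{categories} (the equivalence of categories, hence preservation of subquotients and unique irreducible quotients)---is exactly what the paper's one-line proof points to. The paper simply cites that the LLC for $\GL_n$ is determined by its supercuspidal values (Wedhorn) together with the compatibility results of Section \ref{PropertiesrepsCLF}; you have spelled these out correctly, and your concern about choosing a single $l(n,m)$ uniformly is answered, as you note, by the finiteness of the set of partitions.

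Your proposed ``cleaner route'' does not work for non-generic $\sigma$, and this is a real gap. Rankin--Selberg $\gamma$-factors do not separate points among non-generic irreducible representations of $\GL_n(F)$: by multiplicativity of $\gamma$-factors with respect to parabolic induction, the trivial representation and the Steinberg representation of $\GL_2(F)$, being subquotients of the same principal series, have identical $\gamma(s,\cdot\times\chi,\psi)$ for every character $\chi$ of $F^\times$, yet are non-isomorphic. So agreement of central characters and twisted $\gamma$-factors with generic $\tau$ of lower rank does not force $\kappa_l(\sigma)\cong L'_n(\Del_l(\phi))$, and the uniqueness statements you cite (Theorem \ref{charac}, and Henniart's characterization) are for supercuspidal or generic representations. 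Compounding this, Theorem \ref{RSGF} is stated only for generic pairs; for non-generic $\sigma$ the Rankin--Selberg $\gamma$-factor is itself defined by passing through the Langlands classification, which is precisely the structure you must first show compatible with $\kappa_l$. The alternative therefore either tacitly reduces to your first argument or is valid only in the generic case. Keep the first argument.
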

\begin{proof}
The existence and uniqueness of the LLC for all irreducible, admissible representations of $\GL_n$ follows from the supercuspidal case (see \cite{Wed08}). Hence the corollary follows from the above theorem, and the results of Section \ref{PropertiesrepsCLF} that  various representation theoretic properties are compatible with the Deligne-Kazhdan correspondence.
\end{proof}

\section{The Langlands-Shahidi method for $\GSpin_5 \times \GL_t, t \leq 2$}\label{LSMETHODGSP4}
In Section \ref{LocalCoefficients}, we studied the local coefficients associated to generic representations of Levi subgroups of split reductive groups and proved its compatibility with the Deligne-Kazhdan correspondence (Theorem \ref{maintheoremlocalcoeff}). 
The Langlands-Shahidi method uses this theory of local coefficients to define the $\gamma$-factors. The theory of $\gamma$-factors and their expected properties can be found in great generality over local fields of characteristic 0 in \cite{Sha90}. A theory of $\gamma$-factors is available for split classical groups over local function fields in \cite{Lom09} and \cite{HL11}.

 The LLC for $\GSp_4 \cong \GSpin_5$ in \cite{GT11} over a local field $F'$ of characteristic 0 matches the first $L$- and $\gamma$-factor for generic (and non-supercuspidal) representations of $\GSpin_5(F') \times \GL_t(F'), t \leq 2$, with the Artin factors of the corresponding Langlands parameter.
 In order to establish the LLC for $\GSpin_5(F)$ for a local function field $F$ using the Deligne-Kazhdan correspondence, we need a theory of $L$- and $\gamma$-factors for representations of $\GSpin_5(F) \times \GL_t(F), t \leq 2$, and furthermore, the analogue of Theorem \ref{maintheoremlocalcoeff} for the $L$- and $\gamma$-factors.

There are two $\gamma$-factors that arise in the Langlands-Shahidi method for $\GSpin_{5}$: the first $\gamma$-factor is associated to the pair of generic representations of $\GSpin_5(F) \times \GL_t(F)$, and the second factor is the twisted symmetric square $\gamma$-factor. We will adapt the ideas from \cite{Lom09} to define the first  $L$-function and $\gamma$-factor for generic representations of $\GSpin_5(F) \times \GL_t(F), t \leq 2$, and prove its uniqueness (this $\gamma$-factor is needed for our main theorem). The desired properties and a unique characterization of the twisted symmetric square $\gamma$-factor can be found in \cite{GL14}, which addresses the Seigel Levi case of $\GSpin$ groups more generally.

Consider the pair $(\bfg_t, \bfm_t) = (\GSpin_{5+2t}, \GSpin_5 \times \GL_t)$ with Borel subgroup $\bfb_t = \bft_t\bfu_t$. Let $\Phi_t$ denote the set of roots of $\bft_t$  and let $\Phi^+_t$ (resp. $\Delta_t$) denote the set of positive roots (resp. simple roots) of $\bft_t$ in $\bfb_t$.  Fix a Chevalley basis $\{\lu_{\alpha,t}| \alpha \in \Phi_t\}$. Let $\bfp_t = \bfm_t\bfn_t$ be the standard parabolic subgroup with unipotent radical $\bfn_t \subset \bfu_t$. Let $\theta_t = \Delta_t \backslash \{\beta_t\}$ be the subset of $\Delta_t$ that determines $\bfp_t$. Let $\rho_{\bfp_t}$ denote the half sum of the roots in $\bfn_t$. Define 
\[\tilde{\beta}_{t} = \langle \rho_{\bfp_{t}}, \beta_{t}\rangle^{-1} \beta_{t}.\]
 Note that $^LM_t = \GSp_4(\C) \times \GL_t(\C)$ and $^LG = \GSp_{4+2t}(\C)$. 
Let $w_t = w_{l,\Delta_t}w_{l, \theta_t}$ and note that $w_t(\theta_t) = \theta_t$ that is, the parabolic subgroup $\bfp_t$ is self-associate. Let $r_t$ denote the adjoint action of $^LM_t$ on $^L\fn_t$, where $^L\fn_t$ denotes the Lie algebra of the complex dual group $^LN_t$ of $\bfn_t$ (Check Section 2 of \cite{Asg02} for the structure theory). Let $\Sim$ denote the similitude character and $\Spin$ denote the standard $4$-dimensional representation of $\GSp_4(\C)$ and let $\Std_t$ denote the standard representation of $\GL_t(\C)$.  Then by Proposition 5.6 of \cite{Asg02}, we have $r_{t} = r_{1,t} \oplus r_{2,t}$ with \[r_{1,t} = \Spin^\vee \otimes \Std_t \hspace{0.2in} \text{ and } \hspace{0.2in} r_{2,t} = \Sim^{-1} \otimes \Sym^2 (\Std_t).\]

Now, one can associate $L$- and $\gamma$-factors to irreducible generic representations of $M_t$ corresponding to each $r_{i,t}, \; i=1,2$. The one associated to $r_{1,t}$ is called the first $\gamma$-factor and the one associated to $r_{2,t}$ is called the second $\gamma$-factor. 

To do this, let us recall the crude functional equation satisfied by the local coefficients from Section 5.4 of \cite{Lom09}.  Let $\mathbb{A}$ denote the ring of adeles over a global function field $k$ such that $k_{v_0} = F$ for some place $v$. Let $\Psi = \otimes_{v}\Psi_v$ be a non-trivial additive character of $k \backslash \A$. Such a character has the property that $\Psi_v$ is unramified for almost all $v$. For each place $v$ define \begin{equation}\label{globalgenchar}
\calx_{v,t} = \prod_{\alpha \in \Delta_{t}} \Psi_v \circ \lu_{\alpha,t}^{-1}.
\end{equation} Then $\calx_{t} = \otimes_v \mathcal{X}_{v,t}$ is a generic character of $\bfu_{t}(k) \backslash \bfu_{t}(\A)$  that is unramified for almost all $v$. 
  Let $\Sigma = \otimes' \Sigma_v$ be a cuspidal automorphic representation of $\GSpin_{5}(\A)$ and let $\fT = \otimes'\fT_v$ be a cuspidal automorphic representation of $\GL_t(\A)$. Assume $\Sigma \times \fT$ is $\calx_{t}$-generic. 
Let $S$ be a finite set of places such that $\Sigma_v$,$\fT_v$ and $\calx_{v,t}$ are all unramified for all $v \notin S$.  
Then the partial $L$-function is defined by
\[L^S(s, \Sigma \times \fT, r_{i,t}) = \displaystyle{\prod_{v \notin S}L(s, \Sigma_v \times \fT_v, r_{i,t})}\]
where each of the factors in the right are defined using their Satake parametrization (cf. Definition 5.1 of \cite{Lom09}). 
 Note that the results in Section 5.4 of \cite{Lom09} are written in the context of split reductive groups, including $\GSpin_{5}$. We recall Theorem 5.14 (Crude Functional Equation) of that section below.  
\begin{equation}\label{Crf.eq} 
\displaystyle{\prod_{i=1}^2 L^S(is, \Sigma_v \times \fT_v, r_{i,t}) = \prod_{v \in S} C_{{\calx}_{v,t}}(s\tilde{\beta}_{t}, \;\Sigma_v \otimes \fT_v,\; \tw_{t}) \prod_{i=1}^2L^S(1-is, \Sigma_v \times \fT_v, r_{i, t}^\vee)}.
\end{equation}
 Note that the second $L$-function associated to $r_{2,t}$ is just the twisted symmetric square $L$-function. More precisely,
\[L^S(s, \Sigma \times \fT, r_{2,t}) = L^S(s,   \fT,   \Sym^2(\Std_t)\otimes \omega_\Sigma^{-1})\]
where $\omega_\Sigma$ denotes the central character of $\Sigma$ (cf. \cite{Sha97} and \cite{GL14}).
\subsection{Induction step}
Consider the pair $(\bfg_{0,t}, \bfm_{0,t}) = (\GSpin_{1+2t}, \GL_1 \times \GL_t), \; t \leq 2$. Let $\bfb_{0,t} = \bft_{0,t}\bfu_{0,t}$, $\Phi_{0,t}$, $\Delta_{0,t}$, $\theta_{0,t}$, $\beta_{0,t}$, $\tilde{\beta}_{0,t}$ and $w_{0,t}$ be the corresponding objects for $\bfg_{0,t}$. Fix a Chevalley basis $\{\lu_{\alpha,0,t}|\alpha \in \Phi_{0,t}\}$ of $\bfg_{0,t}$. Let $r_{0,t}$ denote the adjoint action of $^LM_{0,t}$ on $^L\fn_{0,t}$. Then again by Proposition 5.6 of \cite{Asg02},
$r_{0,t} = \Sim^{-1} \otimes \Sym^2(\Std_t)$ is irreducible. 

For each place $v$ define \begin{equation}\label{globalgenchar1}
\calx_{v,0,t} = \prod_{\alpha \in \Delta_{0,t}} \Psi_v \circ \lu_{\alpha,0,t}^{-1}.
\end{equation} Then $\calx_{0,t} = \otimes_v \mathcal{X}_{v,0,t}$ is a generic character of $\bfu_{0,t}(k) \backslash \bfu_{0,t}(\A).$  
  Let $\xi= \otimes' \xi_v$ be a Hecke character of $\A^\times$ and let $\fT = \otimes'\fT_v$ be a globally $\calx_{0,t}$-generic cuspidal automorphic representation of $\GL_t(\A)$. Let $S$ be a finite set of places such that $\xi_v$, $\fT_v$ and $\calx_{v,0,t}$ are all unramified for all $v \notin S$. As before, the partial $L$-function is defined by
\[L^S(s, \xi \times \fT, r_{0,t}) = \displaystyle{\prod_{v \notin S}L(s, \xi_v \times \fT_v, r_{0,t})}\]
where each of the factors in the right are defined using their Satake parametrization (cf. Definition 5.1 of \cite{Lom09}). 
 Now the crude functional equation becomes 

\begin{equation}\label{Crf.eq1} 
\displaystyle{ L^S(s, \xi_v \times \fT_v, r_{0,t}) = \prod_{\nu \in S} C_{{\calx}_{v,0,t}}(s\tilde{\beta}_{0,t},\; \xi_v \otimes \fT_v,\; \tw_{0,t}) L^S(1-s, \xi_v \times \fT_v, r_{0, t}^\vee)}.
\end{equation}

Recall that $v_0$ is a place of $k$ such that $k_{v_0} = F$. We write $\Psi_{v_0} = \psi$, $\calx_{v_0,t} = \chi_t $, and $\calx_{v_0, 0, t} = \chi_{0,t}$. 
\subsection{$\gamma$-factors - Definition and Properties }
Let $\psi$ be a non-trivial additive character of $F$ and let\begin{equation}\label{gencharaddchar}
\chi_t = \prod_{\alpha \in \Delta_t} \psi \circ \lu_{\alpha,t}^{-1}. 
\end{equation}
 Similarly, let \begin{equation}\label{gencharaddchar1}
\chi_{0,t} = \prod_{\alpha \in \Delta_{0,t}} \psi \circ \lu_{\alpha,0,t}^{-1}.
\end{equation}
  Let $\sigma$ be an irreducible, admissible representation of $\GSpin_5(F)$ and $\tau$ be an irreducible, admissible representation of $\GL_t(F)$. Note that since the center $\bfz_t$ of $\bfg_t$ (and the center of $\bfm_t$) is connected, $(\bft_t/\bfz_t)(F) = T_t/Z_t$ and hence $T_t$ acts transitively on the set of generic characters of $U_t$ (and similarly for $U_{M_t})$. Hence, if $\sigma \times \tau$ is an irreducible, admissible, generic representation of $M_t$, then it  is automatically generic with respect any generic character of $U_{M_t}$.
Now, the central character $\omega_\sigma$ of $\sigma$ is a character of $\GL_1(F)$. Following \cite{Sha90} in characteristic 0 and \cite{Lom09} in characteristic $p$ (also cf. \cite{GL14} for $r_{2,t}$), we define the $\gamma$-factors as follows.
Let 
\begin{equation}\label{secondgammafactor}
\gamma(s, \sigma \times \tau , r_{2,t}, {\psi}):= C_{\chi_{0,t}}(s\tilde{\beta}_{0,t}, \omega_\sigma \otimes \tau, \tw_{0,t}),
\end{equation}
 and define the first $\gamma$-factor so that it satisfies the equation
\begin{equation}\label{firstgammafactor}
C_{\chi_t}(s\tilde{\beta}_t, \sigma \otimes \tau, \tw_t) = \gamma(s, \sigma \times \tau, r_{1,t}, {\psi})\gamma(2s, \sigma \times \tau, r_{2,t}, {\psi}).
\end{equation}
We now want to apply Theorem \ref{maintheoremlocalcoeff} and deduce a similar result about the $\gamma$-factors defined above. To do this, we prove the following lemma.

\begin{lemma}\label{refinedlocalcoeff}Let $(\bfg_t, \bfm_t)$, $(\bfg_{0,t}, \bfm_{0,t})$ be as above, and let $\chi_t$, $\chi_{0,t}$ be defined using a nontrivial additive character $\psi$ as in Equations \eqref{gencharaddchar} and \eqref{gencharaddchar1}.  Let $\sigma \times \tau$ be an irreducible, admissible $\chi_t$- generic representation of $M_t$ with $\depth(\sigma), \depth(\tau) \leq m$. Let $F'$ be another non-archimedean local field and assume $F$ and $F'$ are $(m+4)$-close. Let $\sigma' \times \tau'$, $\chi_t'$ and $\chi_{0,t}'$ be the corresponding objects for $F'$. Then for $t=1,2$,
\[C_{\chi_t}(\nu, \sigma \otimes \tau, \tw_t) = C_{\chi'_t}(\nu, \sigma' \otimes \tau', \tw_t')\]
and
\[C_{\chi_{0,t}}(\nu, \omega_\sigma \otimes \tau, \tw_{0,t}) = C_{\chi_{0,t}'}(\nu, \omega_\sigma' \otimes \tau', \tw_{0,t}').\]
\end{lemma}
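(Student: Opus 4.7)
The plan is to reduce the lemma to two direct applications of Theorem \ref{maintheoremlocalcoeff}: first to the pair $(\bfg_t, \bfm_t)$ with the generic representation $\sigma \otimes \tau$ of $M_t = \GSpin_5(F) \times \GL_t(F)$, and second to the pair $(\bfg_{0,t}, \bfm_{0,t})$ with $\omega_\sigma \otimes \tau$, viewed as a representation of $M_{0,t} = \GL_1(F) \times \GL_t(F)$. The depth hypothesis carries over to the second application since $\depth(\omega_\sigma) \leq \depth(\sigma) \leq m$, and the characters $\chi_t'$, $\chi_{0,t}'$ on the $F'$ side are those produced from $\chi_t$, $\chi_{0,t}$ by the matching of additive characters described in Section \ref{GR}.

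For each application, three hypotheses must be verified. The conductor bound $\cond(\chi_{\alpha,t}) \leq m$ comes essentially for free: since \eqref{gencharaddchar} builds $\chi_t$ uniformly from a single additive character $\psi$, each simple-root component equals $\psi$ under the Chevalley parametrization, and we may take $\psi$ to be unramified without loss of generality. Compatibility $\chi_t(\tw_t u \tw_t^{-1}) = \chi_t(u)$ for $u \in U_{\bfm_t}$ follows from the self-associativity of $\bfp_t$ (equivalently $w_t(\theta_t) = \theta_t$): the representative $\tw_t$ chosen in Section \ref{reps} permutes the simple root subgroups of $\bfu_{\bfm_t}$ up to signs prescribed by the Chevalley structure constants, and the uniform definition of $\chi_t$ from a single $\psi$ on each simple root absorbs these signs. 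The same analysis applies verbatim to $\chi_{0,t}$ and $\tw_{0,t}$.

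The main obstacle is the third hypothesis: producing a nonzero vector $v_0 \in (\sigma \otimes \tau)^{I_m \cap M_t}$ whose Whittaker function satisfies $W_{v_0}(e) \neq 0$. Lemma \ref{MPUC} only guarantees a nonzero invariant vector from the depth bound, not one with nonvanishing Whittaker value at the identity. My strategy is to construct $v_0 = v_\sigma \otimes v_\tau$ by handling each tensor factor separately: for the $\GL_t$ factor with $t \leq 2$ I appeal to the essential-vector theorem of \cite{BH97}, which produces a Whittaker-nonzero vector at level $\cond(\tau)$, a quantity controlled linearly by $\depth(\tau) \leq m$ via \eqref{JosLanDepth}; and for the $\GSpin_5$ factor I use either the analogous statement or a direct translation of an $I_m$-fixed vector by a suitable element of the split torus so as to shift the support of its Whittaker function into a neighborhood of the identity. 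Once this is arranged, both equalities of the lemma follow immediately from Theorem \ref{maintheoremlocalcoeff} applied in turn to each of the two pairs, and the Haar measure conventions of Section \ref{maintheorem} ensure that the two sides are normalized identically.
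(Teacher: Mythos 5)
Your high-level reduction — apply Theorem \ref{maintheoremlocalcoeff} twice, once to $(\bfg_t,\bfm_t)$ and once to $(\bfg_{0,t},\bfm_{0,t})$ — is the right skeleton, and your observation about compatibility with $\tw_t$ (via $w_t$ acting as the identity on $\theta_t$) matches the paper. But the central step, producing a vector in $(\sigma\otimes\tau)^{I_m\cap M_t}$ whose Whittaker function is nonzero at the identity, is where your argument breaks down, and it is precisely the point of the lemma.

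The essential-vector route fails on levels: the essential vector of \cite{BH97} is fixed by $K_t(\cond\tau)$, and $I_m\subset K_t(c)$ only when $m\geq c$; but by \eqref{JosLanDepth} (or \eqref{DepthConductorRelation}) the conductor can be as large as $t^2\depth(\tau)+t^2$, so for $t=2$ the essential vector need only be fixed at level up to $4m+4$, not $m$. Using it would force the fields to be roughly $(4m+8)$-close, contradicting the $(m+4)$ in the statement. Your fallback for the $\GSpin_5$ factor — ``translate an $I_m$-fixed vector by a torus element to move the nonvanishing to a neighborhood of $e$'' — also does not work as stated: $\sigma(\mu(\pi))v$ is fixed by $\mu(\pi)^{-1}K_{m+2}\mu(\pi)$, which for noncentral $\mu$ neither contains nor is contained in $I_m$, so you do not get back an $I_m$-fixed vector.

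What the paper actually does is keep the vector fixed and twist the \emph{character}. Starting from any nonzero $v_1\in\tau^{K_{m+2}\cap\GL_2(F)}$ and $v_2\in\sigma^{K_{m+2}\cap\GSpin_5(F)}$ (available by Lemma \ref{MPUC}), the Iwasawa decomposition produces cocharacters $\mu_1,\mu_2$ with $W_{v_1}(\mu_1(\pi))\neq 0$, $W_{v_2}(\mu_2(\pi))\neq 0$. Conjugating the simple-root components of $\chi_0$ inside $\theta$ by $\mu_i(\pi)$ yields a new generic character $\chi$ in whose Whittaker model the \emph{same} $K_{m+2}$-fixed vectors have nonvanishing Whittaker value at $e$; crucially the component $\chi_\beta$ at the root $\beta$ outside $\theta$ is left equal to $\psi_0$, so $\cond(\chi_\beta)\leq m$ is preserved — and by the equivalence of Conditions \ref{Condition 1} and \ref{Condition 2}, only the outside conductor needs to be checked. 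Theorem \ref{maintheoremlocalcoeff} then applies to $\chi$, not to your original $\chi_t$. You are missing the concluding reduction: one writes $\chi=\chi_0\circ\Ad_{\bf U}\tw_{l,\Delta}(a)$ for some $a\in T$ with $\tw(a)a^{-1}$ central in $M$, uses $C_\chi(\nu,\sigma\otimes\tau,\tw)=\omega_\nu(\tw(a)a^{-1})C_{\chi_0}(\nu,\sigma\otimes\tau,\tw)$, and observes that the identical relation holds over $F'$ with $a\sim a'$, which transfers the equality of local coefficients from $\chi$ back to $\chi_0$. Finally, your ``$\psi$ unramified without loss of generality'' is unjustified as stated; the paper first proves the result for all $\psi_0$ of conductor $\leq m$ and then extends to arbitrary $\psi$ by the same twisting argument.
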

\begin{proof}
Note that this is a slight refinement of Theorem \ref{maintheoremlocalcoeff} for these groups. More precisely, we will show that the $m$, for which all the hypothesis Theorem \ref{maintheoremlocalcoeff} are satisfied,  is completely determined by the depth of the representation $\sigma \times \tau$. 
 
 We will prove this lemma for the case $(\bfg_t, \bfm_t)$ for $t=2$. The other cases would follow similarly. We drop the subscript $t$ throughout the proof of this lemma. So ${\bf M} = \GSpin_5 \times \GL_2 \hookrightarrow \GSpin_9  =\bfg$. Let $K_m$ be the $m$-th usual congruence subgroup of $G$. Then by Lemma \ref{MPUC}, $\sigma$ is a representation of $\GSpin_5(F)$ such that $\sigma^{K_{m+2} \cap \GSpin_5(F) }\neq 0$ and $\tau$ is a representation of $\GL_2(F)$ such that $\tau^{K_{m+2}\cap \GL_2(F)} \neq 0$. Furthermore,
\[\Delta = \{e_1 -e_2, e_2 - e_3, e_3 - e_4, e_4\}\]
and $\theta = \theta_1 \cup \theta_2$ where $\theta_1 = \{e_1 -e_2\}$  and  $\theta_2 = \{ e_3-e_4, e_4\}$
(with notation as in Section 2 of \cite{Asg02}). Let $\beta = e_2 - e_3$. 
Note that $\theta_1$ corresponds to the Dynkin diagram of $\GL_2$ and $\theta_2$ corresponds to the Dynkin diagram of $\GSpin_5$. The longest element for a group of type $B_n$ is $w_{l, \Delta} = -I$. The longest element $w_{l,\theta}$ of $M$ is  given by\begin{align*}
w_{l,\theta} :e_1 \rightarrow e_2,\,\,e_2 \rightarrow e_1,\,\,e_3 \rightarrow -e_3,\,\,e_4 \rightarrow - e_4.
\end{align*}
Therefore $w = w_{l, \Delta}w_{l, \theta}$ is given by
\begin{align*}
w:e_1 \rightarrow -e_2,\,\,e_2 \rightarrow -e_1,\,\,e_3 \rightarrow e_3,\,\,e_4 \rightarrow e_4.
\end{align*}
We see that $w$ acts as identity on $\theta$, and $w(\beta) = -e_1-e_3$. Fix $\psi_0$ an additive character of $F$ of conductor $\leq m$ and let $\chi_0 = \prod_{\alpha \in \Delta} \psi_0 \circ \lu_\alpha^{-1}$. Let $\chi_{0,1}= \psi_0 \circ \lu_{e_1-e_2}^{-1}$ and $\chi_{0,2} =(\psi_0 \circ \lu_{e_3-e_4}^{-1}) \times (\psi_0 \circ \lu_{e_4}^{-1})$.  Consider the Whittaker models $\calw(\tau, \chi_{0,1})$ and $\calw(\sigma, \chi_{0,2})$.  Using the Iwasawa decomposition, it is clear that there exists $v_1 \in \tau^{K_{m+2} \cap \GL_2(F)}$ and $\mu_1 \in X_*(\bf T \cap \GL_2)$ such that $W_{v_1}(\mu_1(\pi)) \neq 0$ where $W_{v_1} \in \calw(\tau, \chi_{0,1})$. Similarly, it is clear there exists $v_2 \in \sigma^{K_{m+2} \cap \GSpin_5(F)}$ and $\mu_2 \in X_*(\bf T \cap \GSpin_5)$ such that $W_{v_2}(\mu_2(\pi)) \neq 0$ where $W_{v_2} \in \calw(\sigma, \chi_{0,2})$.

Now we define $\chi = \prod_{\alpha \in \Delta}\chi_\alpha \circ \lu_\alpha^{-1}$ where
\[\chi_{e_1-e_2} = \pi^{\langle e_1-e_2, \mu_1 \rangle}\psi_0, \;\;\chi_\beta = \psi_0,\;\; \chi_{e_3-e_4} = \pi^{\langle e_3-e_4, \mu_2 \rangle}\psi_0,\;\; \chi_{e_4} = \pi^{\langle e_4, \mu_2 \rangle}\psi_0.\]
Then $\chi$ is a generic character of $U$ and $\chi$ is compatible with $\tw$ since $w$ acts as identity on $\theta$. Set $\chi_1 = \chi_{e_1-e_2} \circ \lu_{e_1-e_2}^{-1}$ and $ \chi_2 =(\chi_{e_3-e_4} \circ \lu_{e_3-e_4}^{-1}) \times (\chi_{e_4} \circ \lu_{e_4}^{-1})$. Let $W_{v_1}^1$ and $W_{v_2}^2$ denote the image of $v_1$ and $v_2$ in the Whittaker models $\calw(\tau, \chi_1)$ and $\calw(\sigma, \chi_2)$ respectively. Then $W_{v_1}^1(e), W_{v_2}^2(e) \neq 0$. In fact, we have defined $\chi$ so that this happens. Note that $\cond(\chi_\beta) = \cond(\psi_0) \leq m$. Hence all the hypothesis of Theorem \ref{maintheoremlocalcoeff} are satisfied for this $\chi$ and by considering the Whittaker models of $\sigma$ and $\tau$ with respect to this $\chi$ (See Condition \ref{Condition 1} and Condition \ref{Condition 2} in Section \ref{maintheorem}). Now, let $\chi'$ be the corresponding character of $U'$ as in Section \ref{GR}. Then $\sigma'\times \tau'$ is $\chi'$-generic and by Theorem  \ref{maintheoremlocalcoeff}, 
\[C_\chi(\nu, \sigma \otimes \tau, \tw) = C_{\chi'}(\nu, \sigma' \otimes \tau', \tw').\]
Now, there exists $a \in T$ such that $\chi = \chi_0 \circ \Ad_{\bf U}\tw_{l,\Delta}(a)$. Moreover, since both $\chi$ and $\chi_0$ are compatible with $\tw$, $\tw(a)a^{-1}$ lies in the center of ${\bf M}(F)$. It is also clear that $\chi' = \chi_0' \circ \Ad_{\bf U}\tw_{l,\Delta}(a')$ where $a \sim a'$ under the isomorphism ${\bf T}(F)/T_{\calp^m} \rightarrow{\bf T}(F')/T_{\calp'^m}$. Since
\[C_\chi(\nu, \sigma \otimes \tau, \tw) = \omega_\nu(\tw(a)a^{-1})C_{\chi_0}(\nu, \sigma \otimes \tau, \tw)\]
where $\omega_\nu$ is the central character of $\sigma\eta_\nu$, we have proved the lemma for all additive characters $\psi$ of conductor $\leq m$. Now, we can argue as above and deduce it for all non-trivial additive characters $\psi$.  
\end{proof}

\begin{proposition}\label{gammafactorCLF}
 Let $F$ be a non-archimedean local field and let $\chi_t$, $\chi_{0,t}$ be defined using a nontrivial additive character $\psi$ as in Equations \eqref{gencharaddchar} and \eqref{gencharaddchar1}. Let  $\sigma \times \tau$ be an irreducible, admissible $\chi_t$-generic representation of $M_t$, $t \leq 2$, of depth $\leq m$. Let $F'$ be another non-archimedean local field that is $(m+4)$-close to $F$. Let $\sigma' \times \tau'$ be the $\chi_t'$-generic representation of $M'_t$ as in Section \ref{GR}. Then
\begin{align*}
\gamma(s, \sigma \times \tau, r_{i,t}, \psi) &= \gamma(s, \sigma' \times \tau',r_{i,t}, \psi'), \; i=1,2.
\end{align*}
\end{proposition}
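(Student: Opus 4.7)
The plan is to deduce this proposition as a direct consequence of Lemma \ref{refinedlocalcoeff}, combined with the defining relations \eqref{secondgammafactor} and \eqref{firstgammafactor} for the two $\gamma$-factors. The closeness hypothesis $l = m+4$ is precisely the one needed to invoke the lemma. Before doing so, I would observe two preliminary facts. First, since $\sigma$ has depth at most $m$, any vector in $V^{G_{x,m+}}$ is fixed by $Z\cap G_{x,m+}$, so $\omega_\sigma$ has depth at most $m$ as a character of $\GL_1(F)$; hence both $\sigma\otimes\tau$ on $M_t$ and $\omega_\sigma\otimes\tau$ on $M_{0,t}$ satisfy the depth hypothesis of the lemma. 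Second, since the Hecke algebra isomorphism $\zeta_{l,\mtheta}$ preserves the action of the center, $\omega_{\sigma'}$ is precisely the character of $Z'$ corresponding to $\omega_\sigma$ under the natural identification $Z/Z_{\calp^l}\cong Z'/Z'_{\calp'^l}$. Consequently $\omega_\sigma\otimes\tau$ transfers to $\omega_{\sigma'}\otimes\tau'$ in a manner consistent with the framework of the lemma.

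For $i=2$ the definition
\[
\gamma(s,\sigma\times\tau,r_{2,t},\psi) = C_{\chi_{0,t}}(s\tilde{\beta}_{0,t},\, \omega_\sigma\otimes\tau,\, \tw_{0,t})
\]
together with the second identity of Lemma \ref{refinedlocalcoeff} applied at $\nu = s\tilde{\beta}_{0,t}$ yields at once $\gamma(s,\sigma\times\tau,r_{2,t},\psi) = \gamma(s,\sigma'\times\tau',r_{2,t},\psi')$.

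For $i=1$ I would use the defining relation \eqref{firstgammafactor},
\[
C_{\chi_t}(s\tilde{\beta}_t,\, \sigma\otimes\tau,\, \tw_t) \;=\; \gamma(s,\sigma\times\tau,r_{1,t},\psi)\,\gamma(2s,\sigma\times\tau,r_{2,t},\psi),
\]
together with the analogous equation for the primed objects over $F'$. The first identity of Lemma \ref{refinedlocalcoeff} at $\nu = s\tilde{\beta}_t$ forces the two left-hand sides to coincide as rational functions of $q^{-s}$, while the case $i=2$ just proved forces the second factors on the right-hand sides to coincide. Since $\gamma(2s,\sigma\times\tau,r_{2,t},\psi)$ is an invertible element of $\C(q^{-s})$, one may divide to conclude the equality of the first $\gamma$-factors.

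The only nontrivial ingredient is Lemma \ref{refinedlocalcoeff} itself, which supplies the simultaneous compatibility of the local coefficients at both the ``big'' level $(\bfg_t,\bfm_t)$ and the ``small'' level $(\bfg_{0,t},\bfm_{0,t})$ at the uniform closeness $m+4$. Once the lemma is in hand, the remaining argument above is purely formal, involving no further analytic input; the work has been localized to verifying Condition \ref{Condition 1} for both pairs of generic characters constructed from the single additive character $\psi$.
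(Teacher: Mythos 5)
Your proposal is correct and follows essentially the same route as the paper: both the paper's proof and yours apply Lemma~\ref{refinedlocalcoeff} twice (to the local coefficients at levels $(\bfg_t,\bfm_t)$ and $(\bfg_{0,t},\bfm_{0,t})$) and then combine with the defining relations \eqref{secondgammafactor} and \eqref{firstgammafactor}, dividing out the second $\gamma$-factor to isolate the first. The extra details you supply — the depth bound on $\omega_\sigma$ and the invertibility of $\gamma(2s,\sigma\times\tau,r_{2,t},\psi)$ in $\C(q^{-s})$ — are correct observations that the paper's terser proof leaves implicit, but they do not change the argument.
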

\begin{proof}
By Equation \eqref{secondgammafactor}, the equality of the second $\gamma$-factors over close local fields is clear by Lemma \ref{refinedlocalcoeff} (Here note that if $\sigma \leftrightarrow \sigma'$, then $\omega_\sigma \leftrightarrow \omega_{\sigma'}$).  By Equation \eqref{firstgammafactor} (or property (2) of Theorem 3.5 of \cite{Sha90}), we have
\begin{equation}\label{LSM0}
 C_{\chi_t}(s\tilde{\beta}_t, \sigma \otimes \tau, \tilde{w}_t)=\gamma(s, \sigma \times \tau, r_{1,t}, {\psi})\gamma(2s, \sigma \times \tau, r_{2,t}, {\psi}).
\end{equation}
Now, the equality of the first $\gamma$-factor follows from the above and another application of Lemma \ref{refinedlocalcoeff}  to the LHS of Equation \eqref{LSM0} (or Equation 3.11 of \cite{Sha90}).
\end{proof} 

We list some properties satisfied by the $\gamma$-factors. The proofs of all these claims can be found in \cite{Sha90} in characteristic 0 and Section 6 of \cite{Lom09} for classical groups in characteristic $p$, and the same ingredients would suffice to verify the properties below. New proofs of Properties (III), (V), and (VI) are obtained using Proposition \ref{gammafactorCLF}.\\
(I)\hspace{2pt}(\textit{Rationality}) The factors $\gamma(s, \sigma \times \tau, r_{i,t}, {\psi}), \; i=1,2,$ are rational functions of $q^{-s}$.  This follows from the inductive definition of $\gamma$-factors together with the rationality of the local coefficients. \\
(II)\hspace{2pt}(\textit{Compatibility with Artin factors}) Assume $\sigma \times \tau$ has an Iwahori fixed vector. Let $\phi: \WD_F \rightarrow \mathord{}^LM$ be the homomorphism of the Weil-Deligne group of $F$ parametrizing $\sigma \times \tau$. For each $i =1,2$, let $L(s, r_{i,t} \circ \phi)$ and $\epsilon(s, r_{i,t} \circ \phi, \psi)$ be the Artin $L$-function and root number attached to $r_{i,t} \circ \phi$. Then
\[\gamma(s, \sigma \times \tau, r_{i,t}, \psi) = \epsilon(s, r_{i,t} \circ \phi, \psi)\frac{ L(1-s, r_{i,t}^\vee \circ \phi)}{L(s, r_{i,t} \circ \phi)}.\]
This follows from the Proposition 3.4 of \cite{Sha90} (cf. Theorem 6.4 of \cite{Lom09} for classical groups).\\
(III)\hspace{2pt}(\textit{Multiplicativity of $\gamma$-factors}) The multiplicativity property of the local coefficients has been established in full generality, independent of characteristic, in \cite{Sha81}. To prove the mutiplicativity property of the $\gamma$-factors in positive characteristic, we combine Proposition \ref{gammafactorCLF}, the results of Section \ref{IndRep}, and the mutiplicativity property of $\gamma$-factors in characteristic 0 established in Theorem 3.5(3) of \cite{Sha90}. More precisely, let $\sigma$ be a representation of $\GSpin_5(F)$ such that
\[ \sigma \hookrightarrow \Ind_Q^{\GSpin_5(F)} \sigma_1\otimes 1,\]
where ${\bf Q}$ is a parabolic subgroup of $\GSpin_5$ (Cf. Chapter 2.1 of \cite{RS07} for a description of parabolic subgroups of $\GSpin_5 \cong \GSp_4$) and $\sigma_1$ is a representation of $M_Q$, with ${\bf M_Q}$ the Levi subgroup of ${\bf  Q}$. 
Similarly, let $\tau$ be a representation of $\GL_t(F)$ such that 
\[\tau \hookrightarrow \Ind_{R}^{\GL_t(F)}\tau_1 \otimes 1,\]
where ${\bf R}$ is a parabolic subgroup of $\GL_t$ with Levi ${\bf M_R}$ and $\tau_1$ is an irreducible, admissible representation of $M_R$. Let $I_m$ be the $m$-th Iwahori congruence subgroup of $\GSpin_{5+2t}(F)$. Choose $m$ large enough such that $\sigma^{I_m \cap \GSpin_5(F)} \neq 0$ and $\tau^{I_m \cap \GL_t(F)} \neq 0$. Then it is easy to see that $\sigma_1^{I_m \cap M_Q} \neq 0$ and $\tau_1^{I_m \cap M_R}\neq 0$.  Choose a local field $F'$ of characteristic 0 such that $F'$ is $l$-close to $F$, where $l$ is large enough so that Proposition \ref{gammafactorCLF} is valid and Theorem \ref{RSGF} holds for $n=2$. Let $\sigma' $ be the representation of $\GSpin_5(F')$ corresponding to $\sigma$, and similarly let $\tau'$ be the representation of $\GL_t(F')$ corresponding to $\tau$. Similarly we obtain representations $\sigma_1'$ and $\tau_1'$. By the results of Section \ref{IndRep}, we see that
\[ (\Ind_Q^{\GSpin_5(F)} \sigma_1\otimes 1)^{I_m \cap \GSpin_5(F)} \cong (\Ind_{Q'}^{\GSpin_5(F')} \sigma_1'\otimes 1)^{I_m' \cap \GSpin_5(F')}\]
and similarly,
\[(\Ind_{R}^{\GL_t(F)}\tau_1\otimes 1)^{I_m \cap \GL_t(F)} \cong (\Ind_{R'}^{\GL_t(F')}\tau_1'\otimes 1)^{I_m' \cap \GL_t(F')}\]
Consequently, we get an embedding
\[ \sigma'  \hookrightarrow \Ind_{Q'}^{\GSpin_5(F')} \sigma_1'\otimes 1 \hspace{0.2in}\text{ and } \hspace{0.2in}\tau' \hookrightarrow \Ind_{R'}^{\GL_t(F')}\tau_1'\otimes 1.\]
Combining Proposition \ref{gammafactorCLF}, Theorem \ref{RSGF}, and Observation (e) of Section \ref{maintheorem} with the multiplicativity property for $\sigma' \times \tau'$ in characteristic 0 (Theorem 3.5  (3) of \cite{Sha90}) proves this property for $\sigma \times \tau$ in characteristic $p$.\\
(IV)\hspace{2pt}(\textit{Global Functional Equation}) Let $\Sigma = \otimes' \Sigma_v$ be a globally generic cuspidal representation of $\GSpin_5(\A)$ and let $\fT = \otimes' \fT_v$  globally generic cuspidal representation of $\GL_t(\A)$. Let $\calx_t$ and $\calx_{0,t}$ be as in Equations \eqref{globalgenchar} and \eqref{globalgenchar1} respectively defined using $\Psi$. Let $S$ be a set of places of $k$ such that $\Sigma_v$, $\fT_v$ and $\Psi_{v}$ are all unramified outside $S$. Then for $i = 1,2$,

\begin{equation}
L^S(s, \Sigma \times \fT, r_{i,t}) = \displaystyle{\prod_{\nu \in S}}\gamma(s, \Sigma_v \times \fT_v, r_{i,t}, \Psi_{v}) L^S(1-s, \Sigma \times \fT, r_{i,t}^\vee).
\end{equation}
 As explained before, note that with $(\bfg_{0,t}, \bfm_{0,t})$ as before, $r_{2,t}$ simply denotes the adjoint action of $^LM_{0,t}$ on the Lie algebra of the unipotent radical $^L\fn_{0,t}$ of $^LN_{0,t}$. Now Equation \eqref{Crf.eq1} yields the above equation for the second $\gamma$-factor. Now, another application of Equation \eqref{Crf.eq} coupled with the functional equation for the second $\gamma$-factor yields the functional equation for the first $\gamma$-factor (cf. Theorem 6.5 of \cite{Lom09} for classical groups). \\
(V)\hspace{2pt}(\textit{Stability}) Let $\sigma_1$ and $\sigma_2$ be irreducible, admissible, generic representations of $\GSpin_5(F)$ with the same central character. Then for any sufficiently ramified character $\eta$ of $\GL_1(F)$, we have
\[\gamma(s, \sigma_1\times \eta,r_{1,t}, \psi) = \gamma(s, \sigma_2 \times \eta, r_{1,t}, \psi).\]
This is Theorem 4.1 of \cite{AS06} for local fields of characteristic 0. It follows that this holds for local fields of positive characteristic using Proposition \ref{gammafactorCLF}.

Property (V) for the first $\gamma$-factor, combined with the remaining properties above, can now be used to prove that the first $\gamma$-factor is indeed uniquely characterized by these properties.
In fact, the stability property can be used to prove that the first $\gamma$-factor for representations of $\GSpin_5(F) \times \GL_1(F)$ is uniquely determined, as in Section 3.2 of \cite{Lom12}. 
Now, we can use arguments similar to the proof of Theorem 6.15 of \cite{Lom09} (Also see Section 3.3 of \cite{Lom12}) to prove the uniqueness of the first $\gamma$-factor for representations of $\GSpin_5(F) \times \GL_t(F)$. We refer the reader to \cite{GL14} where the uniqueness of the second $\gamma$-factor is dealt with in more generality. We list one additional property of these $\gamma$-factors.\\
(VI)\hspace{2pt}(\textit{Local functional equation}) The $\gamma$-factors satisfy the following local functional equation:
\[\gamma(s, \sigma \times \tau, r_{i,t} , \psi)\gamma(1-s, \sigma \times \tau , r_{i,t}^\vee, \bar{\psi}) = 1.\]
This is Theorem 6.15 of \cite{Lom09} for the first $\gamma$-factor and  Corollary 4.4 of \cite{HL11} for the second $\gamma$-factor for classical groups. For our case, this property is true for $\gamma$-factors in characteristic 0 (cf. Equation 3.10 of \cite{Sha90}).   Note that $\gamma(1-s, \sigma \times \tau, r_{i,t}^\vee, \bar{\psi}) = \gamma(1-s, (\sigma \times \tau)^\vee, r_{i,t}, \bar{\psi})$, and if $\sigma \times \tau \leftrightarrow \sigma' \times \tau'$ over close local fields, then $(\sigma \times \tau)^\vee \leftrightarrow (\sigma'\times \tau')^\vee$. Now, use Proposition \ref{gammafactorCLF} to deduce this property in characteristic $p$. 
\subsection{$L$-factors and root numbers}
We define the first $L$-factor, following Section 7 of \cite{Sha90} and \cite{Lom09}, using the Langlands classification.  Let $\sigma \times \tau$ be a tempered representation. Let $P_{\sigma \times \tau, r_{1,t}}(T)$ be the polynomial satisfying $P_{\sigma, \times \tau, r_{1,t}}(0) = 1$ and such that $P_{\sigma \times \tau, r_{1,t}}(q^{-s})$ is the numerator of $\gamma(s, \sigma \times \tau, r_1, \psi)$. Set
\[L(s, \sigma \times \tau, r_{1,t}) = P_{\sigma \times \tau, r_{1,t}}(q^{-s})^{-1}\]
and
\[L(s, \sigma \times \tau, r_{1,t}^\vee) = P_{\sigma^\vee \times \tau^\vee, r_{1,t}}(q^{-s})^{-1}.\]
By Property (VI), we see that
\begin{equation*}
\gamma(s, \sigma \times \tau, r_{1,t}, \psi)\frac{ L(s, \sigma \times \tau, r_{1,t})}{L(1-s, \sigma \times \tau, r_{1,t}^\vee)}
\end{equation*}
is a monomial in $q^{-s}$ which we denote by $\epsilon(s, \sigma \times \tau, r_{1,t}, \psi)$, the root number attached to $\sigma \times \tau$ and $r_{1,t}$. Hence
\begin{equation}\label{epsilonfactor}
 \gamma(s, \sigma \times \tau, r_{1,t}, \psi)=\epsilon(s, \sigma \times \tau, r_{1,t}, \psi)\frac{L(1-s, \sigma \times \tau, r_{1,t}^\vee)}{L(s, \sigma \times \tau, r_{1,t})}
 \end{equation}
Note that the $L$-factor defined above is independent of $\psi$, as explained in Section 7 of \cite{Sha90}. 
The definition of the $L$-function is now extended to quasi-tempered representations using analytic continuation, and then defined for any irreducible, admissible, generic representation using the Langlands classification. Finally, the root number is defined as in Equation \eqref{epsilonfactor}.

\begin{proposition}\label{LEpsilonCLF}
 Let $F$ be a non-archimedean local field and let $\chi_t$, $\chi_{0,t}$ be defined using a nontrivial additive character $\psi$ as in Equations \eqref{gencharaddchar} and \eqref{gencharaddchar1}. Let  $\sigma \times \tau$ be an irreducible, admissible $\chi_t$-generic representation of $M_t$. Let $l$ be large enough so that Proposition \ref{gammafactorCLF} holds and Theorem \ref{RSGF} holds for $n=2$, and let $F'$ be another field that is $l$-close to $F$. Let $\sigma' \times \tau'$ be the $\chi_t'$-generic representation of $M'$ as in Section \ref{GR}. Then
\begin{align*}
L(s, \sigma \times \tau, r_{1,t}) &= L(s, \sigma' \times \tau', r_{1,t}),\\
\epsilon(s, \sigma \times \tau, r_{1,t}, \psi) &= \epsilon(s, \sigma' \times \tau', r_{1,t}, \psi').
\end{align*}
\end{proposition}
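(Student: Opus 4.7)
The plan is to reduce via the Langlands classification to the case where $\sigma \times \tau$ is tempered, and in that case to extract the $L$-factor identity directly from the $\gamma$-factor equality of Proposition \ref{gammafactorCLF}. Throughout I use that $l$-closeness forces $q = q'$, so the rational functions in $q^{-s}$ attached to $F$ and $F'$ live in the common field $\C(q^{-s})$ and can be compared coefficient by coefficient.

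First I would treat the tempered case. If $\sigma \times \tau$ is tempered, then each of $\sigma$ and $\tau$ is a subrepresentation of a parabolic induction from a discrete series of a Levi subgroup. By Theorem \ref{SCDS} (together with its corollary on formal degrees), the Hecke algebra isomorphism carries discrete series to discrete series, while Section \ref{IndRep} shows that parabolic induction is compatible with the Hecke algebra isomorphism; hence $\sigma' \times \tau'$ is again tempered. By Proposition \ref{gammafactorCLF},
\[
\gamma(s, \sigma \times \tau, r_{1,t}, \psi) \;=\; \gamma(s, \sigma' \times \tau', r_{1,t}, \psi')
\]
as elements of $\C(q^{-s})$. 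Writing this common rational function in lowest terms as $N(q^{-s})/D(q^{-s})$ with $N(0) = D(0) = 1$, the polynomials $N$ and $D$ are uniquely determined, so the normalized numerators $P_{\sigma \times \tau, r_{1,t}}$ and $P_{\sigma' \times \tau', r_{1,t}}$ coincide. By the definition of the tempered $L$-factor, $L(s, \sigma \times \tau, r_{1,t}) = L(s, \sigma' \times \tau', r_{1,t})$. Applying the same argument to the contragredient, noting that $(\sigma \times \tau)^\vee$ is matched with $(\sigma' \times \tau')^\vee$ under the Hecke algebra isomorphism, yields the dual $L$-factor equality. Substituting into the defining identity
\[
\epsilon(s, \sigma \times \tau, r_{1,t}, \psi) \;=\; \gamma(s, \sigma \times \tau, r_{1,t}, \psi)\, \frac{L(s, \sigma \times \tau, r_{1,t})}{L(1-s, \sigma \times \tau, r_{1,t}^\vee)}
\]
then produces the $\epsilon$-factor equality in the tempered case.

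To pass to general irreducible admissible generic $\sigma \times \tau$, I would invoke the Langlands classification: there is a standard parabolic $Q \subset M_t$ with Levi $M_Q$, a tempered representation $\delta$ of $M_Q$, and an unramified character $e^\lambda$ of $M_Q$ with $\lambda$ in the strictly positive chamber, such that $\sigma \times \tau$ is the unique irreducible quotient of $\Ind_Q^{M_t}(\delta \otimes e^\lambda)$. The $L$- and $\epsilon$-factors of $\sigma \times \tau$ are by convention those of this standard module, themselves obtained by analytic continuation from the tempered regime. Using the parabolic induction compatibility of Section \ref{IndRep} together with the tempered case applied to $\delta$, the corresponding standard module on the $F'$ side (built from the image of $\delta$ and the same parameter $\lambda$) has identical $L$- and $\epsilon$-factors; its unique irreducible quotient is $\sigma' \times \tau'$, and the equalities of the proposition follow.

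The main obstacle is a careful verification at the level of Langlands data, namely that the Hecke algebra isomorphism carries the Langlands data $(Q, \delta, \lambda)$ of $\sigma \times \tau$ to the Langlands data of $\sigma' \times \tau'$ with the \emph{same} parameter $\lambda$. This decomposes into: (i) compatibility of parabolic induction with the Hecke algebra isomorphism, provided by Section \ref{IndRep}; (ii) preservation of temperedness of the inducing datum, which follows from Theorem \ref{SCDS} and the formal degree identity; and (iii) preservation of the unramified character $e^\lambda$, which follows from the canonical isomorphism $\Unr(M_Q) \cong \Unr(M_Q')$ recorded in Observation (c) of Section \ref{maintheorem}. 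Once these three compatibilities are in place, the uniqueness of Langlands data closes the argument.
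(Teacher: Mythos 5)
Your proposal is correct and follows essentially the same route as the paper: reduce to the tempered case via the Langlands classification, show temperedness is preserved under the Hecke algebra isomorphism using Section \ref{Discreteseries} and Section \ref{IndRep}, extract the $L$-factor from the $\gamma$-factor equality of Proposition \ref{gammafactorCLF} (and its contragredient), and then get the $\epsilon$-factor from Equation \eqref{epsilonfactor}. The three compatibilities you flag at the end are precisely what the paper's more terse phrase ``Since induced representations from parabolic subgroups correspond over close local fields, we see that $(Q',\nu,\rho')$ is the Langlands data for $\sigma'\times\tau'$'' is invoking.
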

\begin{proof}

In Proposition \ref{gammafactorCLF}, we proved the equality of $\gamma$-factors over close local fields. To prove the equality of the $L$-factors, first assume that $\sigma \times \tau$ is tempered (resp. quasi-tempered). 
Then arguing as in Section \ref{Discreteseries}, we see that $\sigma' \times \tau'$ is also tempered (resp. quasi-tempered). Now the equality of $L$-factors is clear in this case using Proposition \ref{gammafactorCLF}. Suppose $\sigma \times \tau$ is any irreducible $\chi_t$-generic representation. Let
 $(Q, \nu, \rho)$ is the Langlands' data for $\sigma \times \tau$. Since induced representations from parabolic subgroups correspond over close local fields, we see that $(Q', \nu, \rho')$ is the Langlands data for $\sigma' \times \tau'$ where $\rho \leftrightarrow \rho'$ under the Hecke algebra isomorphism $\fH(Q, Q \cap I_l) \cong \fH(Q', Q' \cap I_l')$. The equality of the $L$-factors now follows.

The equality of the $\epsilon$-factors is clear from the above and Equation \eqref{epsilonfactor}.
\end{proof} 
\section{The local Langlands correspondence for $\GSp_4(F')$ with $\Char(F') = p >2$}\label{LLCGSP4POSITIVE}
In this section, we prove that the local Langlands correspondence for $\GSp_4(F)$ in characteristic 0 preserves depth, re-characterize the map in characteristic 0, and establish the correspondence for $\GSp_4(F')$ where $F'$ is a local field of odd positive characteristic.
\subsection{Depth preservation}\label{DepthPreservation}
Let us now recall the main theorem of \cite{GT11}.
 Let $ \Pi(\GSp_4(F))$ be the set of equivalence classes of irreducible, admissible representations of $\GSp_4(F)$ and 
 $\Phi(\GSp_4(F))$ be the set of admissible homomorphisms from $\WD_F \rightarrow \GSp_4(\mathbb{C})$, where the homomorphisms are taken upto $\GSp_4(\mathbb{C})$-conjugacy. 
\begin{theorem}\text{\emph{(Theorem 10.1 of \cite{GT11})}}\label{char0} 
There is a unique surjective finite-to-one map \[L: \Pi(\GSp_4(F)) \longrightarrow \Phi(\GSp_4(F)) \] satisfying:
\begin{enumerate}[(a)]
\item the central character $\omega_\sigma$ of $\sigma$ corresponds to the similitude character $\operatorname{sim}(\phi_\sigma)$ of $\phi_\sigma: = L(\sigma)$ via local class field theory.
\item $\sigma$ is a discrete series representation iff $\phi_\sigma$ does not factor through any proper Levi subgroup of $\GSp_4(\mathbb{C})$.
\item if $\sigma$ is generic or non-supercuspidal, then for any irreducible representation $\tau$ of $\GL_t(F)$ with $t \leq 2$, 
\[ \begin{cases}
L(s, \sigma \times \tau) &= L(s, \phi_\sigma \otimes \phi_\tau) \\
\gamma(s, \sigma \times \tau, \psi) &= \gamma(s, \phi_\sigma \otimes\phi_ \tau, \psi)
\end{cases} \]
where the factors on the right are of Artin type, and the factors on the left are obtained using the Langlands-Shahidi method.
\item if $\sigma$ is non-generic supercuspidal, then for any supercuspidal representation $\tau$ of $\GL_t(F)$ with $t \leq 2$, the Plancherel measure $\mu(s, \sigma \times \tau)$ is equal to 
\begin{align*}
 \gamma(s, \phi_\sigma^\vee \otimes \phi_\tau, \psi) \gamma(-s, \phi_\sigma \otimes \phi_\tau^\vee, \bar{\psi})\gamma(2s, \Sym^2\phi_\tau \otimes& \Sim\phi_\sigma^{-1}, \psi)\\
& \gamma(-2s, \Sym^2\phi_\tau^{\vee} \otimes \Sim\phi_\sigma, \bar{\psi}) 
\end{align*}
\end{enumerate}
\end{theorem}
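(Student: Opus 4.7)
The plan is to follow the strategy of Gan-Takeda: construct $L$ via the local theta correspondence, and then characterize it by the four listed properties. Using the exceptional isomorphism $\GSp_4 \cong \GSpin_5$ together with the accidental isomorphism between the projective split groups of types $D_3$ and $A_3$, the key input is the dual pair $(\GSp_4, \mathrm{GO}_{3,3})$, since $\mathrm{GO}_{3,3}$ is essentially $\GL_4 \rtimes \Z/2\Z$. Howe duality in this setup attaches to each irreducible admissible $\sigma$ of $\GSp_4(F)$ an irreducible admissible representation of $\GL_4(F)$, possibly after first passing through lifts to $\mathrm{GO}_{2,2}$ or $\mathrm{GO}_{4,2}$ when the direct lift vanishes. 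Applying the LLC for $\GL_4$ of \cite{HT01, Hen00} then produces a parameter into $\GL_4(\C)$ which, by the discriminant and similitude data of the orthogonal space, factors through $\GSp_4(\C)$, yielding the desired $\phi_\sigma$. Compatibility with central characters and the discrete-series characterization in parts $(a)$ and $(b)$ reduce to standard properties of theta lifts in a Kudla-Rallis tower and the non-vanishing of first-occurrence lifts.

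For the analytic-arithmetic matching in $(c)$, I would argue globally. Given generic $\sigma$, realize $\sigma \times \tau$ as the local component at a chosen place of a globally generic cuspidal automorphic representation, and then compare the Langlands-Shahidi functional equation for the $L$-function attached to $\GSpin_5 \times \GL_t$ (from Section \ref{LSMETHODGSP4}) with the Artin functional equation for $\phi_\sigma \otimes \phi_\tau$. The places outside a finite bad set reduce to a Satake computation where the matching is built into the definition of $\phi_\sigma$ via the $\GL_4$-LLC. For non-supercuspidal, non-generic $\sigma$, one writes $\sigma$ as a subquotient of a parabolically induced representation and invokes the multiplicativity of both analytic and arithmetic $\gamma$-factors. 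Part $(d)$ follows from $(c)$ at the level of generic inducing data, since the Langlands-Shahidi formalism expresses the Plancherel measure for $\sigma \times \tau$ as a product of local $\gamma$-factors attached to the adjoint action on the Levi.

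The main obstacle is uniqueness of $L$. For generic $\sigma$, a local converse theorem for $\GSpin_5$ would show that the twisted $\gamma$-factors by $\GL_t$ for $t \leq 2$, combined with the central character, determine the parameter; this is roughly analogous to the converse theorems of Cogdell-Piatetski-Shapiro used in the $\GL_n$ case. The delicate case is a non-generic supercuspidal $\sigma$, where no theory of $\gamma$-factors is available and only Plancherel data is on hand. Here the argument must proceed packet-theoretically: show that each tempered $L$-packet of $\GSp_4(F)$ contains at most one generic member (pinned down by property $(c)$), and that the remaining non-generic members within a packet are separated from each other by their Plancherel measures against $\GL_t$, $t \leq 2$, possibly supplemented by endoscopic character identities. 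This is where I expect the deepest work to lie, and it is likely the heart of the original argument in \cite{GT11}.
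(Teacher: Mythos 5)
This statement is imported into the paper verbatim from Gan--Takeda: it is labeled ``(Theorem 10.1 of \cite{GT11})'' and is introduced with the sentence ``Let us now recall the main theorem of \cite{GT11}.'' The paper gives no proof and intends none; its contribution for $\GSp_4$ lies in the subsequent depth-bounded re-characterization of this theorem and in the transfer to positive characteristic (Theorem \ref{LLCGSP4}) via the Deligne--Kazhdan correspondence. There is therefore no proof in the paper against which your attempt can be compared --- you have sketched an argument for an external citation.

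Taken as a sketch of \cite{GT11}, your account is broadly aligned (similitude theta towers, reduction to the LLC for $\GL_4$ and $\GL_2$, a global argument for the generic $\gamma$-factor matching, multiplicativity for non-generic non-supercuspidals), but two steps would not go through as written. First, the anisotropic similitude orthogonal group that enters is $\GSO_{4,0} \cong (D^\times \times D^\times)/\{(z,z^{-1})\}$ with $D$ the quaternion division algebra, not ``$\mathrm{GO}_{4,2}$''; this quaternionic form is exactly what handles the non-generic supercuspidals, which is the case where your next step fails. Second, part $(d)$ cannot be deduced ``from $(c)$ at the level of generic inducing data'': a non-generic supercuspidal $\sigma$ is by definition not a subquotient of any parabolic induction, so there is no inducing data and no Langlands--Shahidi $\gamma$-factor to which one could appeal; Gan--Takeda instead compute the Plancherel measure directly through the theta correspondence. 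Finally, the uniqueness argument in \cite{GT11} is bifurcated rather than packet-theoretic in the sense you propose: for generic or non-supercuspidal $\sigma$ it is a $\gamma$-factor converse argument for the four-dimensional parameter (the content the present paper later extracts as Lemma \ref{conversetheoremrefined}), while for non-generic supercuspidal $\sigma$ the Plancherel condition $(d)$ alone pins down the parameter.
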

In part $(c)$ above, we have followed the notation of \cite{GT11}, that is,
\[L(s, \sigma \times \tau) := L(s, \sigma^\vee \times \tau, r_{1,t}); \hspace*{0.2in} \gamma(s, \sigma\times \tau, \psi) := \gamma(s, \sigma^\vee \times \tau, r_{1,t}, \psi);\; t = 1,2.\]
Note that the definition above is extended to non-generic, non-supercuspidal representations as explained in Page 11 of \cite{GT11}.

By Theorem 2.3.6.4 of \cite{Yu09} (recalled as Theorem \ref{depth} in this article), we know that the Local Langlands correspondence for $\GL_n$ preserves depth. Since the LLC for $\GSp_4$ was established using similitude theta correspondence and the LLC for $\GL_n$, we combine the results of \cite{Pan02} and \cite{Yu09} to show that the LLC for $\GSp_4$ preserves depth in odd residue charateristic (The results of \cite{Pan02} hold only for odd residue characteristic). 

\begin{proposition}Let $F$ be a non-archimedean local field of characteristic 0 with residue characteristic $p >2$. Let $\sigma \in \Pi(\GSp_4(F))$. Let $\phi_\sigma = L(\sigma)$ as above. Then
\[\depth(\sigma) = \depth(\phi_\sigma).\]
\end{proposition}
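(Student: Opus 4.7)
The plan is to reduce to the supercuspidal case and then exploit the fact that the LLC for $\GSp_4(F)$ established in \cite{GT11} is built out of the local theta correspondence together with the LLC for $\GL_n$, both of which are compatible with depth.

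First I would handle the non-supercuspidal case. By the Langlands classification, every $\sigma \in \Pi(\GSp_4(F))$ is the unique Langlands quotient of a standard module $\Ind_Q^{\GSp_4(F)}(\rho \otimes e^{\nu})$, where $\rho$ is a tempered representation of the Levi of a standard parabolic $\bfq \subsetneq \GSp_4$. It is a standard consequence of the Iwahori factorization of the Moy--Prasad filtration subgroups (Theorem 4.2 of \cite{MP96}) that parabolic induction preserves depth in the sense $\depth(\sigma) = \depth(\rho)$, with the unramified twist $e^\nu$ being of depth $0$. Correspondingly, by Theorem \ref{char0}(c) the parameter $\phi_\sigma$ is the semisimplification of $\phi_\rho \otimes |\cdot|^\nu$ regarded as a parameter of the associated Levi $^L\bfq$ of $\GSp_4(\C)$, and on the Galois side $\depth$ is plainly invariant under direct sum and unramified twist. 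Tempered representations are in turn subquotients of parabolically induced representations from discrete series of proper Levi subgroups or are themselves discrete series. The only discrete series of proper Levis come from $\GL_2(F) \times \GL_1(F)$, $\GL_1(F)^2 \times \GL_1(F)$, and the Klingen/Siegel Levi structure; in each case either $\rho$ is a twist of a $\GL_n$-representation, and the result follows from Theorem \ref{depth} applied to each $\GL_n$-factor, or $\rho$ is a generalized Steinberg-type discrete series for $\GSp_4$ obtained as the unique subrepresentation of an induced representation from a supercuspidal datum, for which the equality of depths passes from the supercuspidal datum.

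This reduces the problem to the case where $\sigma$ is a supercuspidal representation of $\GSp_4(F)$. Here the construction of $\phi_\sigma$ in \cite{GT11} proceeds through the similitude theta correspondence. If $\sigma$ is generic, then by the results used in \cite{GT11} (combined with Property (c) of Theorem \ref{char0}) $\sigma$ participates in the theta correspondence for the dual pair $(\GSp_4, \GO_{3,3})$, lifting to an irreducible representation $\Sigma$ of $\GSO_{3,3}(F) \cong (\GL_4(F) \times \GL_1(F))/\Delta$, whose $\GL_4$-component $\Sigma_0$ has parameter $\phi_{\Sigma_0}$ equal to the composition of $\phi_\sigma$ with the inclusion $\GSp_4(\C) \hookrightarrow \GL_4(\C)$; thus $\depth(\phi_\sigma) = \depth(\phi_{\Sigma_0})$. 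If $\sigma$ is non-generic supercuspidal, then by the endoscopic classification used in \cite{GT11} $\sigma$ lifts to the dual pair $(\GSp_4, \GO_{2,2})$ and corresponds to a pair of supercuspidal representations on $\GSO_{2,2}(F) \cong (\GL_2(F) \times \GL_2(F))/\Delta$, whose parameters assemble to give $\phi_\sigma = \phi_1 \oplus \phi_2$ with $\phi_i$ two-dimensional, and again $\depth(\phi_\sigma) = \max_i\depth(\phi_i)$.

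The two key inputs are now: Pan's theorem \cite{Pan02} (valid for odd residue characteristic) that the similitude theta correspondence preserves Moy--Prasad depth, so that $\depth(\sigma) = \depth(\Sigma_0)$ in the generic case and $\depth(\sigma) = \max_i \depth(\tau_i)$ for the corresponding $\GL_2$-factors $\tau_i$ in the non-generic case; and Theorem \ref{depth} (Yu's theorem, Theorem 2.3.6.4 of \cite{Yu09}), which gives $\depth(\Sigma_0) = \depth(\phi_{\Sigma_0})$ and $\depth(\tau_i) = \depth(\phi_i)$. Chaining these equalities then yields $\depth(\sigma) = \depth(\phi_\sigma)$. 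The main obstacle I anticipate is verifying carefully that the specific recipe in \cite{GT11} for extracting $\phi_\sigma$ from the theta lift of $\sigma$ matches the Galois-side recipe up to the necessary central/similitude twists without disturbing depth; since these twists involve only the central character of $\sigma$, which has depth controlled by $\depth(\sigma)$ via the Moy--Prasad presentation \eqref{MPpresentation}, and correspond to the similitude character of $\phi_\sigma$ (Theorem \ref{char0}(a)), this compatibility ultimately follows from the depth-preservation statement of local class field theory.
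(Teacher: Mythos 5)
Your high-level plan matches the paper's — both proofs route the depth comparison through the local theta correspondence as used in \cite{GT11}, Pan's theorem, and Yu's depth-preservation theorem for $\GL_n$ — but there is a genuine gap at the key technical step. You cite Pan's theorem \cite{Pan02} as stating that \emph{the similitude theta correspondence preserves depth}, and then chain $\depth(\sigma) = \depth(\Sigma_0) = \depth(\phi_{\Sigma_0})$. However, Pan's theorem concerns the \emph{isometry} theta correspondence for the pair $\Sp_4(F) \times \mathrm{O}(V)$, not the similitude pair $\GSp_4(F) \times \GO(V)$. The passage from the isometry case to the similitude case is precisely the nontrivial content of the proposition: one must restrict $\sigma$ to $\Sp_4(F)$, restrict $\theta(\sigma)$ to $\SO(V)$, match the constituents (Lemma 2.2 of \cite{GT-011}), apply Pan to each, and then recover the depth of the similitude-group representation from the depths of its $\Sp_4$-constituents together with the depth of the central character. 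That last recovery step is the crux of the argument and uses the hypothesis $p > 2$ in an essential and \emph{separate} way from Pan: one needs Hensel's lemma to split the Moy--Prasad filtration of the maximal torus as $T_{\calp^{1+\lfloor r\rfloor}} = Z_{\calp^{1+\lfloor r\rfloor}} T^{\text{der}}_{\calp^{1+\lfloor r\rfloor}}$, and hence $G_{x,r+} = Z_{\calp^{1+\lfloor r\rfloor}} G^{\text{der}}_{x,r+}$, which is what yields $\depth(\sigma) = \max\{\depth(\omega_\sigma),\,\depth(\sigma|_{\Sp_4(F)})\}$. Your final paragraph gestures at the central-character issue but treats it as a routine compatibility with class field theory rather than as a step requiring an actual argument; as written, the chain of equalities simply does not go through because Pan does not give you what you claim.

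Two secondary points. First, your opening reduction to the supercuspidal case via the Langlands classification is not wrong, but it is unnecessary: the theta-correspondence recipe of \cite{GT11} assigns a lift $\theta(\sigma)$ uniformly to \emph{every} irreducible $\sigma$, and the paper treats all cases at once by proving $\depth(\sigma) = \depth(\theta(\sigma))$ in general, so the Langlands-classification detour just adds work and introduces new claims (e.g.\ ``parabolic induction preserves depth'') that themselves need justification. Second, your dichotomy ``generic $\leadsto$ $\GSO_{3,3}$, non-generic supercuspidal $\leadsto$ $\GSO_{2,2}$'' oversimplifies the case analysis in \cite{GT11}; since the depth argument does not actually depend on which $\GSO(V)$ occurs, it is cleaner (and safer) to leave that unspecified, as the paper does.
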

\begin{proof}
This proposition was also observed in the proof of Proposition 1 of \cite{Sor10}, but the details of the proof were  omitted. The author thanks Professor Wee Teck Gan for explaining some of the steps in this proof. \\
\textbullet \hspace{2pt} If $\tau$ is an irreducible, admissible representation of $\GL_n(F)$, and $\phi_\tau$ is the Langlands parameter attached to $\tau$, then $\depth(\tau) = \depth(\phi_\tau)$ by Theorem \ref{depth}. Let $D$ be a division algebra over $F$ of dimension $n^2$. Let $G' = D^\times$ and $G =\GL_n(F)$. Let $\tau^D$ be an essentially square integrable representation of $D^\times$ that corresponds to an essentially square integrable $\tau$ via the Jacquet-Langlands correspondence.  Then by the main theorem of \cite{LR03}, $\depth(\tau^D) = \depth(\tau)$. \\
\textbullet \hspace{2pt} If $\tau_i$ is an irreducible, admissible representation of $\GL_{n_i}(F), i=1,2$, then \[\depth(\tau_1 \times \tau_2) = \max\{\depth(\tau_1), \depth(\tau_2)\}.\] This follows from the simple observation that $(\tau_i)^{G_{{x_i}, r+}} \neq 0$, then $(\tau_i)^{G_{{x_i}, s+}} \neq 0$ for each $s \geq r$. \\
\textbullet \hspace{2pt} Now let us recall the definition of the $L$-map from Section 7 of \cite{GT11}. The similitude orthogonal groups that appear in relation to the theta correspondence for an irreducible representation of $\GSp_4(F)$ are
\begin{align}\label{Orthogonal}
\GSO_{2,2} &\cong (\GL_2 \times \GL_2)/\{(z, z^{-1}):z \in \GL_1\},\nonumber\\
\GSO_{4,0} &\cong (D^\times\times D^\times)/\{(z, z^{-1}):z \in \GL_1\},\\
\GSO_{3,3} &\cong (\GL_4 \times \GL_1)/\{(z, z^{-2}):z \in \GL_1\};\nonumber
\end{align}
where $D$ is a quaternion division algebra over $F$. Via these isomorphisms, an irreducible, admissible representation of $\GSO_{2,2}(F)$ is of the form $\tau_1 \otimes \tau_2$ where $\tau_1$ and $\tau_2$ are irreducible representations of $\GL_2(F)$  with the same central character, and a representation of $\GSO_{4,0}(F)$ is of the form $\tau_1^D \otimes \tau_2^D$ where $\tau_i^D,\;i=1,2,$ are representations of $D^\times$ with the same central character. Similarly, an irreducible representation of $\GSO_{3,3}(F)$ is of the form $\tau \otimes \mu$ where $\tau$ is a representation of $\GL_4(F)$ and  $\omega_\tau = \mu^2$. 
If $\sigma$ participates in theta correspondence with $\GSO_{2,2}(F)$ or $\GSO_{4,0}(F)$, then
\[\sigma = \theta(\tau_1 \otimes \tau_2) \;\text{ or }\; \sigma = \theta(\tau_1^D \otimes \tau_2^D)\]and $\phi_\sigma = \phi_{\tau_1} \oplus \phi_{\tau_2}$, obtained by combining the Jacquet-Langlands (for the $\GSO_{4,0}$ case) and the Local Langlands for $\GL_2$.  If $\sigma$ participates in theta correspondence with $\GSO_{3,3}$, then $\theta(\sigma) = \tau \otimes \mu$ and $\phi_\sigma = \phi_\tau \times \mu$. \\
\textbullet \hspace{2pt} Let $\bft$ denote the standard maximal torus in $\GSp_4$, $\bft^{\text{der}}$ the maximal torus in $\Sp_4$ and $\bfz \cong \bG_m$ the center of $\GSp_4$.  If $\sigma$ is an irreducible representation of $\GSp_4(F)$ and $\theta(\sigma)$ is the representation of an appropriate $\GSO(V)$ that corresponds to $\sigma$ via theta correspondence, then $\depth(\sigma) = \depth(\theta(\sigma))$. To prove this, note that if
\[\sigma|_{\Sp_4(F)} = \oplus \sigma_i\]
then
\[\theta(\sigma)|_{\SO(V)} = \oplus \theta(\sigma_i)\]
where $\sigma_i$ corresponds to $\theta(\sigma_i)$ via the isometry theta correspondence by Lemma 2.2 of \cite{GT-011}. Now, $\depth(\sigma_i) = \depth(\theta(\sigma_i))$ by \cite{Pan02} (The results of \cite{Pan02} hold only when the residue characteristic is $> 2$). 
Therefore, 
\[\depth(\sigma|_{\Sp_4(F)}) = \min\{ \depth(\sigma_i)\} = \min\{ \depth(\theta(\sigma_i)\} = \depth(\theta(\sigma)|_{SO(V)}) = A\text{  (say).}\]
Furthermore, $\omega_{\sigma} = \omega_{\theta(\sigma)}$. Let $A_1 = \max\{\depth(\omega_\sigma), A\}$. We claim that
\[\depth(\sigma) = A_1. \]
Note that the kernel of the similitude character $\operatorname{sim}:\GSp_4(F) \rightarrow F^{\times}$ is $\Sp_4(F)$ and  $\operatorname{sim}(Z) = (F^{\times})^2$. Since the residue characteristic $p >2$, we have by Hensel's lemma that
\begin{align*}
T_{\calp^{1+\lfloor r\rfloor }} & = Z_{\calp^{1+\lfloor r\rfloor }}T^{\text{der}}_{\calp^{1+\lfloor r\rfloor }},  \text{ for each $r \geq 0$}.
\end{align*}
Therefore, using Equations \eqref{MPpresentation} and \eqref{MPpresentation1}, we see that
\[G_{x,r+} = Z_{\calp^{1+ \lfloor r\rfloor }}G^{\text{der}}_{x,r+} \text{ for each }r \geq 0.\] 
Now, since $\depth(\sigma|_{\Sp_4(F)}) =A$, there exists $x \in \calb$ such that $(\sigma|_{\Sp_4(F)}) ^{G^{\text{der}}_{x,A+}} \neq 0$. In particular, $(\sigma|_{\Sp_4(F)}) ^{G^{\text{der}}_{x,A_1+}} \neq 0$.  Also $\omega_\sigma|_{Z_{\calp^{1+ \lfloor A_1 \rfloor}} }=1$. Then $\sigma^{G_{x,A_1+}} \neq 0$. Hence $\depth(\sigma) \leq A_1$. On the other hand, if there exists $x \in \calb$ such that $\sigma^{G_{x,r+}} \neq 0$, then $\omega_\sigma|_{ Z_{\calp^{1+\lfloor r\rfloor }}} = 1$ and $(\sigma|_{\Sp_4(F)}) ^{G^{\text{der}}_{x,r+}} \neq 0$. Therefore $A_1 \leq r$. Hence $A_1 = \depth(\sigma)$. Similarly, we can show that
\[\depth(\theta(\sigma)) = \max\{\depth(\omega_{\theta(\sigma)}),\depth( \theta(\sigma)|_{\SO(V)})\},\] where $\SO(V)$ is one of the groups in Equation \eqref{Orthogonal} above. Hence
\[\depth(\sigma) = \depth(\theta(\sigma)).\] 

Now the proposition follows by combining the observations above.
\end{proof}
\subsection{A re-characterization of the LLC for $\GSp_4$ over local fields of characteristic 0}

Let us introduce the following notation. For $m \geq1$, Let $ \Pi(\GSp_4(F))_m$ be the set of equivalence classes of irreducible, admissible representations of $\GSp_4(F)$ of depth $\leq m$ and similarly, let
 $\Phi(\GSp_4(F))_m$ be the set of admissible homomorphisms from $\WD_F \rightarrow \GSp_4(\mathbb{C})$ of depth $\leq m$, where the homomorphisms are taken up to $\GSp_4(\mathbb{C})$-conjugacy.

From the results of Section \ref{DepthPreservation}, we know that \[\depth(\sigma) = \depth(\phi_\sigma).\] Hence the above map restricts to give a well-defined map 
\[L_m:\Pi(\GSp_4(F))_m \rightarrow \Phi(\GSp_4(F))_m.\]
In this subsection, we re-characterize Theorem \ref{char0} into a corresponding statement about the representations of depth $\leq  m$ (cf. Theorem \ref{charac} for an analogous statement for $\GL_n$). 

Before proceeding, let us recall some notation from \cite{GT11} about representations of $\WD_F$. Let $\G_F(n), \G_F^0(n)$ be as in Section \ref{LLCGLNCLF} and let and $\G_F^2(n)$ denote the set irreducible (indecomposable) representations of $\WD_F$ of dimension $n$, respectively. An irreducible representation of $\WD_F$ is of the form $\rho \otimes S_r$ where $\rho$ is an irreducible representation of $W_F$ and $S_r$ is the irreducible $r$-dimensional representation of $\SL_2(\C)$. If $\phi$ is a semi-simple representation of $\WD_F$, let $m_\phi(\rho, r)$ denote the multiplicity of the representation $\rho \otimes S_r$ in $\phi$. Also, let $v_\phi(\rho, r)$ denote the order of the pole at $s=0$ of the local $L$-factor $L(s, \phi \otimes (\rho \otimes S_r)^\vee)$. We then have
\[L(s, \rho \otimes S_r) = L(s + \frac{r-1}{2}, \rho),\]
and with $t = \min(r,k)$,
\[L(s, (\rho_1 \otimes S_r)\otimes(\rho_2 \otimes S_k)^\vee) = \displaystyle{\prod_{j=0}^{t-1} L(s +\frac{r+k-2-2j}{2}, \rho_1 \otimes \rho_2^\vee}).\]
Also, for any semi-simple representation of $\WD_F$, 
\begin{equation}\label{polesmult}
v_\phi(\rho,r) = \displaystyle{\sum_{k \geq 1} \sum_{j=1}^{\min\{r,k\}}m_\phi(\rho|\;-\;|^{-(\frac{r+k}{2}-j)},k)}.
\end{equation}

Furthermore, note that if $\phi$ is a semi-simple representation of $\WD_F$ with $\phi = \oplus \phi_i$, then
\begin{equation}\label{DepthAdditiveMax}
\depth(\phi) = \displaystyle{\max\{\depth(\phi_i)\}}.
\end{equation}
Let $\G_F(n)_m$ be the isomorphism classes of $n$-dimensional representations of $\WD_F$ of depth $\leq m$.
We now prove the following theorem.
\begin{lemma}\label{conversetheoremrefined}
Let $\phi_1, \phi_2 \in \G_F(4)$ such that $\phi_1$ and $\phi_2$ do not have an irreducible 3-dimensional constituent. Assume $\det(\phi_1) = \det(\phi_2)$. Let $m \geq 1$ be such that $\max\{\depth{\phi_1}, \depth{\phi_2}\} \leq m$. Suppose for  each $\tau \in \G_F(r)_{2m}, r=1,2$,
\begin{align}\label{Lepsilon}
L(s, \phi_1 \otimes \tau) &= L(s, \phi_2 \otimes \tau),\\ \nonumber
\epsilon(s, \phi_1 \otimes \tau, \psi) &= \epsilon(s, \phi_2 \otimes \tau, \psi).
\end{align}
Then, $\phi_1 \cong \phi_2$. 
\end{lemma}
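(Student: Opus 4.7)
My plan is to reduce the lemma to an application of Henniart's converse theorem (Theorem \ref{charac}) via the LLC for $\GL_4$. Fix a decomposition of each $\phi_i$ into irreducibles of the form $\rho \otimes S_r$; by the no-$3$-dim-constituent hypothesis, each summand has $(\dim \rho)\cdot r \in \{1,2,4\}$, and by \eqref{DepthAdditiveMax} every such $\rho$ has depth at most $m$, so in particular every unramified twist of $\rho$ lies in $\G_F(\dim \rho)_{2m}$ and is admissible twisting data.

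The first stage is to match the parts of $\phi_1$ and $\phi_2$ built from irreducibles of dimension $\leq 2$. For every $\rho_0$ that is a character or a two-dimensional irreducible $W_F$-representation of depth $\leq m$, the hypothesis \eqref{Lepsilon} gives $L(s,\phi_1 \otimes \tau^\vee) = L(s,\phi_2 \otimes \tau^\vee)$ for $\tau = \rho_0$ and, in the character case, also for $\tau = \rho_0 \otimes S_2$, since all these $\tau$ lie in $\G_F(r)_{2m}$ with $r \leq 2$. Equating pole orders at $s = 0$, Equation \eqref{polesmult} yields $v_{\phi_1}(\rho_0,k) = v_{\phi_2}(\rho_0,k)$ for the relevant values of $k$. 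Running $\rho_0$ through all its unramified twists and inverting the resulting finite linear system in \eqref{polesmult}, I extract the individual multiplicities $m_{\phi_1}(\rho,r) = m_{\phi_2}(\rho,r)$ for every $\rho$ of dimension $\leq 2$ and every $r$. Subtracting the common direct summand, $\phi_i = \phi^{\leq 2} \oplus \phi_i^{(4)}$ with $\phi^{\leq 2}$ common; the ban on $3$-dim irreducible constituents forces $\phi_i^{(4)}$ to be either zero or a single $4$-dimensional irreducible $W_F$-representation $\rho_i$, and the two dimensions coincide.

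In the zero case we are done, so assume $\phi_i = \rho_i$ is $4$-dimensional irreducible. Let $\sigma_i$ be the supercuspidal representation of $\GL_4(F)$ with $\rec_4(\sigma_i) = \rho_i$ (Theorem \ref{LLC}); then $\depth(\sigma_i) \leq m$ by Theorem \ref{depth}, and $\omega_{\sigma_1} = \omega_{\sigma_2}$ by Property (d) of Theorem \ref{LLC} combined with the hypothesis $\det\phi_1 = \det\phi_2$. Property (b) of Theorem \ref{LLC} translates \eqref{Lepsilon} into the equality $\epsilon(s,\sigma_1 \times \pi,\psi) = \epsilon(s,\sigma_2 \times \pi,\psi)$ for every irreducible supercuspidal representation $\pi$ of $\GL_t(F)$, $t \leq 2$, of depth $\leq 2m$; and Theorem \ref{stability} shows this same equality is automatic when $\depth(\pi) > 2m$. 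Thus the full hypothesis of Theorem \ref{charac} is met, yielding $\sigma_1 \cong \sigma_2$ and hence $\rho_1 \cong \rho_2$, so $\phi_1 \cong \phi_2$.

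The main obstacle is the multiplicity recovery in the first stage: the admissible twists have dimension at most two, so the pole orders $v_{\phi_i}(\rho,r)$ for $r \geq 3$ are not directly accessible, and the various $m_{\phi_i}(\rho,r)$ must be disentangled indirectly from the two-parameter family $\{v_{\phi_i}(\rho_0,k)\}_{k=1,2}$ as $\rho_0$ varies over unramified twists of $\rho$. Finiteness of the support of $m_{\phi_i}(\cdot,\cdot)$ (forced by $\dim \phi_i = 4$) together with the exclusion of $3$-dim irreducibles keeps the resulting linear system solvable, but the case analysis (for example ruling out $\phi = \rho \otimes S_2$ masquerading as $\rho' \oplus \rho''$ with $\rho',\rho''$ two-dimensional) is the delicate part of the bookkeeping.
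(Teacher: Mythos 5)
Your overall strategy — use the pole formula \eqref{polesmult} together with the dimension constraint to pin down the small constituents, then fall back on the converse theorem (Theorem \ref{charac}) for the $4$-dimensional irreducible case — is in the same spirit as the paper's proof. However, where the paper carries out an explicit case analysis on the isotypic structure of $\phi_1$ (strip off $1$-dimensional constituents via the identity $m_i(\chi,1) = v_i(\chi,1) + v_i(\chi|-|^{-1},1) - v_i(\chi|-|^{-1/2},2)$ and recurse; treat the sum of two $2$-dimensional irreducibles by citing \cite{GT11}; treat $\chi \otimes S_4$ and $\rho \otimes S_2$ with $\rho$ two-dimensional directly), you instead assert that the multiplicities $m_{\phi_i}(\rho,r)$ for $\dim \rho \leq 2$ can be recovered uniformly by "inverting the resulting finite linear system." This is not a formal linear-algebra fact, and I do not think it is a proof.

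The concrete difficulty is in the two-dimensional case. For a $2$-dimensional irreducible $\rho$ of $W_F$, the only accessible datum (subject to $\dim \phi = 4$) is $v_\phi(\rho,1) = m_\phi(\rho,1) + m_\phi(\rho|-|^{-1/2},2)$; the quantity $v_\phi(\rho,2)$ would require twisting by $\rho \otimes S_2 \in \G_F(4)$, which is outside the allowed test range $r \leq 2$. The shift system $v(a) = m_1(a) + m_2(a-1/2)$ (as $a$ runs over unramified twists) has, in general, more than one nonnegative integer solution; the correct solution is only singled out by feeding in the total-dimension constraint and the entire profile of pole orders for character twists as well. In other words, the "bookkeeping" you flag as delicate — distinguishing $\rho \otimes S_2$ from $\rho' \oplus \rho''$ and from the mixed decompositions $\rho' \oplus \chi_1 \oplus \chi_2$, $\rho' \oplus (\chi \otimes S_2)$, etc. — is exactly the mathematical content of the lemma, and it has not been carried out. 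The paper's proof is essentially this bookkeeping, organized case by case; your reduction to the $\GL_4$ converse theorem and your justification that all required twists remain within $\G_F(r)_{2m}$ are correct, but the matching stage needs to be written out with the same care as the paper's Cases 1--3 and the $\GL_2$ "Observation."
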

\begin{proof}
This lemma would  simply be Case 1 of  Theorem 10.1, \cite{GT11},  if there had been no restriction on the depth of $\tau$. The proof here is obtained by combining Theorem \ref{stability} with the ingredients in Theorem 10.1 of \cite{GT11} and Equation \eqref{DepthAdditiveMax}.  We write $m_i(\rho,r)$ and $\nu_i(\rho,r)$ instead of $m_{\phi_i}(\rho,r)$ and $\nu_{\phi_i}(\rho,r)$. We observe the following.

\noindent \textbf{Observation}:  Let $\phi_1, \phi_2 \in \G_F(2)$. Assume $\det(\phi_1) = \det(\phi_2)$ and suppose for each $\chi \in \G_F(1)_{2m}$, 
\begin{align*}
L(s, \phi_1 \otimes \chi) &= L(s, \phi_2 \otimes \chi),\\
\epsilon(s, \phi_1 \otimes \chi, \psi) &= \epsilon(s, \phi_2 \otimes \chi, \psi).
\end{align*}
Then, $\phi_1 \cong \phi_2$. To see this, note that if $\phi_1, \phi_2 \in \G_F^0(2)$, then we are done by Theorem \ref{charac}.  Suppose $\phi_1 \in \G_F(2) \backslash \G_F^0(2)$.  Applying formula \eqref{polesmult} to $\phi_1, \phi_2$, we have
\[v_i(\chi, 1) = m_i(\chi,1) +m_i(\chi|\;-\;|^{-1/2},2) \]
and 
\[v_i(\chi|\;-\;|^{1/2}, 1) = m_i(\chi|\;-\;|^{1/2},1) +m_i(\chi,2)\]
 for all $\chi \in \G_F(1)_{2m}$. Then there are two possibilities. The first possibility is that $\phi_1 = \chi \otimes S_2$. Note that  $\depth(\chi) \leq m$. Now, suppose $\phi_2 \neq \chi \otimes S_2$, then $m_2(\chi|\;-\;|^{1/2},1) =1$. Therefore
\[\phi_2 = (\chi|\;-\;|^{1/2} \otimes S_1) \oplus (\eta \otimes S_1).\]
Then $\depth(\eta) \leq m$ and $L(s, \phi_2 \otimes (\eta \otimes S_1)^\vee)$ has a pole at $s=0$, but $L(s, \phi_1 \otimes (\eta \otimes S_1)^\vee)$ is holomorphic at $s=0$, a contradiction. Hence $\phi_1 \cong \phi_2$. The other possibility is $\phi_1 = (\chi \otimes S_1) \oplus (\eta \otimes S_1)$. It is evident that this case can be handled as above to obtain $\phi_1 \cong \phi_2$. 

The proof of the lemma is just going to use arguments similar to the ones in the observation above and Theorem \ref{charac}. We provide the details for completeness. Suppose $\phi_1, \phi_2 \in \G_F^0(4)$. Then $\phi_1 \cong \phi_2$ by Theorem \ref{charac}. Without loss of generality, assume $\phi_1 \notin \G_F^0(4)$. We have the following cases.\\
\textbf{Case} 1: Suppose $\phi_1$ has an irreducible 1-dimensional constituent. As observed in Theorem 10.1 of \cite{GT11}, we have
\[ m_i(\chi,1) = v_i(\chi,1) + v_i(\chi|\;-\;|^{-1},1) - v_i(\chi|\;-\;|^{-1/2},2).\]
Consequently, $m_1(\chi,1) = m_2(\chi,1) \;\forall \;\chi \in \G_F(1)_{2m}$. In particular, since any irreducible constituent of $\phi_1$ also has depth $\leq m$, we see that if $\phi_1 = \phi_{1}^\# \oplus (\chi \otimes S_1),$ then $\phi_2 = \phi_{2}^\# \oplus (\chi \otimes S_1).$ 
Now $\det(\phi_1) = \det(\phi_2)$ and Equation \eqref{Lepsilon} holds for $\phi_1^\#, \phi_2^\# \in \G_F(3)$. Notice that $\phi_1^\#, \phi_2^\# \in \G_F(3) \backslash \G_F^2(3)$, since they do not contain an irreducible 3-dimensional constituent. In particular, $\phi_1$ contains an irreducible 1-dimensional constituent say $\chi \otimes S_1$.  Let $v_i^\#(\rho,r)$ and $m_i^\#(\rho,r)$ denote the corresponding objects for $\phi_i^\#$. Then formula \eqref{polesmult} gives
\[ m_i^\#(\mu, 1) = v_i^\#(\mu,1) +v_i^\#(\mu|\;-\;|^{-1},1) - v_i^\#(\mu|\;-\;|^{-1/2},2).\]
This implies that
\[\phi_1^\# = \phi_1^\circ \oplus (\mu \otimes S_1) \text{ and } \phi_2^\# = \phi_2^\circ \oplus (\mu \otimes S_1).\]
Now, $\det(\phi_1^\circ) = \det(\phi_2^\circ)$ and Equation \eqref{Lepsilon} is satisfied by $\phi_1^\circ$ and $\phi_2^\circ$. Also $\phi_1^\circ, \phi_2^\circ \in \G_F(2)_m$. We are now done by the observation above. \\
\textbf{Case} 2: Assume $\phi_1$ is the sum of two (possibly equivalent) irreducible representations. This case would follow by combining Theorem 10.1, Case 1, of \cite{GT11} and the observation above.\\
\textbf{Case} 3: We are finally reduced to the case when $\phi_1 \in \G_F^2(4)\backslash \G_F^0(4)$. Then again there are two possibilities. The first possibility is  $\phi_1 = \chi \otimes S_4$. Then formula \ref{polesmult} yields
\[ v_i(\chi|\;-\;|^{3/2},1) = m_i(\chi|\;-\;|,2) + m_i(\chi, 4).\]
Now if $\phi_2 \neq \chi \otimes S_4$, then $\chi|\;-\;| \otimes S_2$ occurs as a direct summand of $\phi_2$ and $m_i(\chi|\;-\;|,2) =1$. Consequently,
\[\phi_2 =( \chi|\;-\;|\otimes S_2) + \rho_0\]
with $\rho_0 \neq ( \chi|\;-\;|\otimes S_2)$. Now, $L(s, \phi_1 \otimes \rho_0^\vee)$ is holomorphic at $s=0$ while $L(s, \phi_2 \otimes \rho_0^\vee)$ has a pole at $s=0$, a contradiction. Hence $\phi_1 \cong \phi_2$. The second, and final,  possibility is $\phi_1 = \rho \otimes S_2$ for $\rho \in \G_F^0(2)$. This case follows by arguing as above. \qedhere
\end{proof}

Before stating the re-characterization, let us quickly observe the relation between conductor and depth for semisimple representations of $\WD_F$. Recall that for an irreducible $n$-dimensional representation $\rho$ of $\WD_F$, 
\[\depth(\rho) = \displaystyle{\max\left\{0, \frac{\cond(\rho) - n}{n}\right\}}\]
using Theorem \ref{depth} and Equation \eqref{JosLanDepth}.
Also, if $\rho = \oplus \rho_i$ where $\rho_i$ are irreducible representations of $\WD_F$,
then $\depth(\rho) = \max\{\depth(\rho_i)\}$ and $\cond(\rho) = \sum \cond(\rho_i)$. Hence
\begin{equation}\label{DepthConductorRelation}
\depth(\rho) \leq \cond(\rho) \leq n^2\cdot\depth(\rho) +n^2.
\end{equation}
where $\dim(\rho) = n$.  
We now state the re-characterization of Theorem \ref{char0}. 

\begin{theorem} 
Let $l(m) = 16m+16$. There is a unique surjective finite-to-one map \[L: \Pi(\GSp_4(F)) \longrightarrow \Phi(\GSp_4(F)) \] satisfying:
\begin{enumerate}[(a)]
\item the central character $\omega_\sigma$ of $\sigma$ corresponds to the similitude character $\operatorname{sim}(\phi_\sigma)$ of $\phi_\sigma: = L(\sigma)$ via local class field theory.
\item $\sigma$ is a discrete series representation iff $\phi_\sigma$ does not factor through any proper Levi subgroup of $\GSp_4(\mathbb{C})$.
\item if $\sigma$ is generic or non-supercuspidal and $\depth(\sigma) \leq m$, then for any irreducible representation $\tau$ of $\GL_t(F)$ with $t \leq 2$ and $\depth(\tau) \leq 2l(m)$, 
\[ \begin{cases}
L(s, \sigma \times \tau) &= L(s, \phi_\sigma \otimes \phi_\tau) \\
\gamma(s, \sigma \times \tau, \psi) &= \gamma(s, \phi_\sigma \otimes\phi_ \tau, \psi)
\end{cases} \]
where the factors on the right are of Artin type, and the factors on the left are obtained using the Langlands-Shahidi method.
\item if $\sigma$ is non-generic supercuspidal and $\depth(\sigma) \leq m$, then for any supercuspidal representation $\tau$ of $\GL_t(F)$ with $t \leq 2$ and $\depth(\tau) \leq m$, the Plancherel measure $\mu(s, \sigma \times \tau)$ is equal to 
\begin{align*}
 \gamma(s, \phi_\sigma^\vee \otimes \phi_\tau, \psi) \gamma(-s, \phi_\sigma \otimes \phi_\tau^\vee, \bar{\psi})\gamma(2s, \Sym^2\phi_\tau \otimes& \Sim\phi_\sigma^{-1}, \psi)\\
& \gamma(-2s, \Sym^2\phi_\tau^{\vee} \otimes \Sim\phi_\sigma, \bar{\psi}) 
\end{align*}
\end{enumerate}
\end{theorem}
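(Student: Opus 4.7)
The plan is to treat existence and uniqueness separately. Existence is immediate: the map $L$ from Theorem \ref{char0} satisfies the unrestricted versions of (a)--(d), and therefore \emph{a fortiori} satisfies the depth-restricted statements here. The substantive task is uniqueness of $\phi_\sigma = L(\sigma)$, and I would handle it in two cases according to the form of $\sigma$.

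Suppose first that $\sigma$ is generic or non-supercuspidal with $\depth(\sigma) \leq m$, and let $\phi, \phi'$ be two $4$-dimensional semisimple Weil--Deligne parameters both satisfying (a) and (c) relative to $\sigma$. By the depth-preservation of the LLC for $\GSp_4$ established in Section \ref{DepthPreservation}, both $\phi$ and $\phi'$ lie in $\mathcal{G}_F(4)_m$; by (a) and local class field theory, $\det\phi = \det\phi'$; and by the explicit Gan--Takeda description of parameters attached to generic or non-supercuspidal representations of $\GSp_4$, neither admits an irreducible $3$-dimensional constituent (such a constituent would be incompatible with the $\GSp_4$ structure). Property (c) then supplies $L(s,\phi\otimes\phi_\tau)=L(s,\phi'\otimes\phi_\tau)$ and the corresponding $\epsilon$-identity for every $\tau$ with $\depth(\tau)\leq 2l(m)=32m+32$. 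Since $2m\leq 2l(m)$, these equalities hold in particular for all $\tau \in \mathcal{G}_F(r)_{2m}$, $r=1,2$, and Lemma \ref{conversetheoremrefined} applies to yield $\phi\cong\phi'$.

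For $\sigma$ non-generic supercuspidal with $\depth(\sigma) \leq m$, I would follow the Plancherel-measure strategy of Section 10 of \cite{GT11}. Such $\sigma$ arise via theta correspondence from one of the similitude orthogonal groups in \eqref{Orthogonal}, so the attached $\phi_\sigma$ has restricted form (either an irreducible $4$-dimensional orthogonal-type parameter, or an orthogonal $2{+}2$ decomposition). Condition (d) expresses $\mu(s,\sigma\times\tau)$ as a specific rational combination of Artin $\gamma$-factors in $\phi_\sigma$ and $\phi_\tau$. Using the multiplicativity of $\gamma$-factors (Property (III) of Section \ref{LSMETHODGSP4}) together with the stability property (Property (V)), one can disentangle the four factors appearing in (d) to isolate $\gamma(s,\phi_\sigma^\vee\otimes\phi_\tau,\psi)$ for supercuspidal $\tau$ of depth $\leq m$. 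Combined with depth-preservation, which confines $\phi_\sigma$ to the bounded-depth set $\mathcal{G}_F(4)_m$, and with (a) fixing $\det\phi_\sigma$, Lemma \ref{conversetheoremrefined} again pins $\phi_\sigma$ down uniquely.

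The main obstacle is the last case: the classical Plancherel-measure argument of Gan--Takeda uses arbitrarily deeply ramified twists to stabilize the measure, whereas condition (d) only furnishes twists $\tau$ of depth $\leq m$. The key insight that makes the restricted version suffice is that depth-preservation has already cut $\phi_\sigma$ down to $\mathcal{G}_F(4)_m$, a set on which the $\gamma$-factor data from bounded-depth supercuspidal twists separates parameters via Lemma \ref{conversetheoremrefined}. Carrying this out carefully -- particularly extracting an individual $\gamma$-factor from the four-fold product in (d), tracking the similitude character contributions dictated by (a), and ruling out sign ambiguities in the symmetric-square factors -- is the one step where genuine work is required beyond formal manipulation, and it is the piece of the argument I would write out in full detail.
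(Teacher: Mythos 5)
Your Case 1 has a genuine gap, and it sits exactly where the specific constant $l(m)=16m+16$ earns its keep. You assert that ``by the depth-preservation of the LLC for $\GSp_4$ established in Section \ref{DepthPreservation}, both $\phi$ and $\phi'$ lie in $\mathcal{G}_F(4)_m$.'' But depth preservation was proved only for the Gan--Takeda map $L_1$; if $L_2$ is an arbitrary second map satisfying (a)--(d), you have no a priori control on $\depth(L_2(\sigma))$. The paper closes this by first taking $\tau$ trivial in property (c), which forces $\cond(\phi_1)=\cond(\phi_2)$; then, since $\depth(\phi_1)\leq m$ gives $\cond(\phi_1)\leq 16m+16=l(m)$ by Equation \eqref{DepthConductorRelation}, one gets $\cond(\phi_2)\leq l(m)$ and hence $\depth(\phi_2)\leq l(m)$. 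Lemma \ref{conversetheoremrefined} must then be invoked with the \emph{larger} integer $l(m)$ in place of $m$, which is precisely why property (c) demands twists of depth up to $2l(m)$ rather than $2m$. Your argument, which applies the converse lemma only at level $2m$, would not suffice once you correct the false depth bound on $\phi'$.

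Your Case 2 is also not what the paper does, though it is at least a plausible alternate route. You propose to disentangle the four $\gamma$-factors in (d) using multiplicativity and stability and then feed the isolated factor into Lemma \ref{conversetheoremrefined}; you honestly flag this as unwritten. The paper instead makes a short observation: for non-generic supercuspidal $\sigma$, the parameter $\phi_1=\phi_{\tau_1}\oplus\phi_{\tau_2}$ has depth $\leq m$ and the original proof of Theorem 10.1, Case 2 of \cite{GT11} already only invokes the Plancherel-measure identity for twists $\tau$ of depth $\leq m$; hence the depth-restricted hypothesis (d) is enough with no further manipulation. Notably the paper explicitly cautions that one does \emph{not} know $\depth(\phi_1)=\depth(\phi_2)$ in this case --- the same subtlety you elided in Case 1 --- so the uniqueness must come from the [GT11] mechanism rather than from symmetric control of both parameters' depths.
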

\begin{proof}
 Let $L_1$ be the Langlands map constructed by Gan-Takeda in \cite{GT11}. Then $L_1$ satisfies all the conditions above. We need to prove uniqueness (because we have added a restriction on the $\depth(\tau)$ in Parts $(c)$ and $(d)$). Let $L_2$ be any other map that satisfies properties $(a)$ - $(d)$ above. We need to prove that $L_1 = L_2$ . Let $\sigma \in \Pi(\GSp_4(F))$ with $\depth(\sigma) \leq m$. Let $\phi_i = L_i(\sigma)$. We need to prove that $\phi_1 \cong \phi_2$. As in \cite{GT11},  it suffices to prove that $\phi_{1}$ and $\phi_{2}$ are equivalent as 4-dimensional representations of $\WD_F$. We have the following cases.\\
\textbf{Case} 1: Suppose $\pi$ is generic or non-supercuspidal. Requirement $(c)$ implies that
\begin{align*}
L(s, \phi_{1} \otimes \phi_{\tau}) &= L(s, \phi_{2} \otimes \phi_\tau),\\\nonumber
\epsilon(s, \phi_{1} \otimes \phi_\tau, \psi) & = \epsilon(s, \phi_{2} \otimes \phi_\tau, \psi).
\end{align*}
Taking $\tau$ to be the trivial representation, we see that $\cond(\phi_1) = \cond(\phi_2)$. Now, we know by Section \ref{DepthPreservation} that $\depth(\phi_1) \leq m$. Hence, by Equation \eqref{DepthConductorRelation}, $\depth(\phi_2) \leq l(m)$. 

Since $\phi_1$ and $\phi_2$ factor through $\GSp_4(\C)$, they do not admit an irreducible 3-dimensional constituent. Moreover, $\det(\phi_1) = \Sim(\phi_1)^2 = \Sim(\phi_2)^2 = \det(\phi_2)$. Hence applying Lemma \ref{conversetheoremrefined} for the integer $l(m)$,  we deduce that $\phi_1 \cong \phi_2$. \\
\textbf{Case} 2: Suppose $\sigma$ is non-generic supercuspidal. In this case, we have $\sigma = \theta(\tau_1^D \otimes \tau_2^D)$ for a representation $\tau_1^D \otimes \tau_2^D$ of $\GSO(D)$ (see Case 2, Theorem 10.1 of \cite{GT11}). If $\tau_i$ denotes the Jacquet-Langlands lift of $\tau_i^D$, then $\phi_1 = \phi_{\tau_1} \oplus \phi_{\tau_2}.$
Furthermore, $\depth(\phi_1) \leq m$. We need to prove that $\phi_1 \cong \phi_2$. Note that in this case, we don't yet know that $\depth(\phi_1) = \depth(\phi_2)$. But, as it is clear from the proof of Theorem 10.1, Case 2  in \cite{GT11}, we only need condition (d) to be satisfied for those $\phi_\tau$ with $\depth(\phi_\tau) \leq m$.\qedhere
\end{proof}
\begin{Remark} Note that the above theorem also implies that the map \[L_m: \Pi(\GSp_4(F)_m \rightarrow \Phi(\GSp_4(F))_m\] also satisfies Properties $(a)$ - $(d)$ above and is uniquely characterized by these properties.

\end{Remark}

\subsection{The main theorem}\label{LLCGSP4CLF}
Let ${F' = \F_{p^n}((t))}$ be a non-archimedean local field of positive characteristic with $p >2$. 

\begin{theorem}\label{LLCGSP4} Let $l(m) = 16m+16$. 
There exists a unique surjective finite-to-one map
\[L_m': \Pi(\GSp_4({F'}))_m \rightarrow \Phi(\GSp_4({F'}))_m\]
satisfying the following properties:
\begin{enumerate}[(a)]
\item the central character $\omega_{\sigma'}$ of ${\sigma'}$ corresponds to the similitude character $\operatorname{sim}(\phi_{\sigma'})$ of $\phi_{\sigma'}: = L_m'({\sigma'})$ via local class field theory.
\item ${\sigma'}$ is a discrete series representation iff $\phi_{\sigma'}$ does not factor through any proper Levi subgroup of $\GSp_4(\mathbb{C})$.
\item if ${\sigma'}$ is generic or non-supercuspidal, then for any irreducible representation ${\tau'}$ of $\GL_t({F'})$ with $t \leq 2$ and $\depth({\tau'}) \leq 2l(m)$, 
\[ \begin{cases}
L(s, {\sigma'} \times {\tau'}) &= L(s, \phi_{\sigma'} \otimes \phi_{\tau'}) \\
\gamma(s, {\sigma'} \times {\tau'}, \psi') &= \gamma(s, \phi_{\sigma'} \otimes\phi_ {\tau'}, \psi')
\end{cases} \]
where the factors on the right are of Artin type, and the factors on the left are obtained using the Langlands-Shahidi method.
\item if ${\sigma'}$ is non-generic supercuspidal, then for any supercuspidal representation ${\tau'}$ of $\GL_t({F'})$ with $t \leq 2$ and $\depth({\tau'}) \leq m$, the Plancherel measure $\mu(s, {\sigma'} \times {\tau'})$ is equal to 
\begin{align*}
 \gamma(s, \phi_{\sigma'}^\vee \otimes \phi_{\tau'}, \psi') \gamma(-s, \phi_{\sigma'} \otimes \phi_{\tau'}^\vee, \bar{\psi'})\gamma(2s, \Sym^2\phi_{\tau'} \otimes& \Sim\phi_{\sigma'}^{-1}, \psi')\\
& \gamma(-2s, \Sym^2\phi_{\tau'}^{\vee} \otimes \Sim\phi_{\sigma'}, \bar{\psi'}) 
\end{align*}
\end{enumerate}
\end{theorem}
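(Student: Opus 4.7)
The plan is to define $L_m'$ by transporting the characteristic $0$ map $L_m$ through the Deligne--Kazhdan correspondence, exactly in the spirit of the $\GL_n$ argument of Theorem \ref{GLNDK}. Fix $m \geq 1$ and a depth $\leq m$ representation $\sigma'$ of $\GSp_4(F')$. Choose an integer $l = l_0(m)$ large enough that all of the following hold simultaneously whenever $F$ is a local field of characteristic $0$ that is $l$-close to $F'$: (i) Corollary \ref{proofKazConj} gives a Hecke algebra isomorphism at level $K_{m+2}$, producing a representation $\sigma$ of $\GSp_4(F)$ with $\depth(\sigma) = \depth(\sigma')$ via Lemma \ref{MPUC} and Proposition 3.5.2 of \cite{Lem01}; (ii) Theorem \ref{SCDS} transports square integrability and supercuspidality; (iii) Propositions \ref{gammafactorCLF}, \ref{LEpsilonCLF}, and Theorem \ref{PlancherelCLF} hold for all twists by $\tau$ of depth $\leq 2l(m)$. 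Concretely it suffices to take $l_0(m)$ of the order of $2l(m) + 4$. Then $L_m(\sigma)$ has depth $\leq m$ and thus factors through $\Gal(\bar{F}/F)/I_F^{m+}$, so the Deligne isomorphism (Equation \eqref{Delignebijection} at level $l$) produces a parameter $\phi_{\sigma'} := \Del_l(L_m(\sigma)) : \WD_{F'} \to \GSp_4(\C)$ trivial on $I_{F'}^{l}$. Set $L_m'(\sigma') := \phi_{\sigma'}$.

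The first task is well-definedness: the construction must be independent of $l$ and $F$. This is a routine compatibility check: if $F_1, F_2$ are $l_1, l_2$-close to $F'$ with $l_1, l_2 \geq l_0(m)$, choose a third field $F_3$ of characteristic $0$ that is $\max(l_1, l_2)$-close to both $F_1, F_2$ (through $F'$). The Kazhdan isomorphism commutes in the resulting triangle, and so does the Deligne isomorphism (by Section 3.7 of \cite{Del84}), hence the two candidates for $\phi_{\sigma'}$ agree. Depth preservation on both sides shows that the construction commutes with enlarging $l$ as well.

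Next I verify properties (a)--(d). For (a), the Kazhdan isomorphism matches central characters (they are recovered from the action of the center, which lies in $K_m$), the characteristic $0$ theorem gives $\omega_\sigma \leftrightarrow \Sim(L_m(\sigma))$ via LCFT, and the Deligne map intertwines LCFT by Property (i) of Section \ref{Delignetheory}. For (b), Theorem \ref{SCDS} transports discrete series from $\sigma$ to $\sigma'$ (and conversely), while the property of $\phi_\sigma$ not factoring through a proper Levi is invariant under $\Del_l$ since it depends only on the image in $\GSp_4(\C)$. Properties (c) and (d) are the heart of the argument. On the analytic side, Proposition \ref{gammafactorCLF}, Proposition \ref{LEpsilonCLF}, and Theorem \ref{PlancherelCLF} give
\[
L(s, \sigma \times \tau) = L(s, \sigma' \times \tau'),\quad \gamma(s, \sigma \times \tau, \psi) = \gamma(s, \sigma' \times \tau', \psi'),\quad \mu(s,\sigma \times \tau) = \mu(s, \sigma' \times \tau'),
\]
for $\tau$ in the allowed depth range, where $\tau'$ corresponds to $\tau$ via the $\GL_t$ Hecke algebra isomorphism. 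On the Galois side, the Artin $L$-, $\epsilon$-, and $\gamma$-factors are invariant under $\Del_l$ by Property (iii) of Section \ref{Delignetheory}, applied to $\phi_\sigma \otimes \phi_\tau$, to $\Sym^2\phi_\tau \otimes \Sim\phi_\sigma^{-1}$, and to their duals. Combining these with the characteristic $0$ theorem for $\sigma$ yields (c) and (d) for $\sigma'$.

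Finally, uniqueness follows by reducing to the characteristic $0$ recharacterization theorem proved just above. Suppose $L_m^{(1)}$ and $L_m^{(2)}$ are two maps satisfying (a)--(d). Given $\sigma'$ of depth $\leq m$, set $\phi_i' = L_m^{(i)}(\sigma')$; depth preservation (Section \ref{DepthPreservation}, together with the analogue of Equation \eqref{DepthConductorRelation} for $\GSp_4$ parameters via (c) and (a)) gives $\depth(\phi_i') \leq m$, so each $\phi_i'$ descends through $\Del_l$ to a parameter $\phi_i$ of $\WD_F$ for $F$ sufficiently close to $F'$. Applying the compatibilities of the previous paragraph in reverse, both $\phi_i$ satisfy properties (a)--(d) of the characteristic $0$ recharacterization theorem for the representation $\sigma$ corresponding to $\sigma'$, with the required depth range on $\tau$. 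By uniqueness there, $\phi_1 \cong \phi_2$, and injectivity of $\Del_l$ on isomorphism classes gives $\phi_1' \cong \phi_2'$. The main obstacle throughout is bookkeeping: one must choose $l_0(m)$ large enough that every compatibility result in Sections \ref{LocalCoefficients}--\ref{LSMETHODGSP4} applies across the allowed depth range for $\tau$, and one must verify that the non-generic supercuspidal case, where only Plancherel measures (not $\gamma$-factors) are available, still forces the parameter via the characterization in Case 2 of Theorem \ref{char0}.
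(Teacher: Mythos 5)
Your overall strategy matches the paper's: transport $\sigma'$ to characteristic $0$ via Kazhdan, apply the Gan--Takeda map $L_m$, and transport back via Deligne, then verify the properties and uniqueness using the close-local-fields compatibility theorems of Sections~\ref{LocalCoefficients}--\ref{LSMETHODGSP4}. However, there are two genuine gaps.

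First, when verifying properties (c) and (d) you do not invoke Theorem~\ref{GLNDK} (or Corollary~\ref{corGLNDK}), and this is essential. The target equality is $\gamma(s, \sigma' \times \tau', \psi') = \gamma(s, \phi_{\sigma'} \otimes \phi_{\tau'}, \psi')$, where $\phi_{\tau'}$ is the Langlands parameter of $\tau'$ over $F'$. You correctly note that the analytic $\gamma$-factor transports by Proposition~\ref{gammafactorCLF} and that the Artin factor transports by Deligne, but the chain of equalities only closes if you know $\phi_{\tau'} = \Del_l(\phi_\tau)$, i.e., that the LLC for $\GL_t$ commutes with the Deligne--Kazhdan correspondence. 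That fact is precisely Theorem~\ref{GLNDK} and needs to be invoked (and the constant $l_0(m)$ must be chosen large enough for that theorem to apply across the allowed depth range $\leq 2l(m)$ — your estimate $l_0(m) \sim 2l(m)+4$ is not conservative enough, since Theorem~\ref{GLNDK} was proved by an induction that degrades the closeness level substantially).

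Second, your well-definedness argument is not sufficient as stated. Commutativity of the Kazhdan and Deligne transports in the $F_1$--$F_3$--$F_2$ triangle tells you that $\sigma_1$ transports coherently to $\sigma_2$ and $\phi_1$ to $\phi_2$, but it does not by itself tell you that $L_{m,1}(\sigma_1)$ transports to $L_{m,2}(\sigma_2)$. That commutativity of the outer square (with the $L_{m,i}$ as horizontal arrows) is exactly what must be proved, and it requires the uniqueness of the characteristic-$0$ map: one checks that $\Del^{-1} \circ L_{m,1} \circ \kappa$ satisfies the depth-bounded recharacterization of properties (a)--(d) over $F_2$, hence equals $L_{m,2}$. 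You do deploy the recharacterization theorem in your uniqueness step, so the ingredient is present in your plan; but the well-definedness paragraph as written would fail without it.
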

\begin{proof} The map $L_m'$ is to be defined using the Kazhdan isomorphism, the Langlands map $L_m$ in characteristic 0, and the Deligne isomorphism. Let $l$ be a large enough integer such that Theorems  \ref{gammafactorCLF}, \ref{LEpsilonCLF} and  \ref{PlancherelCLF} hold for $\sigma \in \Pi(\GSp_4(F'))_m$ and $\tau \in \G_{F'}(r)_{2l(m)}, r = 1,2$, and Theorem \ref{GLNDK} holds for $\tau \in \G_{F'}(r)_{2l(m)}, r=1,2$. Note that $l$ depends only on $m$. Let $F$ be a local field of characteristic 0 such that $F'$ is $(l+1)$-close to $F$. 

 In Section \ref{DepthPreservation}, we proved that the map $L_m$ preserves depth. Let $\kappa_l^1$ be the bijection
 \begin{align}
 \{\text{Iso. classes of irr. representations } &(\sigma, V)  \text{ of  $\GSp_4(F)$ with $\sigma^{I_l} \neq 0$\}}\nonumber \\
  & \updownarrow{\kappa_l^1}\\
\{\text{Iso. classes of irr. representations }  &(\sigma', V')   \text{ of $\GSp_4({F'})$ with $\sigma'^{I_l'} \neq 0$\}}.\nonumber
\end{align}
 Suppose $\depth(\sigma) = s$. Then there is $x \in \bar{\calc}$ (see Section \ref{LLCGLNCLF}) such that $V^{G_{x,s+}} \neq 0$. By Lemma \ref{MPUC}, we have $I_{\lceil s\rceil+1} \subset G_{x, s+} \subset I$.  Recall that $I/I_{\lceil s\rceil+1} \cong I'/I_{\lceil s\rceil+1}'$ if $F$ and $F'$ are $(\lceil s\rceil+1)$-close by Section \ref{IGS}. Now it follows that $\kappa_{l+1}^1:\Pi(\GSp_4(F))_l \rightarrow \Pi(\GSp_4({F'}))_l$ is a well-defined  bijection since $F$ and $F'$ are $(l+1)$-close.
 
Next, let $\Del_l$ denote the bijection
 \begin{align}
\{ \text{Homomorphisms } \phi:\WD_F \rightarrow& \GSp_4(\C) \text{ such that $I_F^l \subset \ker(\phi)$\}}\nonumber \\
  & \updownarrow{\Del_l}\\
\{\text{Homomorphisms } \phi':\WD_{F'} \rightarrow& \GSp_4(\C) \text{ such that $I_{F'}^l \subset \ker(\phi')$\}}.\nonumber
\end{align}
 
It is clear that $\Del_{l+1}: \Phi(\GSp_4(F))_l \rightarrow \Phi(\GSp_4({F'}))_l$ is a well-defined bijection when $F$ and $F'$ are $(l+1)$-close. 
Let $L_m: \Pi(\GSp_4(F))_m \rightarrow \Phi(\GSp_4(F))_m$ be the map in characteristic 0 established by Gan-Takeda as explained in the previous section.

We define $L_m'$ so that the following diagram is commutative:
\begin{displaymath}
    \xymatrix{
        \Pi(\GSp_4({F}))_m \ar[r]^{L_{m}} \ar[d]_{\kappa^1_{l+1}} & \Phi(\GSp_4({F}))_m \ar[d]^{\Del_{l+1}} \\
        \Pi(\GSp_4(F'))_m \ar[r]_{L_m'}      &\Phi(\GSp_4(F'))_m}
\end{displaymath}
It is clear that $L_m'$ satisfies Property $(a)$. By Theorem \ref{SCDS}, $L_m'$ has property $(b)$. To prove property (c), let $\kappa^1_{l+1}(\sigma) = \sigma'$ and $\kappa_{l+1}(\tau) = \tau'$ (with $\kappa_{l+1}$ as in  Section \ref{LLCGLNCLF}). Note that by Theorem \ref{GLNDK} and Corollary \ref{corGLNDK}, 
\begin{align}\label{LLCGLNew}
\phi_{\tau'} = \Del_{l+1}(\phi_\tau).
\end{align}
Now,
\begin{align*}
\gamma(s, \sigma' \times \tau', \psi')& = \gamma(s, \sigma \times \tau, \psi)\; \text{( By Proposition \ref{gammafactorCLF})}\\
& = \gamma(s, L_m(\sigma) \otimes \phi_\tau, \psi)\; \text{ (By Theorem \ref{char0}) }\\
& = \gamma(s, (\Del_{l+1} \circ L_m(\sigma)) \otimes \Del_{l+1}(\phi_\tau), \psi')\; \text{ (By (iv) of Section \ref{Delignetheory})}\\
& = \gamma(s, L_m'(\sigma') \otimes \phi_{\tau'}, \psi')\; \text{(By definition of $L_m'$ and Equation \eqref{LLCGLNew})}.
\end{align*}
Similarly, combine Proposition \ref{LEpsilonCLF}, Theorem \ref{char0}, Property (iv) of Section  \ref{Delignetheory}, and Equation \eqref{LLCGLNew} to get the equality of $L$-factors.  Property $(d)$ is a consequence of Theorem \ref{PlancherelCLF}, Property (iv) of Section \ref{Delignetheory}, and Equation \eqref{LLCGLNew}. 

Note that the map $L_m'$ does not depend on the choice of this field of characteristic 0 for the following reason. Suppose $F_1$ is another local field of characteristic 0 that is $(l+1)$-close to $F$. Consider the following diagram.
\begin{displaymath}
    \xymatrix{
        \Pi(\GSp_4({F_1}))_m \ar[r]^{L_{m,1}} \ar[d]_{\kappa^1_{l+1}} & \Phi(\GSp_4({F_1}))_m \ar[d]^{\Del_{l+1}} \\
        \Pi(\GSp_4(F))_m \ar[r]_{L_m}      &\Phi(\GSp_4(F))_m}
\end{displaymath}
Here $L_{m,1}$ and $L_m$ are the maps defined by Gan-Takeda in characteristic 0. Note that this diagram is commutative, since $\Del_{l+1}^{-1} \circ L_m \circ \kappa^1_{l+1}: \Pi(\GSp_4({F_1}))_m \rightarrow \Phi(\GSp_4({F_1}))_m$ is another surjective finite-to-one map that satisfies the same list of properties as $L_{m,1}$. Since $L_{m,1}$ is uniquely characterized by these properties, we deduce that the above diagram is commutative. 

The uniqueness of the map $L_m'$ is clear because the corresponding map $L_m$ in characteristic 0 is the unique map satisfying Properties $(a)$  - $(d)$. 
\end{proof}

\begin{Corollary}
There exists a unique surjective finite-to-one map
\[L': \Pi(\GSp_4({F'})) \rightarrow \Phi(\GSp_4({F'}))\]
satisfying the following properties:
\begin{enumerate}[(a)]
\item the central character $\omega_{\sigma'}$ of ${\sigma'}$ corresponds to the similitude character $\operatorname{sim}(\phi_{\sigma'})$ of $\phi_{\sigma'}: = L'({\sigma'})$ via local class field theory.
\item ${\sigma'}$ is a discrete series representation iff $\phi_{\sigma'}$ does not factor through any proper Levi subgroup of $\GSp_4(\mathbb{C})$.
\item if ${\sigma'}$ is generic or non-supercuspidal and $\depth(\sigma')\leq m$, then for any irreducible representation ${\tau'}$ of $\GL_t({F'})$ with $t \leq 2$ and $\depth({\tau'}) \leq 2l(m)$, 
\[ \begin{cases}
L(s, {\sigma'} \times {\tau'}) &= L(s, \phi_{\sigma'} \otimes \phi_{\tau'}) \\
\gamma(s, {\sigma'} \times {\tau'}, \psi') &= \gamma(s, \phi_{\sigma'} \otimes\phi_ {\tau'}, \psi')
\end{cases} \]
where the factors on the right are of Artin type, and the factors on the left are obtained using the Langlands-Shahidi method.
\item if ${\sigma'}$ is non-generic supercuspidal with $\depth(\sigma')\leq m$, then for any supercuspidal representation ${\tau'}$ of $\GL_t({F'})$ with $t \leq 2$ and $\depth({\tau'}) \leq m$, the Plancherel measure $\mu(s, {\sigma'} \times {\tau'})$ is equal to 
\begin{align*}
 \gamma(s, \phi_{\sigma'}^\vee \otimes \phi_{\tau'}, \psi') \gamma(-s, \phi_{\sigma'} \otimes \phi_{\tau'}^\vee, \bar{\psi'})\gamma(2s, \Sym^2\phi_{\tau'} \otimes& \Sim\phi_{\sigma'}^{-1}, \psi')\\
& \gamma(-2s, \Sym^2\phi_{\tau'}^{\vee} \otimes \Sim\phi_{\sigma'}, \bar{\psi'}) 
\end{align*}
\end{enumerate}
\end{Corollary}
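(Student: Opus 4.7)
The plan is to obtain $L'$ by assembling the finite-depth maps $L_m'$ from Theorem \ref{LLCGSP4} into a single map, after verifying the obvious compatibility between consecutive depth levels.

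First, since every irreducible admissible representation $\sigma'$ of $\GSp_4(F')$ has finite depth (the Moy--Prasad filtrations exhaust $\GSp_4(F')$), the set $\Pi(\GSp_4(F'))$ is the increasing union $\bigcup_{m \geq 1} \Pi(\GSp_4(F'))_m$. Similarly, every Langlands parameter $\phi:\WD_{F'} \rightarrow \GSp_4(\mathbb{C})$ has finite depth, so $\Phi(\GSp_4(F')) = \bigcup_{m \geq 1} \Phi(\GSp_4(F'))_m$. Hence it suffices to define $L'$ compatibly on each $\Pi(\GSp_4(F'))_m$ and take the union.

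The key step is to verify the following compatibility: for $m_1 \leq m_2$, the map $L_{m_2}'$ restricted to $\Pi(\GSp_4(F'))_{m_1}$ coincides with $L_{m_1}'$. To see this, pick $l \geq \max\{l(m_1), l(m_2)\}$ large enough so that both constructions in the proof of Theorem \ref{LLCGSP4} can be carried out using a single local field $F$ of characteristic $0$ that is $(l+1)$-close to $F'$. Given $\sigma' \in \Pi(\GSp_4(F'))_{m_1}$, choose $\sigma \in \Pi(\GSp_4(F))_{m_1} \subset \Pi(\GSp_4(F))_{m_2}$ with $\kappa_{l+1}^1(\sigma) = \sigma'$ (depth preservation under the Kazhdan isomorphism being supplied by the Iwahori-level description in Lemma \ref{MPUC} combined with the discussion in Section \ref{IGS}). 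Since $L_{m_1}$ and $L_{m_2}$ are both restrictions of the single Gan--Takeda map $L$ in characteristic $0$, we have $L_{m_1}(\sigma) = L_{m_2}(\sigma)$, and therefore
\[
L_{m_1}'(\sigma') = \Del_{l+1}(L_{m_1}(\sigma)) = \Del_{l+1}(L_{m_2}(\sigma)) = L_{m_2}'(\sigma').
\]
Consequently, the assignment $L'(\sigma') := L_m'(\sigma')$ for any (equivalently, every sufficiently large) $m \geq \depth(\sigma')$ is well-defined.

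The fact that $L'$ is surjective and finite-to-one is immediate from the corresponding property at each finite depth level, since both $\Pi$ and $\Phi$ filter compatibly by depth (using depth preservation once more on both sides). Properties $(a)$--$(d)$ for $L'$ follow directly from the corresponding properties of each $L_m'$ in Theorem \ref{LLCGSP4}: for each fixed $\sigma'$ of depth $\leq m$, the depth restriction on $\tau'$ in $(c)$ and $(d)$ is exactly the one under which the finite-depth statement has been established. Finally, uniqueness is automatic: if $L''$ were another such map, then $L''|_{\Pi(\GSp_4(F'))_m}$ would satisfy the hypotheses of Theorem \ref{LLCGSP4} for every $m \geq 1$, forcing $L''|_{\Pi(\GSp_4(F'))_m} = L_m' = L'|_{\Pi(\GSp_4(F'))_m}$, hence $L'' = L'$ on all of $\Pi(\GSp_4(F'))$. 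The only non-routine point in the argument is verifying the compatibility $L_{m_1}' = L_{m_2}'|_{\Pi_{m_1}}$, and this is handled cleanly by choosing the auxiliary characteristic-zero field $F$ $(l+1)$-close for the larger parameter $l$ so that both finite-depth constructions are realized through the same pair $(\kappa_{l+1}^1, \Del_{l+1})$.
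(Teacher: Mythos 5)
Your proposal is correct but takes a slightly different route from the paper, and the comparison is worth making explicit. The paper's compatibility argument is a one-line appeal to the uniqueness statement of Theorem \ref{LLCGSP4}: for $m \leq n$, the restriction $L_n'|_{\Pi(\GSp_4(F'))_m}$ (which lands in $\Phi(\GSp_4(F'))_m$ by depth preservation) satisfies Properties (a)--(d), and therefore coincides with $L_m'$ by uniqueness. You instead unwind the construction through a common characteristic-0 field $F$ that is $(l+1)$-close to $F'$ for a single $l$ large enough for both $m_1$ and $m_2$, and then use the fact that $L_{m_1}$ and $L_{m_2}$ in characteristic 0 are literal restrictions of the same Gan--Takeda map. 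This works, but note that it still implicitly relies on the uniqueness in Theorem \ref{LLCGSP4}: the assertion $L_{m_1}'(\sigma') = \Del_{l+1}(L_{m_1}(\sigma))$ for \emph{your} common choice of $l$ (rather than whatever $l$ was originally used to construct $L_{m_1}'$) is only legitimate because the uniqueness tells you that any valid construction gives the same map, i.e.\ that $L_{m_1}'$ is independent not only of the chosen characteristic-0 field but also of $l$. Once you notice this dependence, the paper's shorter argument --- restrict $L_n'$, check the properties, invoke uniqueness --- is the more economical formulation of the same idea. Your treatment of surjectivity, finiteness of fibres, Properties (a)--(d), and uniqueness of $L'$ is correct and matches what the paper leaves implicit in the phrase ``the corollary follows.''
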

\begin{proof}
For $\sigma' \in \Pi(\GSp_4({F'}))$, suppose $\depth(\sigma') \leq m$. Define $L'(\sigma') = L_m'(\sigma')$. We need to ensure that the definition of $L'$ does not depend on $m$. So suppose $m \leq n$. Then $L_n'|_{{\Pi(\GSp_4(F'))}_m}$ satisfies Properties $(a)$  - $(d)$ of Theorem \ref{LLCGSP4}. Hence $L_n'|_{{\Pi(\GSp_4(F'))}_m} = L_m'$ and the corollary follows. 
\end{proof}
\bibliographystyle{plain}

\endgroup
\end{document}